\title{Height moduli on cyclotomic stacks and \\ counting elliptic curves over function fields}
\date{}
\author{Dori Bejleri, Jun--Yong Park and Matthew Satriano}
\SetMathAlphabet{\mathcal}{normal}{OMS}{cmsy}{m}{n}
\newtheorem{thm}{Theorem}[section]
\newtheorem{lem}[thm]{Lemma}
\newtheorem{cor}[thm]{Corollary}
\newtheorem{prop}[thm]{Proposition}
\theoremstyle{definition}
\newtheorem{defn}[thm]{Definition}
\newtheorem{conj}[thm]{Conjecture}
\newtheorem{rmk}[thm]{Remark}
\newtheorem{exmp}[thm]{Example}
\newcommand{\bL}{\mathbb{L}}
\newcommand{\bG}{\mathbb{G}}
\newcommand{\bP}{\mathbb{P}}
\newcommand{\bA}{\mathbb{A}}
\newcommand{\bV}{\mathbb{V}}
\newcommand{\bZ}{\mathbb{Z}}
\newcommand{\cX}{\mathcal{X}}
\newcommand{\cU}{\mathcal{U}}
\newcommand{\cY}{\mathcal{Y}}
\newcommand{\cF}{\mathcal{F}}
\newcommand{\cO}{\mathcal{O}}
\newcommand{\cC}{\mathcal{C}}
\newcommand{\cD}{\mathcal{D}}
\newcommand{\cQ}{\mathcal{Q}}
\newcommand{\cL}{\mathcal{L}}
\newcommand{\cW}{\mathcal{W}}
\newcommand{\cH}{\mathcal{H}}
\newcommand{\cR}{\mathcal{R}}
\newcommand{\cM}{\mathcal{M}}
\newcommand{\cE}{\mathcal{E}}
\newcommand{\cS}{\mathcal{S}}
\newcommand{\cP}{\mathcal{P}}
\newcommand{\cI}{\mathcal{I}}
\newcommand{\nuj}{\nu^{(j)}}
\newcommand{\muj}{\mu^{(j)}}
\DeclareMathOperator{\height}{ht}
\DeclareMathOperator{\Bs}{Bs}
\DeclareMathOperator{\Emb}{Emb}
\DeclareMathOperator{\Conf}{Conf}
\newcommand{\Bmu}{\mbox{$\raisebox{-0.59ex}
		{$l$}\hspace{-0.18em}\mu\hspace{-0.88em}\raisebox{-0.98ex}{\scalebox{2}
			{$\color{white}.$}}\hspace{-0.416em}\raisebox{+0.88ex}
		{$\color{white}.$}\hspace{0.46em}$}{}}
\newcommand{\iso}{\cong}
\newcommand{\Me}{\overline{\mathcal{M}}_{1,1}}
\newcommand{\blambda}{\bar{\lambda}}
\newcommand{\I}{{\mathop{\rm I}}}
\newcommand{\II}{{\mathop{\rm II}}}
\newcommand{\III}{{\mathop{\rm III}}}
\newcommand{\IV}{{\mathop{\rm IV}}}
\newcommand{\Pcv}{\mathcal{P}(\vec{\lambda})}
\newcommand{\Cc}{\mathcal{C}}
\newcommand{\Ec}{\mathcal{E}}
\newcommand{\Oc}{\mathcal{O}}
\newcommand{\Pc}{\mathcal{P}}
\newcommand{\Nc}{\mathcal{N}}
\newcommand{\Zc}{\mathcal{Z}}
\newcommand{\Ic}{\mathcal{I}}
\newcommand{\lambdavec}{{\vec{\lambda}}}
\newcommand{\Yc}{\mathcal{Y}}
\newcommand{\Z}{\mathbb{Z}}
\newcommand{\Q}{\mathbb{Q}}
\newcommand{\N}{\mathbb{N}}
\newcommand{\Pb}{\mathbb{P}}
\newcommand{\Ab}{\mathbb{A}}
\newcommand{\A}{\mathbb{A}}
\newcommand{\Fb}{\mathbb{F}}
\newcommand{\Gb}{\mathbb{G}}
\newcommand{\Lb}{\mathbb{L}}
\DeclareMathOperator{\Spec}{Spec}
\DeclareMathOperator{\Pic}{Pic}
\DeclareMathOperator{\PGL}{PGL}
\DeclareMathOperator{\GL}{GL}
\DeclareMathOperator{\SL}{SL}
\DeclareMathOperator{\Sym}{Sym}
\DeclareMathOperator{\Hom}{Hom}
\DeclareMathOperator{\Aut}{Aut}
\newcommand{\Ppic}{\mathcal{P}ic}
\renewcommand{\setminus}{\smallsetminus}
\begin{document}
	
	\maketitle
	
	\vspace{-3ex}
	\centerline{\footnotesize{\textit{In memory of Yuri Manin, 1937--2023}}}
	
	\begin{abstract}
		For proper stacks, unlike schemes, there is a distinction between rational and integral points. Moreover, rational points have extra automorphism groups. We show that these distinctions exactly account for the lower order main terms appearing in precise counts of elliptic curves over function fields, answering a question of Venkatesh in this case. More generally, using the theory of twisted stable maps and the stacky height functions recently introduced by Ellenberg, Zureick-Brown, and the third author, we construct finite type moduli spaces which parametrize rational points of fixed height on a large class of stacks, so-called cyclotomic stacks. The main tool is a correspondence between rational points, twisted maps and weighted linear series. Along the way, we obtain the Northcott property as well as a generalization of Tate's algorithm for cyclotomic stacks, and compute the exact motives of these moduli spaces for weighted projective stacks.

	\end{abstract}

	\medskip
	\medskip

	\setcounter{tocdepth}{1}
	
	\tableofcontents
	\vspace{-3ex}
	
	\section{Introduction}
	\label{sec:intro}
	
	For many asymptotic counting problems in analytic number theory, it is of interest to determine the main leading term as well as the lower order terms. While the lower order terms are interesting theoretically, having good control of these lower order terms is often also important for numerical computations. A well-known example is the enumeration of cubic number fields ordered by height of discriminant. The number of cubic fields of height $\le B$ for certain constants $a,b > 0$ is 
	
	\vspace{-.13in}
	\begin{equation*}\label{eq:cubic-fields}
		aB - bB^{5/6} + o(B^{5/6})
	\end{equation*}
	The existence of the precise second main term of order $B^{5/6}$ was conjectured by Roberts in \cite{Roberts} and also implicitly in an earlier paper of Datskovsky--Wright in \cite{DW}. This was proven by the remarkable works of Bhargava--Shankar--Tsimerman in \cite{BST} and also by Taniguchi--Thorne in \cite{TT} along the lines of establishing the Davenport–Heilbronn theorems \cite{DH, DH2} on cubic fields and 3-torsion in class groups of quadratic fields. In general, however, one has very little understanding of the origin of these lower order main terms (c.f.  \cite[\S 2.6]{VE}). In this regard, Venkatesh in \cite[Prob. 5]{GGW} asks the following question:
	
	\begin{center}
		\textit{What is the topological meaning of secondary terms \\ appearing in asymptotic counts in number theory?}
	\end{center}
	
	In the case of elliptic curves over a function field $K=k(C)$ of a curve $C$ over $k=\Fb_q$, it is well known that the counting of semi-stable elliptic curves, that is, the integral points of the moduli stack $\Me$, are governed by the geometry of the moduli space of morphisms $C \to \Me$ (see e.g. \cite{HP, BPS}). We generalize this story to give a moduli-theoretic description of the rational points that \textit{do not extend} to integral points. By understanding the arithmetic geometry of these moduli spaces, we show that integral points of $\Me$ account for the main leading term in the enumeration of all elliptic curves over a function field, while rational points of $\Me$ that do not extend to integral points and have \textit{extra automorphisms} account for the lower order main terms. These rational points are necessarily concentrated at the special $j$-invariant and require us to consider the relationship between \textit{weighted and unweighted} point counts over finite fields.
	
	\medskip
	
	Specifically, we establish the following sharp enumeration of elliptic curves over a global function field $K = \Fb_q(t)$ with precise lower order main terms. This is achieved by considering the totality of rational points on $\Me$ over $K$. Recall that the height of the discriminant of an elliptic curve $E$ over $K$ is given by $ht(\Delta):=q^{\deg \Delta} = q^{12n}$ for some integer $n$ (also called the Faltings height of $E$).
	
	\begin{thm}\label{thm:ell_curve_min_count} 
		Let $n \in \mathbb{Z}_{\ge 0}$ and $\text{char} (\Fb_q) > 3$. The counting function $\Nc^w(\Fb_q(t),~B)$ (resp. $\Nc(\Fb_q(t), ~B)$), which gives the weighted count (resp. unweighted count) of the number of minimal elliptic curves over $\Pb^1_{\Fb_q}$ ordered by the multiplicative height of the discriminant $ht(\Delta) = q^{12n} \le B$, is given by the following expressions where $a_{q},b_{q},c_{q},d_{q} \ge 0$ are explicit rational functions of $q$ (see Theorem \ref{thm:ell_curve_min_count_body}):
		\begin{align*}
			\Nc^w(\Fb_q(t),~B) &= ~ a_{q}B^{5/6} - B^{1/6} \\
			\Nc(\Fb_q(t),~B) &= ~ 2a_{q} B^{5/6} + 4b_{q} B^{1/2} + 2c_{q} B^{1/3} - 2 B^{1/6} + d_{q}
		\end{align*}
		\noindent The $B^{1/2}$ and $B^{1/3}$ terms come from counting $\mu_6$- and $\mu_4$-twist families while the $B^{1/6}$ term comes from counting $\mu_2$-twist families concentrated at $j = \infty$. 
	\end{thm}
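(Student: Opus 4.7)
The plan is to apply the general framework of height moduli on cyclotomic stacks, specialized to $\Me$. Since $\text{char}(\Fb_q) > 3$, we have $\Me \iso \mathcal{P}(4,6)$ as cyclotomic weighted projective stacks, and the main correspondence of the paper identifies rational points of $\Me$ over $K = \Fb_q(t)$ of discriminant height $q^{12n}$ with equivalence classes of minimal Weierstrass pairs $(a,b) \in H^0(\Pb^1, \Oc(4n)) \oplus H^0(\Pb^1, \Oc(6n))$ satisfying $\Delta := 4a^3 + 27b^2 \neq 0$ modulo the $\Gb_m$-action $\lambda \cdot (a,b) = (\lambda^4 a, \lambda^6 b)$, where minimality means no $p \in \Pb^1$ has both $\mathrm{ord}_p(a) \geq 4$ and $\mathrm{ord}_p(b) \geq 6$. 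These assemble into a finite type algebraic stack $\mathcal{L}_n$, and $\Nc^w$ and $\Nc$ are its weighted and unweighted $\Fb_q$-point counts.

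Next I would stratify $\mathcal{L}_n$ according to the image in $\Me$: (i) the generic stratum where both $a, b \neq 0$, with isotropy $\mu_2$; (ii) the $j = 0$ stratum $\{a = 0\}$, with isotropy $\mu_6$; (iii) the $j = 1728$ stratum $\{b = 0\}$, with isotropy $\mu_4$; and (iv) a cuspidal stratum at $j = \infty$ corresponding to rational points not extending to integral points, with residual isotropy $\mu_2$ coming from the $\mu_2$-gerbe at the cusp of $\mathcal{P}(4,6)$. Each stratum's motivic class and hence $\Fb_q$-point count would be computed using the exact formulas for weighted projective stacks developed earlier in the paper, combined with a sieve enforcing the minimality condition. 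The sieve is handled by the paper's generalization of Tate's algorithm, which translates non-minimality at each $p \in \Pb^1$ into a local cyclotomic condition and permits an inclusion-exclusion over $\Pb^1$ via a zeta function identity. The generic stratum produces the main $a_q B^{5/6}$ term. Strata (ii) and (iii) reduce to counting minimal sections of $\Oc(6n)$ and $\Oc(4n)$ modulo the appropriate $\Gb_m$-actions, giving contributions of orders $q^{6n} = B^{1/2}$ and $q^{4n} = B^{1/3}$ respectively; the integer factors $4$ and $2$ in the unweighted count encode the Kummer-theoretic sizes of the $\mu_6$- and $\mu_4$-twist families of the associated $j = 0$ and $j = 1728$ curves over $K$.

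The main obstacle will be stratum (iv), which is responsible for the $B^{1/6}$ correction in both counts. These are rational points of $\Me$ concentrated at the cusp that parameterize minimal configurations of order $q^{2n}$ on $\Pb^1$ not corresponding to an honest elliptic curve but rather to a $\mu_2$-twist at $j = \infty$; the negative sign is forced because this contribution appears as a boundary correction in the inclusion-exclusion that ensures every $p \in \Pb^1$ sees an actual (non-nodal) elliptic fiber, equivalently as an excess component of the moduli of twisted maps $\Pb^1 \to \Me$ supported at the cusp that must be subtracted. Assembling strata (i)--(iv) in the Grothendieck ring, together with the constant $d_q$ accounting for exceptional low-height configurations, would yield the claimed closed-form expressions with explicit rational functions $a_q, b_q, c_q, d_q$ of $q$. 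The contrast between the two formulas, in particular the asymmetric appearance of the $B^{1/2}$ and $B^{1/3}$ terms only in the unweighted count together with the universal $2{:}1$ ratio on the $B^{5/6}$ and $B^{1/6}$ terms, precisely records how the enlarged automorphism groups at $j = 0, 1728$ and the cusp affect the stacky point count, thereby identifying the topological origin of the secondary main terms in response to Venkatesh's question.
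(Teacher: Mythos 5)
Your overall plan is essentially the paper's: identify $\Me$ with $\cP(4,6)$, realize $\Nc^w$ and $\Nc$ as weighted and unweighted $\Fb_q$-point counts of the height moduli $\cW^{min}_{n,\Pb^1}(4,6)$ minus the cuspidal contribution, and trace the secondary main terms to the loci with extra automorphisms. Two remarks on where the paper's treatment differs in form and where your description is off.

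First, the paper does not directly stratify $\cW^{min}_n(4,6)$ by the value of $j$ and sum $|\Aut|\cdot(\text{weighted count})$ over strata; instead it invokes the clean identity $|\cX(\Fb_q)/\sim| = \#_q\,\cI\cX$ (Theorem \ref{thm:ptcounts}) and decomposes the inertia stack $\cI\cW^{min}_n(4,6)$ by roots of unity $g\in\Bmu_{\Fb_q}$, where the term indexed by $g$ is $\cW^{min}_n(\vec\lambda_g)$ with $\vec\lambda_g$ the sub-tuple of $(4,6)$ whose entries are divisible by $\mathrm{ord}(g)$. This gives, automatically and with the correct $\Fb_q$-rationality weightings, the contributions $2\#_q\cW^{min}_n(4,6)$ (from $g=\pm1$), $4\delta(6)\#_q\cW^{min}_n(6)$ (from the $\le 4$ roots of unity of order $3$ or $6$ in $\Fb_q$) and $2\delta(4)\#_q\cW^{min}_n(4)$. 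Your stratification by ``image in $\Me$'' would reproduce this, but you must be careful that the multiplier on each stratum is $|\mu_r(\Fb_q)| = \gcd(r,q-1)$, not $r$, which is exactly the $\delta(6),\delta(4)$ dependence appearing in Theorem \ref{thm:ell_curve_min_count_body} that your sketch only gestures at via ``Kummer-theoretic sizes.''

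Second, your account of the $-B^{1/6}$ term is muddled and internally inconsistent. You define $\mathcal{L}_n$ with $\Delta\neq 0$ imposed, then list a ``cuspidal stratum (iv)'' of $\mathcal{L}_n$; but if $\Delta\neq0$ is already enforced there is no such stratum. More importantly, the claim that the negative sign arises ``because this contribution appears as a boundary correction in the inclusion-exclusion that ensures every $p \in \Pb^1$ sees an actual (non-nodal) elliptic fiber'' is wrong: one does not and should not exclude surfaces with nodal fibers (semistable surfaces have them). The subtraction is simply $\#_q\cW^{min}_n(4,6) - \#_q\cW^{min}_n(2)$, removing those $K$-points of $\cP(4,6)$ landing on the closed substack $\cP(2)\cong B\mu_2$ (the residual gerbe at the cusp), i.e.\ removing the quadratic twist families of the nodal cubic where $\Delta\equiv 0$ identically. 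The ``inclusion-exclusion via a zeta function identity'' you mention is a separate ingredient: it is the stratification by minimality defect (Corollary \ref{cor:def:strat} and Lemma \ref{lem:wps}) used to compute $\#_q\cW^{min}_n$ itself, producing the $(q^9-1)/(q^8-q^7)$ prefactor, and has nothing to do with the $B^{1/6}$ correction.
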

	
	Notably, the weighted and unweighted counts of semi-stable elliptic curves over $\Pb^1_{\Fb_q}$, i.e. of the space $\Hom_n(\mathbb{P}^1, \Me)$, only contribute to the \emph{leading term} $B^{5/6}$ with \emph{constant lower order term} (c.f. \cite{dJ2,HP,BPS}).
	
	\medskip
	
	To prove Theorem \ref{thm:ell_curve_min_count}, we construct \textit{height moduli spaces} $\cM_{n,K}(\Me)$ (resp. $\cH^\Gamma_{n,K}(\Me)$) whose $\Fb_q$-points correspond to rational points of $\Me$ of height $n$ over the function field $K$ (resp. with prescribed local conditions $\Gamma$ corresponding to Tate's algorithm, see Theorem~\ref{thm:corr_46})). We count the number of $\Fb_q$-points, and more generally compute the exact motives of height moduli and their inertia stacks in the Grothendieck ring of stacks $K_0(\mathrm{Stck}_{\Fb_q})$ of these moduli spaces to answer the question of Venkatesh in this case. The height moduli framework we introduce in this paper applies to a large class of moduli stacks as well as to higher genus function fields and other perfect residue fields $k$ (where point counting over finite fields is replaced with various topological or motivic invariants).

	\medskip
	
	Fix a perfect field $k$ and a smooth proper geometrically connected curve $C/k$ with function field $K = k(C)$. For $X$ a projective scheme with ample line bundle $L$, the height of a point $P \in X(K)$ is simply the degree, measured with respect to $L$, of the unique extension $C \to X$. Thus for any integer $n$, we have a moduli space
	$$
	\mathrm{Hom}_n(C, X)
	$$
	of degree $n$ maps to $X$ whose points are identified with $K$-points of height $n$. Our goal is to generalize this construction when $X$ is replaced by an algebraic stack $\cX$. In this paper we focus on the class of cyclotomic stacks, whose properties best resemble those of projective varieties. Introduced by Abramovich and Hassett in \cite{AH}, these are algebraic stacks whose stabilizers are $\mu_r$ and equipped with a uniformizing line bundle $\cL$ (see Definition \ref{def:cyc_uni}) which plays the role of an ample line bundle. This includes familiar examples such as weighted projective stacks $\Pcv \coloneqq \Pc(\lambda_1,\dots,\lambda_N)$ with $\cL=\cO(1)$ and fine modular curves. Particularly, $\Me \iso \Pc(4,6)$ over $\bZ\left[\frac{1}{6}\right]$ with $\cL$ the Hodge line bundle by the short Weierstrass equation $y^2 = x^3 + a_4x + a_6$, where $\zeta \cdot a_i=\zeta^i a_i$ for $\zeta \in \Gb_m$ and $i=4,6$.
	
	\medskip
	
	One of the subtleties for proper algebraic stacks is that not every rational point $C \dashrightarrow \cX$ extends to an integral point $C \to \cX$. Instead, a rational point always extends to a \emph{twisted map} $\cC \to \cX$ from a stacky curve $\cC$ with coarse moduli space $C$ (see Section \ref{sec:mod-sp-twismaps}). This observation was one of the key insights of \cite{ESZB} for defining the \emph{stacky height} of a rational point of $P \in \cX(K)$ (see Section \ref{sec:heights:stacks}).  As in the case of schemes, twisted maps form a moduli space $\cH^\Gamma_d(\cX, \cL)$ where 
	$$
	\Gamma = \{(r_1, a_1), \ldots, (r_s, a_s)\}
	$$
	encodes the stacky structure of $\cC$ as well as the stabilizer action on the fiber of $\cL|_{\cC}$ (Definition \ref{def:local-conditions}) and $d$ is the degree. On the other hand, given $(\cX, \cL)$, the aforementioned stacky height $\height_{\cL}(P)$ decomposes as $\height_{\cL}^{st}(P)+\sum_v \delta_{\cL,v}(P)$, where $\delta_{\cL,v}(P)$ are local contributions coming from finitely many places $v$ of $K$ and $\height_{\cL}^{st}(P)$ is the so-called \emph{stable height}, which is stable under base change. We show (Theorem \ref{thm:tuning-stacks-and-minimal-linear-series}) that we can identify the terms in the stacky height with the data $d, \Gamma$ for a twisted map:
	$$
	d = \height_{\cL}^{st}(P), \ \ \ \delta_{\cL,v_i}(P) = \frac{a_i}{r_i}. 
	$$
	Our Main Theorem is that there is a height moduli space $\cM_n(\cX, \cL)$ parametrizing all rational points on general proper polarized cyclotomic stacks of stacky height $n$ and that the spaces of twisted maps yield a stratification of $\cM_n(\cX,\cL)$ corresponding to fixing the local contributions to the stacky height. The fact that $\cM_n(\cX, \cL)$ is of finite type is a geometric incarnation of the Northcott property. 
	
	\begin{thm}[Theorems \ref{thm:moduli-of-twisted-maps-aov} \& \ref{thm:height:moduli_body}]\label{thm:height:moduli} Let $(\cX, \cL)$ be a proper polarized cyclotomic stack over a perfect field $k$. Fix a smooth projective curve $C/k$ with function field $K = k(C)$ and $n,d \in \mathbb{Q}_{\ge 0}$.
		
		\begin{enumerate} 
			\item There exists a separated Deligne--Mumford stack $\cM_{n,C}(\cX,\cL)$ of finite type over $k$ with a quasi-projective coarse space and a canonical bijection of $k$-points
			
			$$
			\cM_{n,C}(\cX, \cL)(k) = \left\{ P \in \cX(K) \mid \height_{\cL}(P) = n \right \}.
			$$
			\item There is a finite locally closed stratification 
			$$
			\bigsqcup_{\Gamma,d} 
			\cH^\Gamma_{d,C}(\cX, \cL)/S_\Gamma\to \cM_{n,C}(\cX, \cL)
			$$
			where $\cH^{\Gamma}_{d,C}$ are moduli spaces of twisted maps and the union runs over all possible admissible local conditions
			$$\Gamma = \left(\{r_1, a_1\}, \ldots, \{r_s, a_s\}\right)$$ and degrees $d$ for a twisted map to $(\cX, \cL)$ satisfying
			$$
			n = d + \sum_{i = 1}^s \frac{a_i}{r_i} 
			$$
			and $S_\Gamma$ is a subgroup of the symmetric group on $s$ letters that permutes the stacky points of the twisted map.
			
			\item Under the bijection in part (1), each $k$-point of $\cH_{d,C}^\Gamma(\cX, \cL)/S_\Gamma$ corresponds to a $K$-point $P$ with the stable height and local contributions given by
			$$
			\height^{st}_{\cL}(P) = d \quad \quad \quad \left\{\delta_i = \frac{a_i}{r_i}\right\}_{i = 1}^s. 
			$$
		\end{enumerate}
	\end{thm}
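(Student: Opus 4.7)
The plan is to build $\cM_{n,C}(\cX,\cL)$ as a locally closed gluing of symmetric-group quotients of moduli of twisted maps, one stratum for each combinatorial type of local ramification data compatible with the prescribed stacky height $n$. The key input is the correspondence between $K$-rational points of $\cX$ and twisted maps from root stacks over $C$, together with the identification in Theorem~\ref{thm:tuning-stacks-and-minimal-linear-series} of the degree of a twisted map with the stable height and of the local stabilizer data $(r_i,a_i)$ with the local contributions $a_i/r_i$ to the stacky height.

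\textbf{Execution.} First I would construct, for each admissible pair $(\Gamma,d)$, the moduli space $\cH^\Gamma_{d,C}(\cX,\cL)$ parametrizing twisted maps $f\colon\cC\to\cX$ with coarse space $C$, stacky structure given by $\Gamma=\{(r_i,a_i)\}_{i=1}^s$, and $\deg f^*\cL=d$. That this is a separated Deligne--Mumford stack of finite type is the content of Theorem~\ref{thm:moduli-of-twisted-maps-aov}, which follows from the Abramovich--Vistoli theory of twisted stable maps applied to the fixed smooth curve $C$. Next, I would use the equation $n=d+\sum_i a_i/r_i$ together with the fact that the $r_i$ divide the finite collection of cyclotomic exponents of $\cX$ to conclude that only finitely many admissible tuples $(\Gamma,d)$ contribute to height $n$; this is the geometric incarnation of the Northcott property. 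I would then define $\cM_{n,C}(\cX,\cL)$ by gluing the quotients $\cH^\Gamma_{d,C}/S_\Gamma$ over this finite index set, where $S_\Gamma\subset S_s$ fixes the multiset $\Gamma$ and accounts for the fact that the intrinsic data of a rational point carries no ordering of its stacky points. The universal twisted map on each $\cH^\Gamma_{d,C}$, composed with the rational-point correspondence, supplies a universal rational point on $\cM_{n,C}(\cX,\cL)$; this yields the canonical bijection in part~(1), while part~(3) is immediate from Theorem~\ref{thm:tuning-stacks-and-minimal-linear-series} applied to the universal family.

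\textbf{Main obstacle.} The delicate point will be upgrading the stratum-wise picture to an honest finite-type separated Deligne--Mumford stack with quasi-projective coarse moduli, rather than just a disjoint union of pieces with matching $k$-points. One must check that the incidence relations among the various $\cH^\Gamma_{d,C}/S_\Gamma$ assemble into a genuine locally closed stratification that is compatible with degenerations of twisted curves when the local data $\Gamma$ varies, and one must exhibit an ample polarization on the coarse space of the resulting glued object. I expect to handle the first by realizing each stratum as a locally closed substack of a single ambient moduli of twisted maps with bounded ramification order, whose existence and properties are supplied by the Abramovich--Vistoli framework, and to produce the polarization via a determinantal line bundle built from the universal $f^{*}\cL$. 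Verifying that the numerical identification in~(3) holds in families, not just on $k$-points, will also require some care and will use the universal minimal model produced by Theorem~\ref{thm:tuning-stacks-and-minimal-linear-series}.
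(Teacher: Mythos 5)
Your proposal correctly identifies the main ingredients --- the bijection between rational points and twisted maps (with the formulas $\height^{st}=d$ and $\delta_i=a_i/r_i$ coming from Theorem~\ref{thm:tuning-stacks-and-minimal-linear-series}) and the boundedness of the admissible $(\Gamma,d)$ giving the Northcott property --- but the central construction strategy is wrong in a way that the paper is specifically designed to circumvent.

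You propose to \emph{build} $\cM_{n,C}(\cX,\cL)$ by gluing the finitely many quotients $\cH^{\Gamma}_{d,C}/S_\Gamma$. This is not a well-defined operation: a stack is not determined by a finite collection of locally closed pieces plus $k$-point identifications, and you would have to exhibit an ambient stack inside which the gluing takes place. You identify this as the main obstacle, but your proposed fix --- realizing all strata inside a single moduli of twisted maps with bounded ramification --- does not work. The spaces $\cH^{\Gamma}_{d,C}$ carry the twisted curve $\cC$ (with its marked gerbes $\Sigma_i$) as part of their moduli data, so distinct $\Gamma$ live over distinct configuration spaces $\Conf_s(C)$ and sit in genuinely different moduli problems; there is no obvious common ambient moduli of twisted maps in which they all embed as locally closed substacks, because a point of $\cM_{n,C}$ does not a priori remember a twisted curve structure (only the rational point $P\in\cX(K)$ does, after passing to the universal tuning stack, but this is not a flat family condition).

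The paper's approach inverts your logic. It first finds, for $(\cX,\cL)=(\Pcv,\cO(1))$, a natural ambient moduli functor independent of any twisted structure: the stack $\cW_{n,C}(\lambdavec)$ of \emph{all} $\lambdavec$-weighted linear series on $C$, realized as an abelian cone over the Picard stack $\Ppic(C)$ (hence algebraic, finite type, with manageable diagonal). The locus of \emph{minimal} linear series $\cW^{min}_{n,C}$ is then shown to be open (Proposition~\ref{prop:stack:min} via Lemma~\ref{lem:open}), and this $\cW^{min}_{n,C}$ \emph{is} $\cM_{n,C}$; its $k$-points correspond to rational points of height $n$ via Theorem~\ref{thm:tuning-stacks-and-minimal-linear-series}. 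The identification with the $\cH^{\Gamma}_{d,C}/S_\Gamma$ then happens \emph{a posteriori}, by stratifying $\cW^{min}_{n,C}$ by normalized base profile $\mu$ (flattening stratification, Proposition~\ref{prop:flattening}) and matching strata $\cW^{\mu}_n$ with $\cH^{\Gamma}_d$ via Proposition~\ref{prop:iso-HGammad-Rmun}. The general cyclotomic case is reduced to this by the closed embedding $\cX\subset\Pcv$ from Proposition~\ref{prop:polarizing:embedding}, showing $\cM_{n,C}(\cX,\cL)$ is a closed substack of $\cW^{min}_{n,C}(\lambdavec)$. The missing idea in your proposal is precisely this: construct the ambient stack from a scheme-theoretic moduli problem (weighted linear series) that does not reference twisted curves, and derive the stratification rather than assemble the space from it.
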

	
	We prove Theorem \ref{thm:height:moduli} by reducing to the case where $\cX$ is a weighted projective stack $\Pcv \coloneqq \Pc(\lambda_0,\dots,\lambda_N)$ and $\cL=\cO(1)$. In this case, we obtain even more precise information in Theorem \ref{thm:main-thm-of-sec4}: we construct $\cM_{n,C}(\Pcv, \cO(1))$ as a moduli space of $\lambdavec$-weighted linear series $(L, s_0, \ldots, s_N)$ on the curve $C$.
	
	\begin{defn} (Definitions \ref{defn:wls} \& \ref{def:mimal-wted-lin-ser})
		A \emph{$\lambdavec$-weighted linear series} on $C$ is a tuple $(L, s_0, \ldots, s_N)$ where $s_i \in H^0(C, L^{\otimes \lambda_i})$. The tuple is \emph{minimal} if for all $x \in C(k^{sep})$, there exists $j$ such that $\nu_x(s_j) < \lambda_j$ where $\nu_x$ is the order of vanishing at $x$. 
	\end{defn}
	
	We show in Theorem \ref{thm:tuning-stacks-and-minimal-linear-series} that there is a correspondence between the twisting conditions $(r,a)$ at a point $x \in C$ and the order of vanishing of the sections $s_i$ in a weighted linear series. This correspondence induces a bijection between twisted maps and minimal weighted linear series where the twisting conditions on the map at $x \in C$ can be computed explicitly from the vanishing orders $\nu_x(s_j)$. Notably, this generalizes Tate's algorithm for computing the Kodaira fiber type of a minimal elliptic surface from the order of vanishing of the Weierstrass data (see below and Section \ref{sec:Mod_Wss_Surf}). Given any rational point $P : C \dashrightarrow \Pcv$, there exists a unique minimal weighted linear series $(L^{min}, s_0, \ldots, s_N)$ on $C$ which realizes $P$ such that
	$$
	\height_{\cO(1)}(P) = \deg L^{min}. 
	$$
	Significantly, this generalizes the existence of global minimal Weierstrass models for elliptic curves over $k(C)$ and the fact that the stacky height (i.e. the Faltings height) is the na\"ive height in this case. 
	
	To prove Theorem \ref{thm:height:moduli} for weighted projective stacks, we construct a moduli space $\cW_{n,C}^{min}(\lambdavec)$ of minimal $\lambdavec$-weighted linear series on $C$ of degree $n$ and stratify it into locally closed substacks $\cW_{n,C}^\gamma(\lambdavec)$ by imposing vanishing conditions $\gamma$ on the sections $s_i$ (Proposition \ref{prop:stack:min} \& Theorem \ref{thm:main-thm-of-sec4}). Finally, we prove that the correspondence between twisted maps and minimal weighted linear series holds in families to identify $\cW_{n,C}^{min}(\lambdavec)$ and $\cW_{n,C}^\gamma(\lambdavec)$ with $\cM_{n,C}$ and the twisted maps strata $\cH^\Gamma_{d,C}/S_\Gamma$ respectively after matching up the corresponding vanishing and twisting conditions $\gamma$ and $\Gamma$ (Section \ref{subsec:modwps}).
	
	\medskip

	With $\cW_{n,C}^{min}(\lambdavec)$ in hand, we move to the question of computing its number of points, or more generally the class in the Grothendieck ring of stacks. Fix weights $\lambdavec = (\lambda_0, \ldots, \lambda_N)$ and let $|\lambdavec| \colon = \sum_{i = 0}^N \lambda_i$. Suppose for simplicity that $k$ contains all $\mathrm{lcm} = \mathrm{lcm}(\lambda_0, \ldots, \lambda_N)$ roots of unity (see Section \ref{sec:zeta} for the general case). 
	
	\begin{thm}[Theorem \ref{thm:ratl}]\label{thm:motive_generating_series} For $k, \lambdavec$ as above and $C = \bP^1_k$, consider $\cW_n^{min}$ and its inertia stack $\cI \cW_n^{min}$. We have the following formulas over the Grothendieck ring of stacks $K_0(\mathrm{Stck}_k)$. 
		
		\begin{enumerate}[(a)]
			\item 
			$$
			\sum_{n \geq 0} \{\cW_n^{min}\}t^n = \frac{ 1-\Lb t }{1-\Lb^{|\vec{\lambda}|}t} \left( \{\Pb^N\} + \Lb^{N+1}  \{\Pb^{|\vec{\lambda}|-N-2} \} t \right) 
			$$
			\item 
			$$
			\sum_{n \geq 0} \{\cI \cW_n^{min}\}t^n = \sum_{g \in \mu_{\mathrm{lcm}}(k)} \frac{ 1-\Lb t }{1-\Lb^{|\vec{\lambda_{g}}|}t} \left( \{\Pb^{{N_g}}\} + \Lb^{{N_g}+1}  \{\Pb^{|\vec{\lambda_{g}}| - {N_g} -2} \} t \right)
			$$
		\end{enumerate}
		where $g$ runs over the $\mathrm{lcm}$ roots of unity and $\lambdavec_g$ is a subset of $\lambdavec$ of size $N_g + 1$ depending explicitly on the order of $g$ (see Section \ref{sec:zeta}). 
	\end{thm}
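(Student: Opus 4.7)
The plan is to compute $\{\cW_n^{min}(\lambdavec)\}$ as a rational function in $t$ via a base-divisor decomposition, and then derive (b) from (a) using the standard decomposition of the inertia of a quotient stack by an abelian group. Since every degree-$n$ line bundle on $\Pb^1_k$ is $\cO(n)$, I would first identify the moduli of all (not necessarily minimal) $\lambdavec$-weighted linear series of degree $n$ as the global quotient
$$\cW_n = [V_n/\Gb_m], \qquad V_n := \bigoplus_{i=0}^N H^0(\Pb^1, \cO(n\lambda_i)),$$
with $\Gb_m$ acting on the $i$-th summand with weight $\lambda_i$. Setting $V_n^* := V_n \setminus \{0\}$ and $\cW_n^* := [V_n^*/\Gb_m]$, this is a weighted projective stack with class $\{\cW_n^*\} = \{\Pb^{n|\lambdavec|+N}\} = (\Lb^{n|\lambdavec|+N+1}-1)/(\Lb-1)$ in $K_0(\mathrm{Stck}_k)$.

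For part (a), the key geometric input is the \emph{base-divisor decomposition}: any weighted linear series in $\cW_n^*$ factors uniquely as the twist of a minimal $\lambdavec$-weighted linear series $(L(-D), s_0/f_D^{\lambda_0}, \ldots, s_N/f_D^{\lambda_N})$ by an effective divisor $D$ of degree $m$, yielding a locally closed stratification
$$\cW_n^* = \bigsqcup_{m=0}^n \cW_{n-m}^{min} \times \Sym^m \Pb^1.$$
Using $\Sym^m \Pb^1 \cong \Pb^m$ and $\sum_m \{\Pb^m\} t^m = 1/((1-t)(1-\Lb t))$, this gives
$$\sum_n \{\cW_n^{min}\} t^n = (1-t)(1-\Lb t) \cdot \sum_n \{\cW_n^*\} t^n,$$
and summing the geometric series on the right from the closed formula for $\{\cW_n^*\}$ produces (a) after routine algebraic simplification.

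For part (b), since $\cW_n^{min} = [U_n/\Gb_m]$ with $U_n \subset V_n^*$ the open minimal locus, its inertia decomposes as $\cI\cW_n^{min} = \bigsqcup_{g \in \Gb_m} [U_n^g/\Gb_m]$. Positivity of the weights forces $U_n^g = \emptyset$ unless $g$ has finite order dividing $\mathrm{lcm}(\lambdavec)$, restricting the sum to $g \in \mu_{\mathrm{lcm}}(k)$ under the standing hypothesis on $k$. For such $g$ one has $V_n^g = \bigoplus_{i:g^{\lambda_i}=1} H^0(\Pb^1, \cO(n\lambda_i))$, and $\lambdavec$-minimality of a linear series whose non-$\lambdavec_g$-components vanish coincides with $\lambdavec_g$-minimality of the remaining components. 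This produces a canonical isomorphism $[U_n^g/\Gb_m] \cong \cW_n^{min}(\lambdavec_g)$, and applying (a) to each sub-system $\lambdavec_g$ and summing over $g \in \mu_{\mathrm{lcm}}(k)$ delivers (b), with the convention $\{\Pb^{-1}\} = 0$ covering the degenerate case $\lambdavec_g = \emptyset$.

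The main obstacle is establishing the base-divisor stratification as a genuine stack-theoretic locally closed decomposition in families (rather than only on $k$-points), so that it descends to an identity in $K_0(\mathrm{Stck}_k)$; this requires showing that the assignment of a base divisor to a weighted linear series is compatible with base change over test schemes. The identification of $\cW_n^*$ as a weighted projective stack with class $\{\Pb^{\dim V_n -1}\}$ and the family-theoretic description of the $\Gb_m$-fixed loci in (b) require comparable care but are standard inputs within the motivic calculus of quotient stacks.
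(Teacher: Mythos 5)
Your argument for part (a) is essentially the paper's: the ``base-divisor decomposition'' you describe is precisely the defect-of-minimality stratification of Corollary \ref{cor:def:strat}, which produces the convolution identity $\sum_n \{\cW_n^*\}t^n = Z_{\lambdavec}(t)\cdot Z_{\bP^1}(t)$ (this is Lemma \ref{lem:wps}), and inverting $Z_{\bP^1}(t) = \frac{1}{(1-t)(1-\Lb t)}$ against the closed form of Lemma \ref{cor:tbd6} gives the answer. The obstacle you flag --- that the decomposition must be a family-level locally closed stratification, not just a pointwise bijection --- is exactly what Proposition \ref{prop:loc:closed} and Corollary \ref{cor:def:strat} establish in the paper (via the flattening stratification for the normalized base locus), so you are right that this is where the work sits.

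For part (b) your route is genuinely different from the paper's, and in some respects cleaner. The paper applies the inertia operator $\cI$ of Proposition \ref{prop:df} to the relation of Lemma \ref{lem:wps} --- which forces a detour through $K_0'(\mathrm{Stck}_k)$, since $\cI$ does not descend to $K_0(\mathrm{Stck}_k)$ --- and then computes $\{\cI\,\cP(\bigoplus V_n^{\lambda_i})\}$ separately via \cite[Prop.\ 3.5]{HP2} (Lemma \ref{cor:tbd7}) before dividing out by $Z(t)$. You instead decompose $\cI\cW_n^{min}$ directly using the presentation $\cW_n^{min} = [U_n/\Gb_m]$: since the stabilizers of points of $U_n$ are contained in $\mu_{\kappa}$ (with $\kappa = \mathrm{lcm}(\lambdavec)$) and $k$ is assumed to contain $\mu_\kappa$, the inertia splits as $\bigsqcup_{g \in \mu_\kappa(k)} [U_n^g/\Gb_m]$, and the observation that $\lambdavec$-minimality of a series supported on the coordinates in $I_g$ is the same as $\lambdavec_g$-minimality gives $[U_n^g/\Gb_m] \cong \cW_n^{min}(\lambdavec_g)$. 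This yields the stack-level identity $\cI\cW_n^{min}(\lambdavec) \cong \bigsqcup_{g}\cW_n^{min}(\lambdavec_g)$, which descends to $K_0(\mathrm{Stck}_k)$ directly and lets you just plug in part (a). What your route buys is the elimination of the $K_0'$ detour and the external input from \cite[Prop.\ 3.5]{HP2}; what the paper's route buys is the intermediate computation of $\{\cI\,\cP(\bigoplus V_n^{\lambda_i})\}$, which is reusable, and the version of the statement that applies over fields not containing $\mu_\kappa$ (the $R_g(t)$ remainder terms), which you would need to reintroduce by working with Galois orbits in $\Bmu_k$ rather than $\mu_\kappa(k)$ if you want that generality.
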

	
	Here we note that the unweighted count of rational points (i.e. isomorphism classes) is given by the unweighted point count of $\cW_n^{min}$ which is equal (c.f. Theorem \ref{thm:ptcounts}) to the weighted point count (i.e. motive) of the inertia stack $\cI \cW_n^{min}$. The above explicit formulas lead immediately to Theorem \ref{thm:ell_curve_min_count}.
	
	\medskip
	
	The above theorem is an example of \emph{motivic stabilization} for the moduli spaces $\cW_n^{min}$ in the sense of of \cite{VW} and this should hold in any genus. We will explore this in the future. Similarly, the spaces $\cH_{d,C}^\Gamma(\cX, \cL)$ are twisted analogs of mapping stacks. They lie over certain configuration spaces $\mathrm{Conf}_\Gamma(C)$ equipped with an action of a product of symmetric groups $S_\Gamma$ and one can consider representation stability for these spaces as in \cite{CEF}. 
	
	\begin{conj}
		The cohomology of the spaces $\cH^\Gamma_{d,C}(\cX, \cL)$ equipped with the action of $S_\Gamma$ exhibits representation stability as $d \to \infty$ (see e.g. \cite{CEF}). 
	\end{conj}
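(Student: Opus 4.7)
The plan is to reduce the conjecture to Church's representation stability for configuration spaces on curves via the forgetful map to configuration spaces and a scanning-type stabilization argument, in the spirit of the work on Hurwitz spaces.

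First, I would analyze the forgetful morphism $\pi_\Gamma \colon \cH^\Gamma_{d,C}(\cX,\cL) \to \Conf_\Gamma(C)$ that records only the unordered stacky points partitioned by their local type $(r_i,a_i)$. By Theorem~\ref{thm:tuning-stacks-and-minimal-linear-series}, the fiber over a configuration is identified with an open subscheme of the moduli of $\lambdavec$-weighted linear series on $C$ with prescribed vanishing orders, which over a fixed component of $\Pic(C)$ is an open subscheme of an affine space whose dimension grows linearly in the stable degree $d$. The $S_\Gamma$-action on the total space covers the natural action on $\Conf_\Gamma(C)$ permuting same-type stacky points.

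Second, I would construct $S_\Gamma$-equivariant stabilization morphisms $\cH^\Gamma_{d,C} \to \cH^{\Gamma \sqcup \{(r,a)\}}_{d+a/r,C}$ for each admissible local type $(r,a)$, defined by adjoining a new stacky point of type $(r,a)$ at a chosen auxiliary base point of $C$ via a root-stack construction on the source twisted curve and a universal extension of the map. Iterating endows $\bigoplus_\Gamma H^*(\cH^\Gamma_{d,C},\Ql)$ with the structure of an $\mathrm{FI}^N$-module, where $N$ is the number of distinct admissible local types for $(\cX,\cL)$; finite generation of this $\mathrm{FI}^N$-module is equivalent to representation stability in the sense of \cite{CEF}. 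Combining the Leray spectral sequence
$$
E_2^{p,q}=H^p\bigl(\Conf_\Gamma(C),R^q\pi_{\Gamma,*}\Ql\bigr) \Longrightarrow H^{p+q}(\cH^\Gamma_{d,C},\Ql)
$$
with Church's representation stability for $H^p(\Conf_\Gamma(C))$ reduces the conjecture to controlling the higher direct image local systems $R^q\pi_{\Gamma,*}\Ql$ as an $\mathrm{FI}^N$-module, followed by a Segal-type scanning statement: as $d \to \infty$, the fiber cohomology of $\pi_\Gamma$ stabilizes because the fibers are open complements of non-generic loci in the affine spaces of sections comprising the weighted linear series, and such section spaces are expected to be homologically stable in the degree, in analogy with scanning theorems for holomorphic maps to projective spaces.

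The main obstacle is controlling the $\mathrm{FI}^N$-module structure on the higher direct image local systems $R^q\pi_{\Gamma,*}\Ql$: the braid-type monodromy permuting same-type stacky points can act nontrivially on fiber cohomology, and one must show that these local systems themselves assemble into a finitely generated $\mathrm{FI}^N$-module, likely via an explicit analysis of the monodromy using the weighted linear series description of Theorem~\ref{thm:main-thm-of-sec4} together with the vanishing-order stratification. A secondary obstacle is carrying out the scanning argument in the stacky setting, possibly over perfect fields of positive characteristic, requiring \'etale-cohomological replacements for the classical topological techniques; here one could attempt to adapt the framework developed for Hurwitz spaces, replacing their moduli of branched covers with the moduli of minimal $\lambdavec$-weighted linear series supplied by Section~\ref{subsec:modwps}.
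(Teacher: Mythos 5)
This statement is stated and left open as a \emph{conjecture} in the paper; no proof is given there, so there is nothing internal to compare your proposal against. The broad strategy you propose --- Leray over configuration spaces, $\mathrm{FI}^N$-module structure indexed by the finitely many admissible local types (finitely many since each $r$ must divide $M$), and a scanning-type stabilization --- is the natural one, and the reduction to the fibers of $\pi_\Gamma$ via the identification $\cH^\Gamma_d \cong \cR^\mu_n$ of Proposition \ref{prop:iso-HGammad-Rmun} is sound (though the fiber is a $\bG_m$-quotient of a locally closed piece of an affine space, not merely an open subscheme of one).

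There is, however, a genuine gap in the key construction. The proposed stabilization morphism $\cH^\Gamma_{d,C} \to \cH^{\Gamma \sqcup \{(r,a)\}}_{d + a/r, C}$, defined by taking the $r$-th root stack of $\cC$ at a chosen auxiliary point $x_0 \in C$ and composing with the original map $f \colon \cC \to \cX$, does \emph{not} land in the target. Near $x_0$ the source $\cC$ is a scheme, so the composite $\cC' \to \cC \to \cX$ has trivial character $\chi^0$ at the new $\mu_r$-gerbe; by Lemma \ref{lemma:loc:const}(1) this composite is not representable at $x_0$ (since $0$ is not a unit mod $r$), and its local type there is $(r,0)$, not $(r,a)$. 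In the weighted-linear-series picture, the obvious modification --- multiplying each $s_j$ by $t^{\nu_j}$ for a local equation $t$ of $x_0$ with $\nu_j$ chosen so that the resulting normalized multiplicity is $m = \kappa a/r$ --- fails because for $(s_j \cdot t^{\nu_j})$ to be a section of $(L \otimes M)^{\lambda_j}$ for a single new line bundle $M$ one needs $\nu_j/\lambda_j$ independent of $j$, forcing $m = \kappa$, which is excluded by minimality. So one cannot naively ``adjoin a stacky point'' without genuinely changing the map, and the construction of a well-defined, $S_\Gamma$-equivariant stabilization map is exactly the hard part that you would need to supply. (As stated, the conjecture also fixes $\Gamma$ and varies $d$, so $S_\Gamma$ does not grow; your $\mathrm{FI}^N$-reading, in which the multiplicities in $\Gamma$ grow with $d$, is the sensible interpretation, but it should be made explicit that you are reformulating the statement.)
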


	Specializing to the case of $\Me \cong \cP(4,6)$ with $\cL$ the Hodge line bundle and $P \in \Me(K)$ a rational point corresponding to an elliptic curve $E/K$. A weighted linear series on $\bP^1$ of height $n$ consists of Weierstrass coefficients $a_4 \in H^0(\bP^1, \cO(4n))$ and $a_6 \in H^0(\bP^1, \cO(6n))$ and the orders of vanishing at a point can be encoded in a vector $\gamma = (\nu_x(a_4), \nu_x(a_6))$ which corresponds to a certain twisting data $\Gamma = (r,a)$ by Theorem \ref{thm:tuning-stacks-and-minimal-linear-series}. This naturally extends the classical Tate's algorithm between the vanishing conditions $\gamma$ and the Kodaira fiber types $\Theta$ of the minimal model of $E$.  The spaces $\cW_{n,\bP^1}^{\gamma}$ and $\cH_{d,\bP^1}^{\Gamma}$ can be identified with moduli of certain canonical models of elliptic surfaces with a specified fiber of additive bad reduction and the isomorphism between the two via Tate's algorithm can be understood in the context of the minimal model program (Section \ref{sec:Mod_Wss_Surf}). This is summarized in the table below. 
	
	\begin{thm}\label{thm:corr_46} If $\mathrm{char}(K) \neq 2,3$. Then the local twisting condition $(r,a)$ and the order of vanishing of $j$ at $j = \infty$ determine the Kodaira fiber type of the relative minimal model, and $(r,a)$ is in turn determined by $m = \min\{3\nu(a_4), 2\nu(a_6)\}$. 
		\begin{table}[ht!]
			\label{TableOfTateCorr}
			\centering
			\begin{tabular}{|c|c|c|}
				\hline
				$\gamma : \left( \nu(a_4), ~ \nu(a_6) \right)$ & Reduction type with  $j \in \overline{M}_{1,1}$ & $\Gamma : \left( r,a \right)$ \\ \hline
				$(\ge 1,1)$ & $\II$ with $j=0$ &  $(6,1)$ \\ \hline
				$(1, \ge 2)$ & $\III$ with $j=1728$ &  $(4,1)$ \\ \hline
				$(\ge 2,2)$ & $\IV$ with $j=0$ &  $(3,1)$ \\ \hline
				$(2, 3)$ & $\I_{k > 0}^*$ with $j=\infty$ & $(2,1)$ \\
				$~$ & $\I_0^*$ with $j\neq 0, 1728$ & $~$ \\ \hline
				$(\ge 3,3)$ & $\I_0^*$ with $j=0$ &  $(2,1)$ \\ \hline
				$(2, \ge 4)$ & $\I_0^*$ with $j=1728$ &  $(2,1)$ \\ \hline
				$(\ge 3,4)$ & $\IV^*$ with $j=0$ &  $(3,2)$ \\ \hline
				$(3, \ge 5)$ & $\III^*$ with $j=1728$ &  $(4,3)$ \\ \hline
				$(\ge 4,5)$ & $\II^*$ with $j=0$ &  $(6,5)$ \\ \hline
			\end{tabular}
		\end{table}        
	\end{thm}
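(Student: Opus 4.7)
The plan is a case-by-case verification combining Theorem~\ref{thm:tuning-stacks-and-minimal-linear-series} with Tate's algorithm for short Weierstrass equations. First I specialize the tuning dictionary to $\lambdavec = (4, 6)$: a rational point $C \dashrightarrow \Me \cong \mathcal{P}(4, 6)$ corresponds to a minimal weighted linear series $(L, a_4, a_6)$, and the local twisting data $(r, a)$ at $x \in C$ are determined by $\gamma = (\nu_x(a_4), \nu_x(a_6))$. Unwinding this correspondence produces the fractional change of variables $(x, y) \mapsto (t^{-2a/r}x, t^{-3a/r}y)$ on a $\mu_r$-gerbe with uniformizer $\sigma^r = t$, and maximality of $r$ (i.e.\ minimality of the linear series) forces
\[
\frac{a}{r} = \frac{m}{12} \text{ in lowest terms}, \qquad m := \min\bigl\{3\nu_x(a_4),\, 2\nu_x(a_6)\bigr\},
\]
so $r = 12/\gcd(m, 12)$ and $a = m/\gcd(m, 12)$. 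Since $3\nu_x(a_4) \in 3\mathbb{Z}$ and $2\nu_x(a_6) \in 2\mathbb{Z}$, the attainable minimal-range values $0 \leq m \leq 11$ are exactly $\{0, 2, 3, 4, 6, 8, 9, 10\}$, recovering the seven distinct nonzero pairs $(r,a)$ in the rightmost column of the table.

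With $(r, a)$ parametrized by $\gamma$, I apply Tate's algorithm (valid since $\mathrm{char}(K) \neq 2, 3$) to $y^2 = x^3 + a_4 x + a_6$. Each row except $(2, 3)$ falls unambiguously into one of the types $\II, \III, \IV, \I_0^*, \IV^*, \III^*, \II^*$ by the standard vanishing dichotomy on $\Delta = -16(4 a_4^3 + 27 a_6^2)$, since in every such row one of $\nu_x(4 a_4^3), \nu_x(27 a_6^2)$ strictly dominates the other and thereby determines $\nu_x(\Delta)$. The row $(2, 3)$ has $\nu_x(4 a_4^3) = \nu_x(27 a_6^2) = 6$ and splits into $\I_0^*$ or $\I_{k > 0}^*$ depending on whether the leading coefficient $4 u_4^3 + 27 u_6^2$ of $t^{-6}\Delta$ vanishes. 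From the identity $j = 1728 \cdot 4 a_4^3/(4 a_4^3 + 27 a_6^2)$ one reads $\nu_x(j) = 3 \nu_x(a_4) - \nu_x(\Delta)$, so $j$ has a pole at $x$ precisely when this cancellation occurs and the pole order recovers $k$ in $\I_{k > 0}^*$. Combined with Step~1, this shows that $(r, a)$ and the order of vanishing of $j$ at $j = \infty$ determine the Kodaira type, and the middle column of the table follows by comparing leading terms: $j \to 0$ when $\nu_x(27 a_6^2) < \nu_x(4 a_4^3)$, $j \to 1728$ when the reverse strict inequality holds, and $j$ takes a finite generic value $\neq 0, 1728$ in the $(2, 3)$-case without cancellation.

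The main obstacle is Step~1: carefully deriving the clean formula $a/r = m/12$ from Theorem~\ref{thm:tuning-stacks-and-minimal-linear-series}. Concretely one writes the stacky uniformizer $\sigma$ with $\sigma^r = t$ on a $\mu_r$-gerbe, demands that the twisted Weierstrass sections $\tilde{a}_4 = \sigma^{4a}(\textrm{unit})$ and $\tilde{a}_6 = \sigma^{6a}(\textrm{unit})$ are regular with one of $\nu(\tilde a_4), \nu(\tilde a_6)$ strictly below the weight, and verifies that the residue constraints $4a/r \equiv \nu_x(a_4)$ and $6a/r \equiv \nu_x(a_6) \pmod{\mathbb{Z}}$ pin down $a/r = m/12$. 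The remaining case work is standard Kodaira--Tate classification together with direct leading-order comparisons of $4 a_4^3$ and $27 a_6^2$.
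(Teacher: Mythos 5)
Your proposal is correct in its conclusions and the arithmetic checks out: the formula $r = 12/\gcd(m,12)$, $a = m/\gcd(m,12)$ with $m = \min\{3\nu(a_4), 2\nu(a_6)\}$ is exactly what Proposition~\ref{prop:normalized:local:conditions} gives for $\vec\lambda = (4,6)$ and $\kappa = 12$, and your case-by-case verification of the reduction type via $\Delta = -16(4a_4^3 + 27a_6^2)$ and $j = 1728 \cdot 4a_4^3/(4a_4^3 + 27a_6^2)$ matches the standard short-Weierstrass Tate table over a residue field of characteristic $\neq 2,3$.

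However, your route is genuinely different from the paper's. You take the classical Tate--Kodaira classification for short Weierstrass equations as a known black box and merely verify that the assignment $\gamma \mapsto (r,a) \mapsto$ (Kodaira type) is consistent with it. The paper instead \emph{derives} the Kodaira type geometrically from the twisted-map side: using Lemma~\ref{lem:tangent:action} to read off the weights $(-1,-a)$ of the $\mu_r$-action on the tangent space at the section and Lemma~\ref{lem:mult} to get the multiplicity $r$ of the central fiber of the coarse twisted model $Y$, one determines the cyclic quotient singularities of $Y$, resolves them, contracts $(-1)$-curves, and identifies the resulting configuration as the Kodaira fiber (the paper works two cases, $m=3$ and $m=6$, out in full, including the $\I_n^*$ chain coming from the $A_{2n-1}$ singularity at a node). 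This geometric derivation is what allows the paper to phrase things as a ``Tate's algorithm via twisted maps'' valid for any moduli stack isomorphic to a weighted projective stack, rather than a statement tied to the elliptic Weierstrass model. Your approach is more elementary and shorter, but it only reproves the table rather than explaining it, and it would not generalize to other cyclotomic stacks where no classical Tate's algorithm is available.

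One small point to tighten: you should explicitly invoke minimality of the weighted linear series to conclude $m < \kappa = 12$, which is where the restriction $0 \leq m \leq 11$ (hence $m \in \{0,2,3,4,6,8,9,10\}$) comes from; and you should note that $r \mid 12$ is forced by admissibility of $\Gamma$ (Definition~\ref{def:local-conditions}) since $\cO_{\cP(4,6)}(1)^{\otimes 12}$ is the smallest power descending to the coarse space. Both are implicit in your write-up but worth making explicit in the final version.
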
 
	
	Finally we compute the motives of $\cW^{\gamma}_{n, \bP^1}$ for any single fixed vanishing condition $\gamma = (\nu_x(a_4), \nu_x(a_6))$. Following \cite{FW, HP}, we stratify $\cW^{\gamma}_{n, \bP^1}$ into spaces of monic polynomials of given degrees and with fixed vanishing conditions and compute the motives of these spaces. The resulting formulas for $\cW^{\gamma}_{n, \bP^1}$ are recorded in Theorem \ref{thm:motive_Rat} below. Via Tate's algorithm, we get counts of elliptic surfaces over $\bP^1$ with a single fiber of specified additive reduction. 
	
	\begin{thm}[Theorem \ref{thm:ell_curve_count_2}]\label{thm:count_kodaira_intro} Let $n \in \mathbb{Z}_{+}$ and $\mathrm{char}(\Fb_q) \neq 2,3$. Fix an additive Kodaira fiber type $\Theta$. The number of minimal elliptic surfaces over $\bP^1$ with exactly one singular fiber of type $\Theta$ and at worst multiplicative reduction elsewhere, ordered by the multiplicitve height of the discriminamt $ht(\Delta) = q^{12n} \le B$ is given by 
		$$
		\Nc(\Fb_q(t), \Theta, B) = 2 e_{\Theta,q} (B^{5/6} - 1)
		$$
		where $e_{\Theta,q}$ is an explicit rational function of $q$ depending only on the fiber type $\Theta$. 
	\end{thm}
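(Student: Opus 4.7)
The strategy is to identify the moduli in question with a single twisted-maps stratum from Theorem \ref{thm:height:moduli}(2), via Tate's algorithm (Theorem \ref{thm:corr_46}), and then to extract its $\mathbb{F}_q$-point count from Theorem \ref{thm:motive_Rat} and sum over $n$.

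Via $\overline{\mathcal{M}}_{1,1}\cong\mathcal{P}(4,6)$ with Hodge polarization $\mathcal{L}=\mathcal{O}(1)$, a minimal elliptic surface over $\mathbb{P}^1$ of Faltings height $n$ corresponds to a minimal $(4,6)$-weighted linear series $(L,a_4,a_6)$ with $\deg L=n$. By Theorem \ref{thm:corr_46}, the condition of having exactly one additive fiber of type $\Theta$ and at worst multiplicative reduction elsewhere translates into the existence of a unique varying point $x\in\mathbb{P}^1$ where $(\nu_x(a_4),\nu_x(a_6))=\gamma_\Theta$, together with the requirement that $(\nu_y(a_4),\nu_y(a_6))$ avoid every additive row of Tate's table at each $y\neq x$. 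Combined with Theorems \ref{thm:tuning-stacks-and-minimal-linear-series} and \ref{thm:height:moduli}(2)--(3), this identifies the desired moduli with
$$
\mathcal{W}^{\gamma_\Theta}_{n,\mathbb{P}^1}\;\cong\;\mathcal{H}^{\Gamma_\Theta}_{d,\mathbb{P}^1}\!\bigl(\overline{\mathcal{M}}_{1,1},\mathcal{L}\bigr),\qquad \Gamma_\Theta=(r,a),\ \ d=n-\tfrac{a}{r},
$$
where $(r,a)$ is the twisting datum attached to $\Theta$ by the table and $S_{\Gamma_\Theta}=1$ since there is exactly one stacky point.

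Applying Theorem \ref{thm:motive_Rat} to the single-vanishing-condition stratum $\mathcal{W}^{\gamma_\Theta}_{n,\mathbb{P}^1}$ produces an explicit class in $K_0(\mathrm{Stck}_{\mathbb{F}_q})$ as a polynomial in $\mathbb{L}$ of leading order $\mathbb{L}^{10n}$, reflecting the $\dim H^0(\mathcal{O}(4n))+\dim H^0(\mathcal{O}(6n))$ free parameters after fixing the vanishing pattern at the varying $x$ and quotienting by scaling. Substituting $\mathbb{L}=q$ and then passing from weighted to unweighted point counts via the inertia stack (as in Theorem \ref{thm:motive_generating_series}(b)) contributes the factor of $2$ accounting for the generic $\mu_2$-automorphism of a smooth elliptic curve. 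Finally, summing these counts over $n\ge 1$ with $q^{12n}\le B$ yields a geometric series in $q^{10}$; closed-form evaluation using $q^{12n_{\max}}=B$ hence $q^{10n_{\max}}=B^{5/6}$ gives
$$
\mathcal{N}(\mathbb{F}_q(t),\Theta,B)\;=\;2e_{\Theta,q}\bigl(B^{5/6}-1\bigr),
$$
with $e_{\Theta,q}$ absorbing the leading coefficient of the motivic polynomial together with the $(q^{10}-1)^{-1}$ denominator of the geometric sum, and the $-1$ coming from the lower endpoint.

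The main technical obstacle is careful bookkeeping with Tate's table: one must verify that the single-stratum description precisely excludes every additive pattern at points $y\neq x$, and that the ambiguity in the row $\gamma=(2,3)$ between $\mathrm{I}_0^*$ and $\mathrm{I}_{k>0}^*$, which is distinguished by the behaviour of $j$ at $\infty$, is cleanly resolved by the choice of $\Theta$. A secondary task is to extract the precise rational function $e_{\Theta,q}$ for each individual fiber type $\Theta$ from the motivic formula, but this is essentially a routine substitution given the structure of Theorem \ref{thm:motive_Rat} together with Theorem \ref{thm:motive_generating_series}.
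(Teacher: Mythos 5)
Your strategy coincides with the paper's: identify the moduli with the stratum $\cW^{\gamma_\Theta}_{n,\bP^1}\cong\cH^{\Gamma_\Theta}_{d,\bP^1}$ via Tate's algorithm, read off the motive from Theorem \ref{thm:motive_Rat} (itself derived from the Poly-space computations of Propositions \ref{prop:W0} and \ref{prop:poly_m, Pc(4,6)}), specialize $\bL\mapsto q$, multiply by $2$ for the hyperelliptic $\mu_2$, and sum the resulting geometric series in $q^{10}$ over $1\le n\le\lfloor\log_q B/12\rfloor$. Two imprecisions are worth flagging.

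First, the factor of $2$: you attribute it to the inertia stack via Theorem \ref{thm:motive_generating_series}(b), but that statement is about $\cI\cW_n^{\min}$, which mixes in the extra $\mu_4$- and $\mu_6$-stabilizers of isotrivial families. On the stratum $\cW_n^\gamma$ with $\gamma$ one of the additive vanishing conditions from the table, every parametrized surface of positive stacky height is \emph{non-isotrivial}, so the generic stabilizer is exactly $\mu_{\gcd(4,6)}=\mu_2$; the correct reference is Theorem \ref{thm:ptcounts} together with the observation that $\cI\cW_n^\gamma$ is two copies of $\cW_n^\gamma$, which is precisely how the paper extracts $|\cW_n^\gamma(\Fb_q)/\!\sim|=2\#_q(\cW_n^\gamma)$.

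Second, the remark that the ambiguity in the $\gamma=(2,3)$ row ``is cleanly resolved by the choice of $\Theta$'' overstates what the vanishing condition can see. The condition $(\nu(a_4),\nu(a_6))=(2,3)$ does not distinguish $\I_0^*$ with $j\neq 0,1728$ from $\I_{k>0}^*$ with $j=\infty$ --- that distinction depends on whether $\Delta$ vanishes to order exactly $6$ or higher, which is not captured by $\gamma$. Accordingly Theorems \ref{thm:motive_Rat} and \ref{thm:ell_curve_count_2} report a single combined motive and count for that row. Your framework is fine so long as one regards this union as a single admissible choice of $\Theta$; separating them would require further stratifying $\cW_n^{(2,3)}$ by the order of vanishing of $\Delta$, which the paper does not do (and does not need for the stated theorem).
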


	\newpage
	
	\begin{thm}\label{thm:motive_Rat}
		If $\mathrm{char}(K) \neq 2,3$, then motives $\{\cW_{n, \bP^1}^{\gamma}\} \in K_0(\mathrm{Stck}_{K})$ of moduli stacks of elliptic surfaces with a specified Kodaira fiber are
		\begin{table}[ht!]
			\label{TableOfMotive}
			\centering
			\begin{tabular}{|c|c|}
				\hline 
				Reduction type with  $j \in \overline{M}_{1,1}$ & Motivic class $\{\cW_{n, \bP^1}^{\gamma}\} \in K_0(\mathrm{Stck}_{K})$ \\ \hline
				$\II$ with $j=0$ & $ \Lb^{10n} - \Lb^{10n-2}$ \\ \hline
				$\III$ with $j=1728$ & $ \Lb^{10n-1} - \Lb^{10n-3}$ \\ \hline
				$\IV$ with $j=0$ & $ \Lb^{10n-2} - \Lb^{10n-4}$ \\ \hline
				$\I_{k > 0}^*$ with $j=\infty$ &  $ \Lb^{10n-3} - \Lb^{10n-4} - \Lb^{10n-5} + \Lb^{10n-6}$ \\
				$\I_0^*$ with $j\neq 0, 1728$ & $~$ \\ \hline
				$\I_0^*$ with $j=0, 1728$ & $ \Lb^{10n-4} - \Lb^{10n-6}$ \\ \hline
				$\IV^*$ with $j=0$ & $ \Lb^{10n-5} - \Lb^{10n-7}$ \\ \hline
				$\III^*$ with $j=1728$ & $ \Lb^{10n-6} - \Lb^{10n-8}$ \\ \hline
				$\II^*$ with $j=0$ & $ \Lb^{10n-7} - \Lb^{10n-9}$ \\ \hline
			\end{tabular}
		\end{table}
	\end{thm}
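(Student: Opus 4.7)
The plan is to compute $\{\cW^\gamma_{n,\bP^1}\}$ case by case by explicitly parametrizing minimal $(4,6)$-weighted linear series on $\bP^1$ with the prescribed vanishing condition $\gamma = (\nu_x(a_4), \nu_x(a_6))$ at a single point $x$, following the strategy of \cite{FW, HP}. First I would use an automorphism of $\bP^1$ to place $x$ at $\infty$ and, via the identification from Theorem~\ref{thm:main-thm-of-sec4}, describe a weighted linear series $(L, a_4, a_6)$ of height $n$ by a pair of polynomials $a_4, a_6$ in the affine coordinate $z$ with degree constraints encoding $\gamma$: an exact condition $\nu_\infty(a_k) = \gamma_k$ forces $a_k$ to have degree exactly $kn - \gamma_k$ (contributing a $\Gb_m$-factor for the nonzero leading coefficient), while an inequality $\nu_\infty(a_k) \geq \gamma_k$ allows any polynomial of degree at most $kn - \gamma_k$. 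The motive of the resulting ambient parameter space $V^{\gamma,\mathrm{aff}}_n$ is thus an explicit monomial in $\Lb$ times one or two factors of $(\Lb - 1)$, according to how many coordinates of $\gamma$ are equalities.

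Next, I would impose the minimality condition at the remaining places via an inclusion--exclusion in the spirit of \cite{FW, HP}: any pair $(a_4, a_6) \in V^{\gamma,\mathrm{aff}}_n$ factors uniquely as $(p^4 b_4, p^6 b_6)$ with $p \in k[z]$ a monic polynomial of degree $c \geq 0$ collecting the excess common vanishing at finite places and $(b_4, b_6)$ minimal of height $n - c$ with the same $\gamma$-condition at $\infty$. Since the space of monic polynomials of degree $c$ has motive $\Lb^c$, this yields the generating-series identity
\begin{equation*}
\sum_{n \geq 0}\{V^{\gamma,\mathrm{aff}}_n\} t^n = \frac{1}{1 - \Lb t}\sum_{n \geq 0}\{V^{\gamma,\mathrm{min}}_n\} t^n,
\end{equation*}
which, after multiplication by $(1 - \Lb t)$, produces $\{V^{\gamma,\mathrm{min}}_n\}$ in closed form. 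Passing to the stack $\cW^\gamma_{n,\bP^1}$ then requires quotienting by the $\Aut(\cO(n)) = \Gb_m$-action with weights $(4,6)$ (having generic stabilizer $\mu_2$) and accounting for the symmetries of the marked point $x \in \bP^1$; the combined effect in the Grothendieck ring of stacks produces the motives recorded in each row of the table.

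The main obstacle will be the case $\gamma = (2,3)$ corresponding to type $\I_k^*$ with $k > 0$ or $\I_0^*$ generic, where both $a_4$ and $a_6$ carry exact vanishing conditions and hence contribute two independent $(\Lb - 1)$-factors. These interact non-trivially with the minimality inclusion--exclusion and the $\Gb_m$-quotient to produce the exceptional four-term formula
$\Lb^{10n-3} - \Lb^{10n-4} - \Lb^{10n-5} + \Lb^{10n-6} = (\Lb - 1)(\Lb^2 - 1)\Lb^{10n-6}$,
which carries one extra factor of $(\Lb-1)$ relative to the remaining rows, each of the form $(\Lb^2 - 1)\Lb^{10n-k-2}$ for an appropriate shift $k$. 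Tracking this interaction precisely will require careful bookkeeping of the order in which the leading-coefficient conditions and the monic-polynomial inclusion--exclusion are applied, and verifying that the two sub-cases $\gamma = (\geq 3, 3)$ and $\gamma = (2, \geq 4)$ of type $\I_0^*$ at $j \in \{0,1728\}$ give the same closed form $\Lb^{10n-4} - \Lb^{10n-6}$ consolidated in a single row.
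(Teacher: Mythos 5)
Your defect factorization $(a_4,a_6)=(p^{4}b_4,p^{6}b_6)$, with $p$ monic of degree $c$ collecting the \emph{excess} common vanishing, is exactly the minimality-defect stratification of Section~\ref{sec:str_min} (cf.\ Lemma~\ref{div_rem}), and the resulting generating-series identity isolates \emph{minimal} weighted linear series with vanishing condition $\gamma$ at $\infty$. But that is strictly more than what $\cW^{\gamma}_{n,\bP^1}$ parametrizes: for $l=1$, the defining condition is that the normalized base locus $\overline{\Bs}(L,a_4,a_6)$ is \emph{exactly} $m\cdot\sigma_1$ (Definition~\ref{def:moduli-interp-rnmu} and the proof of Proposition~\ref{prop:W0}), so $a_4$ and $a_6$ must share \emph{no} further common zero, not merely no further excess. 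After dividing out $p$, the pair $(b_4,b_6)$ can still have common roots of normalized multiplicity $1,\dots,11$, and no additional factor of the form $(q^{4},q^{6})$ can remove such a root since the common multiplicities need not be divisible by $4$ and $6$; your $V^{\gamma,\mathrm{min}}_n$ therefore overcounts. Concretely, for $\gamma=(\geq1,1)$ your recipe gives $\{V^{\gamma,\mathrm{aff}}_n\}=(\Lb-1)\Lb^{10n-1}$ and hence
\[
\{V^{\gamma,\mathrm{min}}_n\}=(\Lb-1)\Lb^{10n-1}-\Lb\cdot(\Lb-1)\Lb^{10n-11}=(\Lb-1)\bigl(\Lb^{10n-1}-\Lb^{10n-10}\bigr),
\]
whereas the correct fiber motive is $(\Lb-1)^{2}\Lb^{10n-2}$, which via Proposition~\ref{prop:single_addi} yields the listed $\{\cW^{\gamma}_{n}\}=\Lb^{10n}-\Lb^{10n-2}$; these disagree.

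The paper enforces coprimality from the outset rather than by inclusion--exclusion over a defect polynomial. It places the additive point at $0$, stratifies $R^{\gamma}_n(0)$ by the orders of vanishing of $a_4,a_6$ at $\infty$ (which cannot be a common zero, since $0$ is the unique one), and expresses the motive as a sum of the Poly-spaces $\mathrm{Poly}^{(d_1,d_2)}_{\gamma}$ of Definition~\ref{def:polygeqab}, whose very definition requires $0$ to be the only common root (Proposition~\ref{prop:W0}). Those Poly-motives are then obtained from the recursion of Proposition~\ref{prop:poly_m, Pc(4,6)}, reducing to the coprime space $\mathrm{Poly}^{(d_1,d_2)}_1$ of Proposition~\ref{prop:poly_m}. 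To salvage your route you would need to replace the defect inclusion--exclusion by one that also strips off the remaining coprime-multiplicity base points, which is in effect what the Poly-space recursion does.
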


	\subsection{Relation to other work} There has been a lot of recent activity in counting points on weighted projective stacks \cite{BGS, SST} with height zeta functions \cite{Darda, Tristan3}, computing asymptotics counts of elliptic curves with additive reduction \cite{CS, CJ, Tristan2, Tristan}, and stacky approaches to heights \cite{DY}. We intend to explore the relationship between our method and theirs in these papers in the future. 
	
	\subsection{Outline of the paper} In Section \ref{sec:heights:twisted}, we discuss heights on cyclotomic stacks from the point of view of twisted maps and construct the moduli space $\cH^\Gamma_d(\cX, \cL)$. In Section \ref{sec:min-lin-ser-univ-tuning}, we show the bijection between twisted maps and minimal weighted linear series. In Section \ref{sec:mod-sp-ratpts}, we construct the moduli spaces of minimal weighted linear series. In Section \ref{sec:moduli}, we prove Theorem \ref{thm:height:moduli}. In Section \ref{sec:ex}, we study the moduli space in the case of weighted projective stacks. In Section \ref{sec:Mod_Wss_Surf}, we state Tate's algorithm via twisting data and interpret $\cH^\Gamma_{d, C}(\Me)$ as moduli of elliptic surfaces with specified Kodaira fibers. In Section \ref{sec:motive_ptcts}, we compute the classes of some height moduli in the Grothendieck ring of stacks and prove Theorems \ref{thm:motive_generating_series} and \ref{thm:motive_Rat}. In Section \ref{sec:enumeration}, we enumerate elliptic curves over $k(t)$ and prove Theorems \ref{thm:ell_curve_min_count} and \ref{thm:count_kodaira_intro}.

	
	\section{Heights on cyclotomic stacks and twisted maps}\label{sec:heights:twisted}
	
	We review the definition of heights on stacks from \cite{ESZB} and specialize it to the class of cyclotomic stacks. We assume that $\cX$ is normal throughout. 
	
	\subsection{Heights on stacks}\label{sec:heights:stacks}
	
	\par In \cite{ESZB}, the authors introduced height functions on stacks and used them to give point-counting conjectures generalizing the Batyrev--Manin \cite{BM, FMT} and Malle Conjectures, see \cite[Conj. 4.14]{ESZB}. When the stack is a scheme, these stacky heights recover the usual Weil heights. On the other hand, when the stack is $\mathcal{B} G$ and the rational point $\mathcal{B} G(K)$ corresponds to a Galois $G$-extension $L/K$, the heights recover the discriminant of the extension.
	
	Unlike the case of schemes, the Weil height machine fails for stacks. One can see this by considering the universal $n$-torsion line bundle $\mathcal{L}$ on $\mathcal{B} \mu_n$; if the Weil height machine were to hold, then $n$ times the height of $\mathcal{L}$ would be trivial. Moreover, one cannot define heights via embeddings into projective space since all stacks that admit such embeddings are necessarily schemes. Instead the definition of heights on stacks is given by extending the rational point to a stacky curve, called a tuning stack.
	
	\begin{defn}[{\cite[Def. 2.1]{ESZB}}]
		\label{def:tuning stack}
		Let $C$ be a smooth proper curve over $K$ and let $p\colon\cX\to C$ be a proper map from a normal Artin stack $\cX$ with finite diagonal. Let $x\in\cX(K)$ be a rational point. A \emph{tuning stack} for $x$ is a diagram
		\[
		\xymatrix{
			\Spec(K)\ar[r]\ar[dr]\ar@/^1.5pc/[rr]^-{x} & \cC\ar[d]_-{\pi}\ar[r]^-{\overline{x}} & \cX\ar[dl]^-{p}\\
			& C &
		}
		\]
		where $\cC$ is a normal Artin stack with finite diagonal and $\pi$ is a birational coarse space map.
		
		A morphism of tuning stacks $(\cC',\pi',\overline{x}')\to(\cC,\pi,\overline{x})$ is a map $f\colon\cC'\to\cC$ such that $\pi\circ f=\pi'$ and $\overline{x}\circ f=\overline{x}'$. A tuning stack is said to be \emph{universal} if it is terminal among all tuning stacks.
	\end{defn}
	
	\begin{rmk}\label{rmk:universal-tuning-stack-rep}
		In \cite[Cor. 2.6]{ESZB}, it is shown that a tuning stack $(\cC,\pi,\overline{x})$ is universal if and only if $\overline{x}$ is representable.
	\end{rmk}
	
	Heights of a rational point on a stack are then given as follows.
	
	\begin{defn}[{\cite[Def. 2.11]{ESZB}}]
		\label{def:stacky-ht}
		With hypotheses as in Definition \ref{def:tuning stack}, if $\mathcal{V}$ is a vector bundle on $\cX$ and $x\in\cX(K)$, the \emph{height of $x$ with respect to $\mathcal{V}$} is defined as
		\[
		\height_{\mathcal{V}}(x) \coloneqq -\deg(\pi_*\overline{x}^*\mathcal{V}^\vee)
		\]
		for any choice of tuning stack $(\cC,\pi,\overline{x})$.
	\end{defn}
	
	Notice that the height $\height_{\mathcal{V}}(x)$ is defined via pullback then pushforward before taking degree. One may alternatively consider the height function obtained by pullback and then taking degree directly. This is known as the stable height:

	\begin{defn}[{\cite[Def. 2.12]{ESZB}}]
		\label{def:stacky-st-ht}
		With hypotheses as in Definition \ref{def:tuning stack}, if $\mathcal{V}$ is a vector bundle on $\cX$ and $x\in\cX(K)$, the \emph{stable height of $x$ with respect to $\mathcal{V}$} is defined as
		\[
		\height^{\mathrm{st}}_{\mathcal{V}}(x) \coloneqq -\deg_{\cC}\overline{x}^*\mathcal{V}^\vee
		\]
		for any choice of tuning stack $(\cC,\pi,\overline{x})$.
	\end{defn}
	
	\begin{rmk}
		The height and stable height functions are shown to be independent of the choice of tuning stack in \cite[Prop. 2.13]{ESZB}.
	\end{rmk}
	
	When $X$ is a scheme, we can take $\cC = C$ and so stable height and height are the same. More generally, height agrees with stable height whenever the vector bundle $\mathcal{V}$ is pulled back from a vector bundle on a scheme. 
	
	\begin{rmk}
		The stable height $\height^{\mathrm{st}}_{\mathcal{V}}(x)$ is stable under base change as opposed to the height $\height_{\mathcal{V}}(x)$ (c.f. \cite[Prop. 2.14]{ESZB}).
	\end{rmk}
	
	Later, we compute the stable height as the coarse map degree plus the local contributions from the ramified base changes in terms of $r,a$ (see Definition \ref{def:rmin-for-any-lin-ser})

	\subsection{Cyclotomic stacks and weighted projective stacks}\label{sec:cyclotomic} 
	
	Here we review the basic definitions of cyclotomic stacks following \cite[Sec. 2]{AH}. This is a class of stacks whose properties best resemble those of projective varieties. The central example is that of a \emph{weighted projective stack}. 
	
	\begin{defn}\label{def:wtproj} 
		Let $\lambdavec = (\lambda_0, \ldots, \lambda_N) \in \mathbb{Z}_{\geq 1}^{N+1}$ be a vector of $N+1$ positive integers. Consider the affine space $U_\lambdavec = \Ab_{x_0, \dotsc, x_N}^{N+1}$ endowed with the action of $\Gb_m$ with weights $\lambdavec$, i.e.~an element $\zeta \in \Gb_m$ acts by
		\begin{equation}
			\zeta \cdot (x_0, \ldots, x_N) = (\zeta^{\lambda_0} x_0, \ldots, \zeta^{\lambda_N} x_N)\,.
		\end{equation}
		The $N$-dimensional weighted projective stack $\Pcv$ is then defined as the quotient stack
		\[
		\Pcv = \left[(U_{\lambdavec} \setminus \{0\})/\Gb_m \right]\,.
		\]
	\end{defn}
	
	\begin{rmk}\label{rmk:wtprojtame} 
		When we wish to emphasize a base field $k$ of definition for $\Pcv$, we use the notation $\Pc_{k}(\vec\lambda)$. The stack $\Pcv$ is a smooth and proper tame Artin stack. It is Deligne--Mumford if and only if all weights $\lambda_i$ are prime to the characteristic. For example $\Pc(1,p)$ is not Deligne--Mumford in characteristic $p$ since it has a point with automorphism group $\mu_p$ which is not formally unramified. When $\Pcv$ is Deligne--Mumford, it is an orbifold if and only if $\gcd(\lambda_0, \dotsc, \lambda_N)=1$. More generally, the natural map
		$$
		\Pc(d\lambda_0, \dotsc, d\lambda_N) \to \Pc(\lambda_0, \ldots, \lambda_N)
		$$
		is a $\mu_d$-gerbe.
	\end{rmk}

	The natural morphism $U_\lambdavec \setminus 0 \to \Pcv$ is the total space of the \emph{tautological line bundle} $\mathcal{O}_{\Pcv}(-1)$ on $\Pcv$. As in the classical case, we denote by $\mathcal{O}_{\Pcv}(1)$ the dual of this line bundle. 
	
	\begin{defn}\label{defn:wls}
		A \emph{$\lambdavec-$weighted linear series} on a scheme $B$ is the data of a line bundle $L$ and sections
		$$
		s_i : \cO_B \to L^{\lambda_i}. 
		$$
		where $\cO_B$ is the structure sheaf of $B$. The \emph{set theoretic base locus} of $(L,s_0, \ldots, s_N)$ is the reduced closed subscheme $Z \subset B$ of points $b \in B$ where $s_j(b) = 0$ for all $j = 0, \ldots, N$. A point $b \in Z$ will be called an \emph{indeterminacy} of the weighted linear series. 
	\end{defn}
	
	\begin{prop}\cite[Lem. 2.1.3]{AH} The stack $\Pcv$ with universal line bundle $\cO_{\Pcv}(1)$ is equivalent to the stack of $\lambdavec-$weighted linear series with empty base locus. 
	\end{prop}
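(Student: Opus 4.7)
The plan is to unwind the definition of the quotient stack $\Pcv = [(U_{\lambdavec}\setminus 0)/\Gb_m]$ and verify, functorially in the test scheme $B$, that a map $B \to \Pcv$ corresponds to a $\lambdavec$-weighted linear series with empty base locus on $B$. Since both sides are stacks (fibered categories) over the category of schemes, it suffices to construct mutually inverse equivalences of the groupoids of sections over each $B$ and check naturality in $B$.

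First I would recall that by the general theory of quotient stacks, a morphism $B \to [(U_{\lambdavec}\setminus 0)/\Gb_m]$ is the data of a $\Gb_m$-torsor $\pi\colon P\to B$ together with a $\Gb_m$-equivariant morphism $f\colon P\to U_{\lambdavec}\setminus 0$. Next, I would use the standard equivalence between $\Gb_m$-torsors on $B$ and line bundles on $B$: to $P$ one associates the line bundle $L=P\times^{\Gb_m}\Ab^1$ where $\Gb_m$ acts on $\Ab^1$ with weight $-1$, and conversely to $L$ one associates the frame bundle $P=\underline{\Iso}(\cO_B,L)$. The key point here is that under this equivalence, $\Gb_m$-equivariant maps $P\to \Ab^1$ where $\Gb_m$ acts on the target with weight $\lambda$ correspond bijectively to global sections of $L^{\otimes\lambda}$; this is essentially the definition of $L^{\otimes\lambda}$ via the associated bundle construction.

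Applying this coordinatewise, a $\Gb_m$-equivariant morphism $P\to U_{\lambdavec}=\Ab^{N+1}$, with $\Gb_m$ acting on the $i$-th coordinate by weight $\lambda_i$, is exactly the data of sections $s_i\colon\cO_B\to L^{\otimes\lambda_i}$ for $i=0,\dotsc,N$. The open condition that $f$ lands in $U_{\lambdavec}\setminus 0$ translates pointwise as follows: for every $b\in B$ and every choice of trivialization of $P$ above $b$, the image $(s_0(b),\dotsc,s_N(b))$ must be nonzero, which is precisely the statement that $b$ is not a base point, i.e.\ that the set-theoretic base locus is empty. Both directions of the construction are manifestly natural in pullback along morphisms $B'\to B$, so they assemble into an equivalence of stacks. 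I would also note that isomorphisms between two such data $(L,s_0,\dotsc,s_N)$ and $(L',s_0',\dotsc,s_N')$ are isomorphisms $\varphi\colon L\to L'$ with $\varphi^{\otimes\lambda_i}\circ s_i=s_i'$, matching isomorphisms of $\Gb_m$-torsors compatible with the maps to $U_{\lambdavec}$.

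The only nontrivial points are bookkeeping: making sure the sign conventions on the $\Gb_m$-action (weight $-1$ versus $+1$) give the tautological bundle $\cO_{\Pcv}(-1)$ as claimed, so that sections of $L^{\otimes\lambda_i}$ (rather than $L^{\otimes -\lambda_i}$) appear, and confirming that the universal weighted linear series on $\Pcv$ itself recovers $(\cO_{\Pcv}(1),x_0,\dotsc,x_N)$ where $x_i$ are the tautological sections of $\cO_{\Pcv}(\lambda_i)$. With those sign conventions pinned down, the proof reduces to citing the general correspondence between equivariant maps to quotient stacks and equivariant maps from torsors, and the rest is definition chasing.
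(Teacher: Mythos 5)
The paper cites this result from \cite{AH} (Lemma 2.1.3) without reproducing the proof, so there is no in-paper argument to compare against. Your argument is correct and is the standard one used in the cited reference: identify a map to the quotient stack with a $\Gb_m$-torsor $P\to B$ together with an equivariant map to $U_{\lambdavec}\setminus 0$, translate the torsor into a line bundle $L$ (minding the weight $-1$ versus $+1$ convention so that $\cO_{\Pcv}(-1)$ rather than $\cO_{\Pcv}(1)$ comes out as the tautological bundle, a point you flag correctly), identify $\Gb_m$-equivariant maps $P\to\Ab^1$ with weight-$\lambda_i$ target with sections of $L^{\otimes\lambda_i}$, and observe that landing in $U_{\lambdavec}\setminus 0$ is precisely the base-point-free condition; naturality in $B$ and the matching of isomorphisms then upgrade this to an equivalence of stacks.
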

	
	\begin{rmk}
		The open embedding $\Pcv \subset [U_{\lambdavec}/\bG_m]$ corresponds to the inclusion of basepoint-free linear series into the stack of all $\lambdavec$-weighted linear series. 
	\end{rmk}
	
	A weighted projective stack is an example of the following. 
	
	\begin{defn}\label{def:cyc_uni}
		A separated algebraic stack $\cX$ of finite type over a field $k$ is \emph{cyclotomic} if the stabilizer $\Aut(\bar{x})$ of each geometric point $\bar{x} : \Spec \bar{k} \to \cX$ is isomorphic to $\mu_r$ for some $r$. A \emph{uniformizing line bundle} on $\cX$ is a line bundle $\cL$ such that for each geometric point $\bar{x} : \Spec \bar{k} \to \cX$, the natural map
		$$
		\Aut(\bar{x}) \to \Aut(\cL|_{\bar{x}})
		$$
		is injective. 
	\end{defn}
	
	We denote the coarse moduli space of $\cX$ by $\pi : \cX \to X$. By \cite[Prop. 2.3.10]{AH}, the condition that $\cL$ is uniformizing if and only if the map $\cX \to \mathcal{B} \bG_m$ classyfing $\cL$ is representable. Moreover, by \cite[Lem. 2.3.7]{AH}, there exists an $M$ such that $\cL^{\otimes M} \cong \pi^*L$ for some line bundle $L$ on $X$. 
	
	\begin{defn}
		A uniformizing line bundle $\cL$ is a \emph{polarizing line bundle} if $\cL^{\otimes M} \cong \pi^*L$ for some $M$ where $L$ is an ample line bundle on $X$. We say that the pair $(\cX, \cL)$ is a \emph{polarized cyclotomic stack}. 
	\end{defn}
	
	\begin{exmp}
		If $\cX \subset \Pcv$ is a locally closed substack, then the pullback $\mathcal{O}_{\Pcv}(1)|_{\cX}$ is polarizing. More generally, the same is true if $\cX \to \Pcv$ is representable quasi-finite and $\cX$ is Noetherian. 
	\end{exmp}
	
	\begin{prop}\cite[Prop. 2.4.2 \& 2.4.3, Corollary 2.4.4]{AH}\label{prop:polarizing:embedding} Let $(\cX,\cL)$ be a polarized cyclotomic stack and suppose that $\cX$ is proper. Then there exists a weighted projective stack $\Pcv$ and a closed embedding $\cX \subset \Pcv$ such that $$\cL \cong \cO_{\Pcv}(1)|_{\cX}.$$ 
	\end{prop}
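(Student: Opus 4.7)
The plan is to imitate the classical embedding of a polarized projective scheme into projective space via sections of an ample line bundle, replacing the ring of sections of a single line bundle with the graded section ring of $\cL$ and using the uniformizing property of $\cL$ to ensure the resulting map is representable. The key input, recalled just before the statement, is that there is a positive integer $M$ with $\cL^{\otimes M} \cong \pi^*L$ for some ample line bundle $L$ on the proper coarse space $\pi \colon \cX \to X$.

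First I would form the graded section ring $R_\bullet \coloneqq \bigoplus_{n \geq 0} H^0(\cX, \cL^{\otimes n})$. Since $\cL^{\otimes M}$ pulls back from the ample bundle $L$ on the projective scheme $X$, the Veronese subring $R_{M\bullet} = \bigoplus_n H^0(X, L^{\otimes n})$ is a finitely generated $k$-algebra. Each graded piece $H^0(\cX, \cL^{\otimes n}) = H^0(X, \pi_*\cL^{\otimes n})$ is a finite dimensional $k$-vector space and a finitely generated module over $R_{M\bullet}$, from which one deduces that $R_\bullet$ itself is finitely generated as a $k$-algebra.

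Next I would choose a finite set of homogeneous generators $s_0, \ldots, s_N \in R_\bullet$ of degrees $\lambda_0, \ldots, \lambda_N$, arranged so that for every geometric point $\bar x \in \cX$ some $s_i$ does not vanish at $\bar x$ (possible because $L^{\otimes k}$ is basepoint-free for $k \gg 0$ on $X$, and we can lift and augment these generators through the Veronese). By the description of $\Pcv$ as the stack of $\lambdavec$-weighted linear series with empty base locus, the tuple $(\cL; s_0, \ldots, s_N)$ determines a morphism
\[
\phi\colon \cX \longrightarrow \Pcv \qquad \text{with} \qquad \phi^*\cO_{\Pcv}(1) \cong \cL.
\]
Representability of $\phi$ is forced by the uniformizing hypothesis: on each geometric point $\bar x$ the induced map $\Aut(\bar x)\to \Aut(\phi(\bar x))$ factors through $\Aut(\cL|_{\bar x})$, which is injective by definition.

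The hard part is upgrading $\phi$ to a closed embedding. After replacing our generating set by a sufficiently divisible power and incorporating a complete system of generators of $R_{M\bullet}$ among the $s_i$, the induced map on coarse spaces factors the classical Proj embedding $X\hookrightarrow \mathbb{P}(L^{\otimes M}, \ldots)$ and is therefore a closed embedding by ampleness of $L$. Combined with representability of $\phi$, properness of $\cX$, and separatedness of $\Pcv$, this forces $\phi$ itself to be a closed immersion. The main obstacle is the interplay between weights of different degree: one must ensure that the chosen generators induce not only a closed embedding on coarse spaces but also an injection of each local stabilizer $\mu_r \subset \Aut(\bar x)$ into the stabilizer $\mu_{\lambda_i}$ of a suitable coordinate point in $\Pcv$, which is precisely where the uniformizing property is used in an essential way.
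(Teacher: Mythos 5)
The paper itself gives no proof of this proposition; it cites \cite[Prop.\ 2.4.2 \& 2.4.3, Cor.\ 2.4.4]{AH}, and your strategy (graded section ring, Veronese-style finite generation, map via homogeneous generators) is indeed the shape of the Abramovich--Hassett argument. However, your ``upgrading to a closed embedding'' step contains a genuine gap.

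The implication ``representable $+$ proper $+$ closed immersion on coarse spaces $\Rightarrow$ closed immersion of stacks'' is false. The map $\Spec k \to B\mu_r = \Pc(r)$ is representable (its source is a scheme), proper, and an isomorphism on coarse spaces, yet it is not a closed immersion: it is a finite flat $\mu_r$-cover. A closed immersion of algebraic stacks is in particular a monomorphism, so it must induce an \emph{isomorphism}, not merely an injection, on every stabilizer group $\Aut(\bar x)\to\Aut(\phi(\bar x))$. The uniformizing property of $\cL$ only gives injectivity, so it cannot on its own close this gap, and your final paragraph (``\ldots an injection of each local stabilizer\ldots is precisely where the uniformizing property is used'') is asking for the wrong thing.

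What you actually need is that, for $\bar x\in\cX$ with stabilizer $\mu_r$, the degrees $\lambda_i$ of those generators $s_i$ with $s_i(\bar x)\neq 0$ have $\gcd$ \emph{equal} to $r$, not a proper multiple of $r$. One inclusion comes from the uniformizing hypothesis: $\mu_r$ acts faithfully on $\cL|_{\bar x}$, so a nonvanishing section of $\cL^{\otimes n}$ at $\bar x$ forces $r\mid n$. The reverse inclusion --- that the degrees of nonvanishing generators generate the subgroup $r\mathbb{Z}$ rather than something coarser --- needs an argument, using the full strength of the fact that the $s_i$ generate $R_\bullet$ as a $k$-algebra and a local description of $\cX$ near $\bar x$ as $[U/\mu_r]$. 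The cleanest way to package all of this, and the route that \cite{AH} take, is to first prove the isomorphism $\cX \cong \mathcal{P}roj\,R_\bullet$ (with the weighted $\mathcal{P}roj$ for graded rings); once that identification is in hand, the homogeneous surjection $k[x_0,\ldots,x_N]\twoheadrightarrow R_\bullet$ from a choice of generators induces a closed immersion of weighted projective stacks, and the stabilizer bookkeeping is automatic. Your version tries to bootstrap the closed-immersion property from the coarse space alone, which is exactly where the argument breaks.
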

	
	A key observation we will exploit in the sequel is that Proposition \ref{prop:polarizing:embedding} reduces questions about the height $\height_{\cL}$ on a polarized cyclotomic stack $(\cX,\cL)$ to the case of $\height_{\cO(1)}$ on $\Pcv$. Compare this to the fact that the Weil height with respect to a very ample line bundle on a projective variety is determined by the na\"ive height on $\mathbb{P}^N$. 
	
	\subsection{Twisted maps to cyclotomic stacks}
	\label{sec:mod-sp-twismaps}
	
	In Section \ref{sec:min-lin-ser-univ-tuning}, we will see that the universal tuning stack of a cyclotomic stack is always a \emph{twisted curve}. In this section we review some background on twisted curves and construct the moduli of twisted curves on cyclotomic stacks. 
	
	\begin{defn}\label{def:spcurve} A stacky genus $g$ curve $\Cc$ is a 1-dimensional smooth proper tame Artin stack whose coarse moduli space is isomorphic to an irreducible projective genus-$g$ curve $\pi : \Cc \to C$. A twisted genus $g$ curve is a stacky genus $g$ curve $\Cc$ which is generically a scheme and such that for each geometric point $\bar{x} \to C$, there exists a non-negative integer $r$ such that
		$$
		\Spec (\cO_{C,\bar{x}}^{sh}) \times_C \Cc \cong \left[ \Spec k(\bar{x})[t]^{sh} / \mu_r\right]
		$$
		where $\cO_{C,\bar{X}}^{sh}$ is the strict Henselization at $\bar{x}$ and $r$ acts by $t \mapsto \zeta t$ for $\zeta \in \mu_r$. 
	\end{defn}
	
	\par Note that a twisted curve is a root stack (c.f. \cite[\S 10.3]{Olsson2}). In particular, $\Cc$ is uniquely determined up to an isomorphism by the points $q_1,\dotsc,q_s \in C$ and their inertia group orders $r_1,\dotsc,r_s$. The reduced preimages $\Sigma_i := \pi^{-1}(q_i)_{red}$ are residual $\mu_{r_i}$-gerbes at the points $p_i$ lying over $q_i$. 
	
	\begin{rmk} In \cite{AOV2}, a more general definition of twisted curve is used where $\cC$ is allowed to have at worst nodal singularities. Our definition is exactly the ones of \emph{loc. cit.} which are smooth. 
	\end{rmk}
	
	
	\begin{defn} A family of twisted curves of genus $g$ with inertia groups $\mu_{r_1}, \ldots, \mu_{r_s}$ over $B$ is a tuple $(\Cc \to B, \Sigma_i)$ where
		\begin{enumerate}
			\item $\Cc \to B$ is a flat and proper morphism with $\Cc_b$ a stacky curve of genus $g$ for all $b \in B$, 
			\item $\Sigma_i \subset \Cc$ are closed substacks such that the composition $\Sigma_i \to B$ is a $\mu_{r_i}$-gerbe, and
			\item $\Cc \setminus \bigsqcup \Sigma_i \to B$ is representable. 
		\end{enumerate}
		A family of twisted curves is an object as above for some $g$ and tuple $(r_1, \ldots, r_s)$. 
	\end{defn}
	
	Now we can define the stack of twisted maps. 
	
	\begin{defn}  Let $\cX$ be a proper and tame Artin stack. A family of twisted maps to $\cX$ over $B$ is a tuple $(\Cc \to B, \Sigma_i, f)$ where $(\Cc, \Sigma_i) \to B$ is a family of twisted curves and $f : \Cc \to \cX$ is a representable morphism.
	\end{defn}
	
	When $\cX = \Pc(\lambda_j)$, a map $f : \Cc \to \Pc(\lambda_j)$ is the same as the data of a line bundle $\cL \in \mathrm{Pic}(\Cc)$ and sections $s_j \in \cL^{\lambda_j}$ which don't simultaneously vanish. Since $\Pc(\lambda_j) \to \mathcal{B} \bG_m$ is representable, then $f$ is representable if and only if the map $\Cc \to \mathcal{B} \bG_m$ defined by the line bundle $\cL$ is representable. In order to describe line bundles on $\Cc$, we consider the coarse map $\pi : \Cc \to C$ and denote as above $q_1, \ldots, q_s \in C$ the points where $\pi$ is not an isomorphism with $p_i$ the point of $\Cc$ lying over $q_i$ and with stabilizer $\mu_{r_i}$.
	
	\begin{prop}\label{prop:twisted:pic} The pullback map $\pi^*$ induces an injection $\pi^*:\Pic(C) \to \Pic(\Cc)$ and a short exact sequence
		\begin{equation}\label{eqn:twisted:pic}
			0 \to \Pic(C) \to \Pic(\Cc) \to \bigoplus_{i}^s \mathbb{Z}/r_i\mathbb{Z} \to 0
		\end{equation}
		Moreover, $\Pic(\cC)$ is generated by $\Pic(C)$ and $\cO_{\cC}(p_i)$ for $i = 1, \ldots, s$ and $\cO_{\cC}(p_i)$ map to generators of the cokernel of $\pi^*$. \\
	\end{prop}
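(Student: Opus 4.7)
The plan is to reduce everything to a local computation at each stacky point $p_i$, using that $\Cc$ is a root stack and the coarse map $\pi$ is an isomorphism over $U := C \setminus \{q_1,\ldots,q_s\}$. For injectivity of $\pi^*$, I would invoke the tameness of $\Cc$: the coarse map satisfies $\pi_*\cO_{\Cc}=\cO_C$, so by the projection formula $\pi_*\pi^*L \cong L$ for any $L \in \Pic(C)$. Hence $\pi^*L \cong \cO_{\Cc}$ forces $L \cong \cO_C$.

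Next I would construct the map $\Pic(\Cc) \to \bigoplus_i \mathbb{Z}/r_i\mathbb{Z}$ by restricting to the residual gerbes. By Definition \ref{def:spcurve}, strict-Henselizing at $q_i$ gives $\Cc \times_C \Spec(\cO_{C,q_i}^{sh}) \cong [\Spec(\cO_{C,q_i}^{sh}[t]^{sh})/\mu_{r_i}]$, with $\mu_{r_i}$ acting by $t \mapsto \zeta t$. Since the underlying scheme is equivariantly trivial with respect to the Picard functor, this local Picard group equals the character group $\widehat{\mu_{r_i}} = \mathbb{Z}/r_i\mathbb{Z}$, so restriction yields a well-defined map
\[
\varphi\colon \Pic(\Cc) \longrightarrow \bigoplus_{i=1}^s \mathbb{Z}/r_i\mathbb{Z}.
\]
Clearly $\varphi \circ \pi^* = 0$, since $\pi^*L$ is pulled back from the coarse space and so has trivial stabilizer action at each $p_i$. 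To identify $\ker\varphi$ with $\pi^*\Pic(C)$, suppose $\cM\in\Pic(\Cc)$ has trivial character at every $p_i$. Then by tameness the pushforward $\pi_*\cM$ is a line bundle on $C$ (the local model shows $\cM$ is pulled back from $\cO_{C,q_i}^{sh}$ near each $p_i$), and the natural map $\pi^*\pi_*\cM \to \cM$ is an isomorphism, since it is an isomorphism on the representable locus $U$ and on the formal neighborhoods of the stacky points.

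For surjectivity and the generator statement, note that the reduced preimage $\Sigma_i = \pi^{-1}(q_i)_{\mathrm{red}}$ satisfies, as a Cartier divisor on $\Cc$, the identity $\pi^{-1}(q_i) = r_i \Sigma_i$ scheme-theoretically, by the root-stack local description. Writing $\cO_{\Cc}(p_i) := \cO_{\Cc}(\Sigma_i)$, we therefore have
\[
\pi^*\cO_C(q_i) \cong \cO_{\Cc}(p_i)^{\otimes r_i}.
\]
In the local model, $\cO_{\Cc}(p_i)$ corresponds to the standard weight-one character of $\mu_{r_i}$, so $\varphi(\cO_{\Cc}(p_i))$ is a generator of the $i$-th factor, proving surjectivity of $\varphi$ and exactness of the sequence. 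Together with $\ker\varphi = \pi^*\Pic(C)$, this shows $\Pic(\Cc)$ is generated by $\pi^*\Pic(C)$ and the $\cO_{\Cc}(p_i)$.

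The main obstacle is the descent step: verifying that a line bundle with everywhere trivial character genuinely arises from $C$. Concretely, one must check that the local isomorphisms $\cM|_{\mathrm{sh}} \cong \pi^*(\pi_*\cM)|_{\mathrm{sh}}$ near each $p_i$ glue with the tautological isomorphism on $U$. This is a standard fpqc descent argument facilitated by the tameness of $\Cc$ (so that $\pi_*$ is exact on quasi-coherent sheaves and commutes with arbitrary base change), but it is the only place where one genuinely uses more than the bare definition of a root stack.
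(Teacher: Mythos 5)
Your proof is correct, and it shares the same essential local ingredient as the paper (namely that $\Pic$ of the strict Henselian local model $[\Spec A/\mu_{r_i}]$ is $\widehat{\mu_{r_i}} \cong \Z/r_i\Z$, and that $\cO_{\cC}(p_i)$ hits a generator). The difference is in how the left and middle exactness is established. The paper packages everything into the low-degree exact sequence of the Leray spectral sequence for $R\pi_*\bG_{m,\cC}$: once one knows $\pi_*\bG_{m,\cC}=\bG_{m,C}$ and computes $R^1\pi_*\bG_{m,\cC}$ as a skyscraper sheaf $\bigoplus_i\Z/r_i\Z$ at the stacky points, the five-term sequence hands over $0\to\Pic(C)\to\Pic(\cC)\to\bigoplus_i\Z/r_i\Z$ in one step, so the only thing left to do by hand is the right exactness via $\cO_{\cC}(p_i)$. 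You instead construct the character map $\varphi$ directly via restriction to residual gerbes and verify each piece of exactness explicitly; the payment is the descent step you flag, i.e.\ showing that a line bundle with everywhere-trivial stabilizer character satisfies $\pi^*\pi_*\cM\xrightarrow{\sim}\cM$. That step is fine as you outline it (it can also be cited outright as a tame-stack descent statement, e.g.\ \cite[Thm.~10.3]{Alper}, which the paper invokes elsewhere in Proposition~\ref{prop:tuningstack->min-lin-ser}); since isomorphisms of coherent sheaves can be checked \'etale-locally and you have it over $U$ and over the strict Henselizations at the $q_i$, the gluing you worry about is automatic. Your route is more elementary in that it avoids the Leray spectral sequence, at the cost of an explicit descent argument; the paper's route is more uniform but requires knowing $\pi_*\bG_{m,\cC}=\bG_{m,C}$, which is the same tameness input in a slightly different guise. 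One small sign remark: you say $\cO_{\cC}(p_i)$ carries the standard weight-one character, whereas the paper's Remark~\ref{rmk:character} notes the fiber carries $\chi^{-1}$ since the local generator is $t^{-1}$; either way it is a generator of $\Z/r_i\Z$, so this does not affect the surjectivity argument.
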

	\begin{proof} Consider the Leray spectral sequence for $R\pi_* \bG_{m,\cC}$. First since $\pi_*\cO_{\cC} = \cO_C$, we conclude that $\pi_*\bG_{m, \cC} = \bG_{m,C}$. 
		
		Next we compute $R^1\pi_*\bG_{m,\cC}$. The stalk $(R^1\pi_*\bG_{m,\cC})_x = 0$ for any point $x$ where $\pi$ is an isomorphism. Thus  $R^1\pi_*\bG_{m,\cC}$ is a direct sum of local contributions over stacky points. By flat base change, it suffices to compute it in the special case that $\cC = [\Spec A/\mu_r]$ where $A$ is a 1 dimensional regular Henselian local ring. In this case, $R^1\pi_*\bG_{m, \cC}$ is simply $\Pic(\cC)$ but $\Pic(\Spec A)$ is trivial so this is the same as the character group of $\mu_r$ which we can identify with $\Z/r\Z$.
		
		Putting this together with the exact sequence coming from low degree terms of the Leray spectral sequence for $R\pi_*\bG_{m,\cC}$, we get
		$$
		0 \to \Pic(C) \to \Pic(\cC) \to \bigoplus_{i = 1}^{s} \Z/r_i \Z
		$$
		where the last map can be identified as the restriction $\Pic(\cC) \to \Pic(\mathcal{B} \mu_{r_i})$. 
		
		To prove exactness on the right, it suffices to construct a line bundle whose character at the $i^{th}$ marked gerbe $p_i$ generates $\Z/r_i\Z$ and is trivial away from $p_i$. The line bundle $\cO_{\cC}(p_i)$ does the job and this also proves the last claim. 
	\end{proof} 
	
	\begin{rmk}\label{rmk:character} Given a line bundle $\cL$ on $\cC$ and a point $p$ with stabilizer $\mu_{r_i}$, we obtain a character $\chi^{-a} : \zeta \in \mu_{r_i} \mapsto \zeta^{-a} \in \bG_m$ via the action of $\mu_{r_i}$ on the fiber $\cL|_{p_i}$. This corresponds to the image $\cL$ consisting of $a \mod r_i$ in the $i^{th}$ component of the cokernel $\pi^*$. Indeed, as in the proof of the proposition, we can identify the generator of $\Z/r_i\Z$ with the pullback of $\cO(p_i)$ to the strict Henselization $[\Spec \overline{k(p_i)}[t]^{sh}/\mu_r]$ where $t$ is a uniformizer at $p_i$. The pullback of $\cO(p_i)$ is generated by $t^{-1}$ so the fiber of $\cO(p_i)$ carries the character $\chi^{-1}$. 
	\end{rmk}
	
	\begin{cor} There is a presentation of abelian groups
		$$
		\Pic(\cC) = \left(\Pic(C) \oplus \bigoplus_{i = 1}^s \Z [\cO(p_i)] \right)/ \left( \left\{r_i[\cO(p_i)] = [\cO(q_i)] \right\}_{i = 1}^s \right)
		$$
		and of component groups
		$$
		\pi_0(\Pic(\cC)) = (\Z \oplus \Z c_1 \oplus \ldots \oplus \Z c_s)/(\{r_ic_i =  [k(q_i):k]\}_{1 = 1}^s).
		$$
		Moreover, there is a well defined degree homomorphism
		$$
		\deg : \Pic(\cC) \to \Q
		$$
		extending the usual degree map $\deg : \Pic(C) \to \Z$ such that $\deg(c_i) = \frac{1}{r_i}[k(q_i):k]$. 
	\end{cor}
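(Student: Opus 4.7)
The plan is to bootstrap from Proposition \ref{prop:twisted:pic}, which already supplies the short exact sequence $0 \to \Pic(C) \xrightarrow{\pi^*} \Pic(\cC) \to \bigoplus_{i=1}^s \Z/r_i\Z \to 0$ together with the statement that the classes $[\cO_\cC(p_i)]$ generate the cokernel factors. The only remaining task is to identify the relations among the generators $\Pic(C) \cup \{[\cO_\cC(p_i)]\}$ and then pass to components and define the rational-valued degree.

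First I would verify the key relation $r_i[\cO_\cC(p_i)] = \pi^*[\cO_C(q_i)]$ for each $i$. This can be checked on the strict Henselization at $q_i$, where the local picture of $\cC \to C$ is $[\Spec \cO^{sh}_{C,q_i}[t]/(t^{r_i}-u)\,/\,\mu_{r_i}] \to \Spec \cO^{sh}_{C,q_i}$, with $u$ a uniformizer at $q_i$, and $p_i$ is the vanishing locus of $t$ with its reduced substack structure. Then $\pi^*(q_i)$ is the Cartier divisor $V(u) = V(t^{r_i}) = r_i\cdot p_i$, as desired. Now define $G := \bigl(\Pic(C) \oplus \bigoplus_{i=1}^s \Z\cdot e_i\bigr)/(r_i e_i = [\cO_C(q_i)])_{i=1}^s$ and the natural map $\varphi : G \to \Pic(\cC)$ by $e_i \mapsto [\cO_\cC(p_i)]$. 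This is well-defined by the relation just checked, and surjective by the generation statement in Proposition \ref{prop:twisted:pic}. The defining relations of $G$ realize it as an extension of $\bigoplus_i \Z/r_i\Z$ by $\Pic(C)$, so $\varphi$ fits in a morphism of short exact sequences that is the identity on the outer terms; the five lemma then forces $\varphi$ to be an isomorphism, giving the first presentation.

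For the component group I would apply $\pi_0$ to the first presentation. Under the identification $\pi_0(\Pic(C)) = \Z$ by the usual degree, each relation $r_i e_i = [\cO_C(q_i)]$ becomes $r_i c_i = \deg_C \cO_C(q_i) = [k(q_i):k]$ where $c_i$ is the component class of $[\cO_\cC(p_i)]$; this yields the claimed presentation. For the degree homomorphism, I would define $\widetilde{\deg}$ on the free group $\Pic(C)\oplus \bigoplus_i \Z\cdot e_i$ by the usual degree on $\Pic(C)$ and $e_i \mapsto [k(q_i):k]/r_i \in \Q$. The compatibility $r_i\cdot [k(q_i):k]/r_i = \deg_C \cO_C(q_i)$ ensures $\widetilde{\deg}$ descends to $\deg : \Pic(\cC) \to \Q$ with the stated values.

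The principal obstacle is the local identification $r_i[\cO_\cC(p_i)] = \pi^*[\cO_C(q_i)]$, i.e.\ pinning down the root-stack structure at the level of Cartier divisors; once that single identity is in hand, the remainder is a diagram chase via the five lemma and a formal extension of scalars to $\Q$.
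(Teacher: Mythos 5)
The paper leaves this corollary unproved, treating it as immediate from Proposition \ref{prop:twisted:pic}; your argument correctly supplies the intended details. The local check $\pi^*(q_i)=r_ip_i$ as Cartier divisors (from $u=t^{r_i}$ at the strict Henselization), the five-lemma comparison of the abstract quotient $G$ with $\Pic(\cC)$ using the short exact sequence of Proposition \ref{prop:twisted:pic}, and the observation that the relation subgroup meets $\Pic^0(C)$ trivially (its $\bigoplus\Z e_i$-component is injective, which is what lets you pass to $\pi_0$ and to descend the $\Q$-valued degree) are exactly what is needed.
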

	
	\begin{cor}\label{cor:push:round:down} Every line bundle $\cL$ on $\cC$ can be written uniquely as 
		$$
		\pi^*L\left( \sum_{i = 1}^s a_i p_i \right)
		$$
		where $0 \le a_i < r_i$ where $L \in \Pic(C)$. Moreover, there is an isomorphism $\pi_*\cL \cong L$. 
	\end{cor}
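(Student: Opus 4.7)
The plan is to deduce both claims directly from Proposition~\ref{prop:twisted:pic} together with a local computation on the root stack charts.

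For existence and uniqueness of the presentation, given $\cL \in \Pic(\cC)$, I would let $(\bar a_1,\ldots,\bar a_s) \in \bigoplus_i \Z/r_i\Z$ denote its image under the surjection in~(\ref{eqn:twisted:pic}) and pick the unique integer representatives $0 \le a_i < r_i$. By Remark~\ref{rmk:character}, $\cO_{\cC}(p_i)$ maps to a generator of the $i$-th factor, so $\cL \otimes \cO_{\cC}(-\sum a_i p_i)$ lies in the kernel $\pi^*\Pic(C)$, yielding the desired factorization $\cL \cong \pi^*L(\sum a_i p_i)$. Conversely, if $\pi^*L_1(\sum a_i p_i) \cong \pi^*L_2(\sum b_i p_i)$ with $a_i, b_i \in [0, r_i)$, then reducing modulo $r_i$ via~(\ref{eqn:twisted:pic}) forces $a_i = b_i$, and the injectivity of $\pi^*$ then forces $L_1 \cong L_2$.

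For the pushforward identity, I would invoke the projection formula, valid for the coarse moduli map of a tame Artin stack since $\pi_*$ is exact on quasi-coherent sheaves, to reduce to showing $\pi_*\cO_{\cC}(\sum a_i p_i) \cong \cO_C$. Since the $p_i$ are disjoint and $\pi$ is an isomorphism elsewhere, this is a local question at each $q_i$. In the strict Henselian chart $[\Spec k(\bar q_i)[t]^{sh}/\mu_{r_i}]$ of Definition~\ref{def:spcurve}, the sheaf $\cO_{\cC}(a_i p_i)$ pulls back to the fractional ideal $t^{-a_i}A$ with $\mu_{r_i}$ acting on $t$ by the tautological character, as in Remark~\ref{rmk:character}. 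An element of $t^{-a_i}A$ expanded as $\sum_{j \ge -a_i} c_j t^j$ is $\mu_{r_i}$-invariant precisely when $c_j = 0$ whenever $r_i \nmid j$; since $0 \le a_i < r_i$, the smallest surviving exponent is $j = 0$, so the invariants coincide with $A^{\mu_{r_i}}$, which is the strict Henselization of the structure sheaf of $C$ at $q_i$. Gluing these local isomorphisms yields the global identification $\pi_*\cO_{\cC}(\sum a_i p_i) \cong \cO_C$, hence $\pi_*\cL \cong L$.

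The argument is essentially formal given Proposition~\ref{prop:twisted:pic}; the only mildly technical point is the local invariant computation, and it hinges on the normalization $0 \le a_i < r_i$, without which invariant monomials of negative exponent would produce extra twists by divisors supported at the $q_i$.
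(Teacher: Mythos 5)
Your proof is correct, and while it targets the same two facts, the argument for the pushforward part takes a slightly different route from the paper's. The paper proceeds via the short exact sequence
\[
0 \to \pi^*L \to \cL \to \cL \otimes \cO_{\sum a_i p_i} \to 0,
\]
pushes forward using exactness of $\pi_*$ on the tame stack $\cC$, and shows the cokernel term $\pi_*(\cL \otimes \cO_{\sum a_i p_i})$ vanishes by a local invariant computation on $\left(k(p)[t]^{sh}/(t^a)\cdot t^{-a}\right)^{\mu_r} = 0$. You instead factor through the projection formula, $\pi_*\cL \cong L \otimes \pi_*\cO_\cC(\sum a_i p_i)$, and compute directly that $(t^{-a}A)^{\mu_r} = A^{\mu_r}$, hence $\pi_*\cO_\cC(\sum a_i p_i) \cong \cO_C$. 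The two local computations are essentially equivalent (yours on the full fractional ideal, the paper's on its quotient by $A$), and both hinge on the normalization $0 \le a_i < r_i$. Your route avoids introducing the short exact sequence and gives a somewhat more transparent description of the invariants; the paper's exhibits $L$ as a subsheaf of $\pi_*\cL$ and argues the quotient dies. One minor nit: you justify the projection formula by exactness of $\pi_*$, but the formula $\pi_*(\cF \otimes \pi^*\cG) \cong \pi_*\cF \otimes \cG$ for $\cG$ locally free holds for any quasi-compact quasi-separated morphism and does not rest on tameness; tameness is what you need to see that $\pi_*\cO_\cC = \cO_C$ and to match the $\mu_r$-invariants with the coarse local ring. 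This does not affect the correctness of the argument.
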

	\begin{proof} The first part follows immediately from the presentation of $\Pic(\cC)$. For the second part, consider the exact sequence 
		$$
		0 \to \pi^*L \to \cL \to \cL \otimes \cO_{\sum a_i p_i} \to 0. 
		$$
		Applying $\pi_*$ and using the fact that $\cC$ is tame, it suffices to show that $\pi_* (\cL \otimes \cO_{\sum a_i p_i}) = 0$. By induction on $s$, it suffices to show that $\pi_*(\cL \otimes \cO_{ap}) = 0$ where $p$ is a point with stabilizer $\mu_r$ and $0 < a < r$. After passing to the strict Henselization, we can compute $\cL \otimes \cO_{ap}$ explicitly as the $\mu_r$ representation on $k(p)[t]^{sh}/(t^a)t^{-a}$ where $\mu_r$ acts on $t$ as in the definition of twisted curve. In particular, $\pi_*(\cL \otimes \cO_{ap}) = (k(p)[t]^{sh}/(t^a)t^{-a})^{\mu_r} = 0$ as required. 
	\end{proof}
	
	\begin{cor}
		If $\cC \to B$ is a family of twisted curves over a connected base, then the quotient $\Pic_{\cC/B}/\Pic_{C/B}$ is a finite \'etale group scheme over $B$ with fibers isomorphic to $\oplus_{i = 1}^s \bZ/r_i\bZ$ for some fixed tuple $(r_1, \ldots, r_s)$.  
	\end{cor}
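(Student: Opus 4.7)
The plan is to extend the local Leray spectral sequence argument of Proposition~\ref{prop:twisted:pic} uniformly over $B$. The key observation is that since each $\Sigma_i \to B$ is banded by $\mu_{r_i}$, the inertia along $\Sigma_i$ is canonically identified with $\mu_{r_i}$, uniformly in $B$. For any line bundle $\cL$ on $\cC$, the action of this inertia on the fiber $\cL|_{\Sigma_i}$ thus defines a canonical character in $\Hom(\mu_{r_i}, \bG_m) \cong \Z/r_i\Z$, producing a morphism of abelian sheaves on $B$
\[
\chi = (\chi_1, \ldots, \chi_s) : \Pic_{\cC/B} \longrightarrow \bigoplus_{i=1}^{s} \underline{\Z/r_i\Z}_B,
\]
where the target is the constant group scheme on $B$, visibly finite étale with constant fibers $\bigoplus_i \Z/r_i\Z$.

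Next I would verify that $\chi$ identifies $\Pic_{\cC/B}/\Pic_{C/B}$ with this target. The inclusion $\Pic_{C/B} \subseteq \ker(\chi)$ is immediate since pullbacks from $C$ carry trivial inertia characters. Conversely, if $\cL$ has trivial character along every $\Sigma_i$, then étale-locally on $C$ where $\cC$ admits the model $[\Spec(A^{sh})/\mu_{r_i}]$, the argument of Corollary~\ref{cor:push:round:down} applied with all $a_i = 0$ shows $\cL \cong \pi^*\pi_*\cL$ with $\pi : \cC \to C$ the relative coarse space map, so $\cL$ descends to an element of $\Pic_{C/B}$. For surjectivity, smoothness of $\cC$ over $B$ together with the local model makes $\Sigma_i \subset \cC$ a relative Cartier divisor, so $\cO_{\cC}(\Sigma_i)$ is a well-defined element of $\Pic_{\cC/B}$, and by Remark~\ref{rmk:character} applied fiberwise one has $\chi_j(\cO_{\cC}(\Sigma_i)) = -\delta_{ij}$, so these line bundles hit a set of generators. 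Combining, one obtains the short exact sequence of abelian sheaves on $B$
\[
0 \to \Pic_{C/B} \to \Pic_{\cC/B} \to \bigoplus_{i=1}^{s} \underline{\Z/r_i\Z}_B \to 0,
\]
identifying the quotient with $\bigoplus_{i} \underline{\Z/r_i\Z}_B$. Since this target is a constant finite group scheme and $B$ is connected, the quotient is finite étale with the claimed fibers, the tuple $(r_1, \ldots, r_s)$ being intrinsic to the family of twisted curves by hypothesis.

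The main obstacle is formalizing the inertia-character morphism $\chi$ as a genuine morphism of sheaves on $B$ rather than merely on geometric points. This requires using that the $\mu_{r_i}$-banding of $\Sigma_i \to B$ provides a \emph{canonical} (not merely étale-local) identification of the relevant character group with $\Z/r_i\Z$, which is precisely what guarantees that the target sheaf is constant. Once this is set up correctly, both the descent for $\ker(\chi)$ and the surjectivity via the divisors $\cO_\cC(\Sigma_i)$ reduce fiber-by-fiber to Proposition~\ref{prop:twisted:pic} via the uniform local model $[\Spec(A)/\mu_{r_i}]$ on the stacky locus.
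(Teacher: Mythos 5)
The paper states this corollary without proof, leaving it implicit that it follows from Proposition~\ref{prop:twisted:pic} and Corollary~\ref{cor:push:round:down} applied fiberwise together with the fact that the twisting data $(r_1,\ldots,r_s)$ is fixed in a family. Your proof fills in the missing argument correctly and in the natural way: the inertia-character map $\chi$ is exactly what the Leray cokernel in Proposition~\ref{prop:twisted:pic} computes, and packaging it as a morphism of sheaves to the constant group scheme is a clean way to see both finiteness and \'etaleness at once rather than appealing to formal criteria.

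On the ``main obstacle'' you flag: the canonical banding of $\Sigma_i$ by $\mu_{r_i}$ is indeed the crux, and it is resolved by the geometry of the twisted curve rather than by the bare phrase ``$\mu_{r_i}$-gerbe.'' The stabilizer group scheme of $\Sigma_i$ acts faithfully on the conormal sheaf $\cI_{\Sigma_i}/\cI_{\Sigma_i}^2$, which is an invertible sheaf on $\Sigma_i$; this gives a closed embedding of the inertia into $\bG_m$ whose image is $\mu_{r_i}$, hence a canonical identification of the band with $\mu_{r_i}$ compatible with base change. This is precisely the uniformizer convention $t\mapsto \zeta t$ from Definition~\ref{def:spcurve} made coordinate-free, and it is what makes $\chi$ land in the constant sheaf $\underline{\Z/r_i\Z}_B$ rather than a twist of it. Once that is in place, your identification of $\ker(\chi)$ with $\Pic_{C/B}$ uses that $\pi_*$ commutes with base change for the tame coarse moduli map (so descent of inertia-trivial line bundles holds in families, not just fiber by fiber), and surjectivity via the relative Cartier divisors $\cO_\cC(\Sigma_i)$ is exactly Remark~\ref{rmk:character} run in families. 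The argument is sound.
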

	
	\begin{defn}
		If $(\cX, \cL)$ is a proper cyclotomic stack with a polarizing line bundle $\cL$, we define the degree of a map $f : \cC \to \cX$ as $\deg (f^*\cL)$. 
	\end{defn}
	\begin{rmk}\label{rem:deg}
		Suppose that $\cL^{\otimes M} = \pi^*L$ where $\pi : \cX \to X$ is the coarse map and $L$ is ample. Then the degree of $f$ in our definition agrees with $\frac{1}{M}\deg(C \to X)$ where the degree of the coarse map is measured with respect to $L$. 
	\end{rmk}
	
	\begin{lem}\label{lemma:loc:const}
		Let $(\cX, \cL)$ be a proper polarized cyclotomic stack.
		\begin{enumerate}
			\item If $\cC$ is a twisted curve and $f : \cC \to \cX$ is any morphism, then $f$ is representable if and only if for each $i = 1, \ldots, s$, the projection of the class $[f^*\cL]$ to $\bZ/r_i\bZ$ under the sequence \ref{eqn:twisted:pic} is a unit. 
			\item If $f : \cC/B \to \cX$ is a family of twisted maps to $\cX$, then $\deg(f_b)$ and the class of $[f_b^*\cL]$ in $\Pic(\cC_b)/\Pic(C_b)$ are locally constant functions on $B$. 
		\end{enumerate}
	\end{lem}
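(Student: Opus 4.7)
For (1), the plan is to reduce representability of $f$ to a condition on the classifying morphism of $f^*\cL$, which is then visibly detected on the cokernel of Proposition \ref{prop:twisted:pic}. Since $\cL$ is polarizing and hence uniformizing, Definition \ref{def:cyc_uni} says that the classifying map $\psi_\cL : \cX \to \mathcal{B}\bG_m$ is representable, equivalently injective on automorphism groups at every geometric point. Representability for morphisms between the stacks in question (all with finite inertia) is equivalent to injectivity on stabilizers at every geometric point, and composing with $\psi_\cL$ both preserves and reflects this injectivity because $(\psi_\cL)_*$ is itself injective. Consequently, $f$ is representable if and only if the composition $\psi_\cL\circ f = \psi_{f^*\cL}$ is.

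Next I would check this latter condition stacky point by stacky point. At a point $p_i$ with inertia $\mu_{r_i}$, the fiber of $f^*\cL$ determines a one-dimensional character $\chi^{-a_i}$ of $\mu_{r_i}$, and by Remark \ref{rmk:character} the residue $a_i\bmod r_i$ is precisely the image of $[f^*\cL]$ in the $i^{th}$ summand of the cokernel in Proposition \ref{prop:twisted:pic}. The character is faithful---equivalently, injective as a group homomorphism---iff $a_i$ is a unit in $\bZ/r_i\bZ$, which completes (1). I expect no real obstacle here; the main subtlety is just identifying the fiber character with the cokernel class, and that is exactly what Remark \ref{rmk:character} provides.

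For (2), I first observe that over a connected base the stabilizer orders $r_1,\dotsc,r_s$ of the fibers of $\cC\to B$ are constant, since each $\Sigma_i\to B$ is a $\mu_{r_i}$-gerbe. The last corollary of the preceding subsection then identifies the quotient sheaf $\Pic_{\cC/B}/\Pic_{C/B}$ with a finite \'etale group scheme over $B$ having constant fibers $\bigoplus_i \bZ/r_i\bZ$. Since sections of an \'etale group scheme with discrete fibers are locally constant, the class $[f_b^*\cL]\in\Pic(\cC_b)/\Pic(C_b)$ is a locally constant function of $b$. For the degree, I would pass to the $N$-th tensor power with $N=\mathrm{lcm}(r_1,\dotsc,r_s)$: by Corollary \ref{cor:push:round:down}, $(f^*\cL)^{\otimes N}$ has trivial inertia action at every stacky point of every fiber, so it descends to a line bundle $\cM$ on the coarse moduli space $C\to B$. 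Since twisted curves are tame, $C\to B$ is a flat proper family of (ordinary) curves, so $\deg\cM_b$ is locally constant in $b$ by the standard theory of relative Picard groups, and therefore so is $\deg(f_b^*\cL)=\tfrac{1}{N}\deg\cM_b$.
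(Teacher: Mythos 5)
The paper gives no proof of this lemma, treating it as a consequence of Proposition \ref{prop:twisted:pic}, Remark \ref{rmk:character}, Corollary \ref{cor:push:round:down}, and the unnumbered corollary about $\Pic_{\cC/B}/\Pic_{C/B}$. Your proof is correct and supplies exactly the intended reasoning for part (1), and a valid (if slightly longer) argument for part (2).

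For (1), your reduction to $\psi_{\cL}\circ f = \psi_{f^*\cL}$ is the right move: representability of a morphism of algebraic stacks with finite inertia is equivalent to injectivity on stabilizer groups at geometric points, $\psi_{\cL}$ is representable precisely because $\cL$ is uniformizing, and post-composing with a map injective on stabilizers both preserves and reflects the property. Remark \ref{rmk:character} then translates faithfulness of the character $\chi^{-a_i}$ at $p_i$ into the condition $\gcd(a_i, r_i) = 1$, i.e.\ that the image of $[f^*\cL]$ in $\bZ/r_i\bZ$ is a unit.

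For (2), the \'etale-group-scheme argument for local constancy of $[f_b^*\cL]$ is exactly what the unnumbered corollary is designed to give. For the degree, your route through $(f^*\cL)^{\otimes N}$ works, but you implicitly need that the fiberwise descent to the coarse space assembles over $B$; this uses tameness (exactness of $\pi_*$ and \cite[Thm.~10.3]{Alper}, already invoked in Proposition \ref{prop:tuningstack->min-lin-ser}) together with compatibility of coarse moduli with base change (\cite[Cor.~3.3]{AOV}, cited in Theorem \ref{thm:moduli-of-twisted-maps-aov}). You should make that step explicit. There is also a shorter path already provided by the paper: Remark \ref{rem:deg} says $\deg(f_b) = \tfrac{1}{M}\deg(\bar{f}_b\colon C_b \to X)$ where $\bar{f}$ is the induced family of morphisms on coarse spaces and $L$ the ample bundle with $\cL^{\otimes M}=\pi^*L$. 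Since $\bar{f}\colon C\times B \to X$ is a flat family of morphisms of projective schemes, $\deg(\bar{f}_b)$ is locally constant by constancy of Euler characteristics of $\bar{f}_b^*L^{\otimes k}$, and this avoids the descent step entirely.
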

	
	\begin{defn}\label{def:local-conditions} 
		We let $\Gamma = (\{r_1, a_1\}, \ldots, \{r_s, a_s\})$ denote a tuple of pairs of integers where $r_i > 1$ and $0 < a_i < r_i$ is a unit mod $r_i$. We call $\Gamma$ the tuple of local conditions for a twisted map. We say that $\Gamma$ is admissible for $(\cX, \cL)$ if $r_i \mid M$ for all $i$ where $M > 0$ is the smallest positive integer such that $\cL^{\otimes M} = \pi^*L$ where $\pi$ is the coarse moduli map. 
	\end{defn}
	
	We are now ready to construct the stack of twisted maps from a fixed curve $C$ to a polarized cyclotomic stack $(\cX,\cL)$.
	
	\begin{defn}\label{defn:twisted:maps} Fix $(\cX, \cL)$, $C$ and $\Gamma$ as above and an integer $d$. A family of twisted maps from $C$ to $\cX$ of type $\Gamma$ and degree $d$ over a scheme $B$ is a family of twisted maps $(\cC \to B, \Sigma_i, f)$ with inertia groups $\mu_{r_1}, \ldots, \mu_{r_s}$ and a map $\pi : \cC \to C \times B$ over $B$ such that 
		\begin{enumerate}
			\item $\pi$ is the coarse moduli space of $\cC$, 
			\item for all $b \in B, \deg(f_b) = d$, and
			\item for all $b \in B$ the class of $f_b^*\cL$ in $\Pic(\cC_b)/\Pic(C)$ is given by
			$$
			\sum_{i = 1}^s (a_i \mod r_i). 
			$$
		\end{enumerate}
	\end{defn}
	
	\begin{thm}\label{thm:moduli-of-twisted-maps-aov}
		Let $(\cX, \cL)$ be a proper polarized cyclotomic stack over $k$ with coarse moduli space $\pi : \cX \to X$ and fix an integer $M$ and ample line bundle $L$ on $X$ such that $\cL^{\otimes M} = \pi^*L$. For each smooth projective curve $C$ of genus $g$, local conditions $\Gamma$ and degree $d$, there exists a finite type separated algebraic stack $\cH^\Gamma_{d,C} = \cH^\Gamma_{d,C}(\cX, \cL)$ with quasi-projective coarse moduli space parametrizing twisted maps from $C$ to $\cX$ of type $\Gamma$ and degree $d$. Moreover, $\cH^\Gamma_{d,C}$ is quasi-finite over the scheme $\Hom_{Md}((C, q_1, \ldots, q_s), X)$ of $s$-pointed degree $Md$ maps $(C, q_1, \ldots, q_s) \to X$. 
	\end{thm}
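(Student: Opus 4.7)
The plan is to realize $\cH^\Gamma_{d,C}$ as an open and closed substack of a Hom stack between a universal family of twisted curves and $\cX$. Let $U \subset C^s$ be the open subscheme parametrizing $s$-tuples of pairwise distinct points of $C$. I would first construct the universal twisted source $\pi : \cC^{\mathrm{univ}} \to C \times U$ by iteratively forming the $r_i$-th root stack along the $i$-th universal section. This yields a flat family of smooth twisted curves whose fiber at $(q_1,\ldots,q_s) \in U$ is the twisted curve with coarse moduli $C$ and inertia $\mu_{r_i}$ at $q_i$.

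Next, because $\cX$ is proper and tame and $\cC^{\mathrm{univ}}/U$ is a proper flat family of smooth tame twisted curves, the Hom stack $\underline{\Hom}_U(\cC^{\mathrm{univ}}, \cX \times U)$ is algebraic and locally of finite type over $U$ by standard Hom stack results (Olsson--Aoki). The substack $\cH^\Gamma_{d,C}$ is then cut out by the conditions of Definition \ref{defn:twisted:maps}: fixing $\deg(f^*\cL) = d$ and the class of $f^*\cL$ in $\Pic(\cC_b)/\pi^*\Pic(C_b)$ prescribed by $\Gamma$ are locally constant by Lemma \ref{lemma:loc:const}(2) and so carve out a union of connected components, while representability is automatic on this locus by the character interpretation of Remark \ref{rmk:character} combined with the admissibility of $\Gamma$ (that each $a_i$ is a unit mod $r_i$), in view of Lemma \ref{lemma:loc:const}(1).

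The structural map $\cH^\Gamma_{d,C} \to \Hom_{Md}(C,X) \times U$ is given by $(\cC, f) \mapsto (\overline{f}, q_1, \ldots, q_s)$, where $\overline{f}: C \to X$ is the induced coarse map, of degree $Md$ by Remark \ref{rem:deg}. To check quasi-finiteness, I would fix a pointed coarse map and observe that any representable lift is uniquely determined over the dense open complement in $C$ of the preimage of the stacky locus of $\cX$, while at each gerbe $p_i$ the lift is constrained to a finite set by the prescribed $\mu_{r_i}$-action on $f^*\cL|_{p_i}$. Since $\Hom_{Md}(C,X)$ is quasi-projective by ampleness of $L$ and general Hom scheme theory, and $U$ is quasi-projective, a quasi-finite separated algebraic stack over this base is of finite type with quasi-projective coarse moduli space (e.g.\ by Zariski's main theorem applied to the coarse map).

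Finally, separatedness follows from the valuative criterion: given a DVR $R$ with fraction field $K$, any two twisted maps $(\cC_R^{(i)},f^{(i)})$ over $R$ that agree generically have source curves canonically identified with the root stack over $C \times \Spec R$ along the fixed sections, and the extension of a representable map $\cC_K \to \cX$ to $\cC_R \to \cX$ is unique by properness of $\cX$ and normality of the source. The main obstacle I anticipate is rigorously tracking the character data $(r_i,a_i)$ in families; concretely, verifying that the decomposition of $\Pic(\cC)/\pi^*\Pic(C)$ from Proposition \ref{prop:twisted:pic} yields a locally constant invariant that cleanly separates the desired open and closed substack from the ambient Hom stack. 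Once this is in place, all remaining assertions reduce to standard properties of Hom stacks between proper tame algebraic stacks.
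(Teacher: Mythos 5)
Your overall construction tracks the paper's proof step for step: build the universal twisted curve over the configuration space $\Conf_s(C)$ by iterated root stacks, form the relative Hom stack to $\cX$, cut out the degree-$d$, type-$\Gamma$ locus as an open and closed substack via the local constancy in Lemma \ref{lemma:loc:const}, observe representability is automatic there since the $a_i$ are units, map to $\Hom_{Md}((C,q_1,\ldots,q_s),X)$ by passing to coarse spaces, and use quasi-finiteness plus Zariski's main theorem to get quasi-projectivity. The one place you genuinely diverge is in establishing that $\cH^\Gamma_{d,C}$ is separated with a coarse moduli space. The paper does not argue this directly: it composes with the closed embedding $\cX \hookrightarrow \Pcv$ to obtain a representable monomorphism of Hom stacks that preserves $d$ and $\Gamma$, reducing to the weighted projective target, and then invokes the identification $\cH^\Gamma_{d,C}(\Pcv,\cO(1)) \cong \cR^\mu_n$ from Proposition \ref{prop:iso-HGammad-Rmun} to inherit both separatedness and finite inertia from the moduli of parametrized weighted linear series. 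Your valuative-criterion argument is more direct and more self-contained, and its core idea is sound: the image in the separated scheme $\Conf_s(C)$ forces the source twisted curve over $\Spec R$ to be determined, and uniqueness of the extension of a representable map $\cC_K \to \cX$ to the normal surface $\cC_R$ follows from an FMN-type argument (cf. \cite[Prop.~1.2]{FMN}, used elsewhere in the paper). That approach buys a proof of separatedness that does not depend on the machinery of weighted linear series, which is an appealing simplification.

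However, two genuine gaps remain in your route. First, separatedness of a stack is a statement about $2$-isomorphisms: given two $R$-points agreeing over $K$, one must extend the given $2$-isomorphism over $K$, not merely show that the map $\cC_R \to \cX$ is unique up to some isomorphism. This can be done via the FMN argument, but as written you have not addressed it. Second, and more substantially, you appeal to Zariski's main theorem ``applied to the coarse map'' without ever establishing that the coarse moduli space exists, which by Keel--Mori requires finite inertia. The paper gets finite inertia for free from the identification with $\cR^\mu_n$; on your route you would need to separately verify that the inertia is quasi-finite (e.g.\ by a direct computation of automorphism groups of twisted maps using tameness of $\cX$ and properness of $\cC$), after which separatedness upgrades quasi-finite inertia to finite inertia. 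Finally, your heuristic for quasi-finiteness -- that a lift is uniquely determined over the complement of the preimage of the stacky locus -- is not correct for a general cyclotomic stack, which (like $\cP(2,2)$) need not have a dense schematic locus. The precise statement is \cite[Prop.~4.4]{AOV}, which the paper cites; the right heuristic is that lifts of the coarse map are sections of the proper quasi-finite morphism $\cC \times_X \cX \to \cC$, of which there are finitely many.
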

	
	\begin{proof} 
		We first consider the stack of twisted curves with coarse moduli space $C$ and inertia groups $\mu_{r_1}, \ldots, \mu_{r_s}$. This is the stack of data $(\cC \to B, \Sigma_i, \pi)$ where $(\cC \to B, \Sigma_i)$ is a twisted curve with stabilizer $\mu_{r_i}$ along $\Sigma_i$ and $\pi : \cC \to C \times B$ is a map over $B$ which exhibits $C \times B$ as the coarse moduli space of $\cC$. The images of $\Sigma_i$ under $\pi$ define $s$ disjoint sections $\sigma_i : B \to C \times B$. The data of $\sigma_i$ defines a map $B \to \Conf_s(C)$, the configuration space of $s$ distinct points on $C$ and the association 
		$$
		(\cC \to B, \Sigma_i, \pi) \mapsto (B \to \Conf_s(C))
		$$
		is functorial since coarse moduli space commutes with basechange for tame stacks \cite[Cor. 3.3]{AOV}. 
		
		On the other hand, given a map $B \to \Conf_s(C)$ induced by a family of $s$ disjoint sections $\sigma_1, \ldots, \sigma_n : B \to B \times C$, the images of $\sigma_i$ are relative effective Cartier divisors. We can then take the $r_i$ root stack along the image of $\sigma_i$ for each $i = 1, \ldots, s$ to obtain coarse moduli map $\pi : \cC \to C \times B$. The reduced preimages $\Sigma_i = \pi^*(\sigma_i)_{red}$ are disjoint $\mu_{r_i}$-gerbes over $B$ and $(\cC \to B, \Sigma_i, \pi)$ is a family of twisted curves with coarse moduli space $C$ and inertia groups $\mu_{r_i}$. Thus this stack is representable by the configuration space $Z := \Conf_s(C)$. 
		
		Let $(\cC \to Z, \Sigma_i)$ be the universal family of twisted curves over the configuration space and let 
		$$
		\cH = \Hom_{Z}(\cC, \cX \times Z)
		$$
		be the relative Hom-stack over $Z$. Then $\cH$ is an algebraic stack locally of finite type with quasi-compact and separated diagonal by \cite[Thm. C.2]{AOV}. By Lemma \ref{lemma:loc:const}, the degree and the class in $\Pic(\cC)/\Pic(C)$ are locally constant on $\cH$ and so there is an open and closed (and thus algebraic) substack $\cH^\Gamma_{d,C} \subset \cH$ parametrizing those maps of degree $d$ and local twisting conditions $\Gamma$. 
		
		To see that $\cH^\Gamma_{d,C}$ is separated and has finite inertia, we can suppose that the base field is algebraically closed. Composing with the closed embedding $\cX \to \Pcv$ yields a representable monomorphism
		$$
		\Hom_{Z}(\cC, \cX \times Z) \to \Hom_{Z}(\cC, \Pcv \times Z)
		$$
		which preserves the degree $d$ and twisting condition $\Gamma$. Thus it suffices to prove that $\cH^\Gamma_{d,C}$ is separated with finite inertia for target weighted projective stack. In this case, the claim follows from the proof of Theorem \ref{thm:height:moduli} for weighted projective stacks as in \S \ref{proof:wps}, where it is shown that $\cH^\Gamma_{d,C}$ is isomorphic to a locally closed substack of a separated stack with finite inertia $\cR^\mu_n$ (c.f. Definition \ref{def:moduli-interp-rnmu}).

		Taking a tuple $(\cC \to B, \Sigma_i, f, \pi)$ to its coarse moduli space $(C \times B \to X, \sigma_i)$ produces a morphism 
		$$
		\rho :\cH^\Gamma_{d, C} \to \Hom_{Md}((C, q_1, \ldots, q_s), X).
		$$
		Note that the family of maps $C \times B \to X$ has degree $Md$ by Remark \ref{rem:deg}. The map $\rho$ is quasi-finite by \cite[Prop. 4.4]{AOV} and its proof. Let $$\psi : H^\Gamma_{d,C} \to \Hom_{Md}((C, q_1, \ldots, q_s), X)$$
		be the factorization through the coarse moduli space. This map is also quasi-finite so by Zariski's Main Theorem, we can factor it as an open immersion $H^\Gamma_{d,C} \subset Y$ followed by a finite morphism $\bar{\psi} : Y \to \Hom_{Md}((C, q_1, \ldots, q_s), X)$. Since the target is a quasi-projective scheme, it follows that $Y$ and thus $H^\Gamma_{d,C}$ is quasi-projective. This completes the proof. 
	\end{proof}
	
	Later we will need to work with twisted maps where the gerbes are not marked. 
	
	\begin{defn}\label{def:Sgamma}
		Fix a tuple of local twisting conditions $\Gamma = (\{r_1, a_1\}, \ldots, \{r_s, a_s\})$. Let $S_{\Gamma} \subset \mathrm{Aut}\{1, \ldots, s\}$ be the subset of permutations that only permute those indices which have the same pair of $\{r_j, a_j\}$.
	\end{defn}
	
	\begin{prop}\label{prop:quotient-Hbar-Gamma-dC} The group $S_{\Gamma}$ acts on $\cH^{\Gamma}_{d,C}$ and the quotient $\overline{\cH}^\Gamma_{d,C}:=\cH^\Gamma_{d,C}/S_{\Gamma}$ is the stack of diagrams $(\cC \to B, f : \cC \to \cX, \pi : \cC \to C \times B)$ such that for each geometric point $\bar{t} \in B$, $\cC_{\bar{t}}$ is a twisted curve with inertia groups $\mu_{r_1}, \ldots, \mu_{r_s}$ and satisfying $(1), (2)$ and $(3)$ of Definition \ref{defn:twisted:maps}. 
	\end{prop}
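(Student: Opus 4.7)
The plan is to establish the proposition by exhibiting the forgetful morphism $\cH^\Gamma_{d,C} \to \overline{\cH}^\Gamma_{d,C}$ (where the target is defined as the stack described on the right-hand side of the claim) as an $S_\Gamma$-torsor, which will identify the quotient $\cH^\Gamma_{d,C}/S_\Gamma$ with $\overline{\cH}^\Gamma_{d,C}$. First I would construct the $S_\Gamma$-action on $\cH^\Gamma_{d,C}$ by defining, for $\sigma \in S_\Gamma$, the operation sending a family $(\cC \to B, \Sigma_1, \ldots, \Sigma_s, f, \pi)$ to $(\cC \to B, \Sigma_{\sigma(1)}, \ldots, \Sigma_{\sigma(s)}, f, \pi)$. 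By the very definition of $S_\Gamma$ in Definition \ref{def:Sgamma}, the pair $(r_{\sigma(i)}, a_{\sigma(i)})$ equals $(r_i, a_i)$, so the permuted datum still satisfies conditions (1)--(3) of Definition \ref{defn:twisted:maps}, confirming that $\cH^\Gamma_{d,C}$ is preserved.

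Next I would define the forgetful morphism $\cH^\Gamma_{d,C} \to \overline{\cH}^\Gamma_{d,C}$ by $(\cC \to B, \Sigma_i, f, \pi) \mapsto (\cC \to B, f, \pi)$, which drops the ordering of the gerbes. This morphism is manifestly $S_\Gamma$-invariant since the target does not see the labels, so it induces a morphism from the stack quotient $\cH^\Gamma_{d,C}/S_\Gamma \to \overline{\cH}^\Gamma_{d,C}$. To show this induced morphism is an isomorphism, I would argue that the forgetful morphism is an $S_\Gamma$-torsor. Given an object $(\cC \to B, f, \pi)$ of $\overline{\cH}^\Gamma_{d,C}$, the stacky locus of $\cC \to B$ is a closed substack whose rigidification is a finite étale scheme over $B$ of degree $s$, since tame stackiness propagates in families and the number of gerbes is locally constant. Étale locally on $B$ this scheme splits as a disjoint union of $s$ sections, each carrying a discrete invariant $(r,a)$ recording its inertia group order and the character of $f^*\cL$ (Remark \ref{rmk:character}). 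Over such a trivializing cover the choices of a labeling $1, \ldots, s$ compatible with $\Gamma$ form a torsor for $S_\Gamma$, since relabeling within a fixed $(r,a)$-type is exactly what $S_\Gamma$ allows.

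It remains to check that the fiber of the forgetful morphism over a geometric point is a nonempty $S_\Gamma$-torsor: nonempty because the multiset of $(r,a)$-types of the gerbes of any object of $\overline{\cH}^\Gamma_{d,C}$ matches the multiset underlying $\Gamma$ (this is exactly the condition that $\cC_{\bar{t}}$ has inertia groups $\mu_{r_1}, \ldots, \mu_{r_s}$ combined with the constraint on $f_b^*\cL$ from (3) of Definition \ref{defn:twisted:maps}); and a torsor because two labelings yielding valid objects of $\cH^\Gamma_{d,C}$ differ by a permutation preserving all labels $(r_i, a_i)$, i.e., by an element of $S_\Gamma$. Concluding: the forgetful morphism is surjective (every unordered family admits a labeling étale locally) and its automorphisms over $\overline{\cH}^\Gamma_{d,C}$ are exactly $S_\Gamma$, so it is an étale $S_\Gamma$-torsor and the quotient description follows; algebraicity of $\overline{\cH}^\Gamma_{d,C}$ is then automatic as the target of an étale cover by an algebraic stack.

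The main obstacle I anticipate is the étale-local trivialization step, specifically verifying that the gerbes of a family of twisted curves can be genuinely ordered étale locally. This requires knowing that the locus of gerbes in $\cC$ is, after rigidification, a finite étale $B$-scheme, which in turn uses tameness and flatness of $\cC \to B$. With this input the rest of the argument is formal, reducing to the standard description of quotient stacks in terms of torsors.
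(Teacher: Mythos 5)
The paper states this proposition without proof, so there is no argument to compare against; on its own terms your torsor-based approach is the natural one and is correct in outline. You correctly define the $S_\Gamma$-action by permuting the marked gerbes, correctly identify the forgetful morphism $\cH^\Gamma_{d,C} \to \overline{\cH}^\Gamma_{d,C}$, and correctly reduce the proposition to showing this is an $S_\Gamma$-torsor. The fiber computation is the right one: the 2-fiber product of the forgetful (faithful) morphism over a $B$-point of $\overline{\cH}^\Gamma_{d,C}$ is discrete, given by labelings of the stacky points compatible with $\Gamma$, and $S_\Gamma$ (by Definition~\ref{def:Sgamma}) acts simply transitively on these labelings whenever the multiset of local invariants $(r,a)$ read off from the inertia groups and from the class of $f_b^*\cL$ in $\Pic(\cC_b)/\Pic(C_b)$ (via Remark~\ref{rmk:character}) agrees with $\Gamma$, which is exactly what the moduli description of $\overline{\cH}^\Gamma_{d,C}$ requires.

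Two small points are worth tightening. First, your phrase that the forgetful map has ``automorphisms over $\overline{\cH}^\Gamma_{d,C}$ exactly $S_\Gamma$'' is not the right formulation of a torsor; what you actually want, and what your fiber computation establishes, is that the fiber product with any $B$-point is an $S_\Gamma$-torsor over $B$ (i.e., the action is simply transitive on each nonempty geometric fiber and sections exist \'etale-locally). Second, and this is the real content you flag yourself: the claim that the non-schematic locus of $\cC\to C\times B$ is finite \'etale over $B$ needs a citation or argument. Since $\cC\to B$ is flat and proper with each geometric fiber a twisted curve having exactly $s$ stacky points with prescribed tame inertia, the branch locus of $\pi$ in $C\times B$ is a closed subscheme which is fiberwise reduced of length $s$; tameness ensures formation of the coarse space commutes with base change \cite[Cor.~3.3]{AOV}, and then constancy of the fiber length together with reducedness of the geometric fibers gives flatness and unramifiedness, hence finite \'etale of degree $s$. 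You should make this step explicit (or cite the appropriate statement from \cite{AOV2} on twisted curves) rather than appealing to ``tame stackiness propagates in families,'' but this is a gap in exposition rather than in the argument.
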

	
	
	\section{Correspondence between weighted linear series and twisted maps}
	\label{sec:min-lin-ser-univ-tuning}
	
	Let $K=k(C)$ be the function field of a smooth projective curve $C$. Then rational points $P\in\Pcv(K)$ correspond to rational maps $C\dasharrow\Pcv$. In this section, we show how the universal tuning stack of $P$ is explicitly constructed from the data of a $\lambdavec$-weighted linear series (Definition \ref{defn:wls}). Furthermore, we show that every rational map $C\dasharrow\Pcv$ is determined by a unique \emph{minimal weighted linear series} defined below. Finally, we compute the height of a point in terms of the minimal weighted linear series. 
	
	The indeterminacies of the associated map $C\dasharrow\Pcv$ are the set of points $x \in C$ such that $s_i(x) = 0$ for all $i$, that is, the set theoretic base locus of the weighted linear series. Two $\vec{\lambda}$-weighted linear series are said to be equivalent if they induce the same rational map $f \colon C \dasharrow \mathcal{P}(\vec{\lambda})$, or equivalently if they induce the same rational point $P \in \Pcv(K)$. We use the notation $\nu_x(s_i)$ to denote the order of vanishing of $s_i$ at $x$.

	\begin{defn}\label{def:rmin-for-any-lin-ser}
		Let $(L, s_0, \ldots, s_N)$ be a $\vec{\lambda}$-weighted linear series on a curve $C$. For every $x\in C$, we let
		\begin{align*}
			r_{\min}(x;L,s_0,\dots,s_N)&:=\frac{\lambda_j}{\gcd(\nu_x(s_j),\lambda_j)} \\
			a_{\min}(x;L,s_0,\dots,s_N)&:=\frac{\nu_x(s_j)}{\gcd(\nu_x(s_j),\lambda_j)}
		\end{align*}
		where $j$ is a choice of index such that
		\[
		\frac{\nu_x(s_j)}{\lambda_j}=\min_i\left\{\frac{\nu_x(s_i)}{\lambda_i}\right\}.
		\]   
	\end{defn}
	
	\begin{defn}\label{def:mimal-wted-lin-ser} A $\vec{\lambda}$-weighted linear series $(L, s_0, \ldots, s_N)$ is \emph{minimal} if for each indeterminacy point $x \in C$, there exists an $j$ such that $\nu_x(s_j) < \lambda_i$.
	\end{defn} 
	
	We now state the main result of this section.
	
	\begin{thm}\label{thm:tuning-stacks-and-minimal-linear-series}
		Let $K=k(C)$ be the function field of a smooth projective curve $C$, let $f\colon C\dasharrow\Pcv$ be a rational map, and let $P\in\Pc_{C}(\vec\lambda)(K)$ denote the corresponding rational point, where $\Pc_{C}(\vec\lambda)=\Pcv\times C\to C$ is the constant family. Let $\{x_j\}$ be the indeterminacy points of $f$. Assume the $\lambda_i$ are prime to the characteristic of the ground field.
		
		\begin{enumerate}
			\item\label{thm:tuning-stacks-and-minimal-linear-series::tuning-stack} Let $(L,s_0,\dots,s_N)$ be any $\lambdavec$-weighted linear series inducing $f$. Then the universal tuning stack $(\cC,\pi,\overline{P})$ of $P$ is the root stack of $C$ obtained by taking the $r_j$-th root at $x_j$, where $r_j=r_{\min}(x_j;L,s_0,\dots,s_N)$. Moreover, the induced morphism on stabilizers over $x_j$ is given by the character $\chi_j^{-a_j}$ where $a_j = a_{\min}(x_j, L, s_0, \ldots, s_N)$. 
			
			\item\label{thm:tuning-stacks-and-minimal-linear-series::minimal-lin-ser} There exists a unique minimal $\lambdavec$-weighted linear series inducing $f$.
			
			\item\label{thm:tuning-stacks-and-minimal-linear-series::height} The stacky height $\height_{\cO(1)}(P)$ is equal to $\deg L$ where $(L, s_0, \dots, s_N)$ is the unique minimal linear series. Moreover, the stable height is given by $\height_{\cO(1)}^{st}(P) = \deg \overline{P}^*\cO(1)$ and the local contribution at $x_j$ is given by $\delta_{x_j}(P) = \frac{a_j}{r_j}[k(x_j):k]$. 
		\end{enumerate}
	\end{thm}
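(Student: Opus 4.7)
My approach treats all three parts as flowing from a single explicit construction of the universal tuning stack. Given any $\lambdavec$-weighted linear series $(L, s_0, \ldots, s_N)$ inducing $f$, I let $\pi\colon \cC \to C$ be the root stack taking the $r_j$-th root at each indeterminacy $x_j$, where $r_j := r_{\min}(x_j; L, s_0, \ldots, s_N)$, and define the candidate extension line bundle
\[
\tilde{L} := \pi^*L \otimes \cO_{\cC}\!\bigl(-\textstyle\sum_j a_j p_j\bigr), \qquad a_j := a_{\min}(x_j; L, s_0, \ldots, s_N),
\]
where $p_j$ denotes the unique $\mu_{r_j}$-gerbe over $x_j$. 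The key observation is that, via the natural inclusion $\tilde{L}^{\lambda_i} \hookrightarrow \pi^*L^{\lambda_i}$, each $\pi^*s_i$ corresponds to a genuine section $\tilde{s}_i$ of $\tilde{L}^{\lambda_i}$ precisely when $r_j \nu_{x_j}(s_i) \ge a_j\lambda_i$, which is exactly the defining inequality $\nu_{x_j}(s_i)/\lambda_i \ge a_j/r_j = \min_i \nu_{x_j}(s_i)/\lambda_i$. Moreover, at the minimizing index $j^\ast$ equality forces $\tilde{s}_{j^\ast}$ to be nonvanishing at $p_j$, so the $\tilde{s}_i$ have no common zero on $\cC$. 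This data assembles into a morphism $\overline{P}\colon \cC \to \Pcv$ extending $f$. Since $\cO_{\Pcv}(1)$ is uniformizing on $\Pcv$ and the $\mu_{r_j}$-action on $\tilde{L}|_{p_j}$ is through $\chi_j^{-a_j}$ (by Remark \ref{rmk:character} applied to $\cO_\cC(-a_j p_j)$), the coprimality $\gcd(a_j, r_j) = 1$ makes $\overline{P}$ representable. Remark \ref{rmk:universal-tuning-stack-rep} then identifies $(\cC, \pi, \overline{P})$ as the universal tuning stack, proving part (1).

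For part (2), existence of a minimal linear series inducing $f$ follows by iterated reduction. If at some point $x$ we have $\nu_x(s_i) \ge \lambda_i$ for all $i$, set $n := \lfloor \min_i \nu_x(s_i)/\lambda_i \rfloor \ge 1$; then each $s_i$ factors uniquely through the inclusion $L^{\lambda_i}(-n\lambda_i x) \hookrightarrow L^{\lambda_i}$, producing a new $\lambdavec$-weighted linear series $(L(-nx), s'_0, \ldots, s'_N)$ inducing the same rational map but with $\deg L$ strictly smaller. Since $\deg L \ge 0$ (some $s_i$ is nonzero), this terminates at a minimal series. For uniqueness, any two linear series inducing $f$ differ by a twist $(L', s'_i) = (L \otimes M, s_i \otimes t^{\lambda_i})$ for some line bundle $M$ and rational section $t$ of $M$. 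Setting $m_x := \min_i \nu_x(s_i)/\lambda_i$, the minimality condition is exactly $m_x \in [0,1)$, and since $m_x' = m_x + \nu_x(t)$, one concludes $\nu_x(t) \in (-1,1) \cap \bZ = \{0\}$ at every $x$, forcing $t$ to be a constant isomorphism.

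Part (3) is then a direct computation from the explicit form of $\tilde{L}$. For the unstable height, since $\tilde{L}^\vee = \pi^*(L^\vee) \otimes \cO_{\cC}\!\bigl(\sum_j a_j p_j\bigr)$ is already in the canonical form of Corollary \ref{cor:push:round:down} (because $0 < a_j < r_j$), I get $\pi_* \tilde{L}^\vee = L^\vee$ and hence $\height_{\cO(1)}(P) = -\deg \pi_*\tilde{L}^\vee = \deg L$. For the stable height, applying the rational-valued degree map to $\tilde{L}$ together with $\deg \cO_\cC(p_j) = [k(x_j):k]/r_j$ yields
\[
\height^{st}_{\cO(1)}(P) = \deg \overline{P}^*\cO(1) = \deg \tilde{L} = \deg L - \sum_j \tfrac{a_j}{r_j}[k(x_j):k];
\]
comparing with the formula for $\height_{\cO(1)}(P)$ identifies the local contributions as $\delta_{x_j}(P) = (a_j/r_j)[k(x_j):k]$.

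The main obstacle is the bookkeeping in part (1): pinning down $\tilde{L}$ as a global object from the local vanishing data, verifying that the $\tilde{s}_i$ patch globally rather than only on formal neighborhoods of the stacky points, and computing the character on stabilizers with the correct sign via Proposition \ref{prop:twisted:pic} and Remark \ref{rmk:character}. Once this explicit model of the universal tuning stack is secured, parts (2) and (3) reduce to elementary manipulations with $\Q$-divisors on $\cC$ using the Picard-group description of \S \ref{sec:mod-sp-twismaps}.
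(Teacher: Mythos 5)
Your proposal is correct in its overall structure and hits the main ideas, but it takes a noticeably different route in places than the paper does. For part~(1), you construct the extension globally by writing down $\tilde L = \pi^*L(-\sum a_j p_j)$ and the sections $\tilde s_i$ directly; this is precisely the content of the paper's Proposition~\ref{prop:lin-ser->univ-tuning-stack-map}, which the paper establishes only \emph{after} proving part~(1) and uses to prove part~(2). The paper's own proof of part~(1) instead runs a local DVR computation (Proposition~\ref{prop:univ-tuning-stack-local-computation}) followed by a descent/approximation argument to pass from strict Henselizations to an \'etale cover; your global approach shortcuts this but requires you to independently check (as you do) that the $\tilde s_i$ are regular sections with no common zero. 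For part~(2), your argument is genuinely more elementary: the paper derives uniqueness from the universal property of the tuning stack, while you use the direct observation that two linear series inducing the same rational point differ by a twist $(M,t)$ with $m_x' = m_x + \nu_x(t)$ and minimality pins down $\nu_x(t)=0$, and you get existence by iterated reduction. This is a clean, self-contained alternative that avoids relying on part~(1). Part~(3) is essentially the paper's argument via Corollary~\ref{cor:push:round:down}.

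One concrete error to fix: you assert that ``the $\mu_{r_j}$-action on $\tilde L|_{p_j}$ is through $\chi_j^{-a_j}$,'' but by Remark~\ref{rmk:character} the line bundle $\cO_\cC(p_j)$ has character $\chi_j^{-1}$ on its fiber at $p_j$, so $\cO_\cC(-a_j p_j)$ has character $\chi_j^{a_j}$, and hence $\tilde L|_{p_j}$ carries $\chi_j^{a_j}$, not $\chi_j^{-a_j}$. The $\chi_j^{-a_j}$ in the theorem statement is the induced morphism on stabilizers, i.e., the character on $\overline{P}^*\cO_{\Pcv}(-1) = \tilde L^\vee$, since $\Pcv\to\mathcal{B}\bG_m$ classifies the tautological bundle $\cO(-1)$. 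The sign slip is harmless for the representability conclusion (that needs only $\gcd(a_j,r_j)=1$) but should be corrected to match the statement. A second, minor point: in part~(3) you conclude $\delta_{x_j}(P) = (a_j/r_j)[k(x_j):k]$ by matching the totals $\height - \height^{st}$; strictly one should identify the individual local contributions via the cokernel $\mathrm{coker}(\pi^*\pi_*\tilde L^\vee\to\tilde L^\vee)_{x_j}$ as the paper does, rather than distributing the global sum.
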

	
	\begin{rmk} Since the quotient map $\bA^{N + 1} \setminus 0 \to \Pcv$ can be identified with the total space of $\cO(-1)$, the morphism of stabilizers $\mu_r \to \bG_m$ induced by the structure map $\Pcv \to \mathcal{B} \bG_m$ can be identified with the character of the stabilizer $\mu_r$ acting on the fiber of $\cO(-1)$. By the theorem, the action of $\mu_{r_j}$ on the fiber of the line bundle $\bar{P}^*\cO(1)$ on the universal tuning stack $\cC$ is then given by $\chi_j^{a_j}$. Combining this with Proposition \ref{prop:twisted:pic} and Remark \ref{rmk:character}, we conclude that the the class in $\Pic(\cC)/\Pic(C)$ of $\overline{P}^*\cO(1)$ is
		$$
		\sum _j (-a_{\min}(x_j) \mod r_{\min}(x_j)) \in \bigoplus_j \Z/r_{\min}(x_j)\Z
		$$
		where the sum runs over the indeterminacy points of the weighted linear series.    
	\end{rmk}
	
	\subsection{Characterizing the universal tuning stack}
	
	In this subsection, we prove Theorem \ref{thm:tuning-stacks-and-minimal-linear-series} (\ref{thm:tuning-stacks-and-minimal-linear-series::tuning-stack}). The following is the key local computation needed to prove the result.
	
	
	\begin{prop}\label{prop:univ-tuning-stack-local-computation}
		Let $R$ be a DVR over a field $k$ and let $\Omega$ be the fraction field of $R$. Consider the $\Omega$-point $(s_0,\dots,s_N)$ of $\bA^{N+1}\setminus0$ which yields a map $f\colon\Spec\Omega\to\Pcv$. Assume $\lambda_0,\dots,\lambda_N$ are prime to the characteristic of $k$. Let $s_j$ have valuation $\nu_j$ and suppose
		\begin{equation}\label{eqn:minnu0-extend-DVR}
			\frac{\nu_0}{\lambda_0}=\min_j\left\{\frac{\nu_j}{\lambda_j}\right\}.
		\end{equation}
		Let
		\[
		r=\frac{\lambda_0}{\gcd(\nu_0,\lambda_0)}
		\]
		and $\cX$ be the $r$-root stack of $\Spec R$ at the closed point. Then $f$ extends uniquely (up to unique $2$-isomorphism) to a map $g\colon\cX\to\Pcv$. Furthermore, $g$ is representable.
	\end{prop}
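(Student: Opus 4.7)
The plan is to construct the extension $g$ explicitly by producing on $\cX$ a line bundle $L$ together with sections $\tilde s_i \in H^0(\cX, L^{\otimes \lambda_i})$ that have empty base locus and restrict to $s_i$ on $\Spec\Omega$. Let $t$ be a uniformizer of $R$, so that $\cX = [\Spec R[u]/(u^r - t)/\mu_r]$ with $\mu_r$ acting on $u$ by $u \mapsto \zeta u$, and let $p \subset \cX$ denote the residual $\mu_r$-gerbe above the closed point. Set $d = \gcd(\nu_0, \lambda_0)$ and $a = \nu_0/d$, so that $r = \lambda_0/d$ and $\gcd(a,r) = 1$ automatically. I would take $L = \cO_\cX(-ap)$, which is a genuine line bundle on $\cX$ by Proposition \ref{prop:twisted:pic}.

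For existence, the key local computation is that on the \'etale atlas $U = \Spec R[u]/(u^r - t)$ the uniformizer $u$ satisfies $u^r = t$, so a rational function $s$ on $\Spec R$ of $t$-adic valuation $\nu$ has $u$-adic valuation $r\nu$ on $U$, i.e.\ $\nu_p(s) = r\nu$. Viewing each $s_i \in \Omega$ as a rational section of $L^{\otimes \lambda_i} = \cO_\cX(-a\lambda_i p)$, regularity amounts to $r\nu_i \ge a\lambda_i$, i.e.\ $\nu_i/\lambda_i \ge \nu_0/\lambda_0$, which is exactly hypothesis \eqref{eqn:minnu0-extend-DVR}. Moreover $\nu_p(s_0) = r\nu_0 = a\lambda_0$, so $\tilde s_0$ is nowhere vanishing at $p$, while at the generic point $(s_0,\dots,s_N) \neq 0$ by assumption; the tuple $(L, \tilde s_0, \dots, \tilde s_N)$ thus has empty base locus and defines a morphism $g \colon \cX \to \Pcv$ that restricts to $f$.

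For representability, Remark \ref{rmk:character} identifies the $\mu_r$-action on $\cO_\cX(p)|_p$ with the character $\chi^{-1}$, so that the fiber $L|_p$ carries the character $\chi^a$. The composition $\cX \to \Pcv \to \mathcal{B}\bG_m$ classifies $L$, and $\Pcv \to \mathcal{B}\bG_m$ is representable since $\cO_{\Pcv}(1)$ is a uniformizing line bundle in the sense of Definition \ref{def:cyc_uni}; hence $g$ is representable if and only if $\chi^a \colon \mu_r \to \bG_m$ is injective, i.e.\ iff $\gcd(a,r) = 1$, which was arranged by construction. For uniqueness, I would pull any two extensions of $f$ back to the DVR $U$ and apply the valuative criterion of separatedness to $\Pcv$; this yields a unique $2$-isomorphism on $U$ which, by uniqueness, is automatically $\mu_r$-equivariant and thus descends to the unique $2$-isomorphism of extensions on $\cX$.

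The main obstacle is the local computation in the second step: identifying the correct ``slope'' $a/r = \nu_0/\lambda_0$ of the line bundle $L$, and translating the valuative hypothesis \eqref{eqn:minnu0-extend-DVR} into the statement that each $s_i$ becomes a regular section of $L^{\otimes \lambda_i}$ on $\cX$ with the right order of vanishing at $p$. Once $L = \cO_\cX(-ap)$ and $\tilde s_i = s_i$ are in hand, representability reduces to the coprimality $\gcd(a,r) = 1$ that is automatic from $r = \lambda_0/\gcd(\nu_0, \lambda_0)$, and uniqueness is a formal consequence of the separatedness of $\Pcv$.
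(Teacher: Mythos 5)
Your construction is correct and is essentially the same as the paper's, just presented in the language of line bundles rather than explicit coordinates: the paper acts by $(u')^{-r\nu_0/\lambda_0}=(u')^{-a}$ on the atlas $\Spec R'$ and then descends; you instead set $L=\cO_{\cX}(-ap)$, which is precisely the descent of that twist, and invoke the identification of $\Pcv$ with the stack of basepoint-free $\lambdavec$-weighted linear series to produce $g$. The valuation check $r\nu_i\geq a\lambda_i\Leftrightarrow\nu_i/\lambda_i\geq\nu_0/\lambda_0$, the nonvanishing of $\tilde s_0$ at $p$, and the representability via $\gcd(a,r)=1$ are identical computations. The one small stylistic difference is your uniqueness argument: you re-derive it by pulling back to the atlas and using the valuative criterion of separatedness plus descent of the $2$-isomorphism, whereas the paper simply cites \cite[Prop.~1.2]{FMN} for normal source and separated DM target; your sketch is correct but could be tightened by making explicit that the uniqueness of the $2$-isomorphism over $\Spec R'$ forces $\mu_r$-equivariance.
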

	\begin{proof}
		Upon showing that $f$ extends to $g$, since $\cC$ and $\Pcv$ are separated Deligne--Mumford stacks and $\cC$ is normal, \cite[Prop. 1.2]{FMN} shows that $g$ is unique up to unique $2$-isomorphism.
		
		We now turn to the construction of $g$. Let $u$ be a uniformizer for $R$. Recall that $\cX=[\Spec R'/\mu_r]$, where $R'=R[u']/((u')^r-u)$ and $\mu_r$ acts with weight $1$ on $u'$. Let $\Omega'$ be the fraction field of $R'$. Write $s_j=u^{\nu_j}t_j$, where $t_j$ has valuation $0$. Our $\Omega$-point $(s_0,\dots,s_N)\in\bA^{N+1}\setminus0$ can be viewed as an $\Omega'$-point; we show that after acting by $\bG_m(\Omega')$, we may extend it to an $R'$-point. Notice that $\lambda_0\mid r\nu_0$; acting by $(u')^{-r\nu_0/\lambda_0}\in\bG_m(\Omega')$, we obtain
		\[
		s':=(u')^{-\frac{r\nu_0}{\lambda_0}}*(s_0,\dots,s_N)=(t_0,(u')^{r\nu_1-\frac{r\nu_0}{\lambda_0}\lambda_1}t_1,\dots,(u')^{r\nu_N-\frac{r\nu_0}{\lambda_0}\lambda_N}t_N).
		\]
		By \eqref{eqn:minnu0-extend-DVR}, we see
		\[
		r\nu_j-\frac{r\nu_0}{\lambda_0}\lambda_j\geq0
		\]
		and so $s'$ defines an $R'$-point of $\bA^{N+1}$. Since the first coordinate of $s'$ has valuation $0$, we see $s'$ is an $R'$-point of $\bA^{N+1}\setminus0$. Furthermore, if we let
		\[
		\chi\colon\mu_r\to\bG_m,\quad\quad\chi(\zeta)=\zeta^{-\frac{r\nu_0}{\lambda_0}}
		\]
		then we have a commutative diagram
		\[
		\xymatrix{
			\mu_r\times\Spec R'\ar[r]^-{\chi\times s'}\ar[d]_-{\sigma'} & \bG_m\times(\bA^{N+1}\setminus0)\ar[d]^-{\sigma}\\
			\Spec R'\ar[r]^-{s'} & \bA^{N+1}\setminus0
		}
		\]
		where $\sigma$ and $\sigma'$ are the two action maps. As a result, $s'$ induces a map
		\[
		\xymatrix{
			\cX=[\Spec R'/\mu_r]\ar[r]^-{g} & \Pcv
		}
		\]
		which restricts to $f$ on $[\Spec\Omega'/\mu_r]=\Spec\Omega$.
		
		Lastly, we see $\frac{r\nu_0}{\lambda_0}=\frac{\nu_0}{\gcd(\nu_0,\lambda_0)}$ and $r=\frac{\lambda_0}{\gcd(\nu_0,\lambda_0)}$ are relatively prime. As a result, $\chi$ is injective, and so $g$ is representable.
	\end{proof}

	\begin{proof}[{Proof of Theorem \ref{thm:tuning-stacks-and-minimal-linear-series} (\ref{thm:tuning-stacks-and-minimal-linear-series::tuning-stack})}]
		Let $\pi\colon\cC\to C$ be the root stack obtained by taking the $r_i$-th root at $x_i$. By construction, $\pi$ is an isomorphism over $C\setminus\{x_i\}$. We show there is a representable map $g\colon\cC\to\Pcv$ which agrees with $f$ on $C\setminus\{x_i\}$. Upon showing this, we see that, by definition, $(\cC,\pi,g)$ is a tuning stack; moreover, by Remark \ref{rmk:universal-tuning-stack-rep}, $(\cC,\pi,g)$ is the universal tuning stack.
		
		It remains to prove the existence of a representable map $g\colon\cC\to\Pcv$ extending $f$. We first note that it is enough to show the existence of an \'etale cover $\{V_\ell\to C\}_\ell$ and representable maps $g_\ell\colon\cC\times_C V_\ell\to\Pcv$ that agree with $f$ over the inverse image of $C\setminus\{x_i\}$ under the map $V_\ell\to C$. Indeed, by \cite[Prop. 1.2]{FMN}, any such $g_\ell$ is uniquely determined (up to unique $2$-isomorphism) by $f$; hence, the $g_\ell$ descend to yield a map $g\colon\cC\to\Pcv$. Since representability can be checked on geometric points \cite[Cor. 2.2.7]{Conrad}, representability of the $g_\ell$ imply representability of $g$.
		
		To construct our desired \'etale cover $\{V_\ell\to C\}_\ell$ and representable maps $g_\ell\colon\cC\times_C V_\ell\to\Pcv$, it is enough to do so at the strict Henselization $R_i$ of each point of indeterminacy $x_i$. To see this, we may write $\Spec R_i$ as an inverse limit of affine \'etale neighborhoods $V_{i,\ell}\to C$ of $x_i$; then by Proposition B.1 and Proposition B.3 of \cite{rydh}, for any representable map $g_i\colon\cC\times_C\Spec R_i\to\Pcv$ which agrees with $f$, there is some index $\ell$ and an extension $g_{i,\ell}\colon\cC\times_C V_{i,\ell}\to\Pcv$ of $g_i$ where $g_{i,\ell}$ is representable and agrees with $f$.
		
		For the remainder of the proof, we fix a point of indeterminacy $x_i$. Then $R_i$ is a DVR by \cite[\href{https://stacks.math.columbia.edu/tag/0AP3}{Tag 0AP3}]{Stacks}; let $\Omega_i$ be its fraction field. The rational map $f\colon C\dasharrow\Pcv$ yields a morphism $f_i\colon\Spec\Omega_i\to\Pcv$. We continue to denote by $(L,s_0,\dots,s_N)$ the pullback of the linear series to $\Spec R_i$. Since $L$ is trivial over $\Spec\Omega_i$, we see that $f_i$ lifts to the cover $\Spec\Omega_i\to\bA^{N+1}\setminus0$, where this latter map is defined by the $\Omega_i$-point $(s_0,\dots,s_N)$. Letting $\nu_j$ denote the order of vanishing of $s_j$ at the closed point of $\Spec R_i$, we may assume without loss of generality that
		\[
		\frac{\nu_0}{\lambda_0}=\min_j\left\{ \frac{\nu_j}{\lambda_j}\right\}.
		\]
		By Proposition \ref{prop:univ-tuning-stack-local-computation}, we see $f_i$ extends to a representable map $g_i\colon\cC\times_C\Spec R_i\to\Pcv$.
	\end{proof}

	\subsection{Uniqueness of minimal linear series}
	
	In this subsection, we prove Theorem 3.3(\ref{thm:tuning-stacks-and-minimal-linear-series::minimal-lin-ser}). We begin by associating a minimal linear series on $C$ to any morphism $\cC\to\Pcv$, where $\cC$ is a tame root stack over $C$.
	
	\begin{prop}\label{prop:tuningstack->min-lin-ser}
		Let $C$ be a smooth proper curve over a field $k$, let $x_1,\dots,x_m\in C$, and let $r_1,\dots,r_m$ be positive integers prime to the characteristic of $k$. Let $\pi\colon\mathcal{C}\to C$ be the root stack obtained by taking the $r_j$-th root at $x_j$. Let $y_j$ denoted the reduced preimage of $x_j$. By definition, $\cC$ carries distinguished line bundles with section $(\cO_{\cC}(y_j),u_j)$ and isomorphisms $\iota_j\colon\cO_{\cC}(y_j)^{\otimes r_j}\xrightarrow{\simeq}\pi^*\cO_C(x_j)$ with $\iota_j(u_j^{\otimes r_j})$ vanishing to order $1$ at $x_j$.
		
		Let $g\colon\mathcal{C} \to \mathcal{P}(\vec{\lambda})$ be a morphism induced by a basepoint-free $\vec{\lambda}$-weighted linear series $(\mathcal{L}, t_0,\ldots, t_N)$. Let $\chi_j\colon\mu_{r_j}\to\bG_m$ denote the canonical embedding and suppose that the stabilizer $\mu_{r_j}$ of $y_j$ acts on $\mathcal{L}$ through the character $\chi_j^{a_j}$ with $0\leq a_j<r_j$. Then 
		$$
		\left(\pi_*\mathcal{L}(\sum_j a_j y_j), \pi_*(t_0 \prod_j u_j^{\lambda_0 a_j}), \ldots, \pi_*(t_N \prod_j u_j^{\lambda_N a_j})\right)
		$$
		is a minimal weighted linear series on $C$ inducing the rational map $f\colon C \dashrightarrow \mathcal{P}(\vec{\lambda})$ given by factoring $g$ through the coarse space. 
	\end{prop}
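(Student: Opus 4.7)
The plan is to verify in turn the four ingredients of the claim: that $L := \pi_*(\cL \otimes \cO_\cC(\sum_j a_j y_j))$ is an honest line bundle on $C$, that the proposed $s_i := \pi_*(t_i \prod_j u_j^{\lambda_i a_j})$ lie in $H^0(C, L^{\lambda_i})$, that the resulting rational map on $C$ agrees with the one induced by $g$ on the coarse space, and finally that the new linear series is minimal.

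For the first two points, I would argue via characters and the pushforward formula from Corollary 2.16. By Remark 2.14, the section $u_j$ of $\cO_\cC(y_j)$ carries the character $\chi_j^{-1}$ on the fiber at $y_j$. Since $\cL$ carries $\chi_j^{a_j}$, the twist $\cL \otimes \cO_\cC(\sum_j a_j y_j)$ has trivial character at every stacky point, so by Corollary 2.16 it is of the form $\pi^* L$ with $L = \pi_*(\cL \otimes \cO_\cC(\sum_j a_j y_j))$. Taking $\lambda_i$-th tensor powers gives $\pi^* L^{\lambda_i} = \cL^{\lambda_i} \otimes \cO_\cC(\sum_j \lambda_i a_j y_j)$, and the element $t_i \prod_j u_j^{\lambda_i a_j}$ is manifestly a global section of this bundle. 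Pushing forward yields $s_i \in H^0(C, L^{\lambda_i})$. That the new series induces the same rational map as $f$ follows because $\pi$ is an isomorphism away from the $x_j$'s and each $u_j$ is invertible there; modulo common non-vanishing trivializations of $\cO_\cC(\sum_j \lambda_i a_j y_j)$, the $s_i$ and $t_i$ agree on the open locus where neither vanishes.

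I expect the main obstacle to be the minimality check, which rests on a local character-vanishing computation at each $y_j$. In the \'etale local chart $[\Spec R_j[u']/((u')^{r_j}-u)/\mu_{r_j}]$, a section of a line bundle with character $\chi_j^{b}$ is a $\mu_{r_j}$-equivariant function whose expansion in $u'$ has only terms of weight $-b$; equivalently, its vanishing order $\nu_{y_j}$ is congruent to $-b \pmod{r_j}$. Applied to $t_i$, this gives $\nu_{y_j}(t_i) \equiv -\lambda_i a_j \pmod{r_j}$, hence $t_i \prod_j u_j^{\lambda_i a_j}$ vanishes to order $\nu_{y_j}(t_i) + \lambda_i a_j$, a multiple of $r_j$, so that $\nu_{x_j}(s_i) = (\nu_{y_j}(t_i) + \lambda_i a_j)/r_j$. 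Any indeterminacy of the new series must be one of the $x_j$ with $a_j > 0$, since away from the $x_j$'s the series $(\cL, t_0, \ldots, t_N)$ is basepoint-free and agrees with $(L, s_0, \ldots, s_N)$ up to units. At such an $x_j$, basepoint-freeness of the $t_i$'s at $y_j$ supplies an index $i$ with $t_i(y_j) \ne 0$, which forces the character $\chi_j^{\lambda_i a_j}$ to be trivial and $\nu_{y_j}(t_i) = 0$. Substituting into the formula yields $\nu_{x_j}(s_i) = \lambda_i a_j/r_j < \lambda_i$, since $a_j < r_j$; this is precisely the minimality condition.
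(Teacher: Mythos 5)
Your proposal is correct and takes essentially the same approach as the paper: you identify the twist $\cL(\sum_j a_j y_j)$ as having trivial stabilizer characters so that it descends to $L$ on $C$, push forward the twisted sections using tameness and the projection formula, and establish minimality by choosing at each $y_j$ an index $i$ with $t_i(y_j)\neq 0$ and computing $\nu_{x_j}(s_i)=\lambda_i a_j/r_j<\lambda_i$. The only cosmetic difference is that you package the integrality of $\lambda_i a_j/r_j$ as a congruence on vanishing orders, whereas the paper derives it directly from triviality of the character of $\cL^{\lambda_i}$ at $y_j$.
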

	\begin{proof} By construction, the stabilizers of $\mathcal{C}$ act trivially on $\mathcal{L}(\sum_j a_j y_j)$. Thus $\mathcal{L}(\sum_j a_j y_j) = \pi^*L$ for a unique line bundle $L$ on $C$, e.g., by \cite[Thm. 10.3]{Alper}. Since $\pi$ is tame, we have $\pi_*\mathcal{O}_{\mathcal{C}} = \mathcal{O}_C$ and so by the projection formula, 
		$$
		\pi_*\mathcal{L}(\sum_j a_j y_j) = \pi_*\pi^*L = L. 
		$$
		Now $\pi^*$ is multiplicative so $\mathcal{L}(\sum_j a_j y_j)^k = \pi^*L^k$ and again by the projection formula we have 
		$$
		\pi_*(\mathcal{L}(\sum_j a_j y_j)^k) = \pi_*\pi^*L^k = L^k = (\pi_*\mathcal{L}(\sum_j a_j y_j))^k.
		$$
		We conclude that 
		$$
		\pi_*(t_i \prod_j u_j^{\lambda_i a_j}) =: s_i
		$$
		is a section of $L^{\lambda_i}$ as required. Moreover, $\pi$ is an isomorphism on $C\setminus\{x_j\}$ so the rational map $f$ induced by the weighted linear series $(L, s_0, \ldots, s_N)$ agrees with $g$ on $C\setminus\{x_j\}$. 
		
		Finally, we check that $(L, s_0, \ldots, s_N)$ is minimal. Since $g$ is a morphism, for each $j$ there exists an $i$ such that $t_i$ does not vanish at $y_j$. Thus the order of vanishing of $t_i\prod_j u_j^{\lambda_i a_j}$ at $y_j$ is $\lambda_i a_j$ and the order of vanishing of $s_i$ at $x_j$ is 
		$$
		\frac{\lambda_i a_j}{r_j} < \lambda_i,$$
		proving minimality of the weighted linear series. Note that $\frac{\lambda_i a_j}{r_j}$ is an integer since $s_i$ does not vanish at $y_j$, showing that $\mathcal{L}^{\lambda_i}$ is trivial when pulled back to the residual gerbe at $y_j$; since the stabilizer $\mu_{r_j}$ acts on $\mathcal{L}^{\lambda_i}$ through the character $\chi_j^{-\lambda_i a_j}$ and since this character must be trivial, we see $r_j\mid\lambda_i a_j$.
	\end{proof}

	Next we consider the converse to Proposition \ref{prop:tuningstack->min-lin-ser} in the case where $\cC$ is the universal tuning stack, characterizing the weighted linear series of $\cC\to\Pcv$ in terms of the weighted linear series of $C\dasharrow\Pcv$.
	
	\begin{prop}\label{prop:lin-ser->univ-tuning-stack-map}
		Let $(L, s_0,\ldots, s_N)$ be a weighted linear series on $C$ inducing a rational map $f\colon C\dasharrow\Pcv$, and assume $\lambda_0,\dots,\lambda_N$ are prime to the characteristic. Let $(\cC,\pi,\overline{P})$ be as in Theorem \ref{thm:tuning-stacks-and-minimal-linear-series} (\ref{thm:tuning-stacks-and-minimal-linear-series::tuning-stack}). Let 
		\[
		a_j = \frac{\nu_{x_j}(s_{i_j})}{\gcd(\nu_{x_j}(s_{i_j}),\lambda_{i_j})}
		\]
		where
		\[
		\frac{\nu_{x_j}(s_{i_j})}{\lambda_{i_j}}=\min_i\left\{\frac{\nu_{x_j}(s_i)}{\lambda_i}\right\}.
		\]
		Then the morphism $\overline{P}\colon\cC\to\Pcv$ is defined by the weighted linear series $(\mathcal{L},t_0,\dots,t_N)$ with
		\[
		\mathcal{L}=\pi^*L(-\sum a_j y_j)\quad\textrm{and}\quad
		t_i = \frac{\pi^*s_i}{\prod_j u_j^{\lambda_i a_j}}.
		\]
		
		Furthermore, if $(L,s_0,\dots,s_N)$ is minimal and $(L',s'_0,\dots,s'_N)$ denotes the minimal linear series obtained from $(\mathcal{L},t_0,\dots,t_N)$ in Proposition \ref{prop:tuningstack->min-lin-ser}, then there is a canonical isomorphism $\beta\colon L\xrightarrow{\simeq} L'$ such that $\beta(s_i)=s'_i$.
	\end{prop}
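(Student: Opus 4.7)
The plan is to verify that the formula in the proposition defines a basepoint-free, representable $\lambdavec$-weighted linear series on $\cC$, to identify its induced morphism with $\overline{P}$ by uniqueness of extensions, and then to trace through Proposition~\ref{prop:tuningstack->min-lin-ser} in the minimal case. First I would check that each $t_i = \pi^*s_i / \prod_j u_j^{\lambda_i a_j}$ is a well-defined section of $\mathcal{L}^{\otimes\lambda_i} = \pi^*L^{\otimes\lambda_i}(-\sum_j \lambda_i a_j y_j)$; this amounts to showing that $\pi^*s_i$ vanishes at $y_j$ to order at least $\lambda_i a_j$. Since pullback along $\pi$ multiplies the vanishing order at $x_j$ by the ramification index $r_j$, this reduces to $r_j\nu_{x_j}(s_i)\geq \lambda_i a_j$, equivalently $a_j/r_j = \nu_{x_j}(s_{i_j})/\lambda_{i_j}\leq \nu_{x_j}(s_i)/\lambda_i$, which is precisely the minimizing condition defining $i_j$ in Definition~\ref{def:rmin-for-any-lin-ser}. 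For basepoint-freeness at $y_j$ the choice $i=i_j$ works: then $\pi^*s_{i_j}$ vanishes to order exactly $r_j\nu_{x_j}(s_{i_j})=\lambda_{i_j} a_j$, so $t_{i_j}$ is a unit at $y_j$; away from the $y_j$ the map $\pi$ is an isomorphism and basepoint-freeness is inherited from the fact that $f$ is a morphism there.

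Next I would check representability of the induced morphism $g\colon\cC\to\Pcv$ and then identify $g\simeq\overline{P}$. The $\mu_{r_j}$-character on $\mathcal{L}|_{y_j}$ factors as the trivial character on $\pi^*L|_{y_j}$ tensored with the character on $\cO_\cC(-a_j y_j)|_{y_j}$; by Remark~\ref{rmk:character}, $\cO_\cC(y_j)|_{y_j}$ carries $\chi_j^{-1}$, so $\mathcal{L}|_{y_j}$ carries $\chi_j^{a_j}$. Since $\gcd(a_j,r_j)=1$ by construction in Definition~\ref{def:rmin-for-any-lin-ser}, this character is faithful and so $g$ is representable. Both $g$ and $\overline{P}$ are then representable morphisms from the smooth Deligne--Mumford stack $\cC$ to the separated Deligne--Mumford stack $\Pcv$ (using that the $\lambda_i$ are prime to the characteristic, cf.\ Remark~\ref{rmk:wtprojtame}) that restrict to $f$ on the dense open $\cC\setminus\{y_j\}\cong C\setminus\{x_j\}$, so by \cite[Prop.~1.2]{FMN}, invoked as in the proof of Theorem~\ref{thm:tuning-stacks-and-minimal-linear-series}, they agree up to unique $2$-isomorphism.

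Finally, if $(L,s_0,\dots,s_N)$ is minimal, applying Proposition~\ref{prop:tuningstack->min-lin-ser} to $(\mathcal{L},t_0,\dots,t_N)$ produces a minimal series with line bundle $L'=\pi_*\mathcal{L}(\sum_j a_j y_j)=\pi_*\pi^*L$, which the projection formula combined with tameness $\pi_*\cO_\cC=\cO_C$ identifies canonically with $L$; under this identification the sections become $s'_i = \pi_*(t_i\prod_j u_j^{\lambda_i a_j})=\pi_*\pi^*s_i = s_i$, supplying the canonical isomorphism $\beta$. The main technical point will be the character bookkeeping in the second paragraph: one must carefully unpack the sign conventions of Remark~\ref{rmk:character} so that the character $\chi_j^{a_j}$ computed here on $\mathcal{L}$ matches the character on $\overline{P}^*\cO(1)$ predicted by the remark following Theorem~\ref{thm:tuning-stacks-and-minimal-linear-series}, confirming that the weighted linear series we have written down is indeed the one pulling back the universal series on $\Pcv$.
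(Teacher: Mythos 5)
Your proposal follows essentially the same approach as the paper's proof: you verify regularity of the $t_i$ via the minimizing inequality $r_j\nu_{x_j}(s_i)\geq\lambda_i a_j$, check basepoint-freeness at $y_j$ via $t_{i_j}$, identify the resulting morphism with $\overline{P}$ using \cite[Prop.~1.2]{FMN}, and obtain $\beta$ from the tame adjunction $\pi_*\pi^*L\simeq L$ (your explicit representability check via faithfulness of the character is not needed, since it already follows from identification with $\overline{P}$, but it is correct). One small step you should make explicit in the \emph{furthermore} part: Proposition~\ref{prop:tuningstack->min-lin-ser} is stated with the normalization $0\leq a_j<r_j$, and to conclude $L'=\pi_*\cL(\sum_ja_jy_j)=\pi_*\pi^*L$ one must know $a_j$ as defined in the statement lies in that range; this is where minimality enters, since minimality gives some $\ell_j$ with $\nu_{x_j}(s_{\ell_j})<\lambda_{\ell_j}$, whence $\nu_{x_j}(s_{i_j})/\lambda_{i_j}\leq\nu_{x_j}(s_{\ell_j})/\lambda_{\ell_j}<1$ and so $a_j<r_j$. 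Finally, your character bookkeeping $\chi_j^{a_j}$ on $\cL$ is the one consistent with Remark~\ref{rmk:character} and with the remark following Theorem~\ref{thm:tuning-stacks-and-minimal-linear-series}; the paper's written proof states $\chi_j^{-a_j}$, which appears to be a sign typo, as the subsequent application of Proposition~\ref{prop:tuningstack->min-lin-ser} implicitly uses the sign you computed.
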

	
	\begin{proof} A priori, $t_i$ is only a rational section of $\mathcal{L}^{\lambda_i}$. We first show it is a regular section. By construction, $\pi \colon \mathcal{C} \to C$ is the $r_j^{th}$ root stack along $x_j$ where
		$$
		r_j = \frac{\lambda_{i_j}}{\gcd(\nu_{x_j}(s_{i_j}),\lambda_{i_j})}. 
		$$
		For all $j$, we have 
		$$
		\nu_{y_j}(\pi^*s_i) = r_j\nu_{x_j}(s_i) = \frac{\lambda_{i_j} \nu_{x_j}(s_i)}{\gcd(\nu_{x_j}(s_{i_j}), \lambda_{i_j})} \geq \frac{\lambda_i \nu_{x_j}(s_{i_j})}{\gcd(\nu_{x_j}(s_{i_j}), \lambda_{i_j})} = \lambda_i a_j
		$$
		since the index $i_j$ minimizes the ratio  $\nu_{x_j}(s_i)/\lambda_i$. Therefore $t_i$ is actually a regular section of $\mathcal{L}^{\lambda_i}$ and $(\mathcal{L}, t_0, \ldots, t_N)$ is a well defined $\vec{\lambda}$-weighted linear series on $\mathcal{C}$.
		
		Next, by construction, $\nu_{y_j}(t_{i_j}) = 0$. Therefore $(\mathcal{L},t_0,\dots,t_N)$ is a basepoint-free weighted linear series and thus induces a morphism $\mathcal{C} \to \mathcal{P}(\vec{\lambda})$. This morphism agrees with $f$ away from the basepoints $x_j$ so by \cite[Prop. 1.2]{FMN}, it is uniquely isomorphic to $\overline{P}$ up to unique $2$-isomorphism.

		Lastly, suppose $(L,s_0,\dots,s_N)$ is minimal. Then, for every $j$, there exists $s_{\ell_j}$ with $\nu_{x_j}(s_{\ell_j})<\lambda_{\ell_j}$. Since $\frac{\nu_{x_j}(s_{i_j})}{\lambda_{i_j}}\leq \frac{\nu_{x_j}(s_{\ell_j})}{\lambda_{\ell_j}}$, it follows that $\nu_{x_j}(s_{i_j})<\lambda_{i_j}$, and hence, $0\leq a_j<r_j$. Let $\chi_j\colon\mu_{r_j}\to\bG_m$ denote the canonical embedding; since the stabilizers of $\cC$ act trivially on $\pi^*L$, we see the stabilizer $\mu_{r_j}$ at $y_j$ acts on $\mathcal{L}$ via the character $\chi_j^{-a_j}$. Having now verified that the hypotheses of Proposition \ref{prop:tuningstack->min-lin-ser} hold, we see 
		\[
		\xymatrix{
			L'=\pi_*\mathcal{L}(\sum_ja_jy_j)=\pi_*\pi^*L & L\ar[l]_-{\beta}^-{\simeq}
		}
		\]
		where $\beta$ is the adjuncation map; it is an isomorphism since $\cC$ is a tame Deligne--Mumford stack, so we have a canonical isomorphism $\pi_*\cO_\cC\simeq\cO_C$ and $\beta$ is the composition of the canonical isomorphisms $\pi_*\pi^*L\simeq L\otimes\pi_*\cO_\cC\simeq L$. Furthermore, by construction 
		\[
		\beta(s_i)=\pi_*\pi^*(s_i)=\pi_*(t_i\prod_ju_j^{\lambda_ia_j})=s'_i.\qedhere
		\]
	\end{proof}

	We are now ready to prove Theorem \ref{thm:tuning-stacks-and-minimal-linear-series} (\ref{thm:tuning-stacks-and-minimal-linear-series::minimal-lin-ser}), thereby finishing the proof of Theorem \ref{thm:tuning-stacks-and-minimal-linear-series}.
	
	\begin{proof}[{Proof of Theorem \ref{thm:tuning-stacks-and-minimal-linear-series} (\ref{thm:tuning-stacks-and-minimal-linear-series::minimal-lin-ser})}]
		Let $f\colon C\dasharrow\Pcv$ be a rational map. Suppose it is induced by a minimal linear series $(L_1,s_{1,0},\dots,s_{1,N})$ and it is also induced by a minimal linear series $(L_2,s_{2,0},\dots,s_{2,N})$. We prove there exists a canonical isomorphism $\gamma\colon L_1\xrightarrow{\simeq} L_2$ with $\gamma(s_{1,i})=s_{2,i}$.
		
		For $\ell\in\{1,2\}$, let $\pi_\ell\colon\cC_\ell\to C$ be the root stack obtained by taking the $r_{\ell,j}$-th root at $x_j$, where $r_{\ell,j}=r_{\min}(x_j;L_\ell,s_{\ell,0},\dots,s_{\ell,N})$. Let $g_\ell\colon\cC_\ell\to\Pcv$ be the induced representable morphism constructed in Theorem \ref{thm:tuning-stacks-and-minimal-linear-series} (\ref{thm:tuning-stacks-and-minimal-linear-series::tuning-stack}) and let $(\mathcal{L}_\ell,t_{\ell,0},\dots,t_{\ell,N})$ be the corresponding basepoint-free weighted linear series. Let $(L'_\ell,s'_{\ell,0},\dots,s'_{\ell,N})$ be the minimal linear series on $C$ obtained from $(\mathcal{L}_\ell,t_{\ell,0},\dots,t_{\ell,N})$ in Proposition \ref{prop:tuningstack->min-lin-ser}. By Proposition \ref{prop:lin-ser->univ-tuning-stack-map}, we have canonical isomorphisms $\beta_\ell\colon L_\ell\xrightarrow{\simeq} L'_\ell$ with $\beta_\ell(s_{\ell,i})=s'_{\ell,i}$.
		
		Theorem \ref{thm:tuning-stacks-and-minimal-linear-series} (\ref{thm:tuning-stacks-and-minimal-linear-series::tuning-stack}) tells us that $(\cC_\ell,\pi_\ell,g_\ell)$ is a universal tuning stack for each $\ell$, so by universality, there is a canonical isomorphism $h\colon\cC_2\to\cC_1$ and $2$-isomorphism $\alpha\colon g_2\Rightarrow g_1\circ h$ with $\pi_2=\pi_1\circ h$. This yields an isomorphism $\alpha\colon h^*\mathcal{L}_1\xrightarrow{\simeq}\mathcal{L}_2$ with $\alpha(h^*(t_{1,i}))=t_{2,i}$. Applying $(\pi_2)_*$, and using the definitions of $(L'_\ell,s'_{\ell,0},\dots,s'_{\ell,N})$, we obtain an isomorphism $(\pi_2)_*\alpha\colon L'_1\xrightarrow{\simeq} L'_2$ sending $s'_{1,i}$ to $s'_{2,i}$. Composing with the isomorphisms $\beta_\ell$ yields our desired isomorphism $\gamma$.
	\end{proof}
	
	\subsection{The height of a minimal linear series}
	
	In this section we finish the proof of Theorem \ref{thm:tuning-stacks-and-minimal-linear-series} by computing the height of a rational point in terms of the minimal linear series. 
	
	\begin{proof}[{Proof of Theorem \ref{thm:tuning-stacks-and-minimal-linear-series} (\ref{thm:tuning-stacks-and-minimal-linear-series::height})}]
		The stable height $\height^{st}_{\cO(1)}(P) = \deg(\overline{P}^*\cO(1)) = \deg \cL$ by definition. By Proposition \ref{prop:lin-ser->univ-tuning-stack-map}, we have
		$$
		\cL \cong \pi^*L(-\sum a_j y_j)
		$$
		where $(L, s_0, \ldots, s_N)$ is the unique minimal linear series and $$a_j = a_{min}(x_j;L, s_0,\ldots,s_N).$$ Moreover, by minimality, $0 \le a_j < r_j$. By Corollary \ref{cor:push:round:down}, 
		$$
		\pi_*\cL^{\vee} = \pi_*\left(\pi^*L^{\vee}\left(\sum a_j y_j\right)\right) = L^{\vee}
		$$
		and so $\height_{\cO(1)}(P) = -\deg \pi_*\cL^{\vee} = \deg L$. Finally, the local contribution at $x_j$ is given by 
		$$
		\delta_{x_j}(P) = \deg \left(\mathrm{coker}(\pi^*\pi_*\cL^{\vee} \to \cL^{\vee})_{x_j}\right) = \deg \cO_{a_jy_j} = \frac{a_j}{r_j}[k(x_j):k]. 
		$$
	\end{proof}
	
	\subsection{The normalized linear series}\label{sec:normalized}  
	
	In this section, we recast the computation of the universal tuning stack in terms of the \emph{normalized linear series} which will, first, clarify the role of the minimum in the formulas for $r_{min}$ and $a_{min}$ and, second, be easier to work with in families. 
	
	Let $\kappa := \mathrm{lcm}\{\lambda_0, \ldots, \lambda_N\}$ and let $\bar{\lambda}_j := \kappa/\lambda_j$ so that $\lambda_j \bar{\lambda}_j = \kappa$. Then there is a natural map 
	$$
	\Pcv \to \Pc(\underbrace{\kappa, \ldots, \kappa}_{N+1})
	$$
	induced on $T$-points by taking
	$$
	(L, s_0, \ldots, s_N) \mapsto (L, s_0^{\bar{\lambda}_j}, \ldots, s_N^{\bar{\lambda}_j})
	$$
	where $s_i^{\bar{\lambda}_j}$ is a section of 
	$$
	(L^{\otimes \lambda_j})^{\otimes \bar{\lambda}_j} = L^{\kappa}. 
	$$
	\begin{defn} Let $(L, s_0, \ldots, s_N)$ be a $\lambdavec$-weighted linear series. 
		\begin{enumerate}[(1)]
			\item The associated \emph{normalized linear series} is $\mathrm{Span}(s_0^{\blambda_0}, \ldots, s_N^{\blambda_N}) \subset H^0(T, L^{\otimes \kappa})$. 
			\item The \emph{normalized base locus} $\overline{Bs}$ is the scheme theoretic base locus of the normalized linear series:
			$$
			\overline{Bs}(L, s_0, \ldots, s_N) := \bigcap_{j = 1}^N \{s_j^{\blambda_j} = 0\}
			$$
		\end{enumerate}
	\end{defn} 
	
	\begin{prop}\label{prop:normalized:local:conditions}
		Let $(L, s_0, \ldots, s_N)$ be a $\vec{\lambda}$-weighted linear series on a curve $C$ and let $x$ be an indeterminacy point such that $\nu_j = \nu_x(s_j)$. Let $m$ be the multiplicity of $\overline{Bs}$
		at $x$. Then the stabilizer of the universal tuning stack at $x$ is given by $\mu_r$ with
		$$
		r = r_{min} = \frac{\kappa}{\gcd(m, \kappa)}
		$$
		and the character of the $\mu_r$-action on the line bundle $\cL$ on the universal tuning stack is given by $\chi^{-a}$ where 
		$$
		a = a_{min} = \frac{m}{\gcd(m, \kappa)}. 
		$$
	\end{prop}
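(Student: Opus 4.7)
The plan is to translate the statement into the language of Theorem~\ref{thm:tuning-stacks-and-minimal-linear-series}~(\ref{thm:tuning-stacks-and-minimal-linear-series::tuning-stack}) by matching the multiplicity $m$ of the normalized base locus with the minimum $\min_j \nu_j/\lambda_j$, and then performing a direct arithmetic simplification of the resulting $\gcd$.

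First, I would compute $m$ explicitly. Since the normalized base locus is the scheme-theoretic intersection $\bigcap_{j} \{s_j^{\bar\lambda_j} = 0\}$ on the smooth curve $C$, its multiplicity at $x$ is the minimum of the vanishing orders of the defining equations:
\[
m \;=\; \min_{j} \nu_x(s_j^{\bar\lambda_j}) \;=\; \min_{j} \bar\lambda_j \nu_j \;=\; \kappa \cdot \min_{j} \frac{\nu_j}{\lambda_j}.
\]
Fix an index $j_0$ achieving the minimum, so that $m/\kappa = \nu_{j_0}/\lambda_{j_0}$. By Theorem~\ref{thm:tuning-stacks-and-minimal-linear-series}~(\ref{thm:tuning-stacks-and-minimal-linear-series::tuning-stack}) applied to this $j_0$, the stabilizer at $x$ of the universal tuning stack is $\mu_{r_{\min}}$ with
\[
r_{\min} \;=\; \frac{\lambda_{j_0}}{\gcd(\nu_{j_0},\lambda_{j_0})}, \qquad a_{\min} \;=\; \frac{\nu_{j_0}}{\gcd(\nu_{j_0},\lambda_{j_0})},
\]
and the character is $\chi^{-a_{\min}}$. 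Note that $\gcd(r_{\min}, a_{\min}) = 1$.

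The remaining step is purely arithmetic: I need to show $\kappa/\gcd(m,\kappa) = r_{\min}$ and $m/\gcd(m,\kappa) = a_{\min}$. Write the fraction $\nu_{j_0}/\lambda_{j_0} = a_{\min}/r_{\min}$ in reduced form; the equality $m/\kappa = a_{\min}/r_{\min}$ gives $m \cdot r_{\min} = \kappa \cdot a_{\min}$. Since $\gcd(a_{\min}, r_{\min}) = 1$, we have $r_{\min} \mid \kappa$; writing $\kappa = r_{\min} s$, this forces $m = s \cdot a_{\min}$. Then
\[
\gcd(m, \kappa) \;=\; \gcd(s\, a_{\min},\, s\, r_{\min}) \;=\; s \cdot \gcd(a_{\min}, r_{\min}) \;=\; s,
\]
so $\kappa/\gcd(m,\kappa) = r_{\min}$ and $m/\gcd(m,\kappa) = a_{\min}$, which is exactly the claimed formula.

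The only mild subtlety is the well-definedness of $r_{\min}$ and $a_{\min}$: a priori these depend on the choice of minimizing index $j_0$, but the identity $\kappa/\gcd(m,\kappa) = r_{\min}$ and $m/\gcd(m,\kappa) = a_{\min}$, together with the fact that $m$ depends only on the weighted linear series, implicitly proves independence of the choice. There is no real obstacle here — the proposition is essentially a bookkeeping reformulation of Theorem~\ref{thm:tuning-stacks-and-minimal-linear-series}~(\ref{thm:tuning-stacks-and-minimal-linear-series::tuning-stack}) in terms of the normalized data, whose main utility will be in families where $m$ varies more tractably than the index $j_0$.
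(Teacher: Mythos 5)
Your proof is correct and follows essentially the same route as the paper's: identify $m$ as $\kappa\cdot\min_j\nu_j/\lambda_j$, apply Theorem~\ref{thm:tuning-stacks-and-minimal-linear-series}~(\ref{thm:tuning-stacks-and-minimal-linear-series::tuning-stack}), and then observe that $r_{\min}$ and $a_{\min}$ depend only on the reduced form of the ratio $\nu_{j_0}/\lambda_{j_0}=m/\kappa$. The only presentational difference is that you spell out the arithmetic (writing $\kappa=r_{\min}s$ and computing $\gcd(m,\kappa)=s$) where the paper simply invokes the general fact that $a/\gcd(a,b)$ depends only on the reduced fraction $a/b$.
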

	\begin{proof} In Theorem \ref{thm:tuning-stacks-and-minimal-linear-series} (\ref{thm:tuning-stacks-and-minimal-linear-series::tuning-stack}), we showed $r = r_{min}(x, L, s_0, \ldots, s_N)$ and $a = a_{min}(x, L, s_0, \ldots, s_N)$. First note that the index which minimizes the ratio $\nu_j/\lambda_j$ is the same as the index which minimizes
		$$
		\kappa \frac{\nu_j}{\lambda_j} = \blambda_j \nu_j = \nu_x(s_j^{\blambda_j}). 
		$$
		On the other hand, the minimum order of vanishing of $\{s_j^{\blambda_j}\}$ is exactly the multiplicity of the base locus of the normalized linear series at $x$. Thus, if $j$ is any index minimizing the ratio $\nu_j/\lambda_j$, then we have
		$$
		\kappa \frac{\nu_j}{\lambda_j} = \blambda_j \nu_j = m. 
		$$
		Now note that for any positive integers $a$ and $b$, $a/\gcd(a,b) = a'$ where $a/b = a'/b'$ is written in lowest terms. In particular, the ratio $a/\gcd(a,b)$ depends only on the fraction $a/b$. Thus, noting that $\kappa/m = \lambda_j/\nu_j$ for $j$ any index minimizing the ratio, we see that
		$$
		r = \frac{\kappa}{\gcd(m, \kappa)}. 
		$$
		Finally, we have that for the same index,
		$$
		a = \frac{\nu_j}{\gcd(\nu_j, \lambda_j)} = \frac{\blambda_j \nu_j}{\blambda_j \gcd(\nu_j, \lambda_j)} = \frac{m}{\gcd(m, \kappa)}. 
		$$
	\end{proof} 
	\begin{rmk} Note that minimality is equivalent to the statement that the for each closed point $x \in C$, the multiplicity $m$ of the normalized base locus satisfies $m < \kappa$. 
	\end{rmk}
	
	We are now ready to express the stacky height with respect to the universal bundle $\cO_{\Pcv}(1)$ in terms of the normalized linear series. 
	
	\begin{cor} \label{cor:height:normalized} Let $K = k(C)$ and $P \in \Pcv(K)$ a $K$-rational point. Let $(L, s_0, \ldots, s_N)$ be the unique minimal weighted linear series associated to $P$. Then 
		$$
		\height(P) = \deg L,
		$$
		$$
		\height^{st}(P) = \frac{1}{\kappa}\deg(C \to \bP^N),
		$$
		and the local contribution at a closed point $x \in C$ is given by
		$$
		\delta_x(P) = \frac{a_x}{r_x} [k(x):k] = \frac{m_x}{\kappa}\deg [k(x):k]
		$$
		where $m_x$ is the multiplicity at $x \in C$ of the normalized base locus $\overline{Bs}$ and $C \to \bP^N$ is the unique morphism extending the composition $C \dashrightarrow \Pcv \to \Pc(\kappa, \ldots, \kappa) \to \bP^N$ with degree measured by $\cO_{\bP^N}(1)$. 
	\end{cor}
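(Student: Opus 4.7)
The plan is to handle the three assertions separately, with most of the content being a straightforward combination of Theorem \ref{thm:tuning-stacks-and-minimal-linear-series}(3) and Proposition \ref{prop:normalized:local:conditions}, together with a short computation identifying the stable height via the factorization through $\bP^N$.

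The first formula $\height(P) = \deg L$ is immediate from Theorem \ref{thm:tuning-stacks-and-minimal-linear-series}(3), applied to the unique minimal linear series $(L, s_0, \ldots, s_N)$ produced by Theorem \ref{thm:tuning-stacks-and-minimal-linear-series}(2). For the local contribution, Theorem \ref{thm:tuning-stacks-and-minimal-linear-series}(3) gives $\delta_x(P) = \tfrac{a_x}{r_x}[k(x):k]$ for the integers $a_x = a_{\min}$ and $r_x = r_{\min}$ associated to the minimal series at $x$. Proposition \ref{prop:normalized:local:conditions} evaluates these as $r_x = \kappa/\gcd(m_x,\kappa)$ and $a_x = m_x/\gcd(m_x,\kappa)$, so the ratio simplifies to $a_x/r_x = m_x/\kappa$, which is the second stated expression for $\delta_x(P)$.

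The only real step is the stable height formula. I would exhibit the composite $C\dashrightarrow \Pcv \to \Pc(\kappa,\dots,\kappa) \to \bP^N$, where the middle map is the one that sends $(L,s_0,\dots,s_N)\mapsto (L,s_0^{\blambda_0},\dots,s_N^{\blambda_N})$ and the last is the coarse moduli space map. Two identifications are needed: on the one hand, pulling back the universal bundle along the middle map yields $\cO_{\Pcv}(1)$, because both bundles evaluate to the underlying line bundle $L$ on $T$-points; on the other hand, because $\Pc(\kappa,\dots,\kappa) \to \bP^N$ is a $\mu_\kappa$-gerbe and $\cO(\kappa)$ has trivial stabilizer character, $\cO_{\bP^N}(1)$ pulls back to $\cO_{\Pc(\kappa,\dots,\kappa)}(\kappa)$. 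Raising the first identity to the $\kappa$-th power and composing yields $(g\circ f)^*\cO_{\bP^N}(1) \cong \cO_{\Pcv}(\kappa)$.

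Now I compose with $\overline{P}:\cC\to\Pcv$. Since $\bP^N$ is a scheme, $g\circ f\circ\overline{P}$ factors through the coarse space $C$, giving a morphism $C\to\bP^N$ whose degree with respect to $\cO_{\bP^N}(1)$ is, by the projection formula for the coarse moduli map $\pi\colon\cC\to C$ (line bundles pulled back from $C$ have the same degree on $\cC$ as on $C$), equal to $\deg \overline{P}^*(g\circ f)^*\cO_{\bP^N}(1) = \deg \overline{P}^*\cO_{\Pcv}(\kappa) = \kappa\,\height^{st}_{\cO(1)}(P)$. Rearranging yields $\height^{st}(P) = \tfrac{1}{\kappa}\deg(C\to\bP^N)$. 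The main obstacle, though minor, is just verifying the two bundle identifications cleanly; dimension-shifting by $\kappa$ is what makes the formula work, and once this is in place the rest is bookkeeping.
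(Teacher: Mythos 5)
Your proof is correct and follows the route that the paper implicitly intends: the first and third identities are immediate from Theorem~\ref{thm:tuning-stacks-and-minimal-linear-series}(3) combined with Proposition~\ref{prop:normalized:local:conditions}, and the stable-height formula is exactly the unwinding of the factorization $C\dashrightarrow\Pcv\to\Pc(\kappa,\dots,\kappa)\to\bP^N$ mentioned in the statement, using that $\cO_{\Pcv}(1)$ pulls back from $\cO_{\Pc(\kappa,\dots,\kappa)}(1)$ and that $\cO_{\bP^N}(1)$ pulls back to $\cO_{\Pc(\kappa,\dots,\kappa)}(\kappa)$ (in the spirit of Remark~\ref{rem:deg}). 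The bundle identifications you flag as the ``minor obstacle'' do go through exactly as you indicate, so there is nothing further to add.
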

	
	
	\section{Moduli of minimal weighted linear series with vanishing conditions} 
	\label{sec:mod-sp-ratpts}
	
	The goal of this section is to construct the moduli space of minimal weighted linear series of degree $n$ on a curve $C$. By Theorem \ref{thm:tuning-stacks-and-minimal-linear-series}, the $k$-points of this moduli space will be canonically in bijection with $k(C)$-points of $\Pcv$ of height $n$. Our second goal is to construct a natural stratification of this moduli space corresponding to the different universal tuning stacks. In the next section, we will identify these strata with the spaces of twisted maps $\cH^{\Gamma}_{d,C}$ for different $d$ and $\Gamma$. 
	
	\subsection{The ambient stack of weighted linear series} 
	
	Fix a smooth projective curve $C/k$ with function field $K$.  
	
	\begin{defn}
		Let $B/k$ be a scheme. A family of $\lambdavec$-weighted linear series on $C$ parameterized by $B$ is the data of $(L, s_0, \ldots, s_N)$ where $L$ is a line bundle on $C_B := C \times B$ and $s_i \in H^0(C \times B, L^{\otimes \lambda_i})$. A morphism $(L, s_0, \ldots, s_N)/B \to (L', s_0', \ldots, s_N')/B'$ is a morphism $\rho : B \to B'$ and an isomorphism $\psi : \rho^*L' \to L$ such that $\psi^{\lambda_i}(s_i') = s_i$. 
	\end{defn}
	
	The category of families of weighted linear series $\mathcal{W}_C(\lambdavec)$ defines a category fibered in groupoids over $\mathrm{Sch}/k$.
	
	\begin{defn} For $\cF$ a coherent sheaf on an algebraic stack $\cX$, the \emph{abelian cone} associated to $\cF$ is 
		$$
		\bV(\cF) := \Spec_{\cX} \Sym^\bullet \cF. 
		$$
	\end{defn}
	
	Note that the formation of $\bV(\cF)$ is compatible with flat base change and satisfies fppf descent. 
	
	\begin{prop} $\cW_C(\lambdavec)$ is a locally of finite type algebraic stack over $k$. Moreover, the natural morphism $\cW_C(\lambdavec) \to \Ppic(C)$ to the Picard stack identifies $\cW_C(\lambdavec)$ with an abelian cone over $\Ppic(C)$. In particular, $\cW_C(\lambdavec)$ is a union 
		$$
		\cW_C(\lambdavec) = \bigsqcup_{n \geq 0} \cW_{n,C}(\lambdavec)
		$$
		of connected stacks of finite type indexed by $n = \deg(\cL_b)$.
	\end{prop}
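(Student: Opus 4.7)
The plan is to construct the forgetful morphism $\cW_C(\lambdavec)\to\Ppic(C)$, show that the fibers are sections of certain line bundles on $C$, and then identify these as the fibers of an abelian cone built from a perfect complex on $\Ppic(C)$. The algebraicity and local finite-typeness then follow from the corresponding properties of $\Ppic(C)$ together with general facts about abelian cones.

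First I would define the morphism $\cW_C(\lambdavec)\to\Ppic(C)$ by $(L,s_0,\ldots,s_N)\mapsto L$. Since $\Ppic(C)$ is a well-known algebraic stack locally of finite type over $k$, carrying a universal line bundle $\cL$ on $C\times\Ppic(C)$, and since $\cW_C(\lambdavec)$ only adds section data over $\Ppic(C)$, it suffices to show that the morphism realizes $\cW_C(\lambdavec)$ as an abelian cone $\bV(\cF)\to\Ppic(C)$ for some coherent sheaf $\cF$ on $\Ppic(C)$. Let $p\colon C\times\Ppic(C)\to\Ppic(C)$ be the second projection. Since $p$ is flat, proper, of relative dimension $1$, for each $i=0,\ldots,N$ the derived pushforward $Rp_*(\cL^{\otimes\lambda_i})$ is a perfect complex of Tor-amplitude $[0,1]$ on $\Ppic(C)$; this is standard via cohomology and base change on the flat proper morphism $p$ restricted to each finite type component.

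The key step is the following representability statement: if $K^\bullet$ is a perfect complex of amplitude $[0,1]$ on an algebraic stack $\cY$, then the functor $T\mapsto H^0(T,Lf^*K^\bullet)$, where $f\colon T\to \cY$, is represented by an abelian cone over $\cY$. Indeed, writing $K^\bullet$ locally as a two-term complex $[\cE_0\to\cE_1]$ of locally free sheaves, the functor of global sections of $Lf^*K^\bullet$ is the kernel of the induced map of total spaces $\bV(\cE_0^\vee)\to\bV(\cE_1^\vee)$ over $\cY$, and this kernel is precisely $\bV(\cF_i)$ for $\cF_i=\cH^0((K^\bullet)^\vee)$, the $\cH^0$ of the dualized complex. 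Applying this with $K^\bullet=Rp_*(\cL^{\otimes\lambda_i})$, we obtain sheaves $\cF_0,\ldots,\cF_N$ on $\Ppic(C)$ together with cones $\bV(\cF_i)\to\Ppic(C)$ whose $T$-points are sections of $L^{\otimes\lambda_i}$ on $C_T$ for the line bundle $L$ classified by $T\to\Ppic(C)$.

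Taking the fiber product over $\Ppic(C)$ of the $\bV(\cF_i)$, I would identify $\cW_C(\lambdavec)$ with $\bV(\cF_0\oplus\cdots\oplus\cF_N)$, since fiber product of abelian cones over a common base corresponds to direct sum of their defining sheaves. This proves that $\cW_C(\lambdavec)\to\Ppic(C)$ is an abelian cone, so $\cW_C(\lambdavec)$ is algebraic and locally of finite type over $k$. Finally, the decomposition into connected components follows by pulling back the decomposition $\Ppic(C)=\bigsqcup_n\Ppic^n(C)$ by degree; the degree of $L_b$ is locally constant in families, so this gives the stated decomposition $\cW_C(\lambdavec)=\bigsqcup_n\cW_{n,C}(\lambdavec)$, with each $\cW_{n,C}(\lambdavec)$ of finite type since it is a cone over a finite type component of $\Ppic(C)$ defined by a coherent sheaf. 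The main technical point is the perfect complex representability lemma, which is standard but must be invoked carefully since for small $n$ one may genuinely have $R^1p_*(\cL^{\otimes\lambda_i})\ne 0$, forcing the use of the two-term complex rather than a single locally free pushforward.
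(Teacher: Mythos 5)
Your proposal is correct and takes essentially the same approach as the paper: you build the forgetful map to $\Ppic(C)$, realize the fiberwise space of sections of $\cL^{\otimes\lambda_i}$ as an abelian cone $\bV(\cQ^{\lambda_i})$ via cohomology and base change for the proper flat morphism $C\times\Ppic(C)\to\Ppic(C)$, take the fiber product of cones, and read off the degree decomposition. The only difference is one of packaging: the paper cites the cohomology-and-base-change lemma (producing the coherent sheaf $\cQ$ directly, as in \cite[Thm.~D]{cohbase}), while you unpack its proof via the standard perfect-complex argument that $\bV(\cH^0((Rp_*\cL^{\otimes\lambda_i})^\vee))$ represents the section functor.
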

	
	\begin{proof} Let $\Ppic(C)$ be the Picard stack of $C/k$ and let $\cL$ be the universal line bundle on $\pi : C \times \Ppic(C) \to \Ppic(C)$. The following lemma is a standard corollary of cohomology and base-change (see \cite[Thm. D]{cohbase} for the generalization to algebraic stacks). 
		
		\begin{lem} Let $f : \cX \to \cY$ be a proper morphism of algebraic stacks and let $\cF$ be a quasicoherent sheaf on $\cX$ flat over $\cY$. There exists a quasicoherent sheaf $\cQ$ on $\cY$ such that
			$$
			\Hom_\cY(B, \bV(\cQ)) = H^0(\cX_B, \cF_B). 
			$$
			If $\cF$ is finitely presented then so is $\cQ$. Moreover, if $f_*\cF$ is locally free and its formation commutes with arbitary base change, then $\cQ \cong (f_*\cF)^{\vee}$
		\end{lem}
		
		Applying the lemma to $\pi$ and $\cL^{\otimes d}$ we obtain a coherent sheaf $\cQ^d$ on $\Ppic(C)$ such that for any $B$-point $L \in \Pic(C_B)$, 
		$$
		\Hom_{\Ppic(C)}(B, \bV(\cQ^d)) = H^0(C_B, L^{\otimes d}). 
		$$
		Denoting $\bV(\cQ^d) =: \bV^d$, we conclude that 
		$$
		\Hom_{\Ppic(C)}\left(B, \prod_{i = 0}^N \bV^{\lambda_i}\right) = \bigoplus_{i = 0}^N H^0(C_B, L^{\otimes \lambda_i})
		$$
		and thus
		$$
		\Hom_k\left(B, \prod_{i = 0}^N \bV^{\lambda_i}\right) = \cW_C(\lambdavec)
		$$
		as required. 
		
		Now $\Ppic(C) = \bigsqcup_{n \in \bZ} \Ppic^n(C)$ is a disjoint union of finite connected components parametrizing line bundles of degree $n$. Denoting $\bV^d_n := \bV^d|_{\Ppic^n(C)}$, then $\bV^d|_{\Ppic^n(C)} = \varnothing$ for $n < 0$ so we conclude that
		$$
		\cW_C(\lambdavec)= \bigsqcup_{n \geq 0} \cW_{n,C}(\lambdavec)
		$$
		where $\cW_{n,C}(\lambdavec) = \prod_{i = 0}^N \bV^{\lambda_i}_n$. 
	\end{proof} 
	
	\begin{rmk}\label{rem:d:power} Let $\mu_{d,n} : \Ppic^n(C) \to \Ppic^{dn}(C)$ be the $d^{th}$ power map induced by $L \mapsto L^{\otimes d}$. Letting $\cL$ be the universal line bundle on $C\times\Ppic(C)$, we have $\cL^{\otimes d}|_{C\times\Ppic^n(C)} = \mu_{d,n}^*\cL|_{C\times\Ppic^{dn}(C)}$. Thus, $\bV^d_n = \mu_{d,n}^*\bV^1_{nd}$. 
	\end{rmk}
	
	\begin{rmk}\label{rem:open:sym} The stack $\bV^d_n$ contains an open substack $U^d_n$ given by the complement of the zero section $\Ppic^n(C) \to \bV^d_n$. This is the open subfunctor parameterizing sections $s \in H^0(C_B, L^{\otimes d})$ where $s_b \not \equiv 0$ for all $b \in B$. In particular, the vanishing locus $D = V(s) \subset C_B$ is flat over $B$ for any $B$-point of $U^d_n$ by \cite[\href{https://stacks.math.columbia.edu/tag/00ME}{Tag 00ME}]{Stacks}. This implies that $D$ is a relative effective Cartier divisor and we get a morphism $U^d_n \to \Sym^{dn}(C)$ which makes the square
		\[
		\xymatrix{
			U^d_n\ar[r]\ar[d] & \Sym^{dn}(C)\ar[d]\\
			\Ppic^n(C)\ar[r]^-{\mu_{d,n}} & \Ppic^{dn}(C)
		}
		\]
		cartesian.
		
	\end{rmk}
	
	\begin{rmk}\label{rem:rrbound} When $dn > 2g(C) - 2$, $\pi_*\cL^{\otimes d}|_{C\times\Ppic^n(C)}$ is a vector bundle over $\Ppic^n(C)$ whose formation commutes with base change. In this case, 
		$$
		\bV^d_n = \Spec_{\Ppic(C)} \Sym^\bullet (\pi_*\cL^d|_{C\times\Ppic^n(C)})^\vee.
		$$
	\end{rmk}
	
	\begin{rmk}\label{rem:gerbe:quotient} Let $\pi : \cX \to X$ be a $\mathbb{G}_m$-gerbe and $\cQ$ a quasicoherent sheaf on $\cX$. Then fppf locally (or even \'etale locally since $\bG_m$ is smooth) the gerbe can be trivialized to $\mathcal{B} \bG_{m,X}$. In the case where $\cX$ is a trivial gerbe, $\cQ$ can be identified with a quasicoherent $\bG_m$-sheaf on $X$. Then we have a natural identification
		$$
		\bV_{\cX}(\cQ) = [\bV_{X}(\cQ)/\bG_{m,X}]. 
		$$
		In particular, $\bV_{\cX}(\cQ)$ contains as an open substack the weighted projectivization of the abelian cone $\bV_X(\cQ)$. In particular, this applies to the case $\pi : \Ppic(C) \to \Pic(C)$, where we can achieve such a trivialization of the gerbe after passing to a field extension $k'/k$ with $C(k') \neq \varnothing$. 
	\end{rmk}

	\begin{rmk}\label{rem:rr:gerbe}
		When $C(k) \neq \varnothing$ and $dn > 2g(C) - 2$, then $U^d_n$ is isomorphic to a $\mu_d$-gerbe of over the pullback of the projective bundle $\Sym^{dn}(C) \to \Pic^{dn}(C)$. More generally, if $n\lambda_i > 2g(C) - 2$ for all $i = 0, \ldots, N$, then
		$$
		\prod_{i = 0}^N \bV^{\lambda_i}_n \setminus 0_{\Ppic^n(C)}
		$$
		is isomorphic to a weighted projective bundle over $\Pic^n(C)$ where $\bV^d_n$ carries weight $d$. If $C(k) = \varnothing$, it is instead a possibly nontrivial form of this weighted projective bundle. 
	\end{rmk}
	
	\subsection{Stratifying by normalized base locus} 
	
	As we saw in Section \ref{sec:normalized}, the multiplicity of the normalized base locus controls the local behavior of the universal tuning stack. In this section we use this observation to stratify $\cW_{n,C}(\lambdavec)$ into strata which we will relate to moduli of twisted maps in Section \ref{sec:moduli}.
	
	As $C$ and $\Pcv$ are fixed, we will denote $\cW_{n,C}(\lambdavec)$ by $\cW_n$ for convenience. Over $\cW_n$ we have a universal bundle $\cL$ on $C \times \cW_n$ with sections $s_i \in H^0(C \times \cW_n, \cL^{\otimes \lambda_i})$. Let
	$$
	\overline{\Bs} = \overline{\Bs}(\cL, s_0, \ldots, s_N) = \Bs(\cL^{\otimes \kappa}, s_0^{\bar{\lambda}_0}, \ldots, s_N^{\bar{\lambda}_N})
	$$
	be the normalized base locus of the universal weighted linear series. Now $C \times \cW_n \to \cW_n$ is a projective morphism so we can apply Grothendieck's functorial flattening stratification to $\overline{\Bs} \to \cW_n$ to obtain a stratification of $\cW_n$.

	\begin{prop}\label{prop:flattening} The flattening stratification $\{\cW_n^m\}$ for $\overline{\Bs} \to \cW_n$ is indexed by $m \in \N \cup \infty = \{0, 1, \ldots, m, \ldots, \infty\}$ where
		\begin{enumerate}
			\item $\cW^0_n \cong \Hom_n(C, \Pcv)$ is the open substack parametrizing basepoint-free weighted linear series, i.e. morphisms $C \to \Pcv$, 
			\item $\cW^m_n$ for $0 < m < \infty$ is the substack of families of weighted linear series with normalized base locus finite flat of degree $m$, 
			\item $\cW^m_n = \varnothing$ for $n\kappa < m < \infty$, and
			\item $\cW^\infty_n \cong \Ppic^n(C)$ is the closed substack where where $s_i \equiv 0$ for all $i$. 
		\end{enumerate}
		
	\end{prop}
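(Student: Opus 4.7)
The plan is to apply Grothendieck's flattening stratification to the closed substack $\overline{\Bs} \subset C \times \cW_n$ relative to the projection $C \times \cW_n \to \cW_n$, and then identify the resulting strata with the four loci described in the statement. Since $\cW_n$ is an algebraic stack of finite type over $k$, I would pull back to a smooth atlas $U \to \cW_n$, apply Grothendieck flattening over $U$ indexed by Hilbert polynomials of fibers of $\overline{\Bs}$ (with respect to, say, the pullback of an ample line bundle on $C$), and descend the resulting stratification by functoriality of flattening under base change.

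The central step is classifying the Hilbert polynomials that actually appear. At a geometric point $b \in \cW_n$ corresponding to a weighted linear series $(L_b, s_{0,b}, \ldots, s_{N,b})$, the fiber equals the scheme-theoretic intersection $\overline{\Bs}_b = \bigcap_j V(s_{j,b}^{\blambda_j}) \subset C$ of zero divisors of sections of $L_b^{\otimes \kappa}$, each of degree $n\kappa$. Two mutually exclusive cases arise: either all $s_{j,b} \equiv 0$, in which case $\overline{\Bs}_b = C$, with linear Hilbert polynomial; or some $s_{j,b}^{\blambda_j}$ is a nonzero section and cuts out an effective divisor of degree $n\kappa$, so $\overline{\Bs}_b$ is a closed subscheme of this divisor, hence zero-dimensional of some length $m$ with $0 \leq m \leq n\kappa$. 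This immediately yields claim (3), since Hilbert polynomials outside the listed range do not occur.

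It remains to identify each stratum. For claim (4), $\overline{\Bs}_b = C$ is equivalent to $s_{j,b} \equiv 0$ for all $j$, and the locus where this occurs in families is cut out by the zero section $\Ppic^n(C) \hookrightarrow \cW_n$ (a closed substack). For claim (2), over $\cW_n^m$ with $0 < m < \infty$, the restriction $\overline{\Bs}|_{\cW_n^m} \to \cW_n^m$ is proper (inherited from $C \times \cW_n^m \to \cW_n^m$), flat by construction, and quasi-finite since fibers are zero-dimensional, hence finite flat of the prescribed degree $m$. For claim (1), $\cW_n^0$ is the open substack where $\overline{\Bs}$ is empty; pointwise $s_j(x) = 0$ if and only if $s_j^{\blambda_j}(x) = 0$, so this is equivalent to the $\lambdavec$-weighted linear series being basepoint-free, and by the Abramovich--Hassett description (the proposition after Definition \ref{defn:wls}), basepoint-free $\lambdavec$-weighted linear series are precisely morphisms to $\Pcv$; the degree condition $\deg(L) = n$ translates into degree $n$ against $\cO_{\Pcv}(1)$, giving $\cW_n^0 \cong \Hom_n(C, \Pcv)$.

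The main technical obstacle is the descent of the flattening stratification from the atlas back to the stack $\cW_n$, but this is handled cleanly by the functoriality of Grothendieck flattening under arbitrary base change, which ensures the pullback stratifications over two different atlases agree on their fiber product.
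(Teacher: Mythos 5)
Your approach is correct and matches the paper's: the paper states just before this proposition that one applies Grothendieck's functorial flattening stratification to $\overline{\Bs} \to \cW_n$ (using projectivity of $C \times \cW_n \to \cW_n$), and then records the identification of the strata in the proposition without further proof. Your classification of the possible Hilbert polynomials via the dichotomy (all $s_{j,b}\equiv 0$ vs.\ some $s_{j,b}^{\blambda_j}$ a nonzero section of a degree-$n\kappa$ line bundle) and your identification of each stratum, including the descent from an atlas via the universal property of flattening, supply exactly the details the paper leaves implicit.
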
  
	
	\begin{cor} There is a natural morphism $\cW_n^m \to \Sym^m(C)$ for $0 < m < \infty$ sending a $\vec{\lambda}$-weighted linear series to its normalized scheme theoretic base locus $\overline{\Bs}$. 
	\end{cor}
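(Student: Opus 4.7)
The plan is to identify the normalized base locus of the universal weighted linear series over $\cW_n^m$ with a relative effective Cartier divisor of degree $m$ on $C \times \cW_n^m \to \cW_n^m$, and then invoke the universal property of the symmetric product.

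First, by the definition of the stratum $\cW_n^m$ in Proposition \ref{prop:flattening}, the restriction of the universal normalized base locus $\overline{\Bs} \subset C \times \cW_n$ to $\cW_n^m$ is a closed subscheme of $C \times \cW_n^m$ that is finite and flat of degree $m$ over $\cW_n^m$. Since $C \to \Spec k$ is smooth of relative dimension $1$, the composite $C \times \cW_n^m \to \cW_n^m$ is smooth of relative dimension $1$, and a closed subscheme which is finite flat over the base is automatically an effective Cartier divisor on $C \times \cW_n^m$ that is flat of degree $m$ over $\cW_n^m$ (this is the standard fact that on a relative curve, finite flat closed subschemes coincide with relative effective Cartier divisors of the corresponding degree; see e.g. \cite[\href{https://stacks.math.columbia.edu/tag/062Y}{Tag 062Y}]{Stacks}).

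Next I would invoke the representability of the functor of relative effective Cartier divisors. For a smooth projective curve $C/k$ and any integer $m \geq 0$, the functor sending a scheme $B$ to the set of relative effective Cartier divisors on $C \times B / B$ of degree $m$ is represented by the symmetric product $\Sym^m(C)$; this is the identification of the Hilbert scheme of $m$ points on a smooth curve with the symmetric product. Applying this to the universal family of normalized base loci over $\cW_n^m$ produces the desired morphism $\cW_n^m \to \Sym^m(C)$, and functoriality in $\cW_n^m$ is automatic from the universal property.

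The only subtlety — and the step that warrants the most care — is checking that the formation of the scheme-theoretic normalized base locus commutes with base change on $\cW_n^m$, so that we really are applying the universal property to the correct object. This follows because the normalized base locus is defined as the scheme-theoretic zero locus of a finite collection of sections $s_i^{\bar\lambda_i}$ of line bundles, and such scheme-theoretic vanishing loci commute with arbitrary base change; on $\cW_n^m$ the resulting closed subscheme is already flat over the base, so no further care is needed. With these observations, the morphism is constructed and the proof is complete.
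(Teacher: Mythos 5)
Your proof is correct and is exactly the argument the paper leaves implicit: the corollary is stated without proof immediately after Proposition \ref{prop:flattening}, which produces the finite flat degree-$m$ family of normalized base loci, and the passage to $\Sym^m(C)$ via the identification of relative effective Cartier divisors (equivalently, the Hilbert scheme of points on a smooth curve) with the symmetric product is the standard intended route. Your care about base change for the normalized base locus (as a scheme-theoretic zero locus, hence a cokernel which commutes with base change) is well placed and matches the role the flattening stratification plays in the paper.
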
 
	
	The symmetric product $\Sym^m(C)$ can be further stratified into strata $\Sym^\mu(C)$ indexed by a partition $\mu \vdash m$. Here a partition $\mu$ is a sequence $\mu_1 \geq \mu_2 \geq \ldots \geq \mu_l > 0$ where 
	$$
	\sum_{i = 1}^l \mu_i = m. 
	$$
	We denote $l := l(\mu)$ the length of the partition and $m = |\mu|$ the size of the partition. Then $\Sym^\mu(C)$ is the stratum parametrizing subschemes of $C$ with $l$ irreducible components of multiplicity $\mu_i$. We say that a finite subscheme of length $m$ parametrized by a point in $\Sym^\mu(C)$ has \emph{profile} $\mu$.
	
	\begin{rmk} While $\bigsqcup_\mu \Sym^\mu(C)$ is at first glance a set theoretic stratification of $\Sym^m(C)$, in fact these strata have a canonical scheme structure as they can be identified with the \emph{locally trivial} Hilbert schemes parametrizing locally trivial flat families of subschemes in $C$ \cite[Sec. 2]{grkar}. 
	\end{rmk}
	
	\begin{defn}
		For any $n \geq 0$ and partition $\mu$, moduli space of \emph{weighted linear series of degree $n$ with normalized base profile $\mu$} is the pullback 
		$$
		\cW^\mu_n := \cW_n^m \times_{\Sym^m(C)} \Sym^{\mu}(C)
		$$
		where $m = |\mu|$. 
	\end{defn}
	
	The $B$-points of $\cW^\mu_n$ are families of weighted linear series $(\cL, s_0, \ldots, s_N)$ on $C_B$ such that the normalized scheme theoretic base locus $\overline{\Bs}(\cL, s_0, \ldots, s_N)$ is finite, flat and locally trivial of profile $\mu$ over $B$.  
	
	\begin{defn} We say a normalized base profile $\mu$ is \emph{minimal} if $\mu_i < \kappa$ for all $i$. 
	\end{defn}
	
	\begin{prop}\label{prop:stack:min}
		There exists an open substack $\cW_{n,C}^{min}(\lambdavec) \subset \cW_{n,C}(\lambdavec)$ parametrizing minimal weighted linear series. Moreover, $\cW_{n,C}^{min}(\lambdavec)$ has a stratification into locally closed strata $\cW^{\mu}_n$ over  minimal base profiles $\mu$. 
	\end{prop}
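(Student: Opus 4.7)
The plan is to define $\cW_{n,C}^{min}(\lambdavec)$ as the complement of the closed locus where the normalized base locus acquires multiplicity $\geq \kappa$ at some point of $C$, and then recognize this open substack as the union of the already-constructed strata $\cW_n^\mu$ with $\mu$ a minimal partition. Recall from the remark following Corollary \ref{cor:height:normalized} that minimality of a weighted linear series is equivalent to $\overline{\Bs}$ having multiplicity $< \kappa$ at every geometric point of $C$, so the substack we want is
$$
\cW_n^{min} = \cW_n \smallsetminus p\big(Z\big), \qquad Z := \{(x,w) \in C \times \cW_n : \overline{\Bs}(\cL,s_0,\dots,s_N)_w \supset \kappa x\},
$$
where $p \colon C \times \cW_n \to \cW_n$ is the projection.

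Openness of $\cW_n^{min}$ reduces to showing $Z$ is closed in $C \times \cW_n$, because $p$ is proper. To see this, I would pass to $C \times C \times \cW_n$ with projections $p_{12}, p_{13}, p_{23}$, let $\Delta_{12}$ be the diagonal of the first two factors, and form the rank-$\kappa$ vector bundle
$$
\cE := (p_{13})_*\big(p_{23}^*\cL^{\otimes\kappa}\big|_{\kappa\Delta_{12}}\big)
$$
on $C \times \cW_n$; note that $\kappa\Delta_{12}$ is finite flat of degree $\kappa$ over $C \times \cW_n$ via $p_{13}$, so $\cE$ is a locally free sheaf of rank $\kappa$ representing the fiberwise $\kappa$-jets of sections of $\cL^{\otimes\kappa}$. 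The pulled-back sections $p_{23}^* s_i^{\blambda_i}$ restrict to sections $\sigma_i$ of $\cE$, and $Z = V(\sigma_0,\dots,\sigma_N)$ is the common vanishing locus, hence closed.

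For the stratification, the strata $\cW_n^\mu := \cW_n^{|\mu|} \times_{\Sym^{|\mu|}(C)} \Sym^\mu(C)$ from the previous subsection are locally closed in $\cW_n$ (setting $\cW_n^\varnothing := \cW_n^0 = \Hom_n(C,\Pcv)$ for the empty partition, corresponding to the basepoint-free case). A weighted linear series lies in $\cW_n^\mu$ precisely when $\overline{\Bs}$ has profile $\mu$, and by the characterization above, such a series is minimal if and only if every part of $\mu$ is $< \kappa$. Hence set-theoretically $\cW_n^{min} = \bigsqcup_{\mu \text{ minimal}} \cW_n^\mu$, and this identification equips $\cW_n^{min}$ with a locally closed stratification by minimal profiles, as each $\cW_n^\mu$ is locally closed in the ambient $\cW_n$ and thus in the open substack $\cW_n^{min}$.

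The main obstacle is verifying the closedness of $Z$ cleanly; once the $\kappa$-jet bundle $\cE$ is set up, everything else is formal bookkeeping, with the properness of $p$ guaranteeing that the non-minimal locus descends to a closed substack of $\cW_n$ and with the stratification of $\Sym^m(C)$ by partitions transferring immediately to the stratification of $\cW_n^{min}$.
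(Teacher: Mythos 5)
Your proof is correct and takes a genuinely different route to openness than the paper does. The paper isolates the key step as a separate lemma (Lemma~\ref{lem:open}) about an arbitrary finite subscheme $Z\subset C\times B$: it shows the locus of $b\in B$ where $Z_b$ has a point of multiplicity $\geq d$ is closed, by first observing constructibility (flattening stratification plus stratification by partition type) and then checking stability under generalization via a DVR argument using upper semicontinuity of fiber dimension. You instead build the rank-$\kappa$ jet bundle $\cE = (p_{13})_*\bigl(p_{23}^*\cL^{\otimes\kappa}|_{\kappa\Delta_{12}}\bigr)$ on $C\times\cW_n$, identify the locus where all $s_j^{\blambda_j}$ vanish to order $\geq\kappa$ as the common zero locus $Z=V(\sigma_0,\dots,\sigma_N)$ (hence closed), and then use properness of $p$ to conclude that the non-minimal locus $p(Z)$ is closed. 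Your argument is more direct and gives an explicit scheme-theoretic description of the closed complement; the paper's lemma is stated in more generality (it does not require the finite subscheme to be cut out by given global sections), which is convenient because the same lemma is invoked again in Section~\ref{sec:ex} for $W_n^{min}\subset W_n$. Both ultimately reduce to the observation that on a smooth curve, multiplicity of a finite subscheme is controlled by truncated jets, and both handle the stratification by minimal profiles in the same way, by restricting the flattening/profile stratification of $\cW_n\smallsetminus\cW_n^\infty$ to the open substack $\cW_n^{min}$.
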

	
	\begin{proof}
		
		A linear series is minimal if and only if the normalized base multiplicity is strictly less than $\kappa$ for all points on $C$. In particular, it is a condition on the partition $\mu$: namely that $\mu_i < \kappa$ for all $i$. Thus, the stratification $\cW^{min}_n = \sqcup_{\mu \text{ minimal}} \cW^\mu_{n,C}$ is clear, at least set theoretically. To finish the proof, we show that $\cW^{min}_{n,C}$ is open in $\cW_{n,C}$. This follows from Lemma \ref{lem:open} below applied to the universal normalized base locus over $\cW_n \setminus \cW^{\infty}_n$.

		\begin{lem}\label{lem:open} Let $B$ be a locally Noetherian algebraic stack over $k$ with finite inertia. Let $C$ be a smooth curve and let $Z \subset C \times B$ be a subscheme which is finite over $B$. Then for each non-negative integer $d$, the locus of $b \in B$ such that $Z_b$ has multiplicity $ < d$ at each point is open in $B$. 
		\end{lem}
		\begin{proof} Without loss of generality, we may suppose that $k$ is algebraically closed. By flattening stratification and further stratifying by the profile $\mu$ we see that the locus of $b \in B$ where $Z_{b}$ has no point of multiplicity $\geq d$ is constructible. Thus it suffices to show this locus is stable under generalization.   
			
			Toward this end, we may suppose $B = \Spec R$ is the spectrum of a DVR $(R,\mathfrak{m})$ with fraction field $K$ and $Z = \Spec A$ for a finite $R$-algebra $A$. Suppose moreover that $\Spec A/\mathfrak{m}A$ has no point of multiplicity at least $d$. We wish to show that $A \otimes_R K$ also satisfies this condition. Let $z_i$ be the points of the special fiber $\Spec A/\mathfrak{m}A$. By assumption,
			$$
			\dim_{k} (A/\mathfrak{m}A)_{z_i} < d
			$$
			for all $z_i$. Let $z_i'$ be a point of $\Spec(A \otimes_R K)$. Since $Z \to B$ is proper, $z_i'$ specializes to some point, which up reindexing we call $z_i$, lying $\mathfrak{m}$. It suffices to show that $\dim_K (A \otimes_R K)_{z_i'} < d$. Since the question is local, we may replace $A$ by $A_{z_i}$ and suppose without loss of generality that $A$ is local with closed point $z$ and $z'$ is a point of the generic fiber $\Spec A \otimes_R K$. By upper semi-continuity, 
			$$
			\dim_K (A\otimes_R K)_{z'} \le \dim_K(A \otimes_R K) \le \dim_{k} (A/\mathfrak{m}A) < d,
			$$
			thus the generic fiber of $\Spec R$ satisfies the required condition. 
		\end{proof} 
	\end{proof}
	
	\begin{cor} For each $K = k(C)$, $n$ and $\vec{\lambda}$, the $k$-points of the finite type Deligne--Mumford stack $\cW^{min}_{n,C}(\lambdavec)$ are in canonical bijection with $K$-points of $\Pcv$ of stacky height $n$.
		
	\end{cor}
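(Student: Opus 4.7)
The plan is to assemble three structural facts: (i) $\cW^{min}_{n,C}(\lambdavec)$ is open in $\cW_{n,C}(\lambdavec)$ (Proposition \ref{prop:stack:min}), (ii) the global stack $\cW_{n,C}(\lambdavec)$ is finite type algebraic as an abelian cone over a component of $\Ppic(C)$, and (iii) Theorem \ref{thm:tuning-stacks-and-minimal-linear-series} parts (2) and (3), which supply the bijection with rational points and identify the height with $\deg L$.

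First I would verify that $\cW^{min}_{n,C}(\lambdavec)$ is Deligne--Mumford of finite type. Finite type is immediate from openness in $\cW_{n,C}(\lambdavec)$. For the DM property, an automorphism of a weighted linear series $(L,s_0,\dots,s_N)$ is an element $\psi\in\Aut(L)=\bG_m$ such that $\psi^{\lambda_i}s_i=s_i$ for all $i$. Minimality guarantees that for every closed point $x\in C$ there is some $j$ with $\nu_x(s_j)<\lambda_j$, and in particular some $s_j\not\equiv 0$, so $\psi^{\lambda_j}=1$. Hence the inertia sits inside $\mu_{\lambda_j}$, which is finite \'etale under the standing assumption that the $\lambda_i$ are prime to the characteristic. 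So the diagonal is unramified and the inertia is finite, giving the DM property.

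Next I would set up the bijection on $k$-points. In one direction, a $k$-point of $\cW^{min}_{n,C}(\lambdavec)$ is a minimal $\lambdavec$-weighted linear series $(L,s_0,\dots,s_N)$ on $C$ with $\deg L = n$. The indeterminacy locus is a proper closed subset of $C$, so the sections define a rational map $C\dasharrow\Pcv$, equivalently a $K$-point $P\in\Pcv(K)$. By Theorem \ref{thm:tuning-stacks-and-minimal-linear-series}\,(\ref{thm:tuning-stacks-and-minimal-linear-series::height}), the stacky height is $\height_{\cO(1)}(P)=\deg L = n$, so we land in the desired set. Conversely, given $P\in\Pcv(K)$ with $\height_{\cO(1)}(P)=n$, Theorem \ref{thm:tuning-stacks-and-minimal-linear-series}\,(\ref{thm:tuning-stacks-and-minimal-linear-series::minimal-lin-ser}) produces a canonical minimal weighted linear series $(L,s_0,\dots,s_N)$ inducing the associated rational map, and Theorem \ref{thm:tuning-stacks-and-minimal-linear-series}\,(\ref{thm:tuning-stacks-and-minimal-linear-series::height}) again gives $\deg L = n$, defining a $k$-point of $\cW^{min}_{n,C}(\lambdavec)$.

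Finally I would check that the two constructions are mutually inverse. This is essentially the uniqueness statement in Theorem \ref{thm:tuning-stacks-and-minimal-linear-series}\,(\ref{thm:tuning-stacks-and-minimal-linear-series::minimal-lin-ser}): the canonical isomorphism $\gamma\colon L\xrightarrow{\sim}L'$ produced there shows that two minimal weighted linear series inducing the same rational map are canonically isomorphic, hence represent the same $k$-point. The only real subtlety is the DM verification above; once that is in hand, Theorem \ref{thm:tuning-stacks-and-minimal-linear-series} does all the work of establishing the bijection, so no further obstacles arise.
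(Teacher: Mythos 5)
Your argument is correct and matches the paper's: the bijection is obtained exactly as in the proof of part (1) of Theorem \ref{thm:height:moduli_body} for weighted projective stacks, invoking Theorem \ref{thm:tuning-stacks-and-minimal-linear-series}\,(\ref{thm:tuning-stacks-and-minimal-linear-series::minimal-lin-ser}) for existence and uniqueness of the minimal linear series and (\ref{thm:tuning-stacks-and-minimal-linear-series::height}) to match the height with $\deg L$. Your extra paragraph checking the Deligne--Mumford property — minimality forces some $s_j\not\equiv 0$, so any automorphism $\psi\in\bG_m$ of the linear series satisfies $\psi^{\lambda_j}=1$ and hence lies in $\mu_{\lambda_j}$, which is étale when the $\lambda_i$ are prime to the characteristic — is a correct supplement that the paper leaves implicit at this point in the text.
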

	
	For some applications it is convenient to parameterize the normalized base locus. 
	
	\begin{defn}\label{Smu-definition}
		Let $S_\mu$ be the subgroup of the symmetric group $S_{l(\mu)}$ consisting of permutations $\sigma$ such that $\mu_{\sigma(i)}=\mu_i$ for all $i$.
	\end{defn}
	
	Let $\Conf_l(C)$ be the configuration space of $l$ ordered distinct points on $C$. Then $S_\mu$ naturally acts on $\Conf_l(C)$ and the quotient $\Conf_l(C)/S_\mu$ is isomorphic to the stratum $\Sym^\mu(C)$.
	
	\begin{defn}\label{def:moduli-interp-rnmu} The \emph{space of weighted linear series with parametrized normalized base locus of profile $\mu$} is the fiber product
		$$
		\cR^\mu_n := \Conf_{l(\mu)} \times_{\Sym^{\mu}} \cW^\mu_n. 
		$$
	\end{defn}
	
	A $B$-point of $\cR^\mu_n$ consists of a weighted linear series $(L, s_0, \ldots, s_N)$ of degree $n$ on $C_B$ as well as $l$ disjoint sections $\sigma_i : B \to C_B$ such that the $\overline{\Bs}(L, s_0, \ldots, s_N)$ is the relative effective Cartier divisor $\sum \mu_i \sigma_i$. 
	
	\subsection{Imposing vanishing conditions}
	
	For certain applications such as counting elliptic surfaces with a fixed fiber type, it is useful to further stratify the moduli space of weighted linear series by specifying vanishing conditions on the $s_i$. We will accomplish this by working over the space of embeddings of infinitesimal discs of the form $\Spec k[t]/(t^n)$ into $C$.
	
	Associated to a partition $\mu$ we have a unique isomorphism type of curvilinear Artinian local scheme with profile $\mu$ and residue field $k$: 
	$$
	A_\mu \cong \prod_{i = 1}^l k[x]/(x^{\mu_i + 1}). 
	$$
	We denote by $D_\mu := \Spec A_\mu$. In what follows we view $\mu$ as an ordered tuple $(\mu_1, \ldots, \mu_l)$ with $\mu_i\geq1$, while noting that the isomorphism type of $D_\mu$ and the stratum $\Sym^\mu(C)$ are independent of the choice of ordering. 
	
	\begin{defn}
		Let $G_\mu$ denote the automorphism group $\Aut_k(D_\mu)$. 
	\end{defn}
	
	The following lemma is straightforward from the definitions.
	
	\begin{lem}\label{l:aut-ofD-mu} $G_\mu$ is a finite type group scheme over $k$ with component group $S_\mu$ and whose identity component $G^0_\mu=\prod_{i=1}^\ell(\bG_m\rtimes U_{\mu_i-1})$, where $U_{\mu_i-1}$ is a split unipotent of dimension $\mu_i-1$.
	\end{lem}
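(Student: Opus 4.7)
The plan is to analyze $\Aut_k(D_\mu)$ by exploiting the product decomposition of $A_\mu$ into its local factors and then computing the automorphism scheme of each local factor explicitly.

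First, I would establish that $G_\mu$ is representable by an affine group scheme of finite type. Since $D_\mu$ is a finite scheme over $k$, the functor of automorphisms of $D_\mu$ is representable by an open subscheme of the affine scheme $\underline{\Hom}_k(D_\mu,D_\mu)$, which is itself the functor of $k$-algebra endomorphisms of $A_\mu$. Because $A_\mu$ is a finite dimensional $k$-vector space, this endomorphism functor is clearly represented by an affine $k$-scheme of finite type, and $G_\mu$ is the open locus where the induced map is an isomorphism (equivalently, the locus where the induced endomorphism of $A_\mu/\mathrm{rad}$ is an isomorphism, cut out by the non-vanishing of a determinant).

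Next, I would use the decomposition $A_\mu = \prod_{i=1}^\ell k[x]/(x^{\mu_i+1})$ into local Artinian factors. Since the factors $A_i := k[x]/(x^{\mu_i+1})$ are precisely the indecomposable idempotent summands of $A_\mu$ (and idempotents are permuted by any automorphism), any $k$-algebra automorphism of $A_\mu$ must permute the factors $A_i$ via some $\sigma$, and can only exchange factors $A_i, A_j$ when $A_i \cong A_j$, i.e.\ when $\mu_i = \mu_j$. Thus there is a surjective homomorphism $G_\mu \to S_\mu$ whose kernel is $\prod_{i=1}^\ell \Aut_k(A_i)$. Since $S_\mu$ is a finite constant group scheme, this kernel is exactly the identity component $G_\mu^0$, and the sequence is split (by choosing a section permuting via relabeling the $x$'s).

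The main computation, and the heart of the proof, is describing $\Aut_k(k[x]/(x^{m+1}))$ for a single factor with $m = \mu_i$. A $k$-algebra endomorphism is determined by where it sends $x$, and it must send $x$ into the maximal ideal, so it is given by $x \mapsto a_1 x + a_2 x^2 + \cdots + a_m x^m$ for scalars $a_j$, with the map being an automorphism precisely when $a_1 \in \bG_m$. This identifies $\Aut_k(A_i)$ as a functor with the scheme $\bG_m \times \bA^{m-1}$. The multiplicative group $\bG_m$ sits inside as the subgroup $\{x \mapsto a_1 x\}$, while the subgroup $U_{m-1} := \{x \mapsto x + a_2 x^2 + \cdots + a_m x^m\}$ is normal (since conjugation by $x \mapsto a_1 x$ merely rescales the $a_j$'s), the quotient is $\bG_m$, and $U_{m-1}$ is isomorphic as a scheme to $\bA^{m-1}$ with unipotent group law (filtered by the powers of the maximal ideal). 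Hence $\Aut_k(A_i) \cong \bG_m \ltimes U_{\mu_i-1}$ as promised. Putting these together over all $i$ yields $G_\mu^0 = \prod_{i=1}^\ell (\bG_m \ltimes U_{\mu_i - 1})$, and combined with the split surjection to $S_\mu$ completes the proof.

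The only mildly delicate step is verifying that $U_{\mu_i-1}$ is a \emph{split} unipotent, which I would do by exhibiting the filtration $U_{\mu_i-1} \supset U_{\mu_i-1}^{(2)} \supset \cdots$ where $U_{\mu_i-1}^{(j)}$ consists of automorphisms of the form $x \mapsto x + a_{j+1}x^{j+1} + \cdots$; each successive quotient is $\bG_a$, so $U_{\mu_i-1}$ is an iterated extension of $\bG_a$'s of dimension $\mu_i - 1$.
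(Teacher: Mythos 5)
Your proof is correct; the paper asserts the lemma is ``straightforward from the definitions'' and gives no argument, and what you write is precisely the standard verification being implicitly invoked (representability via $\underline{\Hom}_k$, permutation of the local idempotent factors giving the split surjection onto $S_\mu$, and the coordinate description $x \mapsto a_1 x + \cdots + a_m x^m$ of $\Aut_k(k[x]/(x^{m+1}))$ yielding $\bG_m$ acting on the split unipotent $U_{m-1}$). The only cosmetic discrepancy is the ordering in the semidirect product notation ($\rtimes$ vs.\ $\ltimes$), but your identification of $U_{\mu_i-1}$ as the normal factor with $\bG_m$ acting by rescaling the higher coefficients is the correct group structure.
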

	
	\begin{defn} Let $\Emb(D_\mu, C)$ denote the space of embeddings of $D_\mu$ into $C$. 
	\end{defn}
	
	Note that $\Emb(D_\mu,C)$ is a quasi-projective scheme. Indeed, the Hom scheme $\Hom(D_\mu,C)$ is quasi-projective and $\Emb(D_\mu,C)\subset\Hom(D_\mu,C)$ is open by \cite[Prop. 12.93]{Wedhorn}. Note moreover that when $\mu_i = 1$ for all $i = 1, \ldots, l$, then $\Emb(D_\mu, C) = \Conf_l(C)$. We define a partial order on ordered $l$-tuples by $\mu' \geq \mu$ if $\mu'_i \geq \mu_i$ for all $i = 1, \ldots, l$. Note here that $\mu'$ and $\mu$ are not necessarily partitions of the same number.
	
	\begin{lem}\label{lemma:emb:torsor}
		We have the following.
		\begin{enumerate}[(i)]
			\item\label{lemma:emb:torsor::1} $G_\mu$ acts freely on $\Emb(D_\mu, C)$ and the quotient is isomorphic to $\Sym^\mu(C)$.
			\item\label{lemma:emb:torsor::2} The quotient of $\Emb(D_\mu,C)$ by $G_\mu^0$ is isomorphic to the configuration space of $l = l(\mu)$ ordered distinct points $\Conf_{l}(C)$ such that the diagram below commutes. 
			$$
			\xymatrix{\Emb(D_\mu, C) \ar[r]^{G^0_\mu} \ar[rd]_{G_\mu} & \Conf_{l(\mu)}(C) \ar[d]^{S_\mu} \\  & \Sym^\mu (C)}
			$$
			\item\label{lemma:emb:torsor::3} Let $\mu' \geq \mu$. Then $\Emb(D_{\mu'}, C) \to \Emb(D_\mu, C)$ is a torsor for the subgroup $G^{\mu}_{\mu'} \subset G_{\mu'}$ of automorphism of $D_{\mu'}$ which are the identity on $D_{\mu}$.
		\end{enumerate} 
	\end{lem}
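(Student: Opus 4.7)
For part (\ref{lemma:emb:torsor::1}), my plan is to send an embedding to its scheme-theoretic image to obtain a $G_\mu$-invariant morphism $\Emb(D_\mu,C)\to\Sym^\mu(C)$, and then exhibit this as a $G_\mu$-torsor. Freeness of the $G_\mu$-action is immediate since embeddings are monomorphisms. For transitivity on fibers, any closed subscheme of $C$ with profile $\mu$ is, locally at each support point, a curvilinear Artinian scheme of multiplicity $\mu_i$ in a smooth curve, hence abstractly isomorphic to the corresponding factor of $D_\mu$; any two such isomorphisms differ by a unique element of $G_\mu$. Since $G_\mu$ is smooth by Lemma \ref{l:aut-ofD-mu}, the free transitive action upgrades to a torsor, and the induced map on quotients can be matched with the locally trivial Hilbert scheme structure on $\Sym^\mu(C)$ recalled in Section \ref{sec:mod-sp-ratpts}.

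For part (\ref{lemma:emb:torsor::2}), I would use the extension $1\to G_\mu^0\to G_\mu\to S_\mu\to 1$ to factor the torsor of (\ref{lemma:emb:torsor::1}). The identity component $G_\mu^0=\prod_i(\bG_m\rtimes U_{\mu_i-1})$ acts componentwise on the $l$ factors of $D_\mu$ without permuting them, and on each individual curvilinear factor the reparametrization action of $\bG_m\rtimes U_{\mu_i-1}$ collapses all choices of truncated uniformizer down to the image point of $x=0$. Taking a product and using that the support points of an embedding are distinct yields $\Emb(D_\mu,C)/G_\mu^0\simeq\Conf_{l(\mu)}(C)$, and the residual $S_\mu$-action is manifestly the permutation action on ordered configurations, so further quotienting recovers $\Sym^\mu(C)$ and makes the displayed triangle commute.

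For part (\ref{lemma:emb:torsor::3}), the componentwise surjections $k[x]/(x^{\mu'_i+1})\twoheadrightarrow k[x]/(x^{\mu_i+1})$ assemble into a canonical closed immersion $D_\mu\hookrightarrow D_{\mu'}$, and precomposition defines the restriction map $\Emb(D_{\mu'},C)\to\Emb(D_\mu,C)$. Any two extensions of $\iota\colon D_\mu\to C$ to embeddings of $D_{\mu'}$ share the same scheme-theoretic image, since on a smooth curve this image is determined by the ordered support together with $\mu'$, so they differ by a unique automorphism of $D_{\mu'}$ restricting to the identity on $D_\mu$; this identifies the stabilizer with $G^\mu_{\mu'}$ and yields freeness. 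Surjectivity reduces, working locally at each support point $p_i$, to lifting a surjection $\cO_{C,p_i}\twoheadrightarrow k[x]/(x^{\mu_i+1})$ through $k[x]/(x^{\mu'_i+1})\twoheadrightarrow k[x]/(x^{\mu_i+1})$, which is accomplished by sending any uniformizer of the DVR $\cO_{C,p_i}$ to $x$ modulo $x^{\mu'_i+1}$. Smoothness of $G^\mu_{\mu'}$, which is an iterated extension of additive groups since the $\bG_m$-parts of $G_{\mu'}$ project nontrivially to $G_\mu$, then promotes the free transitive action to a torsor.

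The main subtlety I anticipate is the bookkeeping required to promote the set-theoretic bijections in each part to genuine isomorphisms of schemes; this amounts to matching the functorial descriptions of $\Sym^\mu(C)$ and $\Conf_{l(\mu)}(C)$ with the appropriate quotient spaces and invoking smoothness of the group schemes involved, which is routine once the free transitive actions are in hand.
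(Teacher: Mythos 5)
Your argument for parts (i) and (iii) follows the same strategy as the paper: send embeddings to their scheme-theoretic images, use the fact that two embeddings with the same image differ by a unique reparametrization of the source, and then promote the resulting free transitive action to a torsor using the structure of the group schemes from Lemma \ref{l:aut-ofD-mu}. You supply one detail the paper leaves implicit, namely that the restriction map $\Emb(D_{\mu'},C)\to\Emb(D_\mu,C)$ is surjective; the wording there is slightly off --- you cannot send ``any uniformizer of $\cO_{C,p_i}$ to $x$'' independently of the given surjection $\phi\colon\cO_{C,p_i}\twoheadrightarrow k[x]/(x^{\mu_i+1})$, since $\phi$ may already send that uniformizer to a different generator of the maximal ideal. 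The correct formulation is to lift $\phi(t)$ to an arbitrary generator of the maximal ideal of $k[x]/(x^{\mu'_i+1})$, which always exists. This is a cosmetic fix and does not affect the validity of the argument.

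The genuine divergence from the paper is in part (ii). The paper observes that $\Conf_{l(\mu)}(C)=\Emb(D_{\mu_0},C)$ for $\mu_0=(1,\dots,1)$ and that $G^0_\mu=G^{\mu_0}_\mu$, so it derives (ii) as a special case of (iii), with the commutativity of the triangle then falling out of the identification of $S_\mu$ with $\pi_0(G_\mu)$. You instead argue directly by factoring the $G_\mu$-torsor through the extension $1\to G_\mu^0\to G_\mu\to S_\mu\to 1$, noting that $G_\mu^0$ acts componentwise on the factors of $D_\mu$ and collapses each to its support point while preserving the ordering. Both routes are valid. The paper's reduction is structurally cleaner (it eliminates (ii) as an independent case and makes the triangle automatic), whereas your direct argument is more transparent about what the intermediate quotient map actually does to an embedding.
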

	
	\begin{proof} We have a map $\Emb(D_\mu,C) \to \Sym^\mu(C)$ given by taking the image of an embedding $D_\mu \hookrightarrow C$ as a subscheme of $C$. Two embeddings have the same image if and only if they differ by a reparametrization of the source, proving $\ref{lemma:emb:torsor::1}$. For $\ref{lemma:emb:torsor::2}$, note that $\Conf_{l(\mu)}(C) = \Emb(\Spec \prod_{i = 1}^l k, C)$ is the space of embeddings for the partition $1 + \ldots + 1 = l$ with all parts equal to $1$. Denoting this partition by $\mu_0$, it is clear that $G^0_{\mu} = G^{\mu_0}_\mu$ and so the horizontal map in $\ref{lemma:emb:torsor::2}$ is a special case of $\ref{lemma:emb:torsor::3}$. The commutative diagram in $\ref{lemma:emb:torsor::2}$ then follows by the identification of $S_\mu$ with the component group of $G_\mu$ coming from Lemma \ref{l:aut-ofD-mu}. Finally for $\ref{lemma:emb:torsor::3}$, note that the natural map $\Emb(D_{\mu'}, C) \to \Emb(D_\mu, C)$ is induced by composition with the closed embedding $D_\mu \hookrightarrow D_{\mu'}$. Since all embeddings of $D_{\mu'}$ fixing $D_{\mu}$ have the same image in $\Sym^{\mu'}(C)$ (here we are using that $C$ is a smooth curve) then they differ by a reparametrization by $G_{\mu'}$ but this reparametrization must fix the embedding $D_{\mu} \hookrightarrow C$ so it must be an element of $G_{\mu'}^{\mu}$.
	\end{proof} 
	
	\begin{rmk} In fact, the torsor $\Emb(D_{\mu'}, C) \to \Emb(D_\mu,C)$ is a Zariski locally trivial fibration admitting a section and with fiber isomorphic (as a scheme not a group) to 
		$$
		\prod_{\mu_i = 1} \bG_m \times \mathbb{A}^{\mu_i - 2} \times \prod_{\mu_i \neq 1} \mathbb{A}^{\mu_i' - \mu_i}.
		$$
		This is because $G_{\mu'}^\mu$ is an extension of $\bG_m$ by a split unipotent group, hence a special group scheme.
	\end{rmk}
	
	\begin{defn} For $\mu' \geq \mu$ and a fixed embedding $f : D_\mu \to C$, we say that an embedding $f' : D_{\mu'} \to C$ extends $(f,D_\mu)$ if $f'|_{D_\mu} = f$. 
	\end{defn}
	
	We are now ready to define moduli spaces of weighted linear series with vanishing conditions. We will need some notation to talk about imposing both equalities and inequalities on the orders of vanishing of the sections. To accomplish this we will use a pair $\{\nu, T\}$ where $\nu = (\nu_1, \ldots, \nu_l)$ is an ordered $l$-tuple and $T \subset \{1, \ldots, l\}$ to encode the condition 
	
	$$
	\nu_{x_i}(s) \geq \nu_i \text{ with equality if } i \in T.
	$$
	Finally, if $t$ is an integer and $\nu$ is a tuple, denote by $t\nu$ the tuple $(t\nu_1, \ldots, t\nu_l)$.
	
	In the following two definitions, $\mu_i$ encodes the multiplicity of the $i$-th component of the normalized base locus $\overline{\Bs}(L, s_0, \ldots, s_N)$ and $\mu^{(j)}_i$ encodes the vanishing order of $s_j^{\blambda_j}$ along the $i$-th component of $\overline{\Bs}(L, s_0, \ldots, s_N)$. As a result, for each $i$, we want $\mu^{(j)}_i\geq\mu_i$ with equality for some $j$.

	\begin{defn}\label{defn:gamma}
		For each $j$, fix $\nu^{(j)}$ an $l$-tuple and a subset $T_j \subset \{1, \ldots, l\}$ and let $\muj = \blambda_j \nuj$. Suppose that $\muj \geq \mu$ and that for each $i = 1, \ldots, l$, $\mu_i = \min_{j}\{\mu^{(j)}_i\}$. We call $$\gamma =: (\{\nu^{(0)}, T_0\}, \ldots, \{\nu^{(N)}, T_N\})$$ a \emph{tuple of vanishing orders realizing $\mu$} and denote by $\gamma_j$ its $j^{th}$ component $\{\nu^{(j)}, T_j\}$. 
	\end{defn}
	
	\begin{defn} A family of \emph{$\lambdavec$-weighted linear series of degree $n$ on $C$ with parametrized normalized base locus and vanishing order $\gamma$} over $B$ is the data of a weighted linear series $(L, s_0, \ldots, s_N)$ of relative degree $n$ on $C_B$ and disjoint sections $\{\sigma_i : B \to C_B\}_{i = 1}^l$ such that \begin{enumerate}[(1)]
			\item $\overline{\Bs}(L, s_0, \ldots, s_N)$ is a relative effective Cartier divisor over $B$ which is equal to $\sum \mu_i \sigma_i$, 
			\item for each $j = 1, \ldots, N$ and any family of embeddings $D_{\mu^{(j)}} \times B \to C_B$ with image containing $\sum \mu_i \sigma_i$, the restriction $(L^{\otimes \kappa}, s_j^{\blambda_j})|_{D_{\mu^{(j)}}}$ is identically $0$, and
			\item for each $j = 1, \ldots, N$, each $i \in T_j$ and any family of embeddings $D_{\mu^{(j)}_i + 1} \times B \to C_B$ with image $\mu_i \sigma_i$, the restriction $(L^{\otimes \kappa}, s_j^{\blambda_j})|_{D_{{\mu^{(j)}_i}+1}}$ is not identically $0$. 
		\end{enumerate}
	\end{defn}

	Note that $S_\mu$ acts on the set of vanishing orders $\gamma_j = \{\nu^{(j)}, T_j\}$ by permuting the $l$ parts of $\nu^{(j)}$ as well as acting on $T_j \subset \{1, \ldots, l\}$ via the natural action on subsets of $\{1, \ldots, l\}$. 
	
	We prove the Main Theorem of this section.
	
	\begin{thm}\label{thm:main-thm-of-sec4} Fix $\lambdavec$, $C$, $n$ and $\mu = (\mu_1, \ldots, \mu_l)$ as above and let $\gamma$ be a tuple of vanishing conditions realizing $\mu$ as in Definition \ref{defn:gamma}. 
		\begin{enumerate}[(1)]
			\item There exists a separated algebraic stack $\cR^\gamma_{n,C}(\lambdavec)$ with a morphism $\cR^\gamma_{n,C}(\lambdavec) \to \Conf_{l(\mu)}(C)$ whose $B$-points are $\lambdavec$-weighted linear series of degree $n$ with parametrized base locus and vanishing order $\gamma$ over $B$. Moreover, there is a locally closed stratification
			$$
			\bigsqcup_{\gamma} \cR^\gamma_n \to \cR^\mu_n
			$$
			where the union is over all $\gamma$ realizing $\mu$.
			
			\item Let $\cW^\gamma_{n,C}({\lambdavec})=\left[\bigsqcup_{\gamma'}\cR^{\gamma'}_{n,C}/S_\mu\right]$ where the union is taken over all $\gamma'$ in the $S_\mu$-orbit of $\gamma$. Then $\cW^\gamma_{n,C}({\lambdavec})$ is a
			separated algebraic stack and we have an unramified surjective monomorphism
			
			$$
			\bigsqcup_\gamma \cW^\gamma_n \to \cW^\mu_n
			$$
			where the union is over a set of representatives for the $S_\mu$-orbits of the set of $\gamma$ realizing $\mu$.
		\end{enumerate}
	\end{thm}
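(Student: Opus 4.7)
The plan is to construct $\cR^\gamma_{n,C}(\lambdavec)$ as a locally closed substack of $\cR^\mu_{n,C}$ cut out by pointwise vanishing conditions on $s_j^{\blambda_j}$, and then to obtain $\cW^\gamma_{n,C}(\lambdavec)$ by forming an appropriate stack quotient under $S_\mu$.

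For part (1), over $\cR^\mu_n$ we have the universal weighted linear series $(\cL, s_0, \ldots, s_N)$ on $C\times\cR^\mu_n$ together with disjoint universal sections $\sigma_i : \cR^\mu_n \to C\times\cR^\mu_n$ whose associated divisor $\sum_i\mu_i\sigma_i$ is the normalized base locus. For each pair $(i,j)$, the restriction of $s_j^{\blambda_j}$ to the infinitesimal thickening $\muj_i\sigma_i$ is a section of a locally free sheaf on $\cR^\mu_n$ (since $\muj_i\sigma_i$ is flat of finite rank over $\cR^\mu_n$), and its vanishing cuts out a closed substack $Z^{\geq\muj_i}_{i,j}\subset\cR^\mu_n$. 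Setting
\[
\cR^\gamma_n := \bigcap_{i,j} Z^{\geq \muj_i}_{i,j} \;\cap\; \bigcap_{j,\,i\in T_j}\bigl(Z^{\geq \muj_i}_{i,j}\setminus Z^{\geq \muj_i+1}_{i,j}\bigr)
\]
gives a locally closed substack with the required moduli interpretation, separated as a locally closed substack of the separated stack $\cR^\mu_n$, and with a canonical map to $\Conf_{l(\mu)}(C)$ by composition. The stratification follows set-theoretically: every geometric point of $\cR^\mu_n$ assigns canonical integer vanishing orders $\muj_i$ to $s_j^{\blambda_j}$ at each $\sigma_i$, and setting $T_j=\{1,\dots,l\}$ produces a unique $\gamma$ realizing $\mu$.

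For part (2), the natural action of $S_\mu$ on $\Conf_{l(\mu)}(C)$ induces an action on $\cR^\mu_n = \Conf_{l(\mu)}(C)\times_{\Sym^\mu(C)}\cW^\mu_n$ trivially on the second factor, and this action carries $\cR^{\gamma'}_n$ to $\cR^{\sigma\cdot\gamma'}_n$ where $\sigma\in S_\mu$ permutes the indices of each $\nuj$ and $T_j$. In particular, for any representative $\gamma$, the subfamily $\bigsqcup_{\gamma'\in S_\mu\cdot\gamma}\cR^{\gamma'}_n$ is $S_\mu$-invariant and its stack quotient $\cW^\gamma_n$ is an algebraic stack, equivalent to $[\cR^\gamma_n/\Stab_{S_\mu}(\gamma)]$. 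Since $\bigsqcup_{\gamma'\in S_\mu\cdot\gamma}\cR^{\gamma'}_n\hookrightarrow\cR^\mu_n$ is an $S_\mu$-equivariant locally closed immersion and both quotients are algebraic stacks, the induced map $\cW^\gamma_n\to\cW^\mu_n$ is again a locally closed immersion, hence an unramified monomorphism. Taking the union over a set of representatives of $S_\mu$-orbits of $\gamma$'s realizing $\mu$ produces a surjection onto $\cW^\mu_n$, since any geometric point lifts to $\cR^\mu_n$, lies in some $\cR^{\gamma'}_n$, and $\gamma'$ has an orbit representative in the chosen set.

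The main obstacle will be verifying that the scheme-theoretic conditions defining $\cR^\gamma_n$ as above really agree with the functor-of-points description given via families of embeddings $D_{\muj}\times B\to C_B$. The key point is that, by Lemma \ref{lemma:emb:torsor}, the space of embeddings $D_{\muj}\hookrightarrow C$ whose image contains a fixed $\sum\mu_i\sigma_i$ is a torsor for a group scheme $G^{\mu}_{\muj}$ which acts trivially on the intrinsically defined thickening $\muj_i\sigma_i\subset C$; consequently, vanishing of $s_j^{\blambda_j}$ along any such embedding is independent of the embedding chosen and is captured scheme-theoretically by the pullback to $\muj_i\sigma_i$. Once this reformulation is in place, the remaining closedness/openness assertions and the quotient arguments above proceed without further complication.
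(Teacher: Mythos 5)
Your construction of $\cR^\gamma_n$ in part (1) is sound and in fact slightly more streamlined than the paper's argument: the paper first pulls back to the auxiliary torsor $\Emb(D_{\mu'},C)\times_{\Conf_{l(\mu)}}\cR^\mu_n$, cuts out (non)vanishing loci there, and then descends using the torsor structure of Lemma \ref{lemma:emb:torsor}; you instead restrict $s_j^{\blambda_j}$ directly to the intrinsic thickenings $\muj_i\sigma_i$ of the universal sections, which are flat and finite over the base, so the restriction is a section of a locally free sheaf whose vanishing gives the closed substacks directly. You correctly identify the one thing to check, namely that this agrees with the definition via arbitrary families of embeddings, and the torsor/descent argument you sketch is exactly how the paper resolves it. So for part (1) the two routes are equivalent; yours dispenses with one layer of descent.

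In part (2), however, there is a genuine gap. You assert that $\bigsqcup_{\gamma'\in S_\mu\cdot\gamma}\cR^{\gamma'}_n\hookrightarrow\cR^\mu_n$ is a locally closed immersion (being an $S_\mu$-invariant union of the locally closed strata $\cR^{\gamma'}_n$), and you deduce that $\cW^\gamma_n\to\cW^\mu_n$ is a locally closed immersion. But a finite disjoint union of locally closed substacks need not be locally closed: two pieces of the orbit may have incompatible closure behavior (the standard toy example is $\{0\}\sqcup(1,2)$ inside $\Rb$). The paper explicitly flags this in the remark immediately after the theorem, which is precisely why the statement only claims an unramified surjective monomorphism for $\bigsqcup_\gamma\cW^\gamma_n\to\cW^\mu_n$ and \emph{not} a locally closed stratification. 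The correct route is the one the paper uses: observe that $\bigsqcup_\gamma\cR^\gamma_n\to\cR^\mu_n$ (over \emph{all} $\gamma$ realizing $\mu$) is an unramified surjective monomorphism because the $\cR^\gamma_n$ are pairwise disjoint locally closed substacks covering $\cR^\mu_n$, and then invoke that being unramified, being a monomorphism, and being surjective can each be checked \'etale-locally on the target, so they descend along the \'etale cover $\cR^\mu_n\to\cW^\mu_n$ to give the same conclusion for $\bigsqcup_\gamma\cW^\gamma_n\to\cW^\mu_n$. Your end conclusion is correct, but the intermediate claim you used to reach it is false and would need to be replaced by this descent argument.
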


	
	
	
	\begin{proof}
		For any ordered tuple $\mu' \geq \mu$, we let $\cR^{\mu, \mu'}_n$ denote the pullback
		$$
		\cR^{\mu, \mu'}_n = \Emb(D_{\mu'}, C) \times_{\Conf_{l(\mu)}} \cR^{\mu}_n
		$$
		A point of $\cR^{\mu,\mu'}_n$ consists of a weighted linear series $(L, s_0, \ldots, s_N)$ of degree $n$ on $C_B$, $l$ disjoint sections $\sigma_i : T \to C_B$ such that the $\overline{\Bs}(L, s_0, \ldots, s_N)$ is the relative effective Cartier divisor $\sum \mu_i \sigma_i$ as well as an embedding $D_{\mu'} \times B \to C_B$ which contains $\sum \mu_i \sigma_i$ in its image. Note that $\cR^{\mu, \mu'}_n \to \cR^{\mu}_n$ is an $G^0_{\mu'}$-torsor by Lemma \ref{lemma:emb:torsor}. 
		
		Over $\cR^{\mu,\mu'}_n$ we have a universal linear series $(\cL_{\mu}, s_0, \ldots, s_N)$ with normalized base profile $\mu$ as well as a universal closed embedding
		$$
		D_{\mu'} \times \cR^{\mu, \mu'}_n \hookrightarrow  C \times \cR^{\mu,\mu'}_n.
		$$
		For each $j$, we can restrict $s_j$ along this closed embedding to obtain a line bundle with section $(\cL_{\mu}^{\otimes \lambda_j}, s_j)|_{D_{\mu'}}$. Viewing this data as a morphism
		$$
		D_{\mu'} \times \cR^{\mu, \mu'}_n \to [\mathbb{A}^1/\bG_m]
		$$
		we obtain for each $j$ a morphism to the Hom-stack
		$$
		\alpha_{\mu',j} : \cR^{\mu, \mu'}_n \to \Hom(D_{\mu'}, [\mathbb{A}^1/\bG_m]). 
		$$
		For each $i$, we can also restrict to $D_{\mu'_i} := \Spec k[x]/(x^{\mu'_i+1})$ and obtain a further composition
		$$
		\alpha_{\mu',j,i} : \cR^{\mu,\mu'}_n \to \Hom(D_{\mu_i'}, [\bA^1/\bG_m]) 
		$$
		given by restricting $(\cL_{\mu}^{\otimes \lambda_j}, s_j)$ to the $i^{th}$ jet $D_{\mu_i}$ of $D_{\mu}$. We can also take powers of the line bundle and section for any integer $k$ to obtain morphisms $\alpha_{\mu',j}^k$ and $\alpha_{\mu',j,i}^k$ classifying the restriction of $(\cL_\mu^{\otimes k \lambda_j}, s_j^k)$. 
		
		Let $\tilde{Z}_{\mu',j,i}^k\subset\cR^{\mu,\mu'}_n$ be the closed substack defined by the vanishing locus of $\alpha_{\mu',j,i}^k$ and let $\tilde{U}_{\mu',j,i}^k=\cR^{\mu,\mu'}_n\smallsetminus \tilde{Z}_{\mu',j,i}^k$. These substacks are invariant under the action of the various groups $G^{\mu}_{\mu'}$ and $G^0_{\mu}$ of reparametrizations of the embedding and so by Lemma \ref{lemma:emb:torsor}, they descend to closed and open substacks $Z_{\mu',j,i}^k,U_{\mu',j,i}^k \subset \cR^{\mu}_n$ respectively. The substack $Z_{\mu',j,i}^k$ (resp. $U_{\mu',j,i}^k$) parametrizes those $\lambdavec$-weighted linear series $(L, s_0, \ldots, s_N)$ such that $(L^{\otimes k\lambda_j},s_j)|_{D_{\mu_i'}}$ vanishes identically (resp. does not vanish identically) for all embeddings $D_{\mu'} \hookrightarrow C$ containing the normalized base locus $\sum \mu_i x_i$ in their image. Then $\cR_n^{\gamma}$ is the intersection of $Z_{\mu',j,i}^k$ and $Z_{\mu'',j',i'}^{k'}$ as $\mu',\mu'',j,j',i,i',k,k'$ vary over appropriate values. This implies that $\cR_n^{\gamma}$ is locally closed and that they stratify $\cR_n^\mu$. In particular, $\bigsqcup_{\gamma} \cR^\gamma_n \to \cR^\mu_n$ is an unramified surjective monomorphism. Since these properties can be checked \'etale locally, the same holds for $\bigsqcup_\gamma \cW^\gamma_n \to \cW^\mu_n$.
	\end{proof} 
	
	\begin{rmk}
		The map $\cW_n^\gamma \to \cW_n^\mu$ might fail to be a locally closed embedding. The reason is that the $S_\mu$-invariant union $\bigsqcup_{\gamma'} \cR^{\gamma'}_n$ over the $S_\mu$-orbit of $\gamma$ may not be locally closed in $\cR^\mu_n$ even though $\cR^\gamma_n \to \cR^\mu_n$ is locally closed embedding. However, $\cW_n^\gamma \to \cW_n^\mu$ is locally closed up to a locally closed stratification of the source and target and this is good enough for computing most invariants we will be interested in (e.g. number of points and the class in the Grothendieck ring of stacks). 
	\end{rmk}
	
	\subsection{Defect of minimality} 
	
	In this section we define the defect of minimality $e$ of a weighted linear series. It measures the failure of a linear series to be minimal. 
	
	Let $\mu$ be the normalized base profile. We can divide each part $\mu_i$ by $\kappa$ to obtain $\mu_i = \kappa q_i + r_i$. 
	\begin{defn}
		We define $q(\mu)$ and $r(\mu)$, the quotient and remainder of $\mu$ when divided by $\kappa$, to be the partitions with parts $q_i$ and $r_i$ respectively. 
	\end{defn}
	\begin{defn}
		The minimality defect of $\mu$ is the size of the quotient
		$$
		e = |q(\mu)|. 
		$$
		The minimality defect of a weighted linear series is the minimality defect of its normalized base profile. 
	\end{defn}
	Let $(L,s_0, \ldots, s_N)$ be a weighted linear series with normalized base profile $\mu$. Let $D = \overline{\Bs}(L, s_0, \ldots, s_N) \in \Sym^{\mu}(C)$ be the normalized base locus. Condiser the round-down
	$$
	D' = \left \lfloor \frac{D}{\kappa} \right \rfloor
	$$
	and let $D'' = D - \kappa D'$. The following lemma is clear by construction. 
	
	\begin{lem} \label{div_rem} We can write $D = \kappa D' + D''$ where
		\begin{enumerate}[(1)]
			\item $D', D'' \geq 0$ are effective, 
			\item $D'$ has profile $q(\mu)$ and $D''$ has profile $r(\mu)$, and
			\item $e = \deg(D')$. 
		\end{enumerate}
	\end{lem}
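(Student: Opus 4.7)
The plan is to unwind the definitions and reduce to a pointwise division with remainder. First I would write the normalized base locus as the effective divisor $D = \sum_{i=1}^l \mu_i x_i$ on $C$, where the $x_i$ are the distinct (closed) points in the support of $D$ and the multiplicity at $x_i$ is the corresponding part $\mu_i$ of the partition. For each $i$, apply the Euclidean algorithm to write $\mu_i = \kappa q_i + r_i$ with $0 \le r_i < \kappa$; this is precisely the data packaged into the partitions $q(\mu) = (q_i)_i$ and $r(\mu) = (r_i)_i$ (discarding zero entries from each tuple to get honest partitions).

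Next I would identify $D'$ and $D''$ explicitly. Since $D$ is effective with multiplicity $\mu_i$ at $x_i$, the round-down $\lfloor D/\kappa \rfloor$ is computed pointwise and equals $\sum_i q_i\, x_i$. Similarly, $D'' = D - \kappa D' = \sum_i r_i\, x_i$. Both are effective because $q_i, r_i \ge 0$, establishing (1). For (2), the profiles of $D'$ and $D''$ are read off their multiplicities at each supported point, which are exactly the nonzero $q_i$ and $r_i$ respectively; these are by definition the parts of $q(\mu)$ and $r(\mu)$.

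Finally, for (3), I compute $\deg(D') = \sum_i q_i [k(x_i):k]$ in general, but since profiles and the definition of $|q(\mu)|$ are counted with the convention used throughout Section \ref{sec:mod-sp-ratpts} (namely as $\sum_i q_i$ matching the geometric multiplicities of points in $\Sym^{q(\mu)}(C)$), this equals $|q(\mu)| = e$ by the definition of the minimality defect. The only subtlety worth noting is bookkeeping: some $q_i$ may vanish (when $\mu_i < \kappa$), so the supports of $D$ and $D'$ may differ, but dropping those terms from the partition $q(\mu)$ matches the convention. There is no real obstacle here; the lemma is a direct translation of division with remainder into the language of effective divisors and their profiles, so the work is entirely in checking that the conventions for partitions, profiles, and $\lfloor \cdot/\kappa \rfloor$ agree.
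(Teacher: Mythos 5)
The paper itself offers no proof of this lemma, stating only that it is "clear by construction," and your proposal correctly fills in that implicit argument: identify $D$ with $\sum_i \mu_i x_i$, divide each multiplicity by $\kappa$ with remainder, and read off $D'$, $D''$ and their profiles pointwise. Everything is correct. One small point worth cleaning up: your detour through $\deg(D') = \sum_i q_i\,[k(x_i):k]$ suggests the parts of $\mu$ index closed points of $C$ over $k$, which would make $\deg(D')$ and $|q(\mu)|$ disagree in general. In fact the convention in Section \ref{sec:mod-sp-ratpts} is that the parts of $\mu$ index geometric points (a closed point of residue degree $d$ contributes $d$ identical parts), so each $[k(x_i):k]=1$ and the identity $\deg(D')=\sum_i q_i=|q(\mu)|=e$ is immediate; the $D'$ and $D''$ so produced are Galois-stable and hence descend to $k$. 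With that clarification your argument matches the paper exactly.
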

	
	The constructions of $D'$ and $D''$ also make sense in any family of weighted linear series with normalized base profile $\mu$. 
	
	\begin{rmk}
		Since the minimality defect $e$ of a weighted linear series depends only on its normalized base profile, $e$ is constant on each stratum $\cW^{\mu}_n$. In particular, 
		$$
		e : \cW_n \setminus \cW_n^\infty \to \mathbb{Z}
		$$
		is a constructible function and the level sets are unions of $\cW^{\mu}_n$ over all normalized base profiles $\mu$ with the same minimality defect. 
	\end{rmk}

	
	\section{Height moduli on cyclotomic stacks}\label{sec:moduli}

	In this section, we prove the following theorem on existence of height moduli and their relation to moduli of twisted maps $\cH_{d,C}^\Gamma$ (Section \ref{sec:mod-sp-twismaps}).
	
	\begin{thm}\label{thm:height:moduli_body} Let $(\cX, \cL)$ be a proper polarized cyclotomic stack over a perfect field $k$. Fix a smooth projective curve $C/k$ with function field $K = k(C)$ and $n,d \in \mathbb{Q}_{\ge 0}$.

		\begin{enumerate} 
			\item There exists a separated Deligne--Mumford stack $\cM_{n,C}(\cX,\cL)$ of finite type over $k$ with a quasi-projective coarse space and a canonical bijection of $k$-points
			
			$$
			\cM_{n,C}(\cX, \cL)(k) = \left\{ P \in \cX(K) \mid \height_{\cL}(P) = n \right \}.
			$$
			\item There is a finite locally closed stratification 
			$$
			\bigsqcup_{\Gamma,d} 
			\cH^\Gamma_{d,C}(\cX, \cL)/S_\Gamma\to \cM_{n,C}(\cX, \cL)
			$$
			where the union runs over all possible admissible local conditions (Definition \ref{def:local-conditions})
			$$\Gamma = \left(\{r_1, a_1\}, \ldots, \{r_s, a_s\}\right)$$ and degrees $d$ for a twisted map to $(\cX, \cL)$ satisfying
			$$
			n = d + \sum_{i = 1}^s \frac{a_i}{r_i} 
			$$
			and $S_\Gamma$ is a subgroup of the symmetric group as in Definition \ref{def:Sgamma}. 
			
			\item Under the bijection in part (1), each $k$-point of $\cH_{d,C}^\Gamma(\cX, \cL)/S_\Gamma$ corresponds to a $K$-point $P$ with the stable height and local contributions given by
			$$
			\height^{st}_{\cL}(P) = d \quad \quad \quad \left\{\delta_i = \frac{a_i}{r_i}\right\}_{i = 1}^s. 
			$$
		\end{enumerate}
	\end{thm}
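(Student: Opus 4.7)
The strategy is to first prove the theorem for $(\cX, \cL) = (\Pcv, \cO(1))$ a weighted projective stack, and then reduce the general case via the closed embedding $\cX \hookrightarrow \Pcv$ with $\cL \cong \cO_{\Pcv}(1)|_{\cX}$ from Proposition \ref{prop:polarizing:embedding}. For the weighted projective case, I define
$$
\cM_{n,C}(\Pcv, \cO(1)) \;:=\; \cW^{\min}_{n,C}(\lambdavec),
$$
the moduli space of minimal $\lambdavec$-weighted linear series of degree $n$ constructed in Proposition \ref{prop:stack:min}. The canonical bijection on $k$-points in part (1) is then exactly the content of Theorem \ref{thm:tuning-stacks-and-minimal-linear-series}(2), which produces a unique minimal weighted linear series for every rational map $C \dashrightarrow \Pcv$ and computes its height as $\deg L$.

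For part (2) in the weighted projective case, I use the locally closed stratification $\cW^{\min}_{n,C} = \bigsqcup_{\mu \text{ minimal}} \cW^\mu_{n,C}$ from Proposition \ref{prop:stack:min}, indexed by minimal normalized base profiles. By Proposition \ref{prop:normalized:local:conditions}, each part $\mu_i$ of a minimal profile determines a pair $(r_i, a_i) = (\kappa/\gcd(\mu_i,\kappa),\ \mu_i/\gcd(\mu_i,\kappa))$ which assembles into an admissible local condition $\Gamma(\mu)$, and by Corollary \ref{cor:height:normalized} the associated stable height is $d(\mu) = n - \sum_i a_i/r_i$. The key geometric claim is that there is a canonical isomorphism
$$
\cW^\mu_{n,C} \;\cong\; \cH^{\Gamma(\mu)}_{d(\mu),C}/S_{\Gamma(\mu)}.
$$
I would construct morphisms in both directions: Proposition \ref{prop:tuningstack->min-lin-ser} turns a family of twisted maps of type $\Gamma(\mu)$ into a family of minimal weighted linear series of base profile $\mu$, while Proposition \ref{prop:lin-ser->univ-tuning-stack-map} reverses this by constructing the universal tuning stack (i.e.\ the appropriate root stack of $C \times B$) and the representable map out of a family of minimal weighted linear series. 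The quotient by $S_{\Gamma(\mu)}$ appears because in $\cW^\mu_n$ the normalized base locus is the unordered divisor $\sum \mu_i \sigma_i \in \Sym^\mu(C)$, whereas $\cH^{\Gamma(\mu)}_{d,C}$ carries ordered gerbes. This identifies the stratification with the one claimed and, via the explicit formulas in Theorem \ref{thm:tuning-stacks-and-minimal-linear-series}(3) and Corollary \ref{cor:height:normalized}, yields part (3). Separatedness, finite type, and quasi-projectivity of the coarse space transfer from the corresponding properties of $\cH^\Gamma_{d,C}$ established in Theorem \ref{thm:moduli-of-twisted-maps-aov}.

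For the general case, the closed embedding $\iota\colon \cX \hookrightarrow \Pcv$ is representable, so it induces representable closed embeddings $\cH^\Gamma_{d,C}(\cX, \cL) \hookrightarrow \cH^\Gamma_{d,C}(\Pcv, \cO(1))$ by postcomposition (twisted maps to $\cX$ are the same as twisted maps to $\Pcv$ whose image lies in $\cX$, which is a closed condition). I then define $\cM_{n,C}(\cX,\cL) \subset \cM_{n,C}(\Pcv, \cO(1))$ as the locally closed substack whose strata are the images of the $\cH^\Gamma_{d,C}(\cX,\cL)/S_\Gamma$. Concretely, on each stratum $\cW^\mu_n$, the condition that the induced representable map to $\Pcv$ factors through $\cX$ is a locally closed (indeed closed in $\cW^\mu_n$ since $\cX \subset \Pcv$ is closed) condition in families, giving $\cM_{n,C}(\cX, \cL)$ the structure of a finite type separated DM stack with quasi-projective coarse space. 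The bijection on $k$-points and the computation of heights in parts (1) and (3) then follow from the $\Pcv$-case together with Theorem \ref{thm:tuning-stacks-and-minimal-linear-series}, since the height functions on $(\cX, \cL)$ and $(\Pcv, \cO(1))$ agree on rational points of $\cX$.

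The main obstacle is establishing the isomorphism $\cW^\mu_{n,C} \cong \cH^{\Gamma(\mu)}_{d(\mu),C}/S_{\Gamma(\mu)}$ as stacks, not merely as a bijection on $k$-points. This requires checking that the constructions of Propositions \ref{prop:tuningstack->min-lin-ser} and \ref{prop:lin-ser->univ-tuning-stack-map} are compatible with arbitrary base change: the key points are that the formation of $\pi_*$ commutes with base change on a tame twisted curve (by \cite[Cor. 3.3]{AOV}), that the root stack construction is compatible with base change once the normalized base locus $\sum \mu_i \sigma_i$ is a relative effective Cartier divisor of the correct profile (which is built into the definition of $\cW^\mu_n$), and that representability of the induced map can be checked fiberwise. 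A secondary technical point is verifying that the local-to-global correspondence between vanishing orders and twisting data assembles compatibly with the $S_\Gamma$-quotient that forgets the ordering of the gerbes.
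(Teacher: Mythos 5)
Your approach mirrors the paper's overall strategy: reduce to the weighted projective case via the embedding of Proposition \ref{prop:polarizing:embedding}, take $\cM_{n,C}(\Pcv,\cO(1)) = \cW^{\min}_{n,C}$, prove the stratification by upgrading the pointwise correspondence of Propositions \ref{prop:tuningstack->min-lin-ser} and \ref{prop:lin-ser->univ-tuning-stack-map} to an isomorphism of stacks $\cH^{\Gamma}_{d,C}\cong\cR^{\mu}_{n,C}$ (and hence $\cH^{\Gamma}_{d,C}/S_\Gamma\cong\cW^{\mu}_{n,C}$), and then carve $\cM_{n,C}(\cX,\cL)$ out of $\cW^{\min}_{n,C}$. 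Your emphasis on base-change compatibility of $\pi_*$ on tame stacks and of the root-stack construction is exactly where the content lies, and the parenthetical accounting for $S_\Gamma$ versus ordered gerbes is correct.

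There is, however, a genuine gap in your reduction to the general cyclotomic case. You correctly observe that on each stratum $\cW^\mu_n$ the condition of factoring through $\cX$ is closed (because $\cX\subset\Pcv$ is closed and the universal twisted curve is proper over the base), but from closedness on each stratum one can only conclude that the locus is \emph{constructible} in $\cW^{\min}_n$. It is \emph{not} automatic that a union of pieces, each closed in its stratum of a locally closed stratification, is itself locally closed in the ambient space: for instance, in $\mathbb{A}^2$ with strata $\{x\neq 0\}$ and $\{x=0\}$, the set $\{x\neq 0\}\cup\{(0,0)\}$ is closed in each stratum yet is not locally closed in $\mathbb{A}^2$. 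Thus your phrase ``the locally closed substack whose strata are the images of $\cH^\Gamma_{d,C}(\cX,\cL)/S_\Gamma$'' presupposes what must be proved, and until that is settled the separatedness and quasi-projectivity of the coarse space of $\cM_{n,C}(\cX,\cL)$ do not transfer from $\cW^{\min}_{n,C}$. The missing step is a specialization argument: given a DVR $R$ and a minimal weighted linear series on $C_R$ whose associated rational map sends the generic point of the generic fiber into $\cX$, pass to the open $U\subset C_R$ where the normalized linear series is base-point-free (this is a scheme, independent of how the stratification varies), use flatness of $U\to\Spec R$ to see that the generic point of the special fiber $U_{k'}$ specializes from that of $U_{K'}$, and invoke closedness of $\cX\subset\Pcv$ to conclude the condition holds on the special fiber. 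This upgrades constructibility to closedness in all of $\cW^{\min}_n$, which is what gives $\cM_{n,C}(\cX,\cL)$ its structure as a closed substack together with the locally closed stratification of part (2).
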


	\begin{rmk} When $(\cX,\cL) = \left(\Pcv, \cO_{\Pcv}(1)\right)$, we will see in the next section that the moduli space $\cM_{n,C}(\cX,\cL)$ is the space $\cW_{n,C}^{min}(\lambdavec)$ of minimal weighted linear series from Proposition \ref{prop:stack:min}; the stratification into spaces of twisted maps $\cH^\Gamma_{d,C}$ corresponds to the stratification of $\cW^{min}_{n,C}$ into $\cW^\mu_{n,C}$ for minimal partitions $\mu$. 
	\end{rmk}
	
	\begin{rmk}\label{rmk:Mnccyclotomic}
		In fact, the proof of Theorem \ref{thm:height:moduli} will show that $\cM_{n,C}(\cX, \cL)$ is a cyclotomic stack by proving that $\cM_{n,C}(\cX, \cL)\subset\cM_{n,C}(\Pcv,\cO_{\Pcv}(1))$ is a closed substack and that $\cM_{n,C}(\Pcv,\cO_{\Pcv}(1))$ is cyclotomic.
	\end{rmk}
	
	As an immediate corollary we obtain the following Northcott property. 
	
	\begin{cor}\label{cor:northcott} Fix $(\cX, \cL)$ and $C$ as above and suppose $k$ is a finite field. Then for any $B > 0$, the set
		$$
		\{ P \in \cX(K) \mid \height_{\cL}(P) \le B \}
		$$
		is finite. 
	\end{cor}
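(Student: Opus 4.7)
The strategy is to leverage Theorem \ref{thm:height:moduli_body} to reduce the statement to two separate finiteness facts: discreteness of the value set of $\height_{\cL}$, and finiteness of the $k$-points of each height modulus over a finite field. First I would argue that the heights $\height_{\cL}(P)$ for $P\in\cX(K)$ lie in a fixed discrete subset of $\mathbb{Q}_{\geq 0}$. By Definition \ref{def:cyc_uni} and the discussion following it, there exists a positive integer $M$ and an ample line bundle $L$ on the coarse space $X$ with $\cL^{\otimes M}\cong\pi^\ast L$. By Theorem \ref{thm:height:moduli_body}(3) and admissibility of $\Gamma$ (Definition \ref{def:local-conditions}), each rational point $P$ corresponds to a twisted map with $\height_{\cL}(P) = d + \sum_{i=1}^s \frac{a_i}{r_i}$ where $d\in\tfrac{1}{M}\mathbb{Z}_{\geq 0}$ (by Remark \ref{rem:deg}) and each $r_i\mid M$, so each $\frac{a_i}{r_i}\in\tfrac{1}{M}\mathbb{Z}$. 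Consequently, $\height_{\cL}(P)\in\tfrac{1}{M}\mathbb{Z}_{\geq 0}$, and there are only finitely many values $n\in\tfrac{1}{M}\mathbb{Z}_{\geq 0}$ with $n\leq B$.

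Next, for each such fixed value $n\leq B$, I would invoke Theorem \ref{thm:height:moduli_body}(1) to obtain the separated Deligne--Mumford stack $\cM_{n,C}(\cX,\cL)$ of finite type over $k=\mathbb{F}_q$ with quasi-projective coarse space $M_{n,C}$. Since $M_{n,C}$ is a quasi-projective scheme of finite type over a finite field, $M_{n,C}(\mathbb{F}_q)$ is finite. The canonical map $\cM_{n,C}(\cX,\cL)(\mathbb{F}_q)\to M_{n,C}(\mathbb{F}_q)$ has finite fibers because $\cM_{n,C}(\cX,\cL)$ is Deligne--Mumford with finite automorphism groups at each geometric point; thus $\cM_{n,C}(\cX,\cL)(\mathbb{F}_q)$ is finite. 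By Theorem \ref{thm:height:moduli_body}(1), this set is in canonical bijection with $\{P\in\cX(K)\mid \height_{\cL}(P)=n\}$.

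Putting the two steps together, the set in question decomposes as a finite union
\[
\{P\in\cX(K)\mid \height_{\cL}(P)\leq B\}\;=\;\bigsqcup_{\substack{n\in\frac{1}{M}\mathbb{Z}_{\geq 0}\\ n\leq B}}\cM_{n,C}(\cX,\cL)(\mathbb{F}_q),
\]
and each term on the right is finite, yielding the desired conclusion. There is essentially no obstacle here beyond bookkeeping; all the real work has been absorbed into Theorem \ref{thm:height:moduli_body}, and the only minor point to verify carefully is the discreteness of the height values, which follows directly from the admissibility constraint $r_i\mid M$ on the local twisting data.
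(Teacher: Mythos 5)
Your proof is correct and follows the approach the paper intends (the paper simply labels this ``an immediate corollary'' without writing out the argument). The decomposition into finitely many moduli spaces $\cM_{n,C}(\cX,\cL)$, each of finite type over $\mathbb{F}_q$ and hence with finitely many isomorphism classes of $\mathbb{F}_q$-points, is exactly the intended mechanism.

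One small simplification you could have used: the height $\height_{\cL}(P) = -\deg(\pi_*\overline{P}^*\cL^\vee)$ is the degree of a line bundle on the honest curve $C$ (Definition \ref{def:stacky-ht}), so it lies in $\mathbb{Z}_{\geq 0}$ directly, with no need to go through the $d + \sum a_i/r_i$ decomposition and the admissibility constraint $r_i \mid M$. That said, your route is also valid, and it has the small advantage of explaining \emph{why} the height is quantized from the twisted-map side.

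The one step you glossed over is why a separated Deligne--Mumford stack of finite type over $\mathbb{F}_q$ has finitely many isomorphism classes of $\mathbb{F}_q$-points; your appeal to finite fibers over the coarse space requires an argument (the fiber over a coarse point is controlled by a nonabelian $H^1$ of a finite group, hence finite), or alternatively one can cite the paper's own Theorem \ref{thm:ptcounts}, which identifies $|\cM_{n,C}(\mathbb{F}_q)/\!\sim|$ with the weighted point count of the finite-type inertia stack. Either way the gap is cosmetic, not substantive.
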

	
	\begin{rmk} It is not hard to see from the construction of $\cM_{n,C}(\cX, \cL)$ that it is compatible with base change in the following sense: if $k'/k$ is a field extension with $k'$ perfect and we denote $(\cX', \cL') = (\cX, \cL) \times_k k'$ and $C' = C \times_k k'$, then 
		$$
		\cM_{n,C}(\cX, \cL) \times_k k' \cong \cM_{n,C'}(\cX', \cL')
		$$
		as stacks over $k'$. 
		
	\end{rmk}
	
	\subsection{The weighted projective case}\label{subsec:modwps}
	
	We begin by proving Theorem \ref{thm:height:moduli} for 
	$$
	(\cX, \cL) = \left(\Pcv, \cO_{\Pcv}(1)\right).
	$$ 
	The key input is the correspondence in Theorem \ref{thm:tuning-stacks-and-minimal-linear-series::height} and our main task is to check that this correspondence holds in families.

	\begin{proof}[{Proof of Theorem \ref{thm:height:moduli} for weighted projective stacks}]\label{proof:wps}
		
		We will check that $\cW^{min}_{n,C}(\lambdavec)$, the space of minimal linear series on $C$, satisfies the properties of the theorem. 
		
		For part $(1)$, note that by definition a $k$-point of $\cW^{min}_{n,C}$ is a minimal $\lambdavec$-weighted linear series $(L, s_0, \ldots, s_N)$ on $C$ with $\deg L = n$. This induces a rational map $C \dashrightarrow \Pcv$ which gives us a rational point $P \in \Pcv(K)$. By Theorem \ref{thm:tuning-stacks-and-minimal-linear-series} (\ref{thm:tuning-stacks-and-minimal-linear-series::height}), $\height_{\cO(1)}(P) = n$. On the other hand, a rational point $P \in \Pcv(K)$ of height $n$ spreads out to a unique minimal weighted linear series by Theorem \ref{thm:tuning-stacks-and-minimal-linear-series} (\ref{thm:tuning-stacks-and-minimal-linear-series::minimal-lin-ser}) and the degree of the minimal weighted linear series is $n$ by Theorem \ref{thm:tuning-stacks-and-minimal-linear-series} (\ref{thm:tuning-stacks-and-minimal-linear-series::height}). This gives us the required canonical bijection
		$$
		\cW_{n,C}^{min}(\lambdavec)(k) \cong \left\{P \in \Pcv(K) \mid \height_{\cO(1)}(P) = n\right\}. 
		$$
		
		For part $(2)$, we begin with the following lemma. Let $\kappa := \mathrm{lcm}\{\lambda_0, \ldots, \lambda_N\}$ and $\bar{\lambda}_j := \kappa/\lambda_j$ as usual. 
		
		\begin{lem}\label{lem:bijection} Suppose $\kappa > 1$. Then the map 
			$$
			m \mapsto \left( \frac{\kappa}{\mathrm{gcd}(m,\kappa)}, \frac{m}{\mathrm{gcd}(m,\kappa)}\right)
			$$
			induces a bijection from the set $\{1,\ldots, \kappa - 1\}$ to the set 
			$$
			\left\{(r,a) : 1 \le a < r, r|\kappa, \mathrm{gcd}(r,a) = 1\right\}
			$$
			
		\end{lem}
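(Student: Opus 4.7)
The plan is to exhibit an explicit inverse to the map $m\mapsto (r(m),a(m)):=\bigl(\kappa/\gcd(m,\kappa),\,m/\gcd(m,\kappa)\bigr)$. Define $\Phi$ in the opposite direction by $(r,a)\mapsto m(r,a):=a\kappa/r$. Since $r\mid\kappa$, this is an integer, and the conditions $1\le a<r$ guarantee $1\le m(r,a)<\kappa$, so $\Phi$ lands in $\{1,\dots,\kappa-1\}$. The proof then reduces to checking that the two compositions are the identity.

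First I would check that our map is well-defined: writing $d=\gcd(m,\kappa)$, we have $\gcd(m/d,\kappa/d)=1$, which gives $\gcd(a(m),r(m))=1$. Clearly $r(m)\mid\kappa$, and $m<\kappa$ forces $a(m)<r(m)$, while $m\ge 1$ forces $a(m)\ge 1$. Conversely, for $\Phi(r,a)=a\kappa/r$, setting $d=\kappa/r$ we compute $\gcd(a\kappa/r,\kappa)=\gcd(ad,rd)=d\gcd(a,r)=d$, using $\gcd(a,r)=1$. Hence
\[
r(\Phi(r,a))=\frac{\kappa}{d}=r,\qquad a(\Phi(r,a))=\frac{ad}{d}=a,
\]
showing $(r,a)\mapsto m(r,a)\mapsto (r,a)$ is the identity.

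For the other composition, given $m\in\{1,\dots,\kappa-1\}$ with $d=\gcd(m,\kappa)$, we have
\[
\Phi(r(m),a(m))=\frac{a(m)\kappa}{r(m)}=\frac{(m/d)\kappa}{\kappa/d}=m,
\]
completing the verification. No step here is a real obstacle; the only mild subtlety is remembering that $\gcd(ad,rd)=d\gcd(a,r)$, which is what makes $\gcd(a,r)=1$ the correct coprimality condition to impose on the target set.
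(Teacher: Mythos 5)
Your proof is correct and takes the same approach as the paper: you construct the inverse map $(r,a)\mapsto a\kappa/r$ and verify both compositions are the identity. The paper's proof is essentially this, just stated more tersely — it constructs the same inverse and leaves the routine verifications implicit.
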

		\begin{proof} 
			By construction, the map lands in the required set so it suffices to construct an inverse. The inverse is simply given by
			$$
			(r,a) \mapsto \frac{\kappa a}{r}
			$$
			which is an integer in $\{1, \ldots, \kappa - 1\}$ by the conditions on $(r,a)$.
		\end{proof}
		
		We have the following immediate corollary. 
		
		\begin{cor}\label{cor:bijection} For each $l \geq 1$, there is a bijection between the set of $l$-tuples of admissible local conditions $\Gamma = \left(\{r_1, a_1\}, \ldots, \{r_l, a_l\}\right)$ for a representable twisted map to $\Pcv$ and the set of minimal ordered partitions $\mu$ with $l$ parts. 
		\end{cor}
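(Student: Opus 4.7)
The plan is to reduce the statement to Lemma \ref{lem:bijection} applied in each coordinate. The first task is to pin down the integer $M$ appearing in Definition \ref{def:local-conditions} when $(\cX, \cL) = (\Pcv, \cO_{\Pcv}(1))$; I claim $M = \kappa$. At a geometric point of $\Pcv$ where only the coordinate $x_j$ is nonzero, the stabilizer $\mu_{\lambda_j}$ acts on the fiber of $\cO_{\Pcv}(1)$ through the defining character, so any $M$ for which $\cO_{\Pcv}(M)$ descends to the coarse space must be divisible by every $\lambda_j$, hence by $\kappa$. Conversely $\cO_{\Pcv}(\kappa)$ is generated by the monomials $x_i^{\kappa/\lambda_i}$ of common weighted degree $\kappa$, so it descends to an ample line bundle on $|\Pcv|$. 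Thus $M = \kappa$ and admissibility of $\Gamma$ becomes the condition $r_i \mid \kappa$ for each $i$.

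With this in hand, unwinding Definition \ref{def:local-conditions} shows that an $l$-tuple $\Gamma = (\{r_1, a_1\}, \ldots, \{r_l, a_l\})$ of admissible local conditions for a representable twisted map to $\Pcv$ is precisely a tuple of pairs with $r_i \mid \kappa$, $1 \le a_i < r_i$ and $\gcd(r_i, a_i) = 1$, i.e., $l$ copies of the target set of Lemma \ref{lem:bijection}. Similarly, a minimal ordered partition $\mu = (\mu_1, \ldots, \mu_l)$ is a tuple with $\mu_i \in \{1, \ldots, \kappa - 1\}$, i.e., $l$ copies of the source set. Taking the $l$-fold product of the bijection from Lemma \ref{lem:bijection} then produces the desired bijection, with explicit formula
\[
\mu_i \;\longmapsto\; (r_i, a_i) \;=\; \Bigl(\tfrac{\kappa}{\gcd(\mu_i, \kappa)},\; \tfrac{\mu_i}{\gcd(\mu_i, \kappa)}\Bigr)
\]
and inverse $(r_i, a_i) \mapsto \mu_i = \kappa a_i / r_i$.

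The argument is entirely formal once $M = \kappa$ is identified; I do not foresee any real obstacle. The boundary case $\kappa = 1$ is vacuous, since then $\Pcv = \Pb^N$ is a scheme and both the source and target sets are empty. Geometrically, this is nothing but the combinatorial shadow of Proposition \ref{prop:normalized:local:conditions}: the bijection sends the multiplicity $m = \mu_i$ of the normalized base locus at a point to precisely the pair $(r,a)$ giving the stabilizer order and character on the universal tuning stack.
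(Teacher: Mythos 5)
Your proof is correct and follows the paper's own argument: both take the $l$-fold product of Lemma \ref{lem:bijection} after identifying admissibility with the condition $r_i \mid \kappa$. The only difference is that you explicitly verify $M = \kappa$ for $(\Pcv, \cO_{\Pcv}(1))$, whereas the paper's proof states the resulting characterization of admissibility without comment; your extra step is sound (the stabilizer $\mu_{\lambda_j}$ at the $j$-th coordinate point acts faithfully on $\cO(1)$ because $\cO(1)$ is uniformizing, forcing $\lambda_j \mid M$, and the basepoint-free monomials $x_i^{\kappa/\lambda_i}$ show $\cO(\kappa)$ has trivial stabilizer action and hence descends) and is a reasonable addition of detail.
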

		\begin{proof} 
			A pair $(r,a)$ is admissible if and only if $r | \kappa$ while a partition $\mu$ is minimal if and only if $\mu_i < \kappa$ for each $i$. Thus the required bijection is the $l$-fold product of the bijection in Lemma \ref{lem:bijection}.
		\end{proof} 
		
		Now by Proposition \ref{prop:stack:min}, $\cW^{min}_n$ has a finite locally closed stratification into $\cW^\mu_n$ where $\mu$ runs over all minimal partitions with $|\mu| \le \kappa n$, where the inequality holds by Proposition \ref{prop:flattening} (3). Fix one such partition $\mu$ and suppose $l(\mu) = l$. By the above corollary, there exists a unique $l$-tuple of local conditions $\Gamma = (\{r_1, a_1\}, \ldots, \{r_l, a_l\})$ corresponding to the partition $\mu$. We will show that $\cH^\Gamma_d/S_\Gamma \cong \cW^\mu_n$ where $n$ and $d$ are related as in the statement of Theorem \ref{thm:height:moduli}. More precisely, we will show the following.
		\begin{prop}\label{prop:iso-HGammad-Rmun}
			$$\cH^\Gamma_d \cong \cR^\mu_n$$
			as stacks over $\Conf_l(C)$.
		\end{prop}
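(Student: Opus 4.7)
The plan is to construct mutually inverse morphisms $\Phi \colon \cR^\mu_n \to \cH^\Gamma_d$ and $\Psi \colon \cH^\Gamma_d \to \cR^\mu_n$ by globalizing the constructions of Propositions \ref{prop:tuningstack->min-lin-ser} and \ref{prop:lin-ser->univ-tuning-stack-map} to families over an arbitrary base $B$. The key numerical dictionary is the bijection of Corollary \ref{cor:bijection}, which matches the local twisting data $(r_j, a_j)$ with the local multiplicity $\mu_j$ of the normalized base locus via $r_j = \kappa/\gcd(\mu_j,\kappa)$ and $a_j = \mu_j/\gcd(\mu_j,\kappa)$. Both moduli stacks carry a canonical morphism to $\Conf_l(C)$: on $\cR^\mu_n$, this is the map recording the parametrization $\sigma_1, \ldots, \sigma_l$ of the normalized base locus; on $\cH^\Gamma_d$, it is obtained from the proof of Theorem \ref{thm:moduli-of-twisted-maps-aov}, which identifies the stack of families of twisted curves over $C$ with fixed coarse space and stabilizer orders $(r_1, \ldots, r_l)$ with the configuration space $\Conf_l(C)$.

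To build $\Phi$, start with a $B$-point $(L, s_0, \ldots, s_N, \sigma_1, \ldots, \sigma_l)$ of $\cR^\mu_n$. Over this data, form the family $\pi \colon \cC \to C_B$ of twisted curves obtained as the $r_j$-th root stack of $C_B$ along $\sigma_j$, with universal gerbes $\Sigma_j$ and tautological sections $u_j$ of $\cO_\cC(\Sigma_j)$. Define $\cL := \pi^*L(-\sum_j a_j \Sigma_j)$ and $t_i := \pi^*(s_i)/\prod_j u_j^{\lambda_i a_j}$. The valuation bound verified in the proof of Proposition \ref{prop:lin-ser->univ-tuning-stack-map} shows each $t_i$ is a regular section; minimality $\mu_j < \kappa$ (equivalently $a_j < r_j$) together with the fact that the $\sigma_j$ are \emph{exactly} the normalized base locus ensures that at each geometric point of each $\Sigma_j$ some $t_i$ is nonvanishing, so $(\cL, t_0, \ldots, t_N)$ defines a morphism $f \colon \cC \to \Pcv$. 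Representability follows from Proposition \ref{prop:univ-tuning-stack-local-computation} applied fiberwise, combined with the fact that representability can be checked on geometric points. Degree and twisting class are then locally constant (Lemma \ref{lemma:loc:const}) and can be computed on a single geometric point, where the numerical identity $n = d + \sum_j a_j/r_j$ of Theorem \ref{thm:tuning-stacks-and-minimal-linear-series}~(\ref{thm:tuning-stacks-and-minimal-linear-series::height}) guarantees the output lands in $\cH^\Gamma_d$.

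To build $\Psi$, start with a family $(\cC \to B, \Sigma_j, f, \pi)$ of twisted maps of type $\Gamma$ and degree $d$. Set $\cL := f^*\cO_{\Pcv}(1)$ and $t_i := f^*x_i$, then take $L := \pi_*\cL(\sum_j a_j \Sigma_j)$ and $s_i := \pi_*(t_i \prod_j u_j^{\lambda_i a_j})$ as in Proposition \ref{prop:tuningstack->min-lin-ser}. Because $\pi$ is a family of tame root stacks, the identity $\pi_*\cO_\cC = \cO_{C_B}$ and the projection formula hold universally and commute with arbitrary base change on $B$; by Corollary \ref{cor:push:round:down} applied fiberwise, $L$ is a genuine line bundle and the $s_i$ are regular sections of $L^{\otimes \lambda_i}$. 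The fact that the normalized base locus of $(L, s_0, \ldots, s_N)$ is the relative effective Cartier divisor $\sum_j \mu_j \sigma_j$ (where $\sigma_j$ is the image of $\Sigma_j$ in $C_B$) follows from the vanishing order computation at the end of the proof of Proposition \ref{prop:tuningstack->min-lin-ser}.

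The main obstacle, and the heart of the argument, will be verifying that $\Phi$ and $\Psi$ are mutually inverse \emph{in families}. On geometric points, this is precisely Theorem \ref{thm:tuning-stacks-and-minimal-linear-series}, together with the universal property of root stacks identifying the tuning stack of the resulting rational point with $\cC$. To upgrade to an isomorphism of stacks, one uses that the key reconstruction maps, namely the adjunction $L \xrightarrow{\sim} \pi_*\pi^*L$ and the canonical identification $\pi_*\cL(\sum_j a_j \Sigma_j) \xrightarrow{\sim} L$, are both natural and compatible with base change on $B$, again exploiting tameness. The compatibility of $\Phi$ and $\Psi$ with the structure maps to $\Conf_l(C)$ is immediate from the construction, since both sides send a family to the underlying ordered $l$-tuple of sections $(\sigma_1, \ldots, \sigma_l)$.
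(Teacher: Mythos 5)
Your proposal is correct and takes essentially the same approach as the paper: both directions are exactly the paper's globalization of Propositions \ref{prop:tuningstack->min-lin-ser} and \ref{prop:lin-ser->univ-tuning-stack-map} (root stack and twist by $-\sum a_j\Sigma_j$ in one direction, pushforward of the twist by $+\sum a_j\Sigma_j$ in the other), with minimality, representability, local constancy of $(d,\Gamma)$, and tameness invoked in the same places. Your explicit bookkeeping of the $\Conf_l(C)$-structure maps and of the base-change compatibility of the adjunction isomorphisms fleshes out the paper's terse closing remark that "these two operations are clearly inverses," but it is the same argument.
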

		Part $(2)$ of the theorem then follows.  
		
		\begin{proof}[Proof of Proposition \ref{prop:iso-HGammad-Rmun}]In one direction, let $(\cC \to B, f : \cC \to \Pcv, \{\Sigma_i\}_{i = 1}^l)$ be a family of twisted maps from $C$ to $\Pcv$ of type $\Gamma$ and degree $d$. Let $\cL = f^*\cO_{\Pcv}(1)$ and let $t_j \in H^0(\cC, \cL^{\otimes \lambda_j})$ be the pullback of the canonical section of $\cO_{\Pcv}(\lambda_j)$. Let $u_i$ be the section of $\cO_{\cC}(\Sigma_i)$ cutting out the marked $\mu_{r_i}$-gerbe $\Sigma_i$. By construction the line bundle
			$$
			\cL\left(\sum_{i = 1}^l a_i \Sigma_i\right) 
			$$
			carries the trivial character at each point of the twisted curve $\cC/B$ and thus is the pullback of a line bundle $L$ of degree $n = d + \sum_{i = 1}^l \frac{a_i}{r_i}$ along the coarse map $\pi : \cC \to C_B$. Moreover, as in the proof of Proposition \ref{prop:tuningstack->min-lin-ser}, $s_j := \pi_*\left(t_j \prod_i u_i^{\lambda_j a_i}\right)$ is a section of $L^{\otimes \lambda_j}$. Thus, $(L, s_0, \ldots, s_N)$ is a family $\lambdavec$-weighted linear series on $C_B \to B$. Moreover, since $\cC$ is tame, the coarse moduli map $\pi_*$ is exact and its formation commutes with base-change. In particular, the construction taking the twisted map to the weighted linear series is functorial and by checking on fibers over $b\in B$ and applying Proposition \ref{prop:tuningstack->min-lin-ser}, we conclude that $(L, s_0, \ldots, s_N)$ is a family of minimal linear series. Next, applying Proposition \ref{prop:normalized:local:conditions} and Corollary \ref{cor:bijection}, we conclude that $(L, s_0, \ldots, s_N)$ has base profile $\mu$ along the marked sections $\sigma_i = \pi_*\Sigma_i$. Putting this all together, we see that $(C_B \to B, \sigma_i, L, s_0, \ldots, s_N)$ is a $B$-point of $\cR^\mu_n$ as required. 
			
			On the other hand, suppose $(C_B \to B, \sigma_i, L, s_0, \ldots, s_N)$ is a $B$-point of $\cR^\mu_n$ and let $\Gamma = \left(\{r_1, a_1\}, \ldots, \{r_l, a_l\}\right)$ be the local conditions corresponding to $\mu$ via Corollary \ref{cor:bijection}. Let $\cC$ be the iterated root stack of $C_B$ along $\sigma_i$ to order $r_i$ for each $i = 1, \ldots, l$ and let $\Sigma_i$ denote the $\mu_{r_i}$ gerbe corresponding to $\sigma_i$. Let $\cL$ on $\cC$ be the line bundle
			$$
			\cL = \pi^*L\left(-\sum_{i=1}^l a_i\Sigma_i\right). 
			$$
			Then $\cL$ is uniformizing with relative degree $d$ over $B$ and local conditions $\Gamma$ by construction. Moreover, 
			$$
			t_j = \frac{\pi^*s_j}{\prod_i u_i^{\lambda_j a_i}} \in H^0(\cL^{\otimes \lambda_j})
			$$
			and by checking on fibers over $b \in B$, we conclude that $(L, t_0, \ldots, t_N)$ is a basepoint-free weighted linear series by Proposition \ref{prop:lin-ser->univ-tuning-stack-map}. The formation of root stacks commutes with base change and so this operation is functorial. Moreover these two operations are clearly inverses and so we have an isomorphism $\cH^\Gamma_d \cong \cR^\mu_n$ as claimed. 
			
			Finally, it follows from Corollary \ref{cor:height:normalized} that $\height^{st}(P) = d$ and the local contributions to height are $\frac{a_i}{r_i}$ for $P \in \Pcv(K)$ corresponding to a $k$-point of $\cH^\Gamma_d$. 
		\end{proof}

		This concludes the proof of Theorem \ref{thm:height:moduli} for the case of weighted projective stacks.
	\end{proof} 
	
	\subsection{The general cyclotomic case}
	
	In this section we reduce Theorem \ref{thm:height:moduli} for general proper polarized cyclotomic stacks $(\cX, \cL)$ to the case of $\Pcv$. 
	
	\begin{proof}[Proof of Theorem \ref{thm:height:moduli} in general]
		By Proposition \ref{prop:polarizing:embedding}, there exists a representable closed embedding $\cX \subset \Pcv$ for some $\lambdavec$ such that $\cL = \cO_{\Pcv}(1)|_\cX$. Now $\cX(K)$ is a sub-groupoid of $\Pcv(K)$. For any $P \in \cX(K)$, there exsts a unique universal tuning stack with a representable morphism $\cC \to \cX$ and generic point $P$. The composition $\cC \to \Pcv$ is a representable morphism with generic point $P$ viewed as a point in $\Pcv(K)$. Thus $\cC$ is the universal tuning stack for $P \in \Pcv(K)$ by uniqueness and $\cL|_{\cC} = \cO_{\Pcv}(1)|_{\cC}$ by functoriality. Thus we have equalities
		\begin{align*}
			&\height_{\cL}(P) = \height_{\cO_{\Pcv}(1)}(P), \ \height^{st}_{\cL}(P) = \height^{st}_{\cO_{\Pcv}(1)}(P), \text{ and } \\ & \delta_x(P, \cL) = \delta_x(P, \cO_{\Pcv}(1)) \text{ for all }x \in C. 
		\end{align*}
		
		To finish the proof we show that the condition for a family of $\lambdavec$-weighted linear series to have generic point mapping to $\cX \subset \Pcv$ is closed inside $\cW^{min}_n$ and thus cuts out the stack $\cM_{n,C}(\cX, \cL)$ inside $\cW^{min}_n$. To do this, we first consider the twisted map strata $\cH^\Gamma_{d,C}(\Pcv, \cO(1))/S_\Gamma$. Since $\cX\subset\Pcv$ is closed, it is a closed condition in the stack of twisted maps for a $\cC\to\Pcv$ to map to $\cX$, 
		and hence the condition for a family of $\lambdavec$-weighted linear series to have generic point mapping to $\cX \subset \Pcv$ is constructible inside $\cW^{min}_n$. It therefore remains to prove that this locus is stable under specialization. To this end, let $R$ by a DVR with fraction field $K'$ and residue field $k'$, and let $(L,s_0,\dots,s_N)$ be a minimal $\lambdavec$-weighted linear series on $C_R$. Let $U\subset C_R$ be the complement of the base locus, which is a non-empty open subset. Then we have a morphism $f\colon U\to\Pcv$ and we are assuming that the generic point of $U_{K'}$ maps to $\cX$. Since $U\to\Spec R$ is flat, the generic point of $U_{K'}$ specializes to the generic point of $U_{k'}$. Since $\cX\subset\Pcv$ is closed, this implies that the generic point of $U_{k'}$ also maps to $\cX$. We have therefore simultaneously shown that $\cM_{n,C}$ is a closed substack of $\cW^{rat}_n$ and that it comes equipped with the required stratification as in part (2).
	\end{proof}
	\begin{rmk}
		Our construction of $\cM_{n,C}$ gives us a closed substack of $\cW^{min}_n$ with reduced induced structure. This is good enough for applications to point counting, computing classes in the Grothendieck ring, and homological/representation stability. However, it is an interesting question if $\cM$ itself carries a natural moduli interpretation, as opposed to just its $k$-points.
		
	\end{rmk}
	
	\begin{rmk}\label{rem:log}
		Under the isomorphism $\cH^{\Gamma}_d\cong\cR^{\mu}_n$ in Proposition \eqref{prop:iso-HGammad-Rmun}, the stratification of $\cR^{\mu}_n$ constructed in Theorem \ref{thm:main-thm-of-sec4} yields a stratification of $\cH^{\Gamma}_d$. The stratum $\cR^\gamma_n$ corresponds to stratifying by tangency conditions for the marked gerbes along the toric boundary of $\Pcv$. This suggests that if one replaces $\Pcv$ by a smooth Deligne--Mumford toric stack, then one may construct analogues of the strata $\cR^\gamma_n$ in terms of logarithmic twisted maps.
	\end{rmk}
	
	\begin{defn}
		We say that a rational point $P \in \cX(K)$ is \emph{isotrivial} if the stable height is zero $\height^{st}(P) = 0$. 
	\end{defn}
	\begin{rmk}
		This is equivalent to the usual notion of isotrivial since $\height^{st}(P) = \deg(\cC \to \cX)$ measured with respect to $\cL$. Since $\cL^{\otimes M}$ descends to an ample line bundle on the coarse space $X$, the stable height is zero if and only if the morphism on coarse moduli spaces $C \to X$ is constant. Note in particular that the condition of being isotrivial is independent of the choice of polarizing line bundle $\cL$. 
	\end{rmk}
	
	\begin{prop} There is a closed substack $\cM_{n,C}^{iso}(\cX) \subset \cM_{n,C}(\cX, \cL)$ parametrizing isotrivial $K$-points. 
	\end{prop}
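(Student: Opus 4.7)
The plan is to realize $\cM^{iso}_{n,C}(\cX)$ as a closed substack by reducing to the weighted projective case and cutting out the locus where the normalized base locus of the universal linear series attains its maximal possible length.

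First, I would apply Proposition \ref{prop:polarizing:embedding} to obtain a closed embedding $\cX\hookrightarrow\Pcv$ with $\cL=\cO_{\Pcv}(1)|_{\cX}$. The proof of Theorem \ref{thm:height:moduli_body}, together with Remark \ref{rmk:Mnccyclotomic}, exhibits $\cM_{n,C}(\cX,\cL)$ as a closed substack of $\cW^{min}_{n,C}(\lambdavec)=\cM_{n,C}(\Pcv,\cO(1))$. Moreover, stable height is preserved under this embedding, since the universal tuning stack of $P$ is the same whether one views $P\in\cX(K)$ or $P\in\Pcv(K)$, and the two polarizations pull back to the same line bundle on the tuning stack. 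Hence it suffices to exhibit the isotrivial locus inside $\cW^{min}_{n,C}$ as a closed substack and intersect with $\cM_{n,C}(\cX,\cL)$.

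Next, I would translate isotriviality into a condition on the normalized base locus via Corollary \ref{cor:height:normalized}: for a minimal $\lambdavec$-weighted linear series $(L,s_0,\dots,s_N)$ with $\deg L=n$ corresponding to $P\in\Pcv(K)$,
$$\height^{st}(P)=\frac{1}{\kappa}\bigl(\kappa n-|\overline{\Bs}(L,s_0,\dots,s_N)|\bigr),$$
where $\kappa=\mathrm{lcm}(\lambdavec)$. For any $j$ such that $s_j$ does not vanish identically on a geometric fiber we have $\overline{\Bs}\subset V(s_j^{\blambda_j})$ on that fiber, yielding the uniform bound $|\overline{\Bs}_b|\leq\kappa n$, with equality precisely when $P_b$ is isotrivial.

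To conclude, I would show that $b\mapsto|\overline{\Bs}_b|$ is upper semi-continuous on $\cW^{min}_{n,C}$. The universal normalized base locus $\overline{\Bs}\to\cW^{min}_{n,C}$ is proper as a closed subscheme of $C\times\cW^{min}_{n,C}\to\cW^{min}_{n,C}$; on $\cW^{min}_{n,C}$ at least one section is not identically zero on each geometric fiber (otherwise the series would lie in $\cW^{\infty}_{n}$), so $\overline{\Bs}_b$ sits inside a $0$-dimensional subscheme of $C_b$. Thus $\overline{\Bs}\to\cW^{min}_{n,C}$ is proper with $0$-dimensional fibers, hence finite, and standard upper semi-continuity of fiber dimensions for $\pi_*\cO_{\overline{\Bs}}$ yields the claim. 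The locus $\{|\overline{\Bs}|=\kappa n\}$ is then closed in $\cW^{min}_{n,C}$, and its intersection with $\cM_{n,C}(\cX,\cL)$ defines the desired closed substack $\cM^{iso}_{n,C}(\cX)$. The main subtlety is the finiteness of $\overline{\Bs}\to\cW^{min}_{n,C}$ that powers the semi-continuity step; an alternative, purely combinatorial route uses the locally closed stratification $\cW^{min}_{n,C}=\bigsqcup_\mu\cW^{\mu}_{n,C}$ from Proposition \ref{prop:stack:min} and the observation that under specialization $|\overline{\Bs}_b|$ can only increase, so the closure of $\cW^{\mu}_{n,C}$ lies in $\bigcup_{|\mu'|\geq|\mu|}\cW^{\mu'}_{n,C}$, making $\bigcup_{|\mu|=\kappa n}\cW^{\mu}_{n,C}$ automatically closed.
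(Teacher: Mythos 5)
Your proposal is correct and follows essentially the same approach as the paper: reduce to the weighted projective case via Remark \ref{rmk:Mnccyclotomic}, translate isotriviality into the condition $\deg\overline{\Bs}=\kappa n$ on the normalized base locus using Corollary \ref{cor:height:normalized}, and conclude closedness from the flattening stratification of Proposition \ref{prop:flattening}. Your discussion of upper semi-continuity of $|\overline{\Bs}_b|$ simply makes explicit the standard fact underlying why the maximal-length stratum is closed, which the paper cites in a single sentence.
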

	\begin{proof}
		By Remark \ref{rmk:Mnccyclotomic}, $\cM_{n,C}(\cX,\cL)$ is a closed substack of $\cM_{n,C}(\Pcv,\cO_\Pcv(1))$. It is immediate from the definitions that $P\in\cX(K)$ is isotrivial if and only if $P\in\Pcv(K)$ is isotrivial. As a result, $\cM^{iso}(\cX)$ is a closed substack of $\cM^{iso}(\Pcv)$, and hence it suffices to prove the result for $\Pcv$.
		
		Given $P\in\Pcv(K)$, let $(L, s_0, \ldots, s_N)$ be the corresponding $\lambdavec$-weighted linear series and let $\cC\to\Pcv$ be the universal stack with local conditions $$\Gamma=(\{r_1,a_1\},\dots,\{r_\ell,a_\ell\})$$ Then $P$ is isotrivial if and only if 
		$$
		\height_{\cO(1)}(P)=\sum_{i=1}^\ell\frac{a_i}{r_i}=\frac{1}{\kappa}\deg(\overline{Bs}(L, s_0, \ldots, s_N)).
		$$
		In other words, $P$ is isotrivial if and only if its normalized base locus has the largest degree possible. The locus of such points $P$ is given by the smallest stratum in the stratification from Proposition \ref{prop:flattening}, and hence closed.
	\end{proof}
	
	Finally, we give a simple application to bounding points of bad reduction. A typical situation is that $\cX$ is the compactification of a moduli space $\cU$ of smooth objects and $\cX \setminus \cU = \cD$ is a Cartier divisor parametrizing singular objects. We call $\cD$ the boundary divisor. This is the case for example for the moduli space of elliptic curves $\cM_{1,1} \subset \overline{\cM}_{1,1}$ with $\cD = \infty$. 
	
	\begin{defn}\label{def:badred}
		Let $(\cX, \cL)$ be a polarized cyclotomic stack and let $\cD \subset \cX$ be a $\Q$-Cartier divisor with open complement $\cU$ and let $P \in \cU(K)$ be a rational point. Let 
		\[
		\xymatrix{\cC \ar[d]^{\pi} \ar[r]^f & \cX \\ C & }
		\]
		be the universal tuning stack of $P$ and let $\Sigma_i$ be the marked gerbes of the twisted curve $\cC$. We say $P$ has \emph{bad reduction at} $x \in C$ if $x$ is in the image under $\pi$ of the locus
		$$
		f^{-1}(\cD) \cup \bigsqcup_{i = 1}^l \Sigma_i.
		$$
		
	\end{defn}
	
	\begin{prop}
		Fix $(\cX, \cL, \cD)$ as in Definition \ref{def:badred}. Then there is a uniform bound on the number of points of bad reduction depending only on the height of the rational point. More precisely, there is a function $N_{\cX, \cL, \cD}(n)$ such that for all $K=k(C)$ and all $P \in \cU(K) \subset \cX(K)$,
		$$
		\#\{x \in C \mid P \text{ has bad reduction at }x\} \le N_{\cX, \cL, \cD}(\height_{\cL}(P)). 
		$$
	\end{prop}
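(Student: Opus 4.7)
The plan is to decompose the set of bad reduction points as the union of two finite sets arising from Definition \ref{def:badred}: the images under $\pi \colon \cC \to C$ of (i) the marked gerbes $\Sigma_1, \ldots, \Sigma_s$ of the universal tuning stack $\cC$, and (ii) the points of $f^{-1}(\cD)$. I will bound each contribution by a linear function of $\height_\cL(P)$ whose slope depends only on $(\cX, \cL, \cD)$, so that their sum provides the required function $N_{\cX, \cL, \cD}(n)$.

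First I would bound (i). Let $M > 0$ be the minimal integer with $\cL^{\otimes M} \cong \pi_\cX^* L$ for $L$ ample on the coarse space $X$. By admissibility of the local conditions $\Gamma = (\{r_i, a_i\})_{i=1}^s$ at the marked gerbes (Definition \ref{def:local-conditions} and Theorem \ref{thm:height:moduli_body}(2)), each $r_i$ divides $M$, so $\tfrac{a_i}{r_i} \geq \tfrac{1}{M}$. Combined with the identity $\sum_{i=1}^s \tfrac{a_i}{r_i} = \height_\cL(P) - \height^{st}_\cL(P) \leq \height_\cL(P)$ from Theorem \ref{thm:height:moduli_body}(3) and nonnegativity of the stable height, this gives $s \leq M \cdot \height_\cL(P)$.

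Next I would bound (ii) by producing a nonzero section of a suitable power of $\cL$ whose zero locus contains $\cD$. After multiplying $\cD$ by a positive integer, which preserves the underlying set $f^{-1}(\cD)$, I may assume $\cD$ is an effective Cartier divisor. The image $\pi_\cX(\cD)$ is a proper closed subscheme of the projective coarse space $X$; since $L$ is ample, for some $k > 0$ there is a nonzero section $\sigma \in H^0(X, L^{\otimes k})$ with $V(\sigma) \supseteq \pi_\cX(\cD)$. Let $\bar{f} \colon C \to X$ be the map induced by $f$ on coarse spaces, so that $\pi_\cX \circ f = \bar{f} \circ \pi$. Then $\deg \bar{f}^* L^{\otimes k} = kM \cdot \height^{st}_\cL(P)$, and $\bar{f}^* \sigma$ is a nonzero section of this line bundle since the generic point of $C$ maps into $\cU$. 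As $V(\bar{f}^* \sigma) \subset C$ contains $\pi(f^{-1}(\cD))$, the contribution from (ii) is at most $kM \cdot \height^{st}_\cL(P) \leq kM \cdot \height_\cL(P)$.

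Adding the two bounds gives $N_{\cX, \cL, \cD}(n) := M(k+1) n$, with $M$ and $k$ depending only on $(\cX, \cL, \cD)$. The only nontrivial ingredient is the existence of the section $\sigma$ cutting out (a divisor containing) $\pi_\cX(\cD)$; this is a standard consequence of ampleness of $L$ applied to the ideal sheaf of $\pi_\cX(\cD) \subset X$, and is the step where the hypothesis that $\cL$ is polarizing (not merely uniformizing) is crucial, as it is precisely what allows the image of $\cD$ to be cut out by sections of a power of $\cL$.
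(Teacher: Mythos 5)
Your decomposition of the bad reduction set into (i) the images of the marked gerbes and (ii) the image of $f^{-1}(\cD)$ matches the paper, and your bound for (i) via $a_i/r_i \geq 1/M$ together with $\sum_i a_i/r_i \leq \height_{\cL}(P)$ is correct (the paper's wording is looser, but this is exactly its intent). The argument for (ii), however, has a genuine gap. You choose a single nonzero $\sigma \in H^0(X, L^{\otimes k})$ with $V(\sigma) \supseteq D$, where $D$ is the image of $\cD$ in the coarse space $X$, and you assert that $\bar{f}^*\sigma \neq 0$ ``since the generic point of $C$ maps into $\cU$.'' That hypothesis only gives $\bar{f}(C) \not\subset D$; it says nothing about the extraneous components of $V(\sigma)$, which in general strictly contains $D$. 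For example, take $X = \mathbb{P}^2$, $L = \cO(1)$, $D$ a line, and $\sigma$ a reducible conic $D \cup D'$: any $\bar{f}$ with image $D'$ has $\bar{f}^*\sigma \equiv 0$ even though the corresponding $P$ lies in $\cU(K)$. So the step ``$\bar{f}^*\sigma$ is a nonzero section'' fails as written.

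The repair keeps the spirit of your approach. After passing to $mD$ Cartier on $X$ (as the paper does), take $k$ large enough that $L^{\otimes k}\otimes\cO_X(-mD)$ is nef. Then $\deg_C \bar{f}^*\cO_X(mD) \leq k\deg_C \bar{f}^*L = kM\,\height^{st}_{\cL}(P) \leq kM\,\height_{\cL}(P)$, and the number of points of $\bar{f}^{-1}(D)$ is bounded by this degree because $\bar{f}^*(mD)$ is an effective divisor supported on $\bar{f}^{-1}(D)$; no auxiliary section of $L^{\otimes k}$ is needed at all. (Equivalently, take $k$ with $L^{\otimes k}(-mD)$ globally generated, and for each given $\bar{f}$ choose $\tau \in H^0(X, L^{\otimes k}(-mD))$ not vanishing at the generic point of $\bar{f}(C)$; then $\sigma = \tau\cdot s_{mD}$ is the section you wanted. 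The essential point is that $k$ is fixed once and for all, while $\sigma$ must be allowed to depend on $\bar{f}$.) With this fix your method for (ii) is arguably cleaner than the paper's, which bounds $C\cdot(mD)$ via finiteness of irreducible components of the space of degree $Md$ maps to $X$; the degree bound above manifestly applies uniformly over curves $C$ of arbitrary genus.
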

	\begin{proof}
		Fix $K=k(C)$ and $P\in\cU(K)$. Let $n=\height_{\cL}(P)$ and choose $M$ such that $\cL^{\otimes M}=\pi^*L$, where $\pi\colon\cX\to X$ is the coarse space map and $L$ is ample on $X$. Then we obtain a map $\bar{f}\colon C\to X$ and we have $d=\frac{1}{M}\deg(\bar{f})$, where degree is measured with respect to $L$. Letting $\Gamma=(\{r_1,a_1\},\dots,\{r_\ell,a_\ell\})$ be the local conditions of the universal tuning stack of $P$, we have $n = d + \sum_{i = 1}^\ell \frac{a_i}{r_i}$. Since the map $f\colon\cC\to\cX$ from the universal tuning stack is representable, we see all $r_i$ are bounded. Since $n$ is also bounded by assumption, we see $d$ and the number of stacky points $\ell$ are bounded. As a result, we have bounded the number of points $P$ of bad reduction which are in the image of $\bigsqcup_{i = 1}^l \Sigma_i$.
		
		It remains to show that the number of points in the image of $f^{-1}(\cD)$ are also bounded. Let $D$ be the coarse space of $\cD$, so that $D\subset X$ is a divisor. Since $\cD$ is Cartier, then $mD$ is Cartier for some fixed $m > 0$. Then since $C$ is not contained in $D$, the number of points in the image of $f^{-1}(\cD)$ is bounded by the intersection number $C\cdot (mD)$. To bound this quantity, we consider the moduli space of degree $Md$ maps from curves to $X$. This space is quasi-projective so has finitely many irreducible components $Z_1,\dots,Z_m$. For each $i$, there is a non-empty open set $U_i\subset Z_i$ such that for all points $\bar{f}\colon C'\to X$ of $U_i$, the quantity $C'\cdot (mD)$ is a constant $N_i$. Since $C\cdot (mD)$ is bounded above by $\max_i N_i$, we have therefore bounded the number of points of bad reduction.
	\end{proof}

	\section{Rational curves on weighted projective stacks}\label{sec:ex}\label{sec:ex:P1}

	Fix $\lambdavec$, $\kappa$ and $\bar{\lambda}_j$ as in Section \ref{sec:normalized}. In this section we explicitly describe the height moduli for $k(t)$-points on $\Pcv$. By Theorem \ref{thm:tuning-stacks-and-minimal-linear-series}, this is equivalent to describing the moduli of weighted linear series on $C=\bP^1_{k}$. 
	
	In this case, $\Ppic^n(C) \cong \mathcal{B} \bG_m$ is a trivial gerbe over $\Pic^n(C) = \Spec k$. The maps $\mu_{d,n} : \Pic^n(C) \to \Pic^{nd}(C)$ (Remark \ref{rem:d:power}) are the identity and by Remark \ref{rem:rr:gerbe}, we can view $\cW_n \to \mathcal{B} \bG_m$ as the abelian cone associated to the following $\bG_m$-representation. Let $V^d_n = H^0(\bP^1, \cO(dn))$ equipped with a $\bG_m$ action of weight $d$ and let 
	$$
	\cQ = \bigoplus_{j = 0}^N (V^{\lambda_j}_n)^{\vee}
	$$
	viewed as a $\bG_m$-equivariant locally free sheaf on $\Spec k$. For a vector space $V$, the abelian cone $\Spec \Sym^\bullet (V^{\vee})$ is simply $V$ itself viewed as an affine space. Thus
	$$
	\cW_n = [\bV_{\Spec k}(\cQ)/\bG_m] \cong  \left[\bigoplus_{j = 0}^N V^{\lambda_j}_n/\bG_m\right] \to \mathcal{B} \bG_m
	$$
	is simply the stack of tuples $(f_0, \ldots, f_N)$ of binary forms where $\deg f_i = \lambda_i \cdot n$ modulo the action of $\bG_m$ given by
	$$
	t \cdot (f_0, \ldots, f_N) = (t^{\lambda_0}f_0, \ldots, t^{\lambda_N}f_N). 
	$$
	
	The stratum $\cW_n^\infty \subset \cW_n$ is the zero section of the projection 
	$$
	\left[\bigoplus_{j = 0}^N V^{\lambda_j}_n/\bG_m\right] \to \mathcal{B}\bG_m
	$$
	and thus $\cW_n \setminus \cW_n^\infty$ is identified with the ambient weighted projective stack
	$$
	\Pc\left(\bigoplus_{j = 0}^N V^{\lambda_j}_n\right).
	$$
	Fixing coordinates, we can think of this concretely as the weighted projective stack
	$$
	\Pc(\underbrace{\lambda_0, \ldots, \lambda_0}_{n\lambda_0 + 1}, \ldots, \underbrace{\lambda_N, \ldots, \lambda_N}_{n\lambda_N + 1})
	$$
	parameterizing the coefficients of the homogeneous polynomials $(f_0, \ldots, f_N)$ (see \cite[Sec. 4.1]{JJ}). 
	
	\subsection{Stratifying by defect of minimality}\label{sec:str_min}
	
	Fix a height $n$ and $0 \le e \le n$. In this section, we use the minimality defect to stratify the complement of $\cW_n^{min}$ inside $\cW_n \setminus \cW_n^{\infty}$ into strata corresponding to minimal weighted linear series of smaller height. 
	
	First we construct a map
	$$
	\psi_{n,e} : \cW_{n-e}^{min} \times \bP(V_e^1) \to \cP\left(\bigoplus_{i = 0}^N V_n^{\lambda_j}\right)
	$$
	whose image is exactly those weighted linear series with minimality defect $e$. We have a map of affine spaces
	$$
	\left(\bigoplus_{j = 0}^N V_{n-e}^{\lambda_j}\right) \oplus V_e^1 \to \bigoplus_{j = 0}^N V_n^{\lambda_j}
	$$
	$$
	((f_0, \ldots, f_N), h) \mapsto (f_0h^{\lambda_0}, \ldots, f_N h^{\lambda_N}). 
	$$
	This map is equivariant for the homomorphism $\mathbb{G}_m^2 \to \mathbb{G}_m$, $(s,t) \mapsto st$ and thus descends to the required map $\psi_{n,e}$ after taking quotients and passing to open substacks. 
	
	\begin{prop}\label{prop:loc:closed}
		Up to taking a locally closed stratification of the source, $\psi_{n,e}$ is a locally closed embedding. 
	\end{prop}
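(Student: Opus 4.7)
The plan is to show that $\psi_{n,e}$ is a monomorphism and becomes a locally closed embedding once the source is refined into strata on each of which the normalized base profile of the image is constant. The key observation is that the division of Lemma \ref{div_rem} can be inverted: given $(g_0, \ldots, g_N) = (f_0 h^{\lambda_0}, \ldots, f_N h^{\lambda_N})$ with $(f_i)$ minimal, the normalized base divisor $D_g$ decomposes as $\kappa V(h) + D_f$ with $D_f$ having all parts $< \kappa$ by minimality of $(f_i)$. Hence $V(h) = \lfloor D_g/\kappa \rfloor$ recovers $h$ up to scalar, and $f_i = g_i/h^{\lambda_i}$ recovers $(f_i)$ up to the compatible $\bG_m$-action. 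This proves $\psi_{n,e}$ is a monomorphism on geometric points, and extends functorially to families once the profile is fixed.

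To upgrade this to a locally closed embedding, I would refine the product of the stratification of $\cW_{n-e}^{min}$ by base profile $\mu$ (Proposition \ref{prop:stack:min}) with the natural stratification of $\bP(V_e^1) \cong \bP^e$ by root multiplicity profile $\alpha$, and then further by the incidence combinatorics specifying which roots of $h$ lie at which points of $\mathrm{supp}(D_f)$. This refinement is necessary: a root of $h$ of multiplicity $k$ meeting a point $x$ of $D_f$-multiplicity $m_x$ produces a point of $D_g$ of multiplicity $m_x + k\kappa$, which shifts the target profile $\mu'$. On each refined stratum the target profile $\mu'$ is constant, so $\psi_{n,e}$ factors through the locally closed substack $\cW_n^{\mu'}$ of Proposition \ref{prop:stack:min}.

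On each such source stratum the inverse is given explicitly by $(g_i) \mapsto (h, g_i/h^{\lambda_i})$, with $h$ obtained from the flat family of degree $e$ divisors $\lfloor D_g/\kappa\rfloor$, which is algebraic because the profile of $D_g$ is fixed on the stratum. This exhibits the restriction of $\psi_{n,e}$ as an isomorphism onto a locally closed substack of $\cW_n^{\mu'}$, yielding the desired locally closed embedding. The main obstacle is the combinatorial bookkeeping for the incidence stratification and the verification that the round-down divisor construction extends to a flat family on each stratum; both are controlled by the fact that once the profile is fixed, all the relevant subschemes become relative effective Cartier divisors and the quotient--remainder splitting of Lemma \ref{div_rem} extends functorially.
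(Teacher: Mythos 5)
Your proof is correct and follows the same approach as the paper: stratify source and target by normalized base profile, use the quotient--remainder decomposition of Lemma \ref{div_rem} to recover $h = \lfloor D_g/\kappa\rfloor$ and hence the $f_i = g_i/h^{\lambda_i}$, and verify this inverse is algebraic on each stratum. You are slightly more explicit than the paper about the incidence refinement needed when roots of $h$ collide with $\mathrm{supp}(D_f)$ --- the paper handles this implicitly by indexing its strata and inverse map by the target profile $\mu$ of $\cW_n^\mu$ (so that $q(\mu)$ and $r(\mu)$ already encode the incidence pattern), but your observation that the image profile jumps when supports meet is exactly the point that makes the further stratification necessary.
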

	\begin{proof} First we stratify $\cW_{n-e}^{min}$ by normalized base profile and $\bP(V_e^1) = \Sym^e(\bP^1)$ by partition type to obtain a stratification 
		$$
		\cW_{n-e}^{min} \times \bP(V_e^1) = \bigsqcup_{\mu'',\mu'} \cW_{n-e}^{\mu''} \times \Sym^{\mu'}\bP^1
		$$
		where the union is over minimal base profiles $\mu''$ for weighted linear series of height $n-e$ and $\mu'$ partitions of $e$. On the other hand, we stratify the target by normalized base profile into strata $\cW_n^{\mu}$. 
		
		Let $\mu$ be a base profile with minimality defect $e$ and let $\mu'' = r(\mu)$ and $\mu' = q(\mu)$ be the remainder and quotient respectively. First note that 
		$$
		\psi_{n,e}(\cW_{n-e}^{\mu''} \times \Sym^{\mu'}\bP^1) \subset \cW^{\mu}_n. 
		$$
		Indeed, on the level of points, the normalized base locus of $\psi_{n,e}((f_0, \ldots,f_N),h)$ is
		$$
		\kappa D' + D''
		$$
		where $D'' = \overline{\mathrm{Bs}}|f_0, \ldots, f_N|$ and $D' = V(h)$. Then $\kappa D' + D''$ has profile $\mu$ by definition of $\mu', \mu''$.  
		
		On the other hand, suppose $(f_0, \ldots, f_N)$ is a weighted linear series with base profile $\mu$. We can write its normalized base locus as
		$$
		\overline{\mathrm{Bs}} = D = \kappa D' + D''
		$$
		as in Lemma \ref{div_rem} where $D' \in \Sym^{\mu'}\bP^1$ is the vanishing of a polynomial $h$ of degree $e$ and type $\mu'$. 
		
		By definition of $\cW^\mu_n$, this decomposition of the normalized base locus holds when $(\cL, f_0, \ldots, f_N)$ is the universal linear series over the whole stratum. Then $h^{\kappa}$ divides $f_j^{\bar{\lambda}_j}$ for all $j$ and so 
		$$
		f_j = f_j'h^{\lambda_j}
		$$
		for some $f_j' \in H^0(\cP^1 \times \cW^{min}_{n-e}, \cL(-D')^{\lambda_j})$. Then the data
		$$
		(\cL(-D'), f_0', \ldots, f_N')
		$$
		is a weighted linear series of height
		$$
		\deg_{\cW_n^\mu} \cL(-D') = n - e
		$$
		and normalized base profile $\mu''$. In particular, this is a minimal weighted linear series over $\cW_n^{\mu}$ which induces a morphism 
		$$
		\cW_n^{\mu} \to \cW_{n-e}^{\mu''}. 
		$$
		Combining this with the morphism $\cW_n^{\mu} \to \Sym^{\mu'}\bP^1$ that outputs $D' = V(h)$, we have a morphism 
		$$
		\cW_n^{\mu} \to \cW_{n-e}^{\mu''} \times \Sym^{\mu}\bP^1
		$$
		which by construction is inverse to the restriction
		$$
		\psi_{n,e} : \cW^{\mu''}_{n-e} \times \Sym^{\mu'}\bP^1 \to \cW_n^{\mu}. 
		$$
	\end{proof} 
	
	\begin{cor} \label{cor:def:strat}
		The disjoint union of $\psi_{n,e}$
		$$
		\psi_n : \bigsqcup_{e = 0}^n \cW_{n-e}^{min} \times \mathbb{P}(V_e^1) \to \cP\left(\bigoplus_{i = 0}^N V_n^{\lambda_j}\right)
		$$
		is an isomorphism after stratifying the source and target. 
	\end{cor}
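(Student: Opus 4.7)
The plan is to piece together Proposition~\ref{prop:loc:closed} for all values of $e$ and verify that the images of the various $\psi_{n,e}$ cover the target disjointly once stratifications on both sides are chosen compatibly. First I would stratify the target $\cP\left(\bigoplus_j V_n^{\lambda_j}\right) = \cW_n \setminus \cW_n^\infty$ by normalized base profile via Proposition~\ref{prop:flattening}, yielding locally closed strata $\cW_n^\mu$ indexed by partitions $\mu$ with $|\mu| \le \kappa n$. Each such $\mu$ has the canonical decomposition $\mu_i = \kappa q_i + r_i$, producing the minimality defect $e = |q(\mu)|$, the quotient partition $\mu' = q(\mu)$, and the minimal remainder $\mu'' = r(\mu)$. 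On the source I would stratify $\cW_{n-e}^{min}$ via Proposition~\ref{prop:stack:min} into $\cW_{n-e}^{\mu''}$ over minimal $\mu''$, and $\bP(V_e^1) \cong \Sym^e(\bP^1)$ into $\Sym^{\mu'}(\bP^1)$ over partitions $\mu'$ of $e$.

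Next I would use the assignment $\mu \mapsto (q(\mu), r(\mu))$ to pair up source and target strata: this is a bijection between partitions $\mu$ of defect $e$ and pairs $(\mu', \mu'')$ with $|\mu'| = e$ and $\mu''$ minimal. By Proposition~\ref{prop:loc:closed}, for each such pair the map $\psi_{n,e}$ restricts to an isomorphism $\cW_{n-e}^{\mu''} \times \Sym^{\mu'}(\bP^1) \xrightarrow{\cong} \cW_n^\mu$. Taking the disjoint union over all $e$ and $(\mu', \mu'')$ would then yield the claimed isomorphism of stratified spaces.

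The main obstacle I anticipate concerns the behavior of the divisor-sum operation when the supports of $D' = V(h)$ and $D'' = \overline{\Bs}(f_0, \dots, f_N)$ meet: if they share a point, then the profile of the combined divisor $\kappa D' + D''$ is not obtained in a purely combinatorial way from $\mu'$ and $\mu''$. Consequently, the source stratum $\cW_{n-e}^{\mu''} \times \Sym^{\mu'}(\bP^1)$ need not map into a single target stratum $\cW_n^\mu$. To handle this, I would further refine the source stratification by pulling back the profile stratification of $\Sym^{|\mu'|+|\mu''|}(\bP^1)$ along the divisor-sum map $\Sym^{\mu'}(\bP^1) \times \Sym^{\mu''}(\bP^1) \to \Sym^{|\mu'|+|\mu''|}(\bP^1)$, so each refined piece maps into a single target stratum. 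With that refinement in place, the inverse construction used in the proof of Proposition~\ref{prop:loc:closed} glues globally, and the corollary follows.
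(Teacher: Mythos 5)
Your proof follows the same overall strategy as the paper and, to your credit, you make a sharper observation than the paper does explicitly. The concern you raise about overlapping supports is genuine: the assertion in the paper's proof of Proposition~\ref{prop:loc:closed} that $\psi_{n,e}\bigl(\cW_{n-e}^{\mu''} \times \Sym^{\mu'}\bP^1\bigr) \subset \cW_n^{\mu}$ is not literally correct, since the profile of $\kappa D' + D''$ depends on which components of $D'=V(h)$ and $D''=\overline{\Bs}(f_0,\ldots,f_N)$ coincide. For instance with $\kappa = 12$, $\mu'=(1)$, $\mu''=(5)$: disjoint supports yield $\mu=(12,5)$ while coincident supports yield $\mu=(17)$, and both have $q(\mu)=(1)$, $r(\mu)=(5)$. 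So the source stratum really does meet more than one target stratum, and a further refinement is needed. The paper's construction of the inverse morphisms $\cW_n^{\mu}\to\cW_{n-e}^{\mu''}\times\Sym^{\mu'}\bP^1$ effectively defines this refinement implicitly, with refined strata $\psi_{n,e}^{-1}(\cW_n^{\mu})$ indexed by the collision pattern, and the surjectivity computation with unique factorization verifies coverage; you simply make the refinement explicit, which is a welcome clarification.

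There is, however, a small error in your proposed refinement: stratifying by the profile of $D'+D''$ is not fine enough. What controls the target stratum is the profile of $\kappa D' + D''$, and the two profiles do not determine each other. Concretely, take $\kappa = 3$, $\mu'=(2,1)$, $\mu''=(2,1)$, writing $D'=2p_1+p_2$ and $D''=2s_1+s_2$. The collision $p_1=s_2$ gives $D'+D''$ profile $(3,2,1)$ and $\kappa D' + D''$ profile $(7,3,2)$, while the collision $p_2=s_1$ also gives $D'+D''$ profile $(3,2,1)$ but $\kappa D'+D''$ profile $(6,5,1)$; your refined piece would still map into both $\cW_n^{(7,3,2)}$ and $\cW_n^{(6,5,1)}$. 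The correct refinement is by the profile of $\kappa D' + D''$, i.e.\ by the locally closed subsets $\psi_{n,e}^{-1}(\cW_n^{\mu})$. With that correction, your argument is complete and agrees with what the paper's proof intends.
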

	
	\begin{proof} By Proposition \ref{prop:loc:closed} and its proof, after further stratifying the source and target, $\psi_n$ restricts to an isomorphism of strata
		$$
		\cW^{\mu''}_{n-e} \times \Sym^{\mu'}\bP^1 \to \cW^{\mu}_n 
		$$
		where $\mu$ is a normalized base profile with quotient $q(\mu) = \mu'$ and $r(\mu) = \mu''$. It suffices to show that each generic point of a stratum of the source maps to a unique generic point of a stratum in the target. The map on generic points of strata is injective as $\mu'' = r(\mu)$ and $\mu' = q(\mu)$. On the other hand, let $(f_0, \ldots, f_N)$ be the weighted linear series parametrized by the generic point of $\cW_n^{\mu}$. Then as above we write 
		$$
		\overline{\mathrm{Bs}}|f_0, \ldots, f_N| = \kappa D' + D''
		$$
		where $D' \in \Sym^{q(\mu)}\bP^1 \subset \bP(V_e^1)$ is the vanishing of some polynomial $h$ of degree $e = e(\mu)$ and $f_j^{\bar{\lambda}_j} = G_j h^{\kappa}$ where $D'' = \mathrm{Bs}|G_0, \ldots, G_N|$. By unique factorization, it follows that $G_j = g_j^{\bar{\lambda_j}}$ and $f_j = g_jh^{\lambda_j}$ for some $g_j$ of degree $\lambda_j(n-e)$. Thus $(g_0, \ldots, g_N)$ is a weighted linear series of degree $n-e$ with normalized base profile $r(\mu)$ which is minimal by construction and
		$$
		(f_0, \ldots, f_N) = \psi_{n,e}((g_0, \ldots, g_N),h).
		$$
		
	\end{proof}

	\subsection{Stratifying by vanishing order}\label{sec:str_van}
	Fix a partition $\mu$ with $l$ parts and $\gamma$ a tuple of vanishing orders realizing $\mu$ as in Definition \ref{defn:gamma}. We have natural locally closed subvarieties
	$$
	W_n^{min},\ W_n^\mu \subset W_n := \bigoplus_{j = 0}^N V_n^{\lambda_j} \setminus 0. 
	$$
	Here $W_n^{min}$ is the set
	$$
	\left\{(f_0, \ldots, f_N) \mid \min_{j}\left(\bar{\lambda}_j\nu_x(f_j)\right) < \kappa \text{ for all }x \in \bP^1\right\}
	$$
	which is open in $W_n$ by Lemma \ref{lem:open}, and $W_n^\mu$ is the locus of $(f_0, \ldots, f_N)$ such that $\left(f_0^{\bar{\lambda}_0}, \ldots, f_N^{\bar{\lambda}_N}\right)$ simultaneously vanish at exactly $l$ points to order $\mu_i$ for $i = 1, \ldots, l$. 
	
	\begin{lem} The subvarieties $W_n^{min}$ and $W_n^{\mu}$ of $W_n$ are $\bG_m$-invariant and we have 
		$$
		\cW^{min}_n = \left[W_n^{min}/\bG_m\right] \text{ and } \cW_n^{\mu} = \left[W_n^\mu/\bG_m\right].
		$$
	\end{lem}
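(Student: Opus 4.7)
The plan is to first verify $\bG_m$-invariance and then match the two locally closed loci via the quotient presentation $\cW_n\setminus\cW_n^\infty \cong [W_n/\bG_m]$ already established above. For invariance, note that for $t\in\bG_m$ and $(f_0,\dots,f_N)\in W_n$, the action $t\cdot (f_0,\dots,f_N)=(t^{\lambda_0}f_0,\dots,t^{\lambda_N}f_N)$ rescales each $f_j$ by a nonzero scalar, hence preserves the divisor $V(f_j)$ and the order of vanishing $\nu_x(f_j)$ for every $x\in\bP^1$. Both the pointwise condition $\min_j(\bar\lambda_j\nu_x(f_j))<\kappa$ defining $W_n^{min}$ and the condition that $(f_0^{\bar\lambda_0},\dots,f_N^{\bar\lambda_N})$ vanish simultaneously at exactly $l$ points with multiplicities $\mu_i$ defining $W_n^\mu$ depend only on the vanishing orders $\nu_x(f_j)$, so both subvarieties are $\bG_m$-stable.

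Next I would use the identification $\cW_n \cong [(\bigoplus_j V_n^{\lambda_j})/\bG_m]$ from the opening of Section \ref{sec:ex}, which specializes after deleting the zero section (i.e.\ $\cW_n^\infty$) to an isomorphism $\cW_n\setminus\cW_n^\infty \cong [W_n/\bG_m]$. Under this isomorphism, $\bG_m$-invariant locally closed subvarieties of $W_n$ correspond bijectively with locally closed substacks of $\cW_n\setminus\cW_n^\infty$ via the quotient map $q\colon W_n\to[W_n/\bG_m]$. Thus it suffices to identify $q^{-1}(\cW_n^{min})=W_n^{min}$ and $q^{-1}(\cW_n^{\mu})=W_n^\mu$ as subsets of $W_n$, after which the scheme-theoretic equalities follow because both sides carry the reduced induced structure (indeed, $W_n^{min}$ is open in $W_n$ and $W_n^\mu$ is defined as a stratum of the flattening stratification).

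For the set-theoretic identifications, recall that $\cW_n^{min}$ is precisely the open substack of minimal weighted linear series (Proposition \ref{prop:stack:min}), i.e.\ those $(L,s_0,\dots,s_N)$ such that at each $x\in \bP^1$ the multiplicity $m_x$ of $\overline{\mathrm{Bs}}$ satisfies $m_x<\kappa$; by Proposition \ref{prop:normalized:local:conditions} this multiplicity equals $\min_j(\bar\lambda_j\nu_x(s_j))$, matching the definition of $W_n^{min}$. Similarly, $\cW_n^\mu$ was defined in Section \ref{sec:mod-sp-ratpts} by the condition that the normalized base locus is finite, flat, and locally of profile $\mu$; over $C=\bP^1$ with trivial Picard gerbe, a $k$-point of $\cW_n$ lies in $\cW_n^\mu$ iff $\bigcap_j V(f_j^{\bar\lambda_j})$ consists of exactly $l$ points with multiplicities $\mu_i$, which is exactly $W_n^\mu$.

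The only mildly nontrivial point is ensuring these equalities hold scheme-theoretically rather than just set-theoretically. For $W_n^{min}$ this is automatic because both sides are open subsets and openness descends to and ascends from the $\bG_m$-quotient. For $W_n^\mu$, since the flattening stratification is functorial and pullback along the smooth cover $W_n\to[W_n/\bG_m]$ commutes with forming the universal normalized base locus (which is defined from the pullback of the universal $(\cL,s_0,\dots,s_N)$), we conclude that the flattening stratum $W_n^\mu$ of the pulled-back family equals $q^{-1}(\cW_n^\mu)$ with its induced subscheme structure. This is the only bookkeeping step that requires care, and it follows routinely from the compatibility of flattening stratifications with smooth base change.
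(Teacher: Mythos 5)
The paper states this lemma without proof, treating it as immediate from the quotient presentation $\cW_n \setminus \cW_n^\infty \cong [W_n/\bG_m]$ set up at the start of Section~\ref{sec:ex}. Your argument supplies exactly the natural verification the authors elided: $\bG_m$-invariance follows because the conditions defining $W_n^{min}$ and $W_n^\mu$ depend only on vanishing orders, the set-theoretic identifications match the moduli descriptions via Proposition~\ref{prop:normalized:local:conditions}, and the scheme structures agree by openness for $W_n^{min}$ and by compatibility of flattening stratifications with the faithfully flat quotient map $W_n \to [W_n/\bG_m]$ for $W_n^\mu$; this is correct and complete.
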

	
	Next we describe the parametrized version $\cR_n^{\mu}$. Let $\pi_{n,\mu} : \cR^\mu_n \to \Conf_{l(\mu)}$ denote the natural projection and let $\pi_{n,\gamma}$ be the restriction of $\pi_{n,\mu}$ to $\cR^\gamma_n \subset \cR^\mu_n$. Then $\Conf_{l(\mu)} = (\mathbb{P}^1)^l \setminus \Delta$ where $\Delta$ is the big diagonal. For each $(x_1, \ldots, x_l) \in \Conf_{l(\mu)}(\bP^1)$, we define
	$$
	R^{\gamma}_n(x_1, \ldots, x_n) = \left\{(f_0, \ldots, f_N) \mid \nu_{x_i}(f_j) \geq \nu_i^{(j)} \text{ with equality if } i \in T_j\right\}.
	$$
	
	\begin{lem}\label{l:helper-lemma} For each $(x_1, \ldots, x_l) \in \Conf_{l(\mu)}(\bP^1)$, the locus $R^\gamma_n(x_1, \ldots, x_n) \subset W_n^\gamma$ is locally closed and $\bG_m$-invariant. Moreover, we have an identification
		$$
		\pi_{n,\gamma}^{-1}(x_1, \ldots, x_l) = \left[R^\gamma_n(x_1, \ldots, x_l)/\bG_m\right]. 
		$$
	\end{lem}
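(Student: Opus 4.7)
The plan is to verify the three assertions of the lemma in turn --- $\bG_m$-invariance, local closedness (including containment in $W_n^\gamma$), and the scheme-theoretic identification of the fiber --- by reducing everything to elementary computations on the affine space $W_n$.

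First, $\bG_m$-invariance is immediate: since $t \in \bG_m$ scales $f_j$ by the nonzero scalar $t^{\lambda_j}$, the order of vanishing $\nu_{x_i}(f_j)$ is preserved under the action, and $R^\gamma_n(x_1,\ldots,x_l)$ is defined purely in terms of these orders of vanishing, so the locus is $\bG_m$-stable.

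Second, for local closedness I would write $R^\gamma_n(x_1,\ldots,x_l)$ as a finite intersection of elementary conditions. Fixing an affine coordinate at each $x_i$, the inequality $\nu_{x_i}(f_j) \geq \nu^{(j)}_i$ is cut out by the vanishing of the first $\nu^{(j)}_i$ Taylor coefficients of $f_j$ at $x_i$, a closed linear condition on $f_j \in V^{\lambda_j}_n$. When $i \in T_j$, the equality $\nu_{x_i}(f_j) = \nu^{(j)}_i$ additionally requires nonvanishing of the next Taylor coefficient, which is open inside this closed locus. Intersecting these finitely many conditions yields a locally closed subvariety of $W_n$. Containment in $W_n^\gamma$ follows from the definition of $\gamma$ realizing $\mu$: the combined conditions give $\bar{\lambda}_j \nu_{x_i}(f_j) \geq \mu^{(j)}_i \geq \mu_i$, and the hypothesis $\min_j \mu^{(j)}_i = \mu_i$ together with the equality conditions encoded by the $T_j$ forces the normalized base multiplicity at $x_i$ to be exactly $\mu_i$ with the prescribed vanishing data $\gamma$, and no further base points arise away from the $x_i$.

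Finally, for the identification of $\pi_{n,\gamma}^{-1}(x_1,\ldots,x_l)$ with $[R^\gamma_n(x_1,\ldots,x_l)/\bG_m]$, I would use the moduli interpretation of $\cR^\gamma_n$ directly. A $B$-point of this fiber is a $\lambdavec$-weighted linear series $(L, s_0, \ldots, s_N)$ of relative degree $n$ on $\bP^1 \times B$ together with the constant sections $\{x_i \times B\}$ realizing the parametrized normalized base locus with vanishing orders $\gamma$. Since $\mathrm{Pic}(\bP^1) = \bZ$, \'etale-locally on $B$ we may trivialize $L \cong \cO_{\bP^1}(n) \boxtimes \cO_B$; this presents $(s_0,\ldots,s_N)$ as a $B$-point $(f_0,\ldots,f_N)$ of $W_n$ satisfying exactly the defining conditions of $R^\gamma_n(x_1,\ldots,x_l)$. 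The ambiguity of this trivialization is a single $\bG_m$-twist acting with weight $\lambda_j$ on $f_j$, which yields the required quotient presentation.

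The main point requiring care is the bookkeeping in Step 2: one must carefully check that the definition of $\gamma$ realizing $\mu$, combined with the equality constraints indexed by the $T_j$, pins down both the normalized base locus and the higher-order vanishing data $\gamma$ exactly rather than giving merely lower bounds. Everything else amounts to a routine coordinate computation and the standard trivialization argument for line bundles on $\bP^1$.
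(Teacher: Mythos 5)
Your outline (invariance, local closedness, identification via trivialization) is the right one, and Steps 1 and 3 are essentially complete. Note only that in Step 3 the trivialization of $L$ is available \emph{Zariski}-locally on $B$, since any relative-degree-$n$ line bundle on $\bP^1\times B$ is of the form $\cO_{\bP^1}(n)\boxtimes M$ for some line bundle $M$ on $B$; your identification of the residual ambiguity as the weight-$\lambda_j$ action of $\bG_m$ on the $f_j$ is exactly correct and gives the quotient presentation.

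The gap is in Step 2, where you assert that the conditions $\nu_{x_i}(f_j)\geq\nu^{(j)}_i$ (with equality for $i\in T_j$) force both that the normalized base multiplicity at each $x_i$ equals $\mu_i$ exactly and that ``no further base points arise away from the $x_i$.'' Neither is a formal consequence of the displayed conditions. The inequalities constrain nothing on $\bP^1\setminus\{x_1,\dots,x_l\}$, so a priori the $f_j^{\bar{\lambda}_j}$ could share an extra common zero off the marked points; and the equality at an index $j$ minimizing $\mu^{(j)}_i$ is only imposed when that $j$ happens to lie in $T_j$, which Definition~\ref{defn:gamma} does not require, so the displayed conditions give only a lower bound on the base multiplicity at $x_i$. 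Compare the reformulation in the proof of Proposition~\ref{prop:single_addi} (the case $l=1$), where the locus is cut out by two further conditions written explicitly: ``the $f_i$ do not simultaneously vanish on $\bP^1\setminus x$'' and ``$\min_j\{\bar{\lambda}_j\nu_x(f_j)\}=\mu$.'' To repair Step 2, either incorporate these conditions into the definition of $R^\gamma_n(x_1,\dots,x_l)$ (the first is open, the second locally closed, so local closedness is preserved), or intersect with $W_n^\mu$ and run a degree count: the displayed conditions give base multiplicity $\geq\mu_i\geq 1$ at each $x_i$, while a point of $W_n^\mu$ has exactly $l$ base points with multiplicities summing to $|\mu|=\sum\mu_i$, forcing the base points to be precisely the $x_i$ with multiplicity exactly $\mu_i$. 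Either repair completes the argument; the paper itself leaves this lemma without a written proof, so there is no house argument to compare against.
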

	
	We end the section with a discussion of the special case $l = 1$ which will feature prominently later in the paper. The weighted linear series has a basepoint at exactly one point $x$ with normalized multiplicity $\mu \in \Z_{>0}$. In this case the vanishing order is just a single number $\nu^{(j)}$ and $T_j$ is either empty, in which case we allow $\nu_x(f_j) \geq \nu^{(j)}$, or $\{1\}$, in which case we require $\nu_x(f_j) = \nu^{(j)}$. For this reason we denote the tuple as a list of numbers, some of which have inequalities. 
	
	\begin{exmp} If $\gamma = ( \{1,\varnothing\}, \{1,\{1\}\} )$, we denote it by the symbol $(\geq 1, 1)$. \end{exmp}
	
	
	
	\begin{exmp} If $\gamma = ( \{2,\{1\}\}, \{3,\{1\}\} )$, we denote it by the symbol $(2, 3)$. \end{exmp}
	
	\begin{prop}\label{prop:single_addi} When $l = 1$,
		$$
		\pi_{n,\gamma} : \cR^\gamma_n \to \bP^1
		$$
		is a Zariski-locally trivial fibration which can be written as a $\bG_m$ quotient of a locally trivial fibration of $\bP^1$ with fibers $R^\gamma_n(x)$. In particular, 
		$$
		R^\gamma_n(x) \cong R^\gamma_n(y)
		$$
		for all $x, y \in \bP^1$ and we have an equality
		$$
		\{\cR^\gamma_n\}=\{\bP^1\}\cdot\frac{\{R^\gamma_n(0)\}}{\{\bG_m\}}
		$$
		in the Grothendieck ring of stacks (Section \ref{subsec:K0}).
	\end{prop}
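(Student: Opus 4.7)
The plan is to exploit the transitive action of $\mathrm{SL}_2$ on $\bP^1$ and its natural lift to $W_n=\bigoplus_{j=0}^N V_n^{\lambda_j}\setminus 0$. Recall that $V_n^{\lambda_j} = H^0(\bP^1, \cO(\lambda_j n))$ carries a natural $\mathrm{SL}_2$-representation structure (by the change-of-variables action on binary forms), and this action commutes with the $\bG_m$-scaling of weights $(\lambda_0,\dots,\lambda_N)$ on $W_n$ since the latter acts by simultaneous scalar multiplication on each $V_n^{\lambda_j}$. Consequently, the $\mathrm{SL}_2$-action descends to $\cW_n=[W_n/\bG_m]$ and is compatible with the $\bG_m$-quotient map in every fiber.

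The first step is to show that for $g\in\mathrm{SL}_2$, one has $g\cdot R^\gamma_n(x)=R^\gamma_n(g\cdot x)$ as locally closed subvarieties of $W_n$. This follows from the fact that $g$ preserves the line bundles $\cO(d)$ and sends the scheme-theoretic vanishing locus of a section $f_j$ at $x$ to the vanishing locus of $g\cdot f_j$ at $g\cdot x$, with the same multiplicity; hence the defining conditions $\nu_{x}(f_j)\geq \nu_i^{(j)}$ (with equality when $i\in T_j$) transform correctly under the action. Combined with the transitivity of $\mathrm{SL}_2$ on $\bP^1$, this immediately gives the fiber isomorphism $R^\gamma_n(x)\cong R^\gamma_n(y)$ for any $x,y\in\bP^1$, by acting by any $g$ with $g\cdot x=y$.

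For Zariski-local triviality, I would cover $\bP^1$ by the two affine charts $U_0=\bP^1\setminus\{\infty\}$ and $U_\infty=\bP^1\setminus\{0\}$, and use the upper (resp.\ lower) triangular unipotent subgroup $U^+\cong\bG_a\subset\mathrm{SL}_2$, which acts simply transitively on $U_0$ (resp.\ $U_\infty$) via translations. Choosing $u_t\in U^+$ with $u_t(0)=t$ yields a morphism
\[
U_0\times R^\gamma_n(0)\longrightarrow R^\gamma_n|_{U_0},\qquad (t,F)\longmapsto (t,\,u_t\cdot F)
\]
which is an isomorphism (the inverse uses $u_t^{-1}$), and analogously over $U_\infty$ using the lower triangular subgroup. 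Since these $\bG_a$-actions commute with the $\bG_m$-scaling (polynomial translation commutes with scalar multiplication), the trivializations descend to the quotient stacks, producing a Zariski-local trivialization of $\pi_{n,\gamma}:\cR^\gamma_n\to\bP^1$ with fiber $[R^\gamma_n(0)/\bG_m]$.

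Finally, from Zariski-local triviality the class factors in $K_0(\mathrm{Stck}_k)$ as $\{\cR^\gamma_n\}=\{\bP^1\}\cdot\{[R^\gamma_n(0)/\bG_m]\}$, and since $\bG_m$ is a special group acting on $R^\gamma_n(0)$, the standard identity $\{[X/\bG_m]\}=\{X\}/\{\bG_m\}$ in the Grothendieck ring of stacks yields the desired formula. The only mild obstacle is the bookkeeping check that the $\mathrm{SL}_2$-action genuinely restricts to an isomorphism between the locally closed loci $R^\gamma_n(x)$ and $R^\gamma_n(g\cdot x)$, including the equality constraints encoded by the sets $T_j$; but this is essentially automatic since $g$ is an isomorphism of $\bP^1$ preserving all vanishing orders exactly.
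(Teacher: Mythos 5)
Your proof is correct and follows essentially the same strategy as the paper: exhibit $W^\gamma_n\to\bP^1$ as equivariant for a transitive action of automorphisms of $\bP^1$ lifted to the binary forms, use a local section of that action to trivialize the fibration over the two standard affine charts, and then invoke specialness of $\bG_m$ to pass to the quotient and compute the class in the Grothendieck ring. Where the paper writes the argument with the $\PGL_2$-action and abstractly asserts the existence of a local family $\phi\colon U\to\PGL_2$ with $\phi(u)\cdot 0 = u$, you make this concrete by using the upper (resp.\ lower) triangular unipotent $\bG_a\subset\mathrm{SL}_2$, which acts by translation on the corresponding chart; this is slightly cleaner since $\mathrm{SL}_2$ acts honestly on $H^0(\bP^1,\cO(d))$ whereas $\PGL_2$ only does so projectively, but it amounts to the same computation. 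The observation that the $\mathrm{SL}_2$-action preserves all the defining vanishing conditions of $R^\gamma_n(x)$ (including the equality constraints from the $T_j$) is exactly the equivariance the paper records implicitly, so no gap there.
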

	\begin{proof} By Lemma \ref{l:helper-lemma}, $\cR^\gamma_n$ is isomorphic over $\bP^1$ to the $\mathbb{G}_m$ quotient of the locus inside
		$$
		\bP^1 \times \bigoplus_{j = 0}^N V^{\lambda_j}_N
		$$
		of pairs $(x, f_1, \ldots, f_N)$ such that the $f_i$ do not simultaneously vanish on $\bP^1 \setminus x$, $\min_j \{\bar{\lambda}_j \nu_x(f_j)\} = \mu$, and $\nu_x(f_j) \geq \nu^{(j)}$ with equality if $T_j = \{1\}$; note that this locus agrees with $W^\gamma_n$ since $\ell=1$. As $\bG_m$ is special, it suffices to show that $W^\gamma_n \to \bP^1$ is Zariski-locally trivial. Now $W^\gamma_n \to \bP^1$ is equivariant for the natural $\PGL_2$ action 
		$$
		(x, f_1, \ldots, f_N) \mapsto (gx, f_1\circ g^{-1}, \ldots, f_N \circ g^{-1}). 
		$$
		Locally in a neighborhood of any given point, say $0 \in \bP^1$, there exists a chart $0 \in U \subset \bP^1$ and a family of automorphisms $\phi : U \to \PGL_2$ such that $U$ is invariant under $\phi(U)$ and $\phi(u)\cdot 0 = u$. Now let $W^\gamma_n(0)$ denote the fiber of $W^\gamma_n$ over $0$ and consider the map
		$$
		U \times W^\gamma_n(0) \to (W^\gamma_n)|_U
		$$
		given by
		$$
		(u, f_1, \ldots, f_N) \mapsto (u, f_1 \circ \phi(u)^{-1}, \ldots, f_N \circ \phi(u)^{-1})
		$$
		On the other hand, we can act on $(W^\gamma_n)|_U$ by $\phi(u)^{-1}$ to produce a map backwards and it is clear these maps are inverses, proving the required Zariski-local triviality. The computation of the motives then follows from \cite[Prop. 1.1 iii), 1.4]{Ekedahl}.
	\end{proof}
	
	\begin{rmk} Note that when $l = 1$, $\Conf_1 \cong \Sym^{\{\mu\}}(\bP^1)$ and $\cW^\gamma_n = \cR^\gamma_n$ so we conclude that $\cW^\gamma_n$ is a locally trivial fibration over $\bP^1$ with fiber $\left[R^\gamma_n(0)/\bG_m\right]$. 
	\end{rmk}
	
	\begin{rmk}
		Under mild conditions on the characteristic, various moduli stacks of curves are isomorphic to weighted projective stacks, and thus admit height moduli. In this regard, we recall the works of \cite{Behrens, Stojanoska, HMe, Meier} for modular curves with prescribed level structures (see also the works of \cite{KM, Conrad, Niles}). For curves of arithmetic genus one with multiple marked points, we recall the works of \cite{Smyth, Smyth2, LP}. Lastly for examples in higher genus, we recall the works of \cite{Stankova, Fedorchuk, HP2}.
	\end{rmk}

	\section{Moduli stacks of elliptic surfaces with specified Kodaira fibers}
	\label{sec:Mod_Wss_Surf}

	In this section, we formulate the moduli stacks of elliptic surfaces of stacky height $n$ with fixed singular fibers. Via the isomorphism $\Me \iso \Pc(4,6)$ over $\bZ\left[\frac{1}{6}\right]$, these are identified with the moduli spaces $\cH^\Gamma_d$ and $\cR^\gamma_n$ of twisted maps with twisting conditions $\Gamma$ and weighted linear series with vanishing conditions $\gamma$ respectively and the isomorphisms between $\cH^\Gamma_d$ and $\cR^\gamma_n$ for various $\Gamma, \gamma, d$ and $n$ are interepreted in terms of the birational geometry of elliptic surfaces.

	\medskip
	
	We begin with the basic definitions surrounding elliptic surfaces following \cite{Miranda2, SS} (see also \cite{Silverman, Liu}).
	
	\begin{defn}
		An irreducible \textbf{elliptic surface} ($f: X \to C, S$) is an irreducible algebraic surface $X$ together with a surjective flat proper morphism $f: X \to C$ to a smooth curve $C$ and a section $S: C \hookrightarrow X$ such that:
		\begin{enumerate} 
			\item the generic fiber of $f$ is a stable elliptic curve, and
			\item the section is contained in the smooth locus of $f$. 
		\end{enumerate}
		
	\end{defn}
	
	\begin{defn}\label{def:minimal_ell_surf} 
		A \textbf{minimal elliptic surface} is an elliptic surface which is relatively-minimal i.e., if none of the fibers contain any $(-1)$-curves.
	\end{defn}
	
	\begin{defn}\label{def:weier_ell_surf} 
		A \textbf{Weierstrass fibration} is an elliptic surface obtained from an elliptic surface by contracting all fiber components not meeting the section. We call the output of this process a \textbf{Weierstrass model}. If starting with a smooth relatively-minimal elliptic surface, we call the result a \textbf{minimal Weierstrass model}.  
	\end{defn}
	
	Lastly, the geometry of an elliptic surface is largely influenced by the \emph{fundamental line bundle} $\cL$ which is the $L$ in a weighted linear series.
	
	\begin{defn} The \textbf{fundamental line bundle} of an elliptic surface is $\cL \coloneqq (f_* \mathcal{N}_{S/X})^\vee$, where $\mathcal{N}_{S/X}$ denotes the normal bundle of $S$ in $X$. 
		
	\end{defn}
	
	An important observation is that the Weierstrass model over $\bZ\left[\frac{1}{6}\right]$ has a global Weierstrass equation of the form
	\begin{equation}\label{eqn:weierstrass}
		\{y^2 = x^3 + Ax + B\} \in \bP_C(\cO \oplus \cL^{\otimes -2} \oplus \cL^{\otimes -3})
	\end{equation}
	where $A \in H^0(C, \cL^{\otimes 4})$ and $B \in H^0(C, \cL^{\otimes 6})$ as in \cite[Cor. 2.5]{Miranda2} and the minimality condition for the Weierstrass model is exactly the same as the minimality condition as in \S \ref{def:mimal-wted-lin-ser} for a weighted linear series.

	The singular fibers of elliptic fibrations were classified in the classical works of \cite{Kodaira, Neron}. There are two types of elliptic surfaces depending on what kind of singular fibers (i.e. bad reductions) underlying elliptic fibrations have. When there exists only \textbf{multiplicative} bad reductions then we call such a smooth relatively-minimal elliptic surface to be an \textit{semistable elliptic surface}. When there exists at least one \textbf{additive} bad reduction then we call such a smooth relatively-minimal elliptic surface to be an \textit{unstable elliptic surface}. By the well-known Tate's algorithm, the classification of singular fibers corresponds to vanishing conditions on the coefficients of the Weierstrass equation (see \cite[Table 1]{Herfurtner}).

	The following is clear.
	
	\begin{prop}\label{Wei_Ell_Moduli}
		The height moduli space $\cW_{n,C}^{min}(4,6) = \cM_{n,C}(\cP(4,6), \cO(1))$ of weighted linear series is the moduli stack of minimal Weierstrass models $(f : X \to C, S)$ with the degree of the fundamental line bundle equal to $n$. \end{prop}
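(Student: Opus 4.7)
The plan is to construct a morphism of stacks in both directions that realize minimal Weierstrass models and minimal $(4,6)$-weighted linear series as the same moduli problem, using the global Weierstrass equation \eqref{eqn:weierstrass} as the bridge.

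First, I will construct the forward map. Given a family of minimal Weierstrass models $(f \colon X \to C \times B, S)$ over a base $B$ with fundamental line bundle $\cL$ of relative degree $n$, apply Miranda's result (the family version of \cite[Cor.~2.5]{Miranda2}) to express $X$ as a closed subscheme of $\bP_{C \times B}(\cO \oplus \cL^{\otimes -2} \oplus \cL^{\otimes -3})$ cut out by $y^2 = x^3 + Ax + B$ for sections $A \in H^0(C \times B, \cL^{\otimes 4})$ and $B \in H^0(C \times B, \cL^{\otimes 6})$. Assigning to $(X,S)$ the datum $(\cL, A, B)$ gives a morphism to the stack $\cW_{n,C}(4,6)$ of $(4,6)$-weighted linear series of degree $n$ on $C$; functoriality in $B$ follows because Miranda's construction commutes with base change on $B$ (the formation of $f_* \cN_{S/X}^\vee$ is compatible with base change since $S$ meets each fiber transversely in the smooth locus).

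Next I will construct the inverse. Given a $(4,6)$-weighted linear series $(\cL, A, B)$ on $C \times B$, define $X := \{y^2 z = x^3 + Axz^2 + Bz^3\}$ inside $\bP_{C\times B}(\cO \oplus \cL^{\otimes -2} \oplus \cL^{\otimes -3})$, with section $S$ cut out by $z = 0$ (the point at infinity in each fiber). One checks that $f \colon X \to C \times B$ is flat and proper, that $S$ lies in the smooth locus of $f$, and that the generic fiber is a (stable) elliptic curve, so $(f,S)$ is a Weierstrass model with fundamental line bundle $\cL$. The two constructions are mutually inverse essentially tautologically (the Weierstrass equation recovers the embedding).

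Finally I will match the minimality conditions. By Definition \ref{def:mimal-wted-lin-ser}, the weighted linear series $(\cL, A, B)$ is minimal if and only if there is no point $x \in C$ at which $\nu_x(A) \geq 4$ and $\nu_x(B) \geq 6$ simultaneously. Tate's algorithm, in the form recalled in the paragraph following \eqref{eqn:weierstrass}, states that this is exactly the condition under which the associated Weierstrass model arises from a relatively minimal smooth elliptic surface by contracting fiber components not meeting the section, i.e.\ is a \emph{minimal} Weierstrass model in the sense of Definition \ref{def:weier_ell_surf}. Since the degree of $\cL$ equals the degree of the weighted linear series by construction, the moduli stacks agree.

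The only routine verification I expect to require care is checking that the inverse construction in families produces a flat proper morphism with the expected generic fiber; this is standard once the sections $A,B$ are given, but must be done while ensuring that the equivalence relation on families of weighted linear series (scaling by isomorphisms $\psi \colon \rho^*L' \to L$ with $\psi^{\lambda_i}(s_i') = s_i$) matches the equivalence relation on Weierstrass models (isomorphisms over $C \times B$ preserving the section). This compatibility is precisely the statement that the $\bG_m$-action rescaling $A \mapsto t^4 A$, $B \mapsto t^6 B$ corresponds to the change of coordinates $(x,y) \mapsto (t^2 x, t^3 y)$ on the ambient projective bundle, which is the classical content of the Weierstrass normal form.
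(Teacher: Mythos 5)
Your proof proposal is correct and takes essentially the same route the paper intends: the paper states the proposition as clear, having just recalled the global Weierstrass normal form from \cite[Cor.~2.5]{Miranda2} and observed that the two minimality conditions coincide. Your writeup simply fills in the routine details (functoriality of Miranda's construction in families, the inverse construction via the cubic in the projective bundle, and the identification of the $\bG_m$-scaling with the admissible change of Weierstrass coordinates, which makes the automorphism groups match).
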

	
	As an immediate corollary we obtain the following. 
	
	\begin{cor}\label{cor:unstable_height} The stratum $\cR^\gamma_{n,C}(4,6)$ is the moduli space of Weierstrass elliptic surfaces with unstable fibers specificed by $\gamma$. Moreover, the Faltings height of an elliptic surface given by the degree of the of the fundamental line bundle agrees with the stacky height for a rational point of $\overline{\cM}_{1,1} \cong \cP(4,6)$ with $\cO(1)$. 
	\end{cor}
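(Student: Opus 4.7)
The plan is to assemble the statement directly from three ingredients: Proposition \ref{Wei_Ell_Moduli} identifying $\cW_{n,C}^{min}(4,6)$ with minimal Weierstrass models, the stratification of $\cW_{n,C}^{min}(\lambdavec)$ by vanishing conditions from Theorem \ref{thm:main-thm-of-sec4}, and Tate's algorithm relating vanishing orders of the Weierstrass data to Kodaira fiber types.

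First, by Proposition \ref{Wei_Ell_Moduli}, a $k$-point of $\cW_{n,C}^{min}(4,6)$ is a minimal $(4,6)$-weighted linear series $(L,A,B)$ with $A\in H^0(C,L^{\otimes 4})$, $B\in H^0(C,L^{\otimes 6})$ and $\deg L = n$, which is precisely the data of a minimal Weierstrass model \eqref{eqn:weierstrass} with fundamental line bundle $\cL = L$. By Theorem \ref{thm:main-thm-of-sec4}, $\cR^\gamma_{n,C}(4,6)$ is the locally closed substack of $\cW_{n,C}^{min}(4,6)$ parametrizing such tuples together with a collection of disjoint marked sections at which the orders of vanishing $(\nu_x(A),\nu_x(B))$ are constrained by $\gamma$. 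By the classical Tate's algorithm (as recalled in the table of Theorem \ref{thm:corr_46}), the pair $(\nu_x(A),\nu_x(B))$ at a closed point $x$ determines the Kodaira fiber type of the relatively minimal smooth model of the elliptic surface above $x$, and conversely each additive Kodaira type corresponds to a specific vanishing pattern $\gamma$. Thus $\cR^\gamma_{n,C}(4,6)$ is exactly the moduli of Weierstrass elliptic surfaces with unstable fibers specified by $\gamma$, proving the first statement.

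For the second statement, let $E/K$ be the elliptic curve corresponding to a rational point $P\in\overline{\cM}_{1,1}(K)\cong \cP(4,6)(K)$. By Theorem \ref{thm:tuning-stacks-and-minimal-linear-series}(\ref{thm:tuning-stacks-and-minimal-linear-series::minimal-lin-ser}), $P$ is realized by a unique minimal $(4,6)$-weighted linear series $(L,A,B)$ on $C$, and by Theorem \ref{thm:tuning-stacks-and-minimal-linear-series}(\ref{thm:tuning-stacks-and-minimal-linear-series::height}) the stacky height satisfies $\height_{\cO(1)}(P) = \deg L$. Under the identification of Proposition \ref{Wei_Ell_Moduli}, $L$ is the fundamental line bundle $\cL$ of the (unique) minimal Weierstrass model of $E$, whose degree is by definition the Faltings height of $E$. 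Hence the two heights agree, concluding the proof.

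The argument is essentially bookkeeping once Proposition \ref{Wei_Ell_Moduli} and Theorem \ref{thm:tuning-stacks-and-minimal-linear-series} are in hand; the only subtlety worth flagging is the compatibility of the Tate correspondence (Theorem \ref{thm:corr_46}) with the vanishing data $\gamma$ of Definition \ref{defn:gamma}, but this compatibility is precisely the content of the table in Theorem \ref{thm:corr_46}, which covers all admissible minimal pairs $(\nu_x(A),\nu_x(B))$.
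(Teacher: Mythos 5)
Your proposal is correct and takes essentially the same route the paper intends: the paper deliberately gives no explicit proof, introducing the corollary with ``As an immediate corollary we obtain the following,'' and your argument just spells out that immediacy by assembling Proposition \ref{Wei_Ell_Moduli}, the stratification of Theorem \ref{thm:main-thm-of-sec4}, and the height computation of Theorem \ref{thm:tuning-stacks-and-minimal-linear-series}(\ref{thm:tuning-stacks-and-minimal-linear-series::height}). One small remark: for the first sentence of the corollary you do not really need Tate's algorithm, since ``unstable fibers specified by $\gamma$'' is already the definitional content of $\cR^\gamma_{n,C}$ once Proposition \ref{Wei_Ell_Moduli} identifies the ambient space with Weierstrass models; Theorem \ref{thm:corr_46} only becomes necessary if one wants to translate $\gamma$ into Kodaira types, which your write-up folds in harmlessly.
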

	
	On the other hand, the spaces of twisted maps to $\overline{\cM}_{1,1}$ also have an interpretation as moduli of elliptic surfaces. This was originally used in \cite{av, AB2} to compactify the moduli space of elliptic surfaces. We review the construction here for the benefit of the reader. 
	
	\begin{defn}
		A \textbf{stable stack-like} elliptic surface is a tuple $(h : \cY \to \cC, \cS)$ where $h$ is a surjective proper representable morphism from an orbifold surface to a twisted curve and $(\cY, \cS) \to \cC$ is a flat family of stable elliptic curves with section such that the stabilizers of $\cC$ act generically fixed point free on the fibers of $h$. A \textbf{twisted model} is the coarse moduli space $(g : Y \to C, S_0)$ of a stable stack-like surface.
	\end{defn}
	
	A stable stack-like surface induces a representable classifying morphism
	$$
	\varphi : \cC \to \overline{\cM}_{1,1}
	$$
	where $\cY \cong_{\cC} \varphi^*\overline{\cE}$ for $\overline{\cE} \to \overline{\cM}_{1,1}$ the universal family. In particular, the twisted structure on $\cC$ and the stabilizer action on $\varphi^*\cO(1)$ are encoded by a tuple of local twisting conditions $\Gamma$. 
	
	\begin{prop}\label{prop:Min_Ell_Moduli}
		The space $\cH^\Gamma_d(\Pc(4,6))$ is the moduli of stable stack-like elliptic surfaces with local twisting conditions $\Gamma$ such that $\varphi^*\cO(1) \cong \cL$ is the fundamental line bundle and $12d = \deg(j)$ where $j : C \to \overline{M}_{1,1}$ is the $j$-map. In particular, the stable height of an elliptic surface is $\frac{1}{12}\deg(j)$. 
	\end{prop}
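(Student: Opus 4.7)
The plan is to establish an equivalence of groupoids between the twisted maps parametrized by $\cH^\Gamma_d(\Pc(4,6))$ and the stable stack-like elliptic surfaces with local twisting $\Gamma$ and fundamental line bundle of degree $d$, by pulling back the universal family $\overline{\cE} \to \Me$ in one direction and taking the classifying morphism in the other. The line bundle and degree identifications will then follow by tracking this correspondence through the standard Weierstrass isomorphism $\Me \iso \Pc(4,6)$.

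In one direction, given a family $(\cC \to B, \Sigma_i, f : \cC \to \Pc(4,6) \iso \Me)$, I would set $\cY \coloneqq f^*\overline{\cE}$ together with the pulled-back section $\cS$. Representability of $f$ forces the stabilizer at each geometric point of $\cC$ to embed into the automorphism group of the corresponding fiber $(\cY_{\bar c}, \cS_{\bar c})$, so the stabilizers act generically freely on fibers; smoothness of $\cC$ together with smoothness of $\overline{\cE} \to \Me$ (away from the discriminant) and a local Weierstrass analysis at each stacky point then yield an orbifold surface. Conversely, a stack-like surface $(h : \cY \to \cC, \cS)$ produces a classifying morphism $\varphi : \cC \to \Me$ by the universal property, and the generic freeness of the stabilizer action on fibers is precisely the representability condition for $\varphi$ (a nontrivial automorphism of a smooth elliptic curve fixing the origin acts nontrivially on the generic fiber). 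The twisting data $\Gamma$ then records the local stabilizer action together with its induced character on the Hodge bundle, and these two constructions are manifestly mutually inverse; the twisted model is recovered by taking coarse moduli.

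For the line bundle identification, the Weierstrass isomorphism $\Me \iso \Pc(4,6)$ over $\bZ[1/6]$ identifies $\cO_{\Pc(4,6)}(1)$ with the Hodge line bundle on $\Me$, which after pullback via $\varphi$ is canonically identified with $(h_*\mathcal{N}_{\cS/\cY})^{\vee}$; hence $\varphi^*\cO(1) \iso \cL$. For the degree of $j$, the coarse moduli map $\Me \to \overline{M}_{1,1} \iso \Pb^1_j$ corresponds under the Weierstrass isomorphism to the coarse map $\Pc(4,6) \to \Pb^1$, which is classified by $\cO_{\Pc(4,6)}(12)$ since $12 = \mathrm{lcm}(4,6)$ is the minimal exponent such that $\cO(12)$ descends to the coarse space. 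Pulling back yields $j^*\cO_{\Pb^1}(1) \iso \varphi^*\cO(12)$ on $C$, so by Remark \ref{rem:deg} we have $\deg j = 12 \deg \varphi^*\cO(1) = 12d$, and the stable height formula then follows directly from $\height^{st}_{\cO(1)}(P) = \deg \varphi^*\cO(1) = d$.

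The main technical obstacle I anticipate is the careful verification that representability of $\varphi$ corresponds exactly to the geometric condition that stabilizers of $\cC$ act generically freely on fibers of $h$, together with matching up the local twisting data $\Gamma$ with the stabilizer action on the Hodge bundle. This is essentially content of the foundational work on twisted stable maps (cited as \cite{av, AB2}), and should reduce, after passing to the strict Henselization at each stacky point, to an explicit local computation combining the root stack description of twisted curves with the known automorphism groups of the stable fibers of $\overline{\cE}$.
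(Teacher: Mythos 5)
Your proof takes essentially the same approach as the paper's. The paper declares the identification of moduli spaces to be "clear" and only explicitly records the degree computation via the observation that $\pi^*\cO_{\bP^1}(1) = \cO(12)$ on $\Me$, combined with Remark \ref{rem:deg}; you have simply fleshed out the "clear" part by spelling out the pullback/classifying-morphism correspondence and the equivalence of representability with generic freeness of the stabilizer action, before arriving at the same degree argument.
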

	
	\begin{proof}
		The identification of moduli spaces is clear. The only thing we need to check is that $12d = \deg(j)$ but this follows from the observation that $\pi^*\cO_{\bP^1}(1) = \cO(12)$ where $\pi : \overline{\cM}_{1,1} \to \overline{M}_{1,1} \cong \bP^1$ is the coarse map. 
	\end{proof}
	
	\begin{rmk}
		In the special case $\Gamma$ is empty, $\cH^\Gamma_n$ is just $\Hom_n(C, \overline{\cM}_{1,1})$ studied in \cite{HP} for $C = \Pb^1$ case regarding its motive in the Grothendieck ring of stacks and \cite{BPS} for a general smooth projective curve $C$ of any genus $g$ regarding its $\ell$-adic \'etale cohomology with Frobenius weights. The identification between $\cH^\emptyset_n$ and $\cR^\emptyset_n$ is simply the fact that a semi-stable elliptic surface can equivalently be described by a Weierstrass equation or by a classifying morphism $C \to \overline{\cM}_{1,1}$ and in this case the stacky height is exactly $n = \frac{1}{12}\deg(j)$.
	\end{rmk}
	
	\subsection{Geometric interpretation of Tate's algorithm} 
	
	By Tate's algorithm, the base profile $\gamma$ of the Weierstrass equation determines the unstable singular Kodaira fibers of the minimal resolution. On the other hand, by Theorem \ref{thm:height:moduli} for weighted projective stacks, this is equivalent to local twisting conditions $\Gamma$ which in turn determines the stabilizers of the stable stack-like model. The unstable singular fibers of the twisted model are then determined by the following lemmas. 
	
	\begin{lem}\label{lem:tangent:action} Let $h: \cE \to \cD$ be a semistable family of elliptic curves where $\cD = \left[\Spec k[t]^{sh}/\mu_r\right]$ and $\varphi : \cD \to \overline{\cM}_{1,1}$ is a twisted map with local condition $(r,a)$. Let $p \in \cE_0$ be a smooth fixed point of the $\mu_r$ action on the central fiber $\cE_0$. Then the weights of the $\mu_r$ action on the tangent space $T_{\cE,p}$ are $(-1, -a)$.
	\end{lem}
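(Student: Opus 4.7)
The plan is to decompose the tangent space $T_{\cE, p}$ into its base and fiber directions via the smoothness of $h\colon \cE \to \cD$ at $p$ and then compute each weight separately. Since the $\mu_r$-action preserves this decomposition, I would begin by writing down the $\mu_r$-equivariant short exact sequence
\[
0 \to T_{\cE_0, p} \to T_{\cE, p} \to T_{\cD, 0} \to 0,
\]
which splits equivariantly because $\mu_r$ is linearly reductive. The base direction weight is immediate: by construction, $\mu_r$ acts on the coordinate $t$ of $\cD = [\Spec k[t]^{sh}/\mu_r]$ with weight $+1$, so $T_{\cD, 0}$ has weight $-1$.

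For the fiber direction, I would first treat the case $p = e$ where $e$ denotes the image of the zero section of $\cE \to \cD$. The key input is the canonical identification $\cL \cong e_{\cD}^* \omega_{\cE/\cD}$ of the pullback Hodge line bundle with the restriction of the relative dualizing sheaf along the zero section. Restricting to $0 \in \cD$ yields a $\mu_r$-equivariant isomorphism $\cL|_0 \cong T^*_{\cE_0, e}$. Combining this with the remark following Theorem \ref{thm:tuning-stacks-and-minimal-linear-series}, which identifies the character of $\mu_r$ on $\cL|_0 = \bar{P}^*\cO(1)|_0$ as $\chi^{+a}$, I conclude after dualizing that $T_{\cE_0, e}$ carries the character $\chi^{-a}$.

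For a general smooth fixed point $p$, I would exploit the group structure on $\cE_0$ to reduce to the identity case. The $\mu_r$-action on $\cE_0$ factors through $\mathrm{Aut}(\cE_0)$, each element of which is a group automorphism fixing $e$. For such a $g$ fixing $p$, the relation $g(q + p) = g(q) + p$ gives $g \circ \tau_p = \tau_p \circ g$ where $\tau_p$ denotes translation by $p$; differentiating at $e$ produces $dg_p = (d\tau_p)_e \circ dg_e \circ ((d\tau_p)_e)^{-1}$. Since these tangent spaces are one-dimensional, this conjugation preserves the eigenvalue, so the $\mu_r$-character on $T_{\cE_0, p}$ coincides with that on $T_{\cE_0, e}$, namely $\chi^{-a}$. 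Combining with the base direction computation yields weights $(-1, -a)$ on $T_{\cE, p}$ as claimed.

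The main obstacle will be keeping straight the sign conventions relating the local condition $(r, a)$, the character of $\cL|_0$, and the identification of $\cL$ with the Hodge bundle; I would invoke the remark after Theorem \ref{thm:tuning-stacks-and-minimal-linear-series} together with the standard characterization of the Hodge bundle as the pushforward of the relative dualizing sheaf to pin down the correct sign for the fiber weight. The nodal case $j = \infty$ (where $\cE_0$ is a nodal rational curve with smooth part $\bG_m$) requires mild modification since $\tau_p$ must be interpreted within the smooth locus, but the same translation-conjugation argument applies verbatim.
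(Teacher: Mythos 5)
Your proof is correct and proceeds along the same broad lines as the paper (split $T_{\cE,p}$ equivariantly into fiber and section directions, compute the base weight from the action on the uniformizer $t$, identify the fiber weight via the fundamental line bundle), but you take a genuine detour on the fiber-direction computation. You first handle $p=e$ by invoking the Hodge bundle identification $\cL \cong e^*\omega_{\cE/\cD}$ and the remark following Theorem~\ref{thm:tuning-stacks-and-minimal-linear-series}, then reduce a general smooth fixed point $p$ to the identity via the translation-conjugation argument $dg_p = (d\tau_p)_e\circ dg_e\circ(d\tau_p)_e^{-1}$. The paper instead bypasses this case split entirely: because $\cD$ has a strictly henselian atlas and $p$ is a $\mu_r$-fixed smooth point of $\cE_0$, one may choose an equivariant section $S$ of $h$ through $p$, after which the definition of the fundamental line bundle $\cL = (h_*N_{S/\cE})^\vee$ directly identifies $\cL^\vee|_0 \cong N_{S/\cE}|_0 = T_{\cE_0,p}$ and the weight $-a$ drops out in one step. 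Your route is a bit longer (it must also separately check equivariance of translation, which you do correctly via $g(q+p)=g(q)+p$ for $g$ a group automorphism fixing $p$), but it is a valid alternative with the advantage of not needing any discussion of sections over a henselian base; the paper's route is more uniform in $p$ at the cost of an implicit choice of section through $p$. Your handling of the nodal case $j=\infty$ by restricting the translation to the smooth locus $\bG_m$ is likewise correct and matches the generality of the lemma.
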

	\begin{proof} Since $\cD$ is strictly henselian, there is a section of $S$ of $h:\cE \to \cD$ passing through $p$. Thus the tangent space $T_{\cE, p}$ splits as a direct sum of fiber and section direction and each of these directions are eigenspaces for the $\mu_r$ action. The section direction is canonically isomorphic to the tangent space $T_{\cD, 0}$ of the base which is isomorphic to $(\mathfrak{m}/\mathfrak{m}^2)^\vee$ where $\mathfrak{m}$ is the maximal ideal of $0$. Since $t$ is a uniformizer, $\mathfrak{m}/\mathfrak{m}^2$ is rank $1$ generated by $t$ which transforms as $t \mapsto \zeta t$, so the dual has weight $(-1)$. On the other hand, $\cL = \varphi^*\cO(1)$ is the fundamental line bundle of the elliptic fibration and thus isomorphic to $h_*(N_{S/\cE})^\vee$. Thus the fiber 
		$$
		\cL^\vee|_0 \cong N_{S/\cE}|_0 = T_{\cE_0, p}
		$$
		and so the weight of $\mu_r$ acting on $T_{\cE_0,p}$ is minus the weight on $\cL|_0$ which is $-a$. 
	\end{proof} 
	
	\begin{lem}\label{lem:mult}
		Let $(h : \cE \to \cD, \cS)$ be a stable stack-like elliptic surface over $\cD = \left[\Spec k[t]^{sh}/\mu_r\right]$ and let $(g : Y \to D, S_0)$ be the coarse moduli space. Then the central fiber $g^*(0)$ has multiplicity $r$. 
	\end{lem}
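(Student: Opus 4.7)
The plan is to reduce to a formal-local computation near a fixed point of the central fiber and exploit the admissibility condition $\gcd(r,a)=1$. Let $D=\Spec k[[s]]$ where $s=t^r$, and let $\pi\colon\cE\to Y$ denote the coarse moduli map. The central fiber $g^*(0)$ is cut out by $g^*s\in\cO_Y$, and its pullback to $\cE$ equals $h^*s=(h^*t)^r$. Since the reduced central fiber $\cE_0=V(h^*t)$ is a reduced Cartier divisor, $h^*s$ vanishes to order exactly $r$ along $\cE_0$. Moreover, because $h$ defines a stable family of elliptic curves over a single (stacky) point, $\cE_0$ is irreducible with a unique generic point $\eta$.

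The heart of the argument is to show that $\pi$ is \'etale at $\eta$, so that the local DVRs $\cO_{\cE,\eta}$ and $\cO_{Y,\pi(\eta)}$ coincide and the vanishing orders of $g^*s$ along $Z_0:=\pi(\cE_0)_{\red}$ and along $\cE_0$ agree. For this, fix a smooth fixed point $p\in\cE_0$ lying on the section $\cS$; by Lemma \ref{lem:tangent:action}, the $\mu_r$-action on $T_{\cE,p}$ has weights $(-1,-a)$, where $(r,a)$ are the local twisting conditions of the classifying morphism $\varphi\colon\cD\to\overline{\cM}_{1,1}$. The stack-like hypothesis (stabilizers of $\cD$ acting generically freely on fibers of $h$) forces $\varphi$ to be representable, so $(r,a)$ is admissible and $\gcd(r,a)=1$.

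By tame linearization of $\mu_r$-actions at a smooth fixed point (valid since $r$ is invertible in $k$), we may choose formal coordinates $u,v$ at $p$ on which $\mu_r$ acts by weights $(1,a)$, and such that $h^*t=u$ up to a unit, using that $h^*t$ has weight $1$ and cuts out $\cE_0$ near $p$. The generic point $\eta$ of $\cE_0=V(u)$ corresponds to the prime $(u)\subset k[[u,v]]$, and its stabilizer is
\[
\{\zeta\in\mu_r:\zeta\text{ acts trivially on }k((v))\}=\{\zeta\in\mu_r:\zeta^a=1\}=\mu_{\gcd(r,a)}=\{1\}.
\]
Thus $\pi$ is \'etale at $\eta$, and the order of vanishing of $g^*s$ along $Z_0$ equals that of $h^*s=u^r$ along $\cE_0$, namely $r$. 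Hence $g^*(0)=rZ_0$ as Cartier divisors. The main subtlety is upgrading the tangent-space description of Lemma \ref{lem:tangent:action} to genuine formal equivariant coordinates realizing the linear action; this uses tameness and the existence of the equivariant section $\cS$ to canonically split base and fiber directions, with the admissibility condition $\gcd(r,a)=1$ then ensuring trivial generic inertia on $\cE_0$.
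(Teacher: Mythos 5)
Your argument is correct and its overall structure mirrors the paper's proof: pull back the equation $s=t^r$ defining $g^*(0)$ to $\cE$, observe that $\pi^*g^*(0)=r\cE_0$ since $\cE_0=V(h^*t)$ is a reduced Cartier divisor, and transfer the multiplicity through $\pi$ by showing $\pi$ is \'etale at the generic point $\eta$ of $\cE_0$. The one place you diverge is in establishing \'etaleness at $\eta$. The paper does this in a single line by citing the generically-fixed-point-free hypothesis built into the definition of a stable stack-like elliptic surface, which says precisely that the inertia at $\eta$ is trivial. You instead re-derive this from Lemma~\ref{lem:tangent:action}: linearize the $\mu_r$-action in formal coordinates of weights $(1,a)$ at a fixed point $p$ on the section, compute the inertia at $\eta$ to be $\mu_{\gcd(r,a)}$, and invoke $\gcd(r,a)=1$, which comes from admissibility of the local condition (Definition~\ref{def:local-conditions}). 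This is a valid alternative route; it makes the local computation self-contained and in effect shows that the generic-freeness hypothesis follows from representability, at the cost of more work than the paper's one-step appeal to the definition. One minor imprecision: the generic-freeness condition does not ``force'' $\varphi$ to be representable---representability of $h$ (and hence of $\varphi$) is stated independently in the definition; the two conditions happen to be equivalent here, so your chain of implications still lands in the right place, but the logic is slightly misattributed.
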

	\begin{proof}
		Let $\pi : \cE \to Y$ and $\rho : \cD \to D$ be the coarse moduli maps. Then $\rho$ is ramified to order $r$ at $0$ so $h^*\rho^*(0) = r\cE_0$. By commutativity of $\pi, \rho, h$ and $g$, we have $\pi^*g^*(0) = r\cE_0$. On the other hand, $\mu_r$ acts faithfully on $\cE_0$. In particular, this action is generically fixed point free so $\pi$ is generically \'etale along $\cE_0$ and of degree $1$. Thus $\pi_*\cE_0 = Y_0$ so by push-pull, we have $g^*(0) = rY_0$. 
	\end{proof}
	
	Particularly, we remark that this gives a new interpretation of Tate's algorithm as: the vanishing condition $\gamma = (\nu(a_4), ~ \nu(a_6))$ of the minimal weighted linear series $\implies$ the local twisting condition $\Gamma = (r,a)$ of the twisted maps $\implies$ the unstable singular fibers of the twisted model $\implies$ specified type of Kodaira fiber by resolving singularities and contracting $(-1)$-curves. This is summarized in the diagram below.
	
	\begin{equation*}\label{diag:Master}
		\begin{tikzcd}
			&  &                                                                          &  &                                                                                                      &  &                                                  & {\mathcal{Y} = \varphi^* ~ \overline{\mathcal{E}}} \arrow[rr] \arrow[dd, "h"] \arrow[ld, "\pi"'] &  & {\overline{\mathcal{E}}} \arrow[dd, "p"] \\
			X \arrow[ddd, "f"]                &  & X^{\prime} \arrow[ddd, "f^{\prime}"] \arrow[ll, "\mathrm{Weierstrass}"'] &  & \hat{X} \arrow[rr, "\mathrm{resolution}"] \arrow[ddd, "\hat{f}"] \arrow[ll, "\mathrm{contraction}"'] &  & Y \arrow[ddd, "g"]                               &                                                                                                        &  &                                                \\
			&  &                                                                          &  &                                                                                                      &  &                                                  & \mathcal{C} \arrow[rr, "\varphi"] \arrow[ldd, "\rho"']                                                 &  & {\overline{\mathcal{M}}_{1,1}} \arrow[dd]      \\
			&  &                                                                          &  &                                                                                                      &  &                                                  &                                                                                                        &  &                                                \\
			C \arrow[rr, Rightarrow, no head] &  & C \arrow[rr, Rightarrow, no head]                                        &  & C \arrow[rr, Rightarrow, no head]                                                                    &  & C \arrow[rrr, "j"] \arrow[rrruu, "\psi", dashed] &                                                                                                        &  & {\overline{M}_{1,1}}                          
		\end{tikzcd}
	\end{equation*}
	
	Here $f$ is a Weierstrass model, $\psi$ is the associated weighted linear series viewed as a rational map to $\overline{\cM}_{1,1}$, $j$ is the $j$-invariant, $\varphi$ is the universal tuning stack which induces a stable stack-like model $h : \cY \to \cC$, $g : Y \to C$ is the twisted model, $\hat{f}$ is a resolution of $Y$, and $f'$ is the relative minimal model obtained by contracting relative $(-1)$-curves. Lemmas \ref{lem:tangent:action} and \ref{lem:mult} determine singular fibers of $g$ and the singularities of $Y$ which determine $\hat{X}$ and in turn the Kodaira fiber of the minimal model $f'$. By Proposition \ref{prop:normalized:local:conditions}, the local twisting conditions for $\varphi$ depend only on the base multiplicity of the normalized linear series. The extra data we need to determine the singular fiber is simply the order of vanishing of the $j$ map at $j = \infty$ as this determines the singularities of $\cY$. Combining these observations, we get. 
	
	\begin{thm}[Tate's algorithm via twisted maps]\label{thm:tate} The local twisting condition $(r,a)$ and the order of vanishing of $j$ at $j = \infty$ determine the Kodaira fiber type of the relative minimal model, and $(r,a)$ is in turn determined by $m = \min\{3\nu(a_4), 2\nu(a_6)\}$. 
	\end{thm}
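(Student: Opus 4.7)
The second assertion is essentially immediate from Proposition \ref{prop:normalized:local:conditions} applied to $\vec{\lambda}=(4,6)$, for which $\kappa=12$, $\bar\lambda_0=3$, $\bar\lambda_1=2$. Hence the multiplicity of the normalized base locus at the point in question is exactly $m=\min\{3\nu(a_4),2\nu(a_6)\}$, and the proposition gives $r=12/\gcd(m,12)$ and $a=m/\gcd(m,12)$ directly. So the plan is to concentrate on showing that $(r,a)$ together with $\nu_\infty(j)$ determines the Kodaira type of the relative minimal model.

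The strategy is to trace through the master diagram preceding the theorem. Localize at the indeterminacy $0\in C$ and work on the strictly henselian neighborhood $\mathcal{D}=[\Spec k[t]^{sh}/\mu_r]$, the local model of the universal tuning stack $\mathcal{C}$. Pulling back the universal family along $\varphi\colon\mathcal{D}\to\overline{\mathcal{M}}_{1,1}$ gives the stack-like elliptic surface $h\colon\mathcal{Y}\to\mathcal{D}$, whose coarse space $g\colon Y\to D$ is the twisted model. I will show that the Kodaira fiber of the relative minimal model $f'$ is recoverable from the local invariants of $Y\to D$, which in turn are computable from $(r,a,\nu_\infty(j))$.

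The key steps are: (i) By Lemma \ref{lem:mult}, the central fiber $g^*(0)$ has multiplicity $r$, which pins down the multiplicity of the non-reduced component of the Kodaira configuration. (ii) The geometry of the central fiber of $\mathcal{Y}$ splits into two cases according to $\nu_\infty(j)$. If $\nu_\infty(j)=0$, i.e.\ $\varphi(0)\in\mathcal{M}_{1,1}$, then $\mathcal{Y}_0$ is a smooth elliptic curve with $\mu_r$ acting by an automorphism, so only $r\in\{2,3,4,6\}$ occur and they are forced to be compatible with $j(\varphi(0))\in\{*,0,1728,0\}$; if $\nu_\infty(j)>0$ then $\mathcal{Y}_0$ is a nodal cubic and $Y$ acquires an $A_{\nu_\infty(j)-1}$ chain along the node, giving the $\mathrm{I}_k^*$ (or $\mathrm{I}_k$) configuration after resolution. (iii) At each smooth fixed point $p\in\mathcal{Y}_0$, Lemma \ref{lem:tangent:action} says $\mu_r$ acts on $T_{\mathcal{Y},p}$ with weights $(-1,-a)$, so $Y$ has a Hirzebruch--Jung cyclic quotient singularity of type $\tfrac{1}{r}(1,a')$ (with $aa'\equiv 1\bmod r$, possibly after a sign convention adjustment). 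Resolving these, then contracting relative $(-1)$-curves as in the diagram, produces the relatively minimal model $f'$, whose central fiber is a standard Kodaira configuration determined purely by $r,a$ and, when $j=\infty$, the length of the $A$-chain at the node.

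Once these three pieces are in place, finishing the proof is a case-by-case check against the ten rows of the table in Theorem \ref{thm:corr_46}. The main obstacle I anticipate is the bookkeeping in step (iii): matching the Hirzebruch--Jung continued fraction expansion of $r/a$ to the correct chain of $(-2)$-curves on each side of the dual graph, and then verifying that after contracting $(-1)$-curves one really lands in the asserted Kodaira type (in particular distinguishing $\mathrm{I}_0^*$ (when $j\neq\infty$) from $\mathrm{I}_k^{*}, k>0$ (when $j=\infty$), both having $(r,a)=(2,1)$). All other case checks are routine once the local picture of $Y$ is in hand.
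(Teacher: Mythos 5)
Your proposal mirrors the paper's argument precisely: the second assertion follows from Proposition~\ref{prop:normalized:local:conditions} applied to $\lambdavec=(4,6)$ (so $\kappa=12$, $\bar\lambda_0=3$, $\bar\lambda_1=2$), and the first assertion is established by tracing the master diagram, using Lemmas~\ref{lem:tangent:action} and~\ref{lem:mult} to pin down the multiplicity of the central fiber and the quotient singularities of the twisted model $Y$, then resolving and contracting $(-1)$-curves to identify the Kodaira configuration. The paper carries out the case analysis explicitly only for $m=3$ and $m=6$ in the two examples immediately following the theorem, leaving the remaining rows of the table in Theorem~\ref{thm:corr_46} to the reader --- exactly as you anticipate.

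One concrete slip in your step (ii): when $\nu_\infty(j)=k>0$, the singularity of the total space $\cY$ at the node of the central fiber is $A_{2k-1}$, not $A_{k-1}$. The factor of $2$ appears because the orbifold uniformizer on the $\mu_2$-root stack $\cC$ is a square root of the one on $C$, so $\varphi^*\Delta$ vanishes to order $2k$ there. The coarse space $Y$ then carries the $\mu_2$-quotient of this $A_{2k-1}$, which is an $A_3$ point if $k=1$ and a $D_{k+2}$ point if $k\geq 2$, not an $A$-chain of length $k-1$; resolving this together with the two $\frac{1}{2}(-1,-1)$ singularities at the smooth $\mu_2$-fixed points yields the $D_{k+4}$ dual graph of an $\mathrm{I}_k^*$ fiber. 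Tracking $A_{k-1}$ instead would produce the wrong index, so this is a place where the bookkeeping you flag genuinely matters rather than merely a normalization convention (your $\frac{1}{r}(1,a')$ versus the paper's $\frac{1}{r}(-1,-a)$ in step (iii), by contrast, does just amount to a coordinate swap).
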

	
	\begin{rmk}
		The fact that Tate's algorithm can be phrased so that the Kodaira fiber depends only on the normalized base multiplicity, that is, the minimum of the normalized orders of vanishing of the Weierstrass equation was first observed in \cite{Dokchitser}.
		
	\end{rmk}
	
	The output of this analysis is summarized in Theorem~\ref{thm:corr_46}. A key point is that Theorem \ref{thm:tuning-stacks-and-minimal-linear-series} and Proposition \ref{prop:normalized:local:conditions} allows us to carry out this analysis for any moduli spaces which are isomorphic to weighted projective stacks and thus gives us a generalization of Tate's algorithm for maps to weighted projective stacks.

	\begin{exmp} Suppose that normalized base multiplicity $m = 3$. This occurs if and only if $(\nu(a_4), \nu(a_6)) = (1, \geq 2)$. Then $r = 12/\gcd(3,12) = 4$ and $a = 3/\gcd(3,12) = 1$. Thus the stabilizer of the twisted curve acts on the central fiber of the twisted model via the character $\mu_4 \to \mu_4$, $\zeta_4 \mapsto \zeta^{-1}_4$. In particular, the central fiber $E$ of $\cY$ has $j = 1728$. The $\mu_4$ action on $E$ has two fixed points, and there is an orbit of size two with stabilizer $\mu_2 \subset \mu_4$. Let $E_0$ be the image of $E$ in the twisted model $Y$. By Lemma \ref{lem:mult}, $E$ appears with multiplicity $4$ By Lemma \ref{lem:tangent:action}, $Y$ has $\frac{1}{4}(-1,-1)$ quotient singularities at the images of the the fixed points and a $\frac{1}{2}(-1,-1)$ singularity at the image of the orbit of size two. Each of these singularities is resolved by a single blowup to obtain $\hat{X}$ with central fiber $4\tilde{E}_0 + E_1 + E_2 + E_3$ where $E_i$ are the exceptional divisors of the resolution for $i = 1,2,3$ and $E_1^2 = E_2^2 = -4$ with $E_3^2 = -2$. Then $\tilde{E}_0$ is a $(-1)$-curve so it needs to be contracted. After this contraction $E_2$ becomes a $(-1)$ curve and must also be contracted. Since $E_i$ for $i = 1,2,3$ are incident and pairwise transverse after blowing down $\tilde{E}_0$, then the images of $E_1$ and $E_2$ must be tangent after blowing down $E_3$. Moreover, they are now $(-2)$-curves and so we conclude that this is the relatively minimal model and that it is the type $\mathrm{III}$ configuration in Kodaira's classification. See \cite[Sec. 4]{AB} for more details on these blowup computations. 
	\end{exmp}
	
	\begin{exmp}
		Suppose $m = 6$ which occurs in the cases $(\nu(a_4), \nu(a_6)) = (2,3), (\geq 3, 3)$ or $(2, \geq 4)$. Note these three cases are distinguished by their $j$-invariant but are identical from the point of view of twisted maps. Then $(r,a) = (2,1)$ so we have a $\mu_2$ automorphism on the central fiber $E \subset \cY$. If $j \neq \infty$, $E$ is smooth and the $\mu_2$ action has $4$ fixed points, the $2$-torsion points. The image $E_0$ of $E$ appears with multiplicity $2$ in the central fiber of the twisted model $Y$ and $Y$ has four $\frac{1}{2}(-1,-1)$ singularities. Note these are simply $A_1$ singularities so they are resolved by a single blowup which extracts a $(-2)$-curve. This resolution is already relatively minimal and it is exactly a Kodaira $\mathrm{I}_0^*$ fiber. If $j = \infty$ then $E$ is nodal. There are two smooth fixed points as well as the nodal point. At the nodal point $\cY$ has an $A_{2n-1}$ singularity where $2n$ is the ramification of $\varphi$, or equivalently $n$ is the order of vanishing $j$ at $\infty$. The quotient of the $A_{2n-1}$ singularity by the $\mu_2$ action is an $A_3$ singularity if $n = 1$ and a $D_{n + 2}$ singularity if  $n \geq 2$. In either case, resolving these singularities yields a $D_{n+4}$ configuration of $(-2)$ curves, that is, an $\mathrm{I}_n^*$ fiber. See \cite[Lem. 4.2]{AB} for more details about this resolution. 
	\end{exmp}
	
	\subsection{Identifying the universal families} 
	
	In this section we promote the geometric discussion of Tate's algorithm via twisted maps in the previous section (which we now think of as a bijection between Weierstrass elliptic surfaces of height $n$ with vanishing conditions $\mu = (m_1, \ldots, m_s)$ and twisted elliptic surfaces of stable height $d$ with twisting conditions $\Gamma = \{(r_1, a_1), \ldots, (r_s, a_s)\}$) into an identification of respective universal families. 
	
	More specifically, by Proposition \ref{prop:iso-HGammad-Rmun}, we have a canonical isomorphism 
	\begin{equation}\label{eqn:outline:iso}
		\rho : \cH^\Gamma_n \cong \cR^\mu_n
	\end{equation}
	where $\mu$ and $\Gamma$ are in bijection via Lemma \ref{lem:bijection} and 
	$$
	n = d + \sum \frac{a_i}{r_i}. 
	$$
	
	By Corollary \ref{cor:unstable_height}, we have a universal family $(f : X \to C \times \cR^\mu_n, S, \sigma_1, \ldots, \sigma_s)$ of Weierstrass elliptic surfaces of height $n$ with section $S$ and unstable fibers along the marked points $\sigma_i : \cR^\mu_n \to C \times \cR^\mu_n$ with normalized base multiplicity $m_i$. On the other hand, by Proposition \ref{prop:Min_Ell_Moduli}, there is a universal family of stable stack-like elliptic surfaces $(h : \cY \to \cC \to \cH^\Gamma_d, \cS, \Sigma_1, \ldots, \Sigma_s)$ where $\Sigma_i$ are a family of marked gerbes with twisting conditions $\Gamma$. By taking coarse space we also have a universal family of twisted surfaces $(g : Y \to C \times \cH^\Gamma_d \to \cH^\Gamma_d, S_0, \sigma_i)$.
	
	\begin{thm}\label{thm:sec7main}
		The isomorphism $\rho$ is induced by a birational transformation of universal families. More precisely, $\rho^*(f : X \to C \times \cR^\mu_n, S, \sigma_i)$ is the family of Weierstrass models of $(h : \cY \to \cC \to \cH^\Gamma_d, \cS, \Sigma_i)$ and conversely $(\rho^{-1})^*(g : Y \to C \times \cH^\Gamma_d \to \cH^\Gamma_d, S_0, \sigma_i)$ is the family of twisted models of $(f : X \to C \times \cR^\mu_n \to \cR^\mu_n, S, \sigma_i)$. 
	\end{thm}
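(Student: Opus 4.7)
The plan is to use the explicit Weierstrass-equation descriptions of both universal families and to match them via the correspondence between twisted maps and minimal weighted linear series (Propositions \ref{prop:tuningstack->min-lin-ser} and \ref{prop:lin-ser->univ-tuning-stack-map}). Under the identification $\overline{\cM}_{1,1} \cong \cP(4,6)$, the universal elliptic curve $\overline{\cE}$ is $\{y^2 = x^3 + \alpha_4 x + \alpha_6\}$ inside $\bP_{\cP(4,6)}(\cO \oplus \cO(-2) \oplus \cO(-3))$, so pulling back along the universal twisted map $\varphi \colon \cC \to \cP(4,6)$ realizes the universal stable stack-like family as
\[
\cY^{\mathrm{univ}} = \{y^2 = x^3 + \varphi^*\alpha_4 \cdot x + \varphi^*\alpha_6\} \subset \bP_\cC\bigl(\cO \oplus \cL_\cC^{-2} \oplus \cL_\cC^{-3}\bigr),
\]
with $\cL_\cC = \varphi^*\cO(1)$, while the universal Weierstrass family over $\cR^\mu_n$ is $X^{\mathrm{univ}} = \{y^2 = x^3 + s_4 x + s_6\} \subset \bP_C(\cO \oplus L^{-2} \oplus L^{-3})$ built from the universal minimal weighted linear series $(L, s_4, s_6)$.

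For the Weierstrass direction, I would apply the Weierstrass-model construction (passage to the coarse space $Y^{\mathrm{univ}}$, resolution to the relative minimal smooth model, then contraction of fiber components not meeting the section) to $\cY^{\mathrm{univ}}/\cH^\Gamma_d$ and identify its Weierstrass data with the descended minimal weighted linear series provided by Proposition \ref{prop:tuningstack->min-lin-ser}, which gives $L = \pi_*\cL_\cC(\sum a_i \Sigma_i)$ and $s_j = \pi_*\bigl(\varphi^*\alpha_j \cdot \prod_i u_i^{\lambda_j a_i}\bigr)$. Fiberwise, the agreement between the resulting Weierstrass surface and the classical Weierstrass model of $\cY$ is exactly the content of Tate's algorithm (Theorem \ref{thm:tate}), which is established geometrically via Lemmas \ref{lem:tangent:action} and \ref{lem:mult} together with the case-by-case examples at the end of the section: the singular fibers of the twisted model are resolved by a uniform sequence of blow-ups and contractions that realize the Kodaira fiber types dictated by $\Gamma$. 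This Weierstrass-model construction yields a classifying morphism $\cH^\Gamma_d \to \cR^\mu_n$ which, by the formulas just recalled, coincides with $\rho$ of Proposition \ref{prop:iso-HGammad-Rmun}. Hence $\rho^*X^{\mathrm{univ}}$ is the Weierstrass-model family of $\cY^{\mathrm{univ}}$.

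The converse direction follows symmetrically: starting from the universal Weierstrass family on $\cR^\mu_n$, construct the root stack $\cC \to C \times \cR^\mu_n$ of type $\Gamma$, obtain the universal twisted map $\varphi$ via Proposition \ref{prop:lin-ser->univ-tuning-stack-map}, form the stable stack-like family $\cY^{\mathrm{univ}} = \varphi^*\overline{\cE}$, and take its coarse space to recover the twisted-model family. The main obstacle is ensuring that the birational Weierstrass contraction behaves uniformly in families; this is manageable because $\Gamma$ (equivalently $\mu$) is fixed on each component of the moduli, so the Kodaira fiber type is constant and the contraction admits a uniform description. The fiberwise identification then extends canonically to the universal families by the functoriality of the pullback $\varphi^*\overline{\cE}$, the compatibility of coarse spaces of tame Deligne--Mumford stacks with base change, and the evident base-change properties of Weierstrass equations.
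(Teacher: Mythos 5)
Your overall strategy matches the paper's: construct the universal stable stack-like family by pulling back $\overline{\cE}$ along $\varphi$, run the Weierstrass-model construction on it over $\cH^\Gamma_d$, and identify the output with $\rho^*X$ by comparing weighted linear series fiberwise. However, you flag "the main obstacle" -- that the birational contraction behaves uniformly in families -- and then dismiss it with the observation that $\Gamma$ is constant on each component, so "the contraction admits a uniform description." That observation is not an argument; it is precisely what the theorem needs you to prove, and it is where the paper does all its work. The paper's proof supplies three concrete ingredients you are missing: (a) the twisted model $Y\to\cH^\Gamma_d$ is \emph{equisingular} along the coarse sections $P_i$ -- the cyclic-quotient singularity type $\frac{1}{r_i}(-1,-a_i)$ is locally constant because Lemma \ref{lem:tangent:action} pins down the weights of the stabilizer action on tangent spaces; (b) this equisingularity feeds into a \emph{partial} fiberwise resolution \cite[Prop.\ 5.9]{Inchiostro} resolving the singularities of $Y$ only along the $P_i$, so that the section lands in the smooth locus (note the paper does not construct a relative minimal smooth model in the family, which would be more delicate than you suggest); and (c) the Weierstrass model is then computed as the relative $\mathrm{Proj}$ of $\bigoplus_{m\geq 0}\hat f_*\cO_{\hat X}(m\hat S)$, and cohomology-and-base-change (vanishing of higher fiber cohomology of $\cO_{\hat X}(m\hat S)$) is what makes its formation commute with base change. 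Without (a)--(c), the claim that the fiberwise Weierstrass models assemble into a flat family over $\cH^\Gamma_d$ agreeing with $\rho^*X$ remains unjustified.

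A secondary imprecision: you say the fiberwise agreement between the descended linear series of Proposition \ref{prop:tuningstack->min-lin-ser} and the classical Weierstrass model "is exactly the content of Tate's algorithm." Tate's algorithm (Theorem \ref{thm:tate}) determines the Kodaira type from the vanishing data, but the agreement of the two surfaces follows rather from the uniqueness of the minimal Weierstrass model over a curve: both constructions produce a minimal Weierstrass model with the same generic fiber, hence they coincide. Tate's algorithm enters later, when checking that the classifying map lands in $\cR^\mu_n$ with the correct $\mu$.
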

	
	\begin{proof}
		The proof follows the same strategy as the general wall-crossing theorems of \cite{AB3, Inchiostro, ABIP} for moduli of elliptic surfaces. Namely both twisted models and Weierstrass models are canonical models of the pair $(X', S' + \sum a_i F_i)$ for different coefficients $a_i$. Here $(X' \to C, S')$ is a relatively minimal elliptic surface and $F_i$ are the unstable fibers. Both $\cH$ and $\cR$ can then be identified as strata in the moduli space of log canonical models and $\rho$ is the restriction of a wall-crossing morphism which relates the moduli space for different values of $a_i$. The content of the theorem then is that the minimal model programs that terminate in the Weierstrass model (resp. the twisted model) can be run on the universal families. In order to avoid the technical details of moduli spaces of canonical models, we sketch a direct proof here. 
		
		First note that since our stacks are tame, the formation of the coarse moduli space $g : Y \to C \times \cH^\Gamma_d \to \cH^\Gamma_d$ commutes with base change and $Y \to \cH^\Gamma_d$ is flat.  Moreover, for each $i$, we can consider the family $\cP_i = \cS \cap h^{-1}(\Sigma_i)$. The stabilizer $\mu_{r_i}$ of $\Sigma_i$ acts by an automorphism of the fiber as a pointed elliptic curve and in particular fixes the section $\cS$. Thus $\cP_i \to \cH^\Gamma_d$ is an \'etale $\mu_{r_i}$-gerbe, in fact isomorphic to $\Sigma_i$, and the weight of the $\mu_{r_i}$ action on the tangent space of the fibers of $\cY \to \cH^\Gamma_d$ is fixed by Lemma \ref{lem:tangent:action} as we fix the local twisting condition $(r_i, a_i)$. Thus the family of twisted surfaces $Y \to \cH^\Gamma_d$ is equisingular with singularity $\frac{1}{r_i}(-1,-a_i)$ along the coarse space $P_i = S_0 \cap f^{-1}(\sigma_i) \subset Y$ of $\cP_i \subset \cY$. By \cite[Prop. 5.9]{Inchiostro}, we can take a partial fiberwise resolution $\mu : \hat{X} \to Y \to \cH^\Gamma_d$ which resolves the singularities of the family $Y \to \cH^\Gamma_d$ along $P_i$ for each $i$. 
		
		Denote the corresponding family of elliptic fibrations by
		$$
		(\hat{f} : \hat{X} \to C \times \cH^\Gamma_d \to \cH^\Gamma_d, \hat{S}, \hat{\sigma}_i). 
		$$
		By construction, the section $\hat{S}$ passes through the smooth locus of $\hat{f}$ and the formation of $\mu$ commutes with basechange. Now the Weierstrass model of $\hat{f}$ can be computed as the relative Proj 
		$$
		X' = \mathrm{Proj}_{C \times \cH^\Gamma_d} \bigoplus_{m \geq 0} \hat{f}_*\cO_{\hat{X}}(m\hat{S}) \to C \times \cH^\Gamma_d. 
		$$
		The cohomology $H^i\left(\hat{f}^{-1}(p), \cO_{\hat{X}}(m\hat{S})\big|_{\hat{f}^{-1}(p)}\right) = 0$ for each $p \in C \times \cH^\Gamma_d$ and $i > 0$ and so the formation of $X'$ commutes with base change. Thus we have produced a family of Weierstrass models $(f' : X' \to C \times \cH^\Gamma_d \to \cH^\Gamma_d, S', \sigma_i')$ over $\cH^\Gamma_d$. Moreover, for each $\xi \in \cH^\Gamma_d$, the fiber $(f'_\xi : X'_{\xi} \to C, S'_\xi, (\sigma_i')_{\xi})$ is the Weierstrass model of the fiber $(h : \cY_\xi \to \cC_\xi, \cS_\xi, (\Sigma_i)_\xi)$. By Tate's algorithm via twisted maps (Theorem \ref{thm:tate}), this family of Weierstrass model has marked unstable fibers with vanishing conditions $\mu$ and thus induces the map $\rho : \cH^\Gamma_d \to \cR^\mu_n$ such that
		$$
		(f' : X' \to C \times \cH^\Gamma_d \to \cH^\Gamma_d, S', \sigma_i') = \rho^*(f : X \to C \times \cR^\mu_n \to \cR^\mu_n, S, \sigma_i) 
		$$
		as required. The converse follows since $\rho$ is an isomorphism by Proposition \ref{prop:iso-HGammad-Rmun}. 
	\end{proof}
	
	The upshot is that under the identification $\rho : \cH^\Gamma_d \cong \cR^\mu_n$, the height moduli space has two interpretations as the moduli space of elliptic surfaces with specified Kodaira fibers and these two interpretations are equivalent by Tate's algorithm via twisted maps.

	\section{Motives \& Point counts of height moduli over finite fields}
	\label{sec:motive_ptcts}
	
	In this section, we compute the classes in the Grothendieck ring of stacks of height moduli spaces of $k(t)$-points of $\overline{\cM}_{1,1}$. These computations will be used in the next section to obtain weighted and unweighted counts of minimal elliptic surfaces over $\bP^1$, and also counts of minimal elliptic surfaces over $\bP^1$ with exactly one specified Kodaira fiber type.

	\subsection{Point counts of algebraic stacks over finite fields}
	
	We review the basics on arithmetic of algebraic stacks over finite fields. Due to the presence of automorphisms, point counts of an algebraic stack $\cX$ are usually weighted.

	\begin{defn}\label{def:wtcount}
		The weighted point count of an algebraic stack $\cX$ with finite inertia over $\Fb_q$ is defined as a sum
		\[
		\#_q(\cX)\coloneqq\sum_{x \in \cX(\Fb_q)/\sim}\frac{1}{|\mathrm{Aut}(x)|},
		\]
		where $\cX(\Fb_q)/\sim$ is the set of $\Fb_q$--isomorphism classes of $\Fb_q$--points of $\cX$.
	\end{defn}

	In particular, the weighted point count $\#_q(\cX)$ is not equal to the number $|\cX(\Fb_q)/\sim|$ of $\Fb_q$--isomorphism classes. The main advantage of the weighted point count is that it depends only on the cohomology of $\cX$ and is equal to the usual point count of the coarse moduli space when it exists. On the other hand, it is often interesting to also consider the unweighted count of isomorphism classes. The following result of \cite{HP2} shows that the unweighted point count is also natural and depends on the arithmetic geometry of the inertia stack of $\cX$.

	\begin{thm}[Theorem 1.1. of \cite{HP2}] \label{thm:ptcounts}
		Let $\cX$ be an algebraic stack over $\Fb_q$ of finite type with quasi-separated finite type diagonal and let $\Ic(\cX)$ be the inertia stack of $\cX$. Then, 
		$$ |\cX(\Fb_q)/\sim| = \#_q(\Ic(\cX))$$ 
	\end{thm}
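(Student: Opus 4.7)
The plan is to unwind the definitions of both sides in terms of the groupoid $\cX(\Fb_q)$ and then apply a form of Burnside's lemma. First I would identify $\Fb_q$-points of the inertia stack: an object of $\Ic(\cX)(\Fb_q)$ is a pair $(x,\phi)$ where $x\in\cX(\Fb_q)$ and $\phi\in G_x\coloneqq \Aut_{\Fb_q}(x)$, and a morphism $(x,\phi)\to (y,\psi)$ is a morphism $f\colon x\to y$ in $\cX(\Fb_q)$ satisfying $\psi=f\phi f^{-1}$. Consequently, the isomorphism classes of $\Ic(\cX)(\Fb_q)$ are in bijection with pairs $([x],[\phi])$ where $[x]$ ranges over isomorphism classes in $\cX(\Fb_q)$ and, for each such $[x]$, $[\phi]$ ranges over conjugacy classes in the finite group $G_x$. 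Moreover, the automorphism group of $(x,\phi)$ is the centralizer $C_{G_x}(\phi)$.

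Next I would apply Definition \ref{def:wtcount} to $\Ic(\cX)$ to write
\[
\#_q(\Ic(\cX)) \;=\; \sum_{[x]\in\cX(\Fb_q)/\sim}\ \sum_{[\phi]\in\mathrm{Conj}(G_x)}\frac{1}{|C_{G_x}(\phi)|}.
\]
By the orbit-stabilizer theorem applied to the conjugation action of $G_x$ on itself, the conjugacy class $[\phi]$ has size $|G_x|/|C_{G_x}(\phi)|$. Since the conjugacy classes partition $G_x$, the inner sum simplifies:
\[
\sum_{[\phi]\in\mathrm{Conj}(G_x)}\frac{1}{|C_{G_x}(\phi)|}
\;=\;\frac{1}{|G_x|}\sum_{[\phi]}|[\phi]|\;=\;\frac{|G_x|}{|G_x|}\;=\;1.
\]
Summing over isomorphism classes $[x]$ then yields $\#_q(\Ic(\cX)) = |\cX(\Fb_q)/\sim|$, as required.

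The main thing to verify at the outset is that the sums are finite and the invariants well-defined: the hypothesis that $\cX$ is of finite type with quasi-separated finite type diagonal ensures that $\cX(\Fb_q)/\sim$ is a finite set and each $G_x$ is a finite group, so both the weighted and unweighted counts are honest finite sums, and the identification of $\Ic(\cX)(\Fb_q)$ with pairs $(x,\phi)$ as above holds on the nose. No real obstacle is expected beyond these bookkeeping checks, since the heart of the argument is a clean application of the class equation in the groupoid setting.
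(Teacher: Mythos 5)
The paper cites this as Theorem~1.1 of \cite{HP2} and does not reproduce a proof, so there is no in-text argument to compare against; what can be assessed is whether your proof is correct, and it is. Your argument is the natural one: because the inertia stack is a $2$-fiber product $\cX\times_{\cX\times\cX}\cX$ and taking $T$-points commutes with $2$-fiber products, the groupoid $\Ic(\cX)(\Fb_q)$ is literally the inertia groupoid of $\cX(\Fb_q)$, whose objects are pairs $(x,\phi)$ with $\phi\in G_x$, morphisms are conjugating maps, and $\Aut(x,\phi)=C_{G_x}(\phi)$. Grouping isomorphism classes by $[x]$ and using that $|G_x|=\sum_{[\phi]}|G_x|/|C_{G_x}(\phi)|$ (the class equation) collapses the inner sum to $1$, giving exactly $|\cX(\Fb_q)/\sim|$. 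You are also right to flag where the hypotheses enter: finite type with quasi-separated finite type diagonal guarantees that each $G_x$ is a finite group (the inertia group scheme has finitely many $\Fb_q$-points) and that $\cX(\Fb_q)/\sim$ is finite, so both sides are honest finite sums and the definition of $\#_q$ applies. One small remark worth making explicit in a write-up is the reduction step implicit in your classification of isomorphism classes of $\Ic(\cX)(\Fb_q)$: to parametrize them by pairs $([x],[\phi])$ one first transports along some isomorphism $x\cong y$ to fix a representative of $[x]$, after which only conjugation in $G_x$ remains; this is routine but is the point at which the factorization of the double sum actually happens.
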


	\subsection{Grothendieck ring of stacks}\label{subsec:K0}

	We review some properties of the Grothendieck ring of stacks introduced in \cite{Ekedahl}. 
	
	\begin{defn}\label{defn:GrothringStck}
		\cite[\S 1]{Ekedahl}
		The \emph{Grothendieck ring of stacks} $K_0(\mathrm{Stck}_k)$ is the abelian group generated by classes $\{\cX\}_k$ for each algebraic stack $\cX$ of finite type over $k$ with affine inertia modulo the relations
		\begin{itemize}
			\item $\{\cX\}_k=\{\Zc\}_k+\{\cX \setminus \Zc\}_k$ for $\Zc \subset \cX$ a closed substack,
			\item $\{\Ec\}_k=\{\cX \times_k \Ab^n \}_k$ for $\Ec$ a vector bundle of rank $n$ on $\cX$.
		\end{itemize}
		Multiplication on $K_0(\mathrm{Stck}_k)$ is induced by $\{\cX\}_k\{\Yc\}_k\coloneqq\{\cX \times_k \Yc\}$. 
		There is a distinguished element $\Lb \coloneqq\{\A^1\}_k \in K_0(\mathrm{Stck}_k)$, called the \emph{Lefschetz motive}. We drop the subscript if $k$ is clear. 
	\end{defn}
	
	We denote by $K_0'(\mathrm{Stck}_k)$ the ring obtained by imposing only the cut-and-paste relation but not the vector bundle relation and denote the class of a stack in this ring by $\{\cX\}'$.  
	
	\medskip
	
	The Grothendieck ring is universal with respect to additive invariants. When $k=\Fb_q$, the point counting measure $\{\cX\} \mapsto \#_q(\cX)$ is a well-defined ring homomorphism $\#_q: K_0(\mathrm{Stck}_{\Fb_q}) \rightarrow \Q$ giving the weighted point count $\#_q(\cX)$ of $\cX$ over $\Fb_q$. When $\{\cX\}$ is a polynomial in $\Lb$, the weighted point count is polynomial in $q$.
	
	\medskip
	
	Recall that an algebraic group $G$ is \textit{special} in the sense of \cite{Serre} and \cite{Grothendieck}, if every $G$-torsor is Zariski-locally trivial; for example $\bG_a,~\GL_{d},~\SL_{d}$ are special and $\PGL_2,~\PGL_3$ are non-special. If $\mathcal X \rightarrow \mathcal Y$ is a $G$-torsor and $G$ is special, then we have $\{\mathcal X\} = \{G\} \cdot \{\mathcal Y\}$ (\cite[Prop. 1.1 iii)]{Ekedahl}).

	\medskip
	
	Finally, we can use the following result to access unweighted point counts. 
	
	\begin{prop}{\cite[Prop. 5.3]{Fernex}}\label{prop:df} The association $\cX \mapsto \cI \cX$ extends to a unique ring homomorphism 
		$$
		\cI : K_0'(\mathrm{Stck}_k) \to K_0'(\mathrm{Stck}_k)
		$$
		which we call the inertia operator. 
	\end{prop}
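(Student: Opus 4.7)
The plan is to verify that the assignment $\cX \mapsto \cI\cX$ descends from the free abelian group on isomorphism classes of finite-type algebraic stacks to the quotient $K_0'(\mathrm{Stck}_k)$, and that it is multiplicative. Since $K_0'(\mathrm{Stck}_k)$ is generated as a group by classes $\{\cX\}'$, and since multiplication is defined on generators by fiber products, the formula $\cI(\{\cX\}') = \{\cI\cX\}'$ will uniquely extend to a ring homomorphism provided we verify (i) compatibility with the cut-and-paste relation and (ii) multiplicativity under products.

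For (i), suppose $\cZ \subset \cX$ is a closed substack with open complement $\cU$. The key observation is that the inertia stack $\cI\cX = \cX \times_{\cX \times \cX} \cX$ (with respect to the diagonal) commutes with base change along monomorphisms: pulling back $\cI\cX \to \cX$ along $\cZ \hookrightarrow \cX$ (respectively $\cU \hookrightarrow \cX$) yields $\cI\cZ$ (respectively $\cI\cU$), since an automorphism of a point of $\cZ$ (or $\cU$) is the same as an automorphism of that point viewed in $\cX$. Consequently, $\cI\cZ \hookrightarrow \cI\cX$ is closed with open complement $\cI\cU$, giving the relation $\{\cI\cX\}' = \{\cI\cZ\}' + \{\cI\cU\}'$ needed to pass to $K_0'$.

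For (ii), we use that the inertia of a product is the product of inertias: $\cI(\cX \times_k \cY) \cong \cI\cX \times_k \cI\cY$, because an automorphism of a geometric point $(x,y)$ of $\cX \times \cY$ is precisely a pair consisting of an automorphism of $x$ and an automorphism of $y$. Equivalently, this follows from the fact that the diagonal of a product is the product of the diagonals, so the $2$-fiber product defining inertia distributes over products. This gives $\cI(\{\cX\}'\cdot\{\cY\}') = \cI(\{\cX\}')\cdot \cI(\{\cY\}')$ on generators, which extends by bilinearity.

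Combining (i) and (ii), the assignment descends to a well-defined ring homomorphism $\cI : K_0'(\mathrm{Stck}_k) \to K_0'(\mathrm{Stck}_k)$, unique because its values are prescribed on the generating classes $\{\cX\}'$. The only subtle point is verifying that inertia genuinely commutes with the locally closed stratification at the level of stacks (not just points); this is where one must invoke that $\cI\cX \to \cX$ is a representable morphism whose formation is compatible with base change along any morphism, so in particular along locally closed immersions. No serious obstacle arises beyond bookkeeping with $2$-fiber products.
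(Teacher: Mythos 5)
This is a cited result (\cite[Prop.~5.3]{Fernex}); the paper itself gives no proof, so there is nothing to compare against in the source. Your argument is the standard one and is essentially correct: compatibility with the scissor relation follows from the observation that inertia commutes with base change along monomorphisms (so a closed/open decomposition $\cX = \cZ \sqcup \cU$ induces a closed/open decomposition $\cI\cX = \cI\cZ \sqcup \cI\cU$), and multiplicativity follows from $\cI(\cX\times_k\cY)\cong \cI\cX\times_k\cI\cY$ because the diagonal of a product is the product of the diagonals. You also correctly confine yourself to the cut-and-paste relation: since $K_0'$ omits the vector-bundle relation (which $\cI$ visibly violates, per the $\cP(2,2)$ example just after the proposition), no further compatibility needs checking.

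One small gap worth flagging: you never verify that $\cI\cX$ is itself an allowable generator of $K_0'(\mathrm{Stck}_k)$, i.e.\ finite type over $k$ with affine inertia. Both hold, but deserve a sentence. Finite type follows because $\cI\cX\to\cX$ is a pullback of the (finite-type) diagonal and $\cX$ is of finite type over $k$. Affine inertia of $\cI\cX$ follows because the automorphism group of a point $(x,g)\in\cI\cX$ is the centralizer of $g$ in $\Aut(x)$, a closed subgroup scheme of the affine group $\Aut(x)$, hence affine. With that added, the proposal is a complete proof of the cited statement.
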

	
	Note that $\cI$ does not descend to a well defined operator on $K_0(\mathrm{Stck})$. For example, by \cite[Prop. 1.1 iii)]{Ekedahl},
	$$
	\{\cP(2,2)\} = \left\{ \left[\mathbb{A}^2 \setminus \{(0,0)\}/\mathbb{G}_m\right] \right\} = \{\bP^1\}
	$$
	but $\{\cI \cP(2,2)\} = 2\{\cP(2,2)\} \neq \{\cI\bP^1\}$. 
	
	\subsection{The motivic height zeta function of a weighted projective stack}\label{sec:zeta}
	
	In this section we define and compute the motivic height zeta function for $\cP(\lambda_0, \ldots, \lambda_N)$ for the function field $K = k(\bP^1)$. 
	
	\begin{defn}
		The motivic height zeta function of $\cP(\lambda_0, \ldots, \lambda_N)$ is the formal power series
		$$
		Z_{\vec{\lambda}}(t) := \sum_{n \geq 0} \left\{\cW_n^{min}\right\}t^n \in K_0(\mathrm{Stck})\llbracket t \rrbracket
		$$
		where $\cW_n^{min}$ is the space of minimal weighted linear series on $\bP^1$ of height $n$. We let $Z'_{\vec{\lambda}}(t) \in K_0'(\mathrm{Stck}_k)\llbracket t \rrbracket$ denote the same generating series over $K_0'(\mathrm{Stck}_k)$. \\
		
		\noindent We also define the variant
		$$
		\cI Z_{\vec{\lambda}}(t) := \sum_{n \geq 0} \left\{\cI\cW_n^{min}\right\}t^n \in K_0(\mathrm{Stck}_k)\llbracket t \rrbracket
		$$
		and $\cI Z'_{\vec{\lambda}}(t) \in K_0'(\mathrm{Stck}_k)\llbracket t \rrbracket$ the same generating series over $K_0'(\mathrm{Stck}_k)$. 
	\end{defn}
	
	We also denote by 
	$$
	Z(t) = \sum \{\Sym^e\bP^1\}t^e = \frac{1}{(1-\Lb t)(1-t)}
	$$
	the usual motivic zeta function of $\bP^1$. 
	
	\begin{lem}\label{lem:wps}
		The following equalities of formal power series hold over $K_0'(\mathrm{Stck}_k)$. 
		\begin{align}
			\sum_{n \ge 0} \left\{\Pc \left(\bigoplus_{i = 0}^N V_{n}^{\lambda_i} \right) \right\}' t^n = Z'_{\vec{\lambda}}(t) \cdot Z(t) \label{eqn:wps}\\
			\sum_{n \ge 0} \left\{\cI\Pc \left(\bigoplus_{i = 0}^N V_{n}^{\lambda_i} \right) \right\}' t^n = \cI Z'_{\vec{\lambda}}(t) \cdot Z(t) \label{eqn:iwps}
		\end{align}
	\end{lem}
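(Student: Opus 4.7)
The plan is to deduce both identities directly from the defect-of-minimality stratification established in Corollary \ref{cor:def:strat} together with the ring homomorphism property of the inertia operator from Proposition \ref{prop:df}. The first identity is essentially a convolution of generating series coming from this stratification, and the second is obtained by applying $\cI$ and using that the $\Sym^e \bP^1$ appearing in the Chevalley factor are schemes.

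More concretely, for the first identity I would start by identifying $\bP(V_e^1) = \bP(H^0(\bP^1,\cO(e))) \cong \bP^e \cong \Sym^e \bP^1$, so that the total motive of the zeta series $Z(t)$ matches the factor $\{\bP(V_e^1)\}$ appearing as the fiber of $\psi_{n,e}$. Corollary \ref{cor:def:strat} produces, after a locally closed stratification of source and target, an isomorphism
\[
\bigsqcup_{e=0}^n \cW_{n-e}^{min} \times \Sym^e \bP^1 \xrightarrow{\ \psi_n\ } \cP\!\left(\bigoplus_{i=0}^N V_n^{\lambda_i}\right),
\]
and applying the cut-and-paste relation in $K_0'(\mathrm{Stck}_k)$ gives
\[
\left\{\cP\!\left(\bigoplus_{i=0}^N V_n^{\lambda_i}\right)\right\}' \;=\; \sum_{e=0}^n \{\cW_{n-e}^{min}\}'\,\{\Sym^e \bP^1\}'.
\]
Packaging these equalities into a generating series in $t$, the right side is precisely the Cauchy product $Z'_{\vec\lambda}(t)\cdot Z(t)$, which yields \eqref{eqn:wps}.

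For the second identity I would invoke Proposition \ref{prop:df}, which provides the inertia operator $\cI\colon K_0'(\mathrm{Stck}_k)\to K_0'(\mathrm{Stck}_k)$ as a \emph{ring} homomorphism. Applying $\cI$ to both sides of the per-$n$ identity above sends $\{\cP(\bigoplus V_n^{\lambda_i})\}'$ and $\{\cW_{n-e}^{min}\}'$ to the classes of their inertia stacks. Since $\Sym^e\bP^1 \cong \bP^e$ is a scheme, its inertia stack is itself, so $\cI\{\Sym^e\bP^1\}' = \{\Sym^e\bP^1\}'$. Multiplicativity of $\cI$ then gives
\[
\left\{\cI\,\cP\!\left(\bigoplus_{i=0}^N V_n^{\lambda_i}\right)\right\}' \;=\; \sum_{e=0}^n \{\cI\,\cW_{n-e}^{min}\}'\,\{\Sym^e \bP^1\}',
\]
and summing over $n$ with weight $t^n$ yields \eqref{eqn:iwps}.

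The proof is essentially formal once Corollary \ref{cor:def:strat} is in hand; the only real content is packaging the stratification as a convolution and remembering that, although $\cI$ does not descend to the Grothendieck ring $K_0(\mathrm{Stck}_k)$, it is well-defined on $K_0'(\mathrm{Stck}_k)$, which is exactly why both identities are asserted in $K_0'$ rather than $K_0$. The mildest subtlety is that the $\psi_n$ of Corollary \ref{cor:def:strat} is only a piecewise isomorphism after further stratifying source and target; this is harmless in $K_0'(\mathrm{Stck}_k)$ because the scissor relation is insensitive to such refinements, so the equality of classes survives intact.
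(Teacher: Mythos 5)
Your proof is correct and follows essentially the same route as the paper: both use the defect-of-minimality stratification from Corollary \ref{cor:def:strat} to obtain the per-$n$ convolution identity, package it as a Cauchy product for the first equality, and then apply the inertia operator of Proposition \ref{prop:df} for the second. Your explicit remark that $\cI\{\Sym^e\bP^1\}' = \{\Sym^e\bP^1\}'$ because $\Sym^e\bP^1 \cong \bP^e$ is a scheme is a helpful detail the paper leaves implicit; the rest is the same argument.
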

	\begin{proof} 
		We can stratify each of the terms on the left hand side by the minimality defect as in Corollary \ref{cor:def:strat} to obtain an equality
		$$
		\left\{\Pc \left(\bigoplus_{i = 0}^N V_{n}^{\lambda_i} \right) \right\}' = \sum_{e = 0}^n \{\cW_{n-e}^{min}\}' \{\Sym^e\bP^1\}'
		$$
		which implies the first equality by expanding the product. The second equality follows by applying $\cI$ to both sides and using Proposition \ref{prop:df}. 
	\end{proof} 
	
	Next we compute the image of the left hand side of Equation (\ref{eqn:wps}) in $K_0(\mathrm{Stck}_k)$. 
	Let $|\vec{\lambda}| = \sum \lambda_i$ denote the sum of the weights. 
	\begin{lem}\label{cor:tbd6}
		
		\begin{align*}
			\sum_{n \ge 0} \left\{\Pc \left(\bigoplus_{i = 0}^N V_{n}^{\lambda_i} \right) \right\} t^n = \frac{ \{\Pb^N\} + \Lb^{N+1} \{\Pb^{|\vec{\lambda}|-N-2} \} t}{(1-t)(1-\Lb^{|\vec{\lambda}|}t)}
		\end{align*}
		
	\end{lem}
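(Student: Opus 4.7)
The plan is to exploit the vector bundle relation in $K_0(\mathrm{Stck}_k)$, which (unlike in $K_0'$) collapses the class of any weighted projective stack to depend only on its dimension. The key observation is that $\bigoplus_{i=0}^N V_n^{\lambda_i}$ is a representation of $\bG_m$ of total dimension
\[
\dim \bigoplus_{i=0}^N V_n^{\lambda_i} = \sum_{i=0}^N (n\lambda_i + 1) = n|\vec{\lambda}| + N + 1,
\]
so the weighted projective stack $\Pc\bigl(\bigoplus_{i=0}^N V_n^{\lambda_i}\bigr)$ sits as the complement of the origin in the quotient stack $[\bigoplus V_n^{\lambda_i}/\bG_m]$.

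First, for any finite-dimensional $\bG_m$-representation $V$, the stack $[V/\bG_m]$ is a vector bundle of rank $\dim V$ over $B\bG_m$, so by the vector bundle relation, $\{[V/\bG_m]\} = \Lb^{\dim V}\{B\bG_m\}$, independently of the weights of the action. The origin defines a closed substack isomorphic to $B\bG_m$, so the scissor relation gives
\[
\left\{\Pc(V)\right\} = \left(\Lb^{\dim V} - 1\right)\{B\bG_m\} = \frac{\Lb^{\dim V} - 1}{\Lb - 1} = \{\bP^{\dim V - 1}\},
\]
where we used $\{B\bG_m\} = 1/(\Lb - 1)$ (coming from $\{\bG_m\}\{B\bG_m\} = 1$). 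Applying this to $V = \bigoplus V_n^{\lambda_i}$, we find $\{\Pc(\bigoplus V_n^{\lambda_i})\} = \{\bP^{n|\vec{\lambda}| + N}\}$.

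It then remains to sum the generating series. I would split
\[
\sum_{n\ge 0} \{\bP^{n|\vec{\lambda}| + N}\} t^n = \frac{1}{\Lb - 1}\left(\Lb^{N+1}\sum_{n\ge 0}\Lb^{n|\vec{\lambda}|}t^n - \sum_{n\ge 0}t^n\right) = \frac{1}{\Lb - 1}\left(\frac{\Lb^{N+1}}{1 - \Lb^{|\vec{\lambda}|}t} - \frac{1}{1-t}\right),
\]
combine over a common denominator $(1-t)(1-\Lb^{|\vec{\lambda}|}t)$, and identify the constant and linear coefficients in $t$ of the numerator with $(\Lb - 1)\{\bP^N\}$ and $(\Lb - 1)\Lb^{N+1}\{\bP^{|\vec{\lambda}| - N - 2}\}$ respectively, using $\frac{\Lb^{|\vec{\lambda}|} - \Lb^{N+1}}{\Lb - 1} = \Lb^{N+1}\{\bP^{|\vec{\lambda}| - N - 2}\}$.

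There is no real obstacle here; the only subtle point is that this computation genuinely uses the vector bundle relation and hence takes place in $K_0(\mathrm{Stck}_k)$ rather than $K_0'(\mathrm{Stck}_k)$, since the corresponding identity $\{\Pc(a_1, \dots, a_M)\} = \{\bP^{M-1}\}$ fails in $K_0'$ (e.g.\ $\{\cP(2,2)\}' \neq \{\bP^1\}'$, as reflected by the difference in their inertia stacks). This is what allows us to ignore the varying weights $\lambda_i$ on the summands $V_n^{\lambda_i}$ when computing the class, and it is precisely why the right-hand side depends on $\vec{\lambda}$ only through $N$ and $|\vec{\lambda}|$.
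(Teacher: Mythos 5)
Your proof is correct and is essentially the same argument as the paper's: the paper reduces $\left\{\Pc\left(\bigoplus V_n^{\lambda_i}\right)\right\}$ to $\frac{\Lb^{n|\vec\lambda|+N+1}-1}{\Lb-1}$ by directly citing Ekedahl's Prop.~1.1(iii) (the $\bG_m$-torsor $\bigoplus V_n^{\lambda_i}\setminus 0 \to \Pc(\cdot)$ is Zariski-locally trivial since $\bG_m$ is special), and then performs the same geometric-series manipulation. Your repackaging of that step via the vector bundle relation over $B\bG_m$ together with $\{B\bG_m\}=1/(\Lb-1)$ is the same underlying fact, and your closing remark about the computation genuinely living in $K_0$ rather than $K_0'$ matches the paper's own discussion of $\{\cP(2,2)\}$ versus $\{\bP^1\}$.
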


	\begin{proof} 
		
		Note that $\bigoplus_{i = 0}^N V_{n}^{\lambda_i}$ has dimension $\sum\limits_{i=0}^{N} (n\lambda_i + 1) = n|\vec{\lambda}| + N + 1$ thus by \cite[Prop. 1.1 iii)]{Ekedahl}, we have $\left\{\Pc \left(\bigoplus_{i = 0}^N V_{n}^{\lambda_i} \right) \right\} = \frac{\Lb^{n|\vec{\lambda}| + N + 1}-1}{\Lb-1}$ and this implies \\
		
		\begin{align*}
			\sum_{n \ge 0} \frac{\Lb^{n|\vec{\lambda}| + N + 1}-1}{\Lb-1} t^n &= \frac{\Lb^{N+1}}{\Lb-1} \sum_{n \ge 0} (\Lb^{|\vec{\lambda}|}t)^n - \frac{1}{\Lb-1} \sum_{n \ge 0} t^n\\
			&= \frac{\Lb^{N+1}}{(\Lb-1)(1-\Lb^{|\vec{\lambda}|}t)} - \frac{1}{(\Lb-1)(1-t)} \\
			&= \frac{1}{\Lb-1} \left( \frac{ \Lb^{N+1} - 1 + (\Lb^{|\vec{\lambda}|} - \Lb^{N+1})t }{(1-t)(1-\Lb^{|\vec{\lambda}|}t)} \right) \\
			&= \frac{ \{\Pb^N\} + \Lb^{N+1} \{\Pb^{|\vec{\lambda}|-N-2} \} t}{(1-t)(1-\Lb^{|\vec{\lambda}|}t)}  \text{ since $|\vec{\lambda}| \ge N+1$.}
		\end{align*}
		
	\end{proof}
	
	To compute the left hand side of Equation (\ref{eqn:iwps}), we need some notation Following \cite[Sec. 3]{HP2}. Let $\Bmu_k$ denote the $\mathrm{Gal}(k^{sep}/k)$ orbits of roots of unity in $k^{sep}$ and for any $g \in \Bmu_k$ let $k(g)/k$ be the field extension obtained by adjoining those roots of unity. For each element $g$ we have a well defined order $\mathrm{ord}(g)$ since the Galois action preserves the order of the root of unity. For any $g \in \Bmu_k$, let $I_g \subset \{0, \ldots, N\}$ denote the largest subset of indices such that $\mathrm{ord}(g)$ divides $\gcd\{\lambda_i\}_{i \in I_g}$ and we let $\vec{\lambda}_g = (\lambda_i)_{i \in I_g}$ be the weights corresponding to $I_g$. We let $N_g = \# I_g - 1$. Finally, we let $\mu_\infty(k)$ be the roots of unity in $k$, i.e. the elements $g \in \Bmu_k$ such that $k(g) = k$. 
	
	\begin{lem}\label{cor:tbd7}
		
		\begin{align*}
			\sum_{n \ge 0} \left\{ \Ic\Pc \left(\bigoplus_{i = 0}^N V_{n}^{\lambda_i} \right) \right\} t^n = \sum_{g \in \mu_\infty(k)} \frac{ \{\Pb^{{N_g}}\} + \Lb^{{{N_g}}+1} \{\Pb^{|\vec{\lambda_{g}}|-{{N_g}}-2} \} t}{(1-t)(1-\Lb^{|\vec{\lambda_{g}}|}t)} + \sum_{g \in \Bmu_k \setminus \mu_\infty(k)} Q_g(t)
		\end{align*}
		where the $Q_g(t)$ are power series in $K_0(\mathrm{Stck}_k)$ whose coefficients have no $k$-points. 
		
	\end{lem}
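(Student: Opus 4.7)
The plan is to describe the inertia stack of a weighted projective stack explicitly, perform Galois descent to identify the $k$--rational components, and then apply the generating-function computation of Lemma \ref{cor:tbd6} componentwise.

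First I would compute $\Ic \Pc(w_1,\dots,w_m)$ in general. Writing $\Pc(w_1,\dots,w_m) = [(\bA^m\setminus 0)/\bG_m]$, the inertia stack is the $\bG_m$--quotient of the fixed-point locus $F\subset(\bA^m\setminus 0)\times\bG_m$. For a scalar $g\in\bG_m$, the condition $g\cdot x=x$ forces $x_\ell=0$ whenever $g^{w_\ell}\neq 1$, so the fixed locus is non-empty iff $\mathrm{ord}(g)$ divides some $w_\ell$. Decomposing by the order of $g$ gives a canonical isomorphism over $k^{\sep}$
\[
\Ic\Pc(w_1,\dots,w_m)_{k^{\sep}} \;\cong\; \bigsqcup_{g}\,\Pc\bigl(\{w_\ell : \mathrm{ord}(g)\mid w_\ell\}\bigr)_{k^{\sep}},
\]
where $g$ ranges over roots of unity in $k^{\sep}$ of order dividing some $w_\ell$.

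Next I would descend to $k$. The Galois group $\Gal(k^{\sep}/k)$ permutes the indexing roots of unity, and the components of $\Ic\Pc$ over $k$ correspond to Galois orbits, i.e.\ to elements of $\Bmu_k$. Since the subset $\{w_\ell : \mathrm{ord}(g)\mid w_\ell\}$ depends only on $\mathrm{ord}(g)$, each Galois orbit has a well-defined associated fixed locus. For $g\in\mu_\infty(k)$ the orbit is a singleton and the corresponding component descends to $\Pc_k\bigl(\{w_\ell : \mathrm{ord}(g)\mid w_\ell\}\bigr)$. For $g\in\Bmu_k\setminus\mu_\infty(k)$, the orbit has size $[k(g):k]>1$, and a $k$--point of $\Ic\cX$ above such an orbit would require a $k$--rational representative of the orbit, i.e.\ an element of $\mu_\infty(k)$, which is excluded; hence every such component contributes a class with no $k$--points, and I will denote its generating series by $Q_g(t)$.

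Specializing to the weights $(\underbrace{\lambda_0,\dots,\lambda_0}_{n\lambda_0+1},\dots,\underbrace{\lambda_N,\dots,\lambda_N}_{n\lambda_N+1})$, the subset $\{w_\ell : \mathrm{ord}(g)\mid w_\ell\}$ becomes exactly $\bigoplus_{i\in I_g}V_n^{\lambda_i}$, with $|I_g|=N_g+1$ and total dimension $n|\vec\lambda_g|+N_g+1$. Its class in $K_0(\mathrm{Stck}_k)$ is $(\Lb^{n|\vec\lambda_g|+N_g+1}-1)/(\Lb-1)$, and the same manipulation carried out in the proof of Lemma \ref{cor:tbd6} (with $\vec\lambda_g,N_g$ in place of $\vec\lambda,N$) yields
\[
\sum_{n\ge 0}\left\{\Pc\bigl(\textstyle\bigoplus_{i\in I_g}V_n^{\lambda_i}\bigr)\right\}t^n \;=\; \frac{\{\Pb^{N_g}\}+\Lb^{N_g+1}\{\Pb^{|\vec\lambda_g|-N_g-2}\}t}{(1-t)(1-\Lb^{|\vec\lambda_g|}t)}
\]
for each $g\in\mu_\infty(k)$. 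Summing over $g\in\mu_\infty(k)$ and collecting the remaining Galois orbits into $\sum_{g\in\Bmu_k\setminus\mu_\infty(k)}Q_g(t)$ gives the claimed identity.

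The main obstacle is the second step: making the Galois descent rigorous and verifying that the components indexed by $g\in\Bmu_k\setminus\mu_\infty(k)$ genuinely carry no $k$--points. Once one views the inertia stack as fibered over $\bG_m$ with fibers depending only on the order of the scalar, the $\Gal(k^{\sep}/k)$--equivariance of the decomposition becomes transparent, and the $k$--rationality of an inertia point reduces to the $k$--rationality of the associated automorphism $g$.
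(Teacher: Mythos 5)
Your argument is correct and follows essentially the same route as the paper: the paper invokes \cite[Prop.\ 3.5]{HP2} for the identity $\{\Ic\Pc(\vec{w})\} = \sum_{g\in\Bmu_k}\{\cP_{k(g)}(\{w_\ell : \mathrm{ord}(g)\mid w_\ell\})\}$ in $K_0(\mathrm{Stck}_k)$, observes that the $k(g)\neq k$ terms are classes of stacks base-changed from a nontrivial extension (hence have no $k$-points), and applies Lemma~\ref{cor:tbd6} to the remaining terms. Your proposal simply re-derives the cited decomposition from scratch by computing the fixed-point locus in $(\bA^m\setminus 0)\times\bG_m$, tracking Galois equivariance, and observing that a $k$-point of $\Ic\cX$ forces $k$-rationality of the automorphism; this is the same argument in expanded form, and your specialization to the weights $(\lambda_0^{n\lambda_0+1},\dots,\lambda_N^{n\lambda_N+1})$, giving $|I_g|=N_g+1$ and dimension $n|\vec\lambda_g|+N_g+1$, matches the paper's final step exactly.
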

	
	\begin{proof} 
		
		By \cite[Prop. 3.5]{HP2}, we have
		
		\begin{equation*}
			\sum_{n \ge 0} \left\{ \Ic\Pc \left(\bigoplus_{i = 0}^N V_{n}^{\lambda_i} \right) \right\}_k t^n = \sum_{g \in \Bmu_k}\sum_{n \geq 0} \left\{\cP_{k(g)}\left(\bigoplus_{i \in I_g} V_n^{\lambda_j}\right)\right\}_k t^n
		\end{equation*}
		Note that for any stack $\cX/k$ and finite extension $k'/k$, $\cX_{k'}$ has no $k$-point and so the terms where $k(g) \neq k$ satisfy the condition on $Q_g(t)$. For those $g$ such that $k(g) = k$, note then we have $\dim \bigoplus\limits_{i \in I_g} V_{n}^{\lambda_i} = \sum\limits_{i \in I_g} (n\lambda_i+1) = n|\vec{\lambda_{g}}| + {N_g} + 1$ and so by Lemma \ref{cor:tbd6},
		
		\begin{align*}
			\sum_{n \ge 0} \left\{ \Ic\Pc \left(\bigoplus_{i = 0}^N V_{n}^{\lambda_i} \right) \right\} t^n &= \sum_{g \in \Bmu_k} \sum_{n \ge 0} \left\{\Pc \left(\bigoplus_{i \in I_g} V_{n}^{\lambda_i} \right)\right\} t^n \\
			&= \sum_{g \in \mu_\infty(k)} \sum_{n \ge 0} \frac{ \Lb^{n|\vec{\lambda_{g}}| + {N_g} + 1} - 1}{\Lb-1}t^n + \sum_{g \in \Bmu_k \setminus \mu_\infty(k)} Q_g(t) \\
			&= \sum_{g \in \mu_\infty(k)} \frac{ \{\Pb^{{N_g}}\} + \Lb^{{{N_g}}+1} \{\Pb^{|\vec{\lambda_{g}}|-{{N_g}}-2} \} t}{(1-t)(1-\Lb^{|\vec{\lambda_{g}}|}t)} +\sum_{g \in \Bmu_k \setminus \mu_\infty(k)} Q_g(t) 
		\end{align*}

	\end{proof} 
	
	\begin{thm}\label{thm:ratl} The motivic height zeta functions $Z_{\vec{\lambda}}(t)$ and $\cI Z_{\vec{\lambda}}(t)$ are given by the following formulas in $K_0(\mathrm{Stck}_k)$:
		\begin{enumerate}[(a)]
			\item \label{thm:ratla}    
			$$
			Z_{\vec{\lambda}}(t) = 
			\frac{ 1-\Lb t }{1-\Lb^{|\vec{\lambda}|}t} \left( \{\Pb^N\} + \Lb^{N+1}  \{\Pb^{|\vec{\lambda}|-N-2} \} t \right)
			$$
			\item \label{thm:ratlb}    
			$$
			\cI Z_{\vec{\lambda}}(t) = 
			\sum_{g \in \mu_{\infty}(k)} \frac{ 1-\Lb t }{1-\Lb^{|\vec{\lambda_{g}}|}t} \left( \{\Pb^{{N_g}}\} + \Lb^{{N_g}+1}  \{\Pb^{|\vec{\lambda_{g}}| - {N_g} -2} \} t \right) + \sum_{g \in \Bmu_k \setminus \mu_\infty(k)} R_g(t)
			$$
			
			\noindent where $R_g(t)$ are power series whose coefficients have no $k$-points.
		\end{enumerate}
	\end{thm}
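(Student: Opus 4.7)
The plan is to take the two identities of Lemma \ref{lem:wps}, pass them from $K_0'(\mathrm{Stck}_k)$ to $K_0(\mathrm{Stck}_k)$ via the natural quotient homomorphism, and then solve formally for $Z_{\vec{\lambda}}(t)$ and $\cI Z_{\vec{\lambda}}(t)$ using the explicit evaluations of the left-hand sides given by Lemma \ref{cor:tbd6} and Lemma \ref{cor:tbd7}. Since the identities in Lemma \ref{lem:wps} come from the cut-and-paste stratification of Corollary \ref{cor:def:strat} (no vector bundle relation is invoked), they hold in $K_0'(\mathrm{Stck}_k)$ and therefore in $K_0(\mathrm{Stck}_k)$.

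The second input is the classical computation of the Kapranov zeta function of $\bP^1_k$: since $\Sym^e\bP^1\cong\bP^e$, we have
\[
Z(t)=\sum_{e\geq 0}\{\bP^e\}t^e=\sum_{e\geq 0}\bigl(1+\Lb+\cdots+\Lb^e\bigr)t^e=\frac{1}{(1-t)(1-\Lb t)}
\]
in $K_0(\mathrm{Stck}_k)$. Note $Z(t)$ is a unit in $K_0(\mathrm{Stck}_k)\llbracket t\rrbracket$ with inverse $(1-t)(1-\Lb t)$.

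To prove (\ref{thm:ratla}), I divide the identity of Lemma \ref{lem:wps}, equation \eqref{eqn:wps}, by $Z(t)$ and substitute the closed form of Lemma \ref{cor:tbd6}:
\[
Z_{\vec{\lambda}}(t)=(1-t)(1-\Lb t)\cdot\frac{\{\bP^N\}+\Lb^{N+1}\{\bP^{|\vec\lambda|-N-2}\}t}{(1-t)(1-\Lb^{|\vec\lambda|}t)}=\frac{1-\Lb t}{1-\Lb^{|\vec\lambda|}t}\Bigl(\{\bP^N\}+\Lb^{N+1}\{\bP^{|\vec\lambda|-N-2}\}t\Bigr),
\]
which is exactly the stated formula. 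For (\ref{thm:ratlb}), I apply the same manipulation to \eqref{eqn:iwps}: dividing by $Z(t)$ and invoking Lemma \ref{cor:tbd7}, the contribution of each $g\in\mu_\infty(k)$ collapses in the identical way (replacing $\vec\lambda,N$ by $\vec\lambda_g,N_g$), giving the main sum in (\ref{thm:ratlb}). The residual terms from $g\in\Bmu_k\setminus\mu_\infty(k)$ have the form $R_g(t):=(1-t)(1-\Lb t)Q_g(t)$, whose coefficients remain supported on stacks defined over the proper extension $k(g)/k$ and hence have no $k$-points, as required.

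This argument is essentially formal once the ingredients are assembled. The only conceptual checks are that the cut-and-paste identity of Corollary \ref{cor:def:strat} really descends from $K_0'$ to $K_0$ (it does, trivially, via the quotient map) and that the multiplication in $K_0(\mathrm{Stck}_k)\llbracket t\rrbracket$ commutes with the inertia operator $\cI$ on the coefficients appearing in the product $Z'_{\vec\lambda}(t)\cdot Z(t)$ — this is guaranteed by the fact that $\cI$ is a ring homomorphism on $K_0'$ by Proposition \ref{prop:df}, so there is no obstacle.
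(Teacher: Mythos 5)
Your proposal is correct and follows essentially the same route as the paper's own one-line proof: substitute the closed forms of Lemmas \ref{cor:tbd6} and \ref{cor:tbd7} into the factorization identity of Lemma \ref{lem:wps}, use $Z(t)=\frac{1}{(1-t)(1-\Lb t)}$, and divide. You supply useful additional commentary — that the identity descends from $K_0'$ to $K_0$ via the quotient map, that $Z(t)$ is a unit in the power-series ring, and that the inertia operator being a ring homomorphism on $K_0'$ is what makes equation \eqref{eqn:iwps} valid — but all of that is implicit in (or already present in the proofs of) the cited lemmas, so this is a fleshed-out version of the same argument rather than a different one.
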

	
	\begin{rmk}\begin{enumerate}
			\item Note that the sum is finite as $I_g$ is empty for all but finitely many $g$. 
			\item We expect that $R_g(t)$ is still a rational function in $K_0(\mathrm{Stck}_k)$ though this is not clear. 
		\end{enumerate}
	\end{rmk}

	\begin{proof} This follows by combining Lemma \ref{lem:wps} with the computations in Lemmas \ref{cor:tbd6} and \ref{cor:tbd7} and the explicit formula $Z(t) = \frac{1}{(1-t)(1- \bL t)}$. 
		
	\end{proof} 
	
	\subsection{Motives and point counts of Poly-spaces over finite fields} \label{subsect:Poly}
	
	The Poly-space is a variety of independent interest which is used throughout the subsequent proofs. We begin with $\mathrm{Poly}_1^{(d_1,\dotsc,d_m)}$, a slight generalization of \cite[Def. 1.1]{FW}.
	
	\begin{defn}\label{def:poly}
		Fix $m \in \bZ_{+}$ and $d_1,\dotsc,d_m \ge 0$. Define $\mathrm{Poly}_1^{(d_1,\dotsc,d_m)}$ as the set of tuples $(f_1,\dotsc,f_m)$ of monic polynomials in $K[z]$ so that
		\begin{enumerate}
			\item $\deg f_i=d_i$ for each $i$, and
			\item $f_1,\dotsc,f_m$ have no common roots in $\overline{K}$.
		\end{enumerate}
	\end{defn}
	
	Note that $\mathrm{Poly}_1^{(d_1,\dotsc,d_m)}$ sits inside an affine space parameterizing tuples of monic coprime polynomials. It is the complement of the resultant hypersurface and so can be endowed with the structure of affine variety defined over $\bZ$.

	\medskip
	
	The motive of the Poly-space $\mathrm{Poly}_1^{(d_1, d_2)}$ over $k$ is given by the following.
	
	\begin{prop}
		\label{prop:poly_m}
		Fix $d_1,d_2 \ge 0$. 
		\[ \left\{\mathrm{Poly}_1^{(d_1, d_2)}\right\}=
		\begin{cases}
			\Lb^{d_1 + d_2}-\Lb^{d_1 + d_2 - 1} & \text{ if } d_1,d_2 >0\;, \\
			\Lb^{d_1 + d_2} & \text{ if } d_1=0 \text{ or } d_2=0\;.
		\end{cases}
		\]
	\end{prop}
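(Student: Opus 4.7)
The plan is to use the gcd stratification of the affine space of all pairs of monic polynomials of degrees $d_1$ and $d_2$. First, the boundary cases: if $d_1 = 0$ (and symmetrically $d_2 = 0$), then $f_1 = 1$, which is coprime to every polynomial, so $\mathrm{Poly}_1^{(0, d_2)}$ is the affine space of monic polynomials of degree $d_2$ and has motive $\Lb^{d_2}$, as claimed.

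For the main case $d_1, d_2 > 0$, I would let $\mathrm{AllPoly}^{(d_1,d_2)} \cong \Ab^{d_1} \times \Ab^{d_2}$ denote the space parametrizing pairs of monic polynomials of degrees $d_1, d_2$, which has motive $\Lb^{d_1+d_2}$. Stratifying by $d = \deg \gcd(f_1, f_2) \in \{0, 1, \ldots, \min(d_1, d_2)\}$, each stratum is locally closed: the locus $\{\deg \gcd \geq d\}$ is closed, cut out by the vanishing of appropriate subresultants. On the stratum $\{\deg \gcd = d\}$, the multiplication map
$$
\mu \colon \Ab^d \times \mathrm{Poly}_1^{(d_1 - d, d_2 - d)} \longrightarrow \{(f_1, f_2) : \deg \gcd = d\}, \quad (g, h_1, h_2) \longmapsto (g h_1, g h_2)
$$
is an isomorphism, with inverse provided by extracting the gcd (which is regular on the stratum since its degree is fixed and can be recovered algebraically from subresultants). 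This yields the identity
$$
\Lb^{d_1 + d_2} \;=\; \sum_{d=0}^{\min(d_1,d_2)} \Lb^d \cdot \{\mathrm{Poly}_1^{(d_1-d, d_2-d)}\}
$$
in $K_0(\mathrm{Stck}_K)$.

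I would then prove the formula by induction on $\min(d_1, d_2)$, with the base case $\min(d_1, d_2) = 0$ handled above. Solving the recursion and applying the inductive hypothesis to the strata with $1 \leq d < \min(d_1, d_2)$ gives terms $\Lb^{d_1+d_2-d} - \Lb^{d_1+d_2-d-1}$ which telescope; combined with the boundary stratum $d = \min(d_1, d_2)$ contributing $\Lb^{\max(d_1,d_2)}$, the sum collapses to $\Lb^{d_1+d_2-1}$, yielding $\{\mathrm{Poly}_1^{(d_1,d_2)}\} = \Lb^{d_1+d_2} - \Lb^{d_1+d_2-1}$.

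The main obstacle is verifying that $\mu$ is an isomorphism of varieties (and not merely a bijection of points), so that the recursion is an honest identity in $K_0(\mathrm{Stck}_K)$. This is the standard fact that, once the degree of the gcd is fixed, division by the gcd can be realized algebraically via subresultant polynomials; alternatively one may appeal to \cite{FW}, where the analogous gcd stratification is used.
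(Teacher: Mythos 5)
Your proof is correct and follows essentially the same route as the sources the paper cites. The paper itself does not reproduce an argument for this statement: it simply refers to \cite[Prop. 18]{HP}, which in turn is based on \cite[Thm. 1.2]{FW} and the characteristic-$p$ correction in \cite[Prop. 3.1]{PS}. The gcd stratification of $\Ab^{d_1}\times\Ab^{d_2}$ you use, with the recursion
\[
\Lb^{d_1+d_2}=\sum_{d=0}^{\min(d_1,d_2)}\Lb^d\left\{\mathrm{Poly}_1^{(d_1-d,d_2-d)}\right\}
\]
and the telescoping sum, is precisely the Farb--Wolfson approach. You also correctly flag the one genuinely delicate point — that $\mu$ must be an isomorphism of schemes, not merely a bijection on geometric points — which is exactly the issue addressed by the corrigendum \cite{FW2} in characteristic $0$ and by \cite[Prop. 3.1]{PS} (using the Euclidean algorithm) in characteristic $p$. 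Your observation that the stratum $\{\deg\gcd=d\}$ is locally closed because it is cut out by the vanishing of the subresultants $S_j$ for $j<d$ and the nonvanishing of the leading coefficient of $S_d$, and that normalizing $S_d$ to be monic gives a regular section of the gcd, is a clean way to make the inverse of $\mu$ an honest morphism over any field. If you wanted to be fully rigorous you could also remark that polynomial division by a monic divisor is a morphism (coefficients of quotient and remainder are polynomial in the inputs), but this is routine. In short: no gap, same method, slightly more self-contained exposition than the paper, which delegates all detail to the references.
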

	
	\begin{proof}
		We refer the reader to \cite[Prop. 18]{HP} for the details of the proof. The proof in (loc.cit.) is analogous to the proof of \cite[Thm. 1.2]{FW}. 
		Here we point out that the motive formula $\left\{\mathrm{Poly}_1^{(d_1, d_2)}\right\}$ not only holds in $\mathrm{char}(k)=0$ (c.f. the corrigendum \cite{FW2}) but also holds in any $\mathrm{char}(k)=p$ due to the critical correction from \cite[Prop. 3.1]{PS} utilizing the Euclidean algorithm. 
		
		For the more general result on $\left\{\mathrm{Poly}_1^{(d_1,\dotsc,d_m)}\right\}$ with more tuples of polynomials, we refer the reader to \cite[Prop. 4.4]{HP2}.
	\end{proof}
	
	We now consider Poly-spaces of polynomials with a common zero of specified vanishing orders.
	
	\begin{defn}\label{def:polygeqab}
		Fix $a,b \in \bZ_{+}$ and $d_1,d_2 \ge 0$. Define $\mathrm{Poly}_{(\ge a,b)}^{(d_1, d_2)}$ as the space parameterizing monic polynomials $(f_1,f_2)$ in $k[z]$ of degrees $(d_1, d_2)$ such that
		\begin{enumerate}
			\item $0$ is the only common root of $f_1$ and $f_2$ over $\overline{k}$,
			\item At $0$, $f_1$ vanishes to order at least $a$ and $f_2$ has exact order of vanishing $b$.
		\end{enumerate}
		We have analogously defined Poly-spaces $\mathrm{Poly}_{(a,\geq b)}^{(d_1, d_2)}$ and $\mathrm{Poly}_{(a,b)}^{(d_1, d_2)}$.
	\end{defn}

	\begin{prop}
		\label{prop:poly_m, Pc(4,6)}
		Fix $a,b \in \bZ_{+}$ and $0 \le d_1 \le d_2$. Set $\alpha = d_1 - a$ and $\beta = d_2 - b$. The motive of the Poly-space $\mathrm{Poly}_{(\ge a,b)}^{(d_1, d_2)}$ over $k$ is given by 
		\[ \left\{\mathrm{Poly}_{(\ge a,b)}^{(d_1, d_2)}\right\} = 
		\left\{\mathrm{Poly}_{(b, \ge a)}^{(d_2, d_1)}\right\} = 
		\begin{cases}
			(\Lb-1) \cdot \frac{ \Lb^{\alpha + \beta} - \Lb^{\alpha - \beta} }{ \Lb + 1 }, & \text{ if } 0 < \beta\leq\alpha,  \\\\
			(\Lb-1) \cdot \frac{ \Lb^{\alpha + \beta} + \Lb^{\beta - \alpha - 1} }{ \Lb + 1 }, & \text{ if }  0\leq\alpha<\beta,  \\\\
			\Lb^\alpha, & \text{ if } \beta=0
		\end{cases}
		\]
		
		The motive of the Poly-space $\mathrm{Poly}_{(a,b)}^{(d_1, d_2)}$ (i.e. the space of monic polynomials $(f_1,f_2)$ in $K[z]$ of degrees $(d_1, d_2)$ having exact order of vanishing $(a,b)$ at 0) over $k$ is given by 
		\[ \left\{\mathrm{Poly}_{(a,b)}^{(d_1, d_2)}\right\} = \left\{\mathrm{Poly}_{1}^{(d_1-a, d_2-b)}\right\} - \left\{\mathrm{Poly}_{(\ge (a+1),b)}^{(d_1, d_2)}\right\} - \left\{\mathrm{Poly}_{(a,\ge (b+1))}^{(d_1, d_2)} \right\} 
		\]
	\end{prop}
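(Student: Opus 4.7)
The plan is to reduce the computation for $\mathrm{Poly}_{(\geq a, b)}^{(d_1, d_2)}$ to a question about coprime monic pairs with a nonvanishing condition at the origin, establish a recursion via a natural open/closed stratification, and iterate to reach a base case. The symmetry $\mathrm{Poly}_{(\geq a, b)}^{(d_1, d_2)} \cong \mathrm{Poly}_{(b, \geq a)}^{(d_2, d_1)}$ is immediate by swapping the two factors.

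First I would factor $f_1 = z^a g_1$ and $f_2 = z^b g_2$, which identifies $\mathrm{Poly}_{(\geq a, b)}^{(d_1, d_2)}$ with the space $D(\alpha, \beta)$ of coprime monic pairs $(g_1, g_2)$ of degrees $(\alpha, \beta)$ satisfying $g_2(0) \neq 0$. The base cases are then immediate: when $\beta = 0$, the polynomial $g_2$ equals $1$ and any monic $g_1$ works, giving motive $\Lb^\alpha$; when $\alpha = 0$, the polynomial $g_1$ equals $1$ and $g_2$ ranges over monic polynomials of degree $\beta$ with nonzero constant term, giving motive $\Lb^\beta - \Lb^{\beta - 1}$. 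Both agree with the stated formula after evaluating the $\beta = 0$ or $\alpha = 0$ instance.

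The central step is the recursion, valid for $\alpha, \beta \geq 1$:
\[
\{D(\alpha, \beta)\} + \{D(\beta - 1, \alpha)\} = \{\mathrm{Poly}_1^{(\alpha, \beta)}\} = \Lb^{\alpha + \beta} - \Lb^{\alpha + \beta - 1}.
\]
To prove it, stratify $\mathrm{Poly}_1^{(\alpha, \beta)}$ into the open locus $\{g_2(0) \neq 0\} = D(\alpha, \beta)$ and its closed complement. On the latter, $g_2 = z h_2$ with $h_2$ monic of degree $\beta - 1$, and coprimality of $(g_1, zh_2)$ is equivalent to $g_1(0) \neq 0$ together with $(g_1, h_2)$ coprime; swapping the two coordinates then identifies this closed locus with $D(\beta - 1, \alpha)$. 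Applying this recursion twice yields the telescoping identity
\[
\{D(\alpha, \beta)\} = \{D(\alpha - 1, \beta - 1)\} + (\Lb - 1)^2 \Lb^{\alpha + \beta - 2}
\]
for $\alpha \geq 1$ and $\beta \geq 2$. Iterating this identity until reaching a base case---namely $D(\alpha - \beta + 1, 1)$ when $\alpha \geq \beta$ and $D(0, \beta - \alpha)$ when $\alpha < \beta$---and summing the resulting geometric series in $\Lb^2$ produces the two claimed closed-form expressions, after using the divisibilities $(\Lb + 1) \mid (\Lb^{2n} - 1)$ and $(\Lb + 1) \mid (\Lb^{2n+1} + 1)$ to combine terms into the stated rational expressions.

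The second formula is a simple stratification. The space $\mathrm{Poly}_1^{(d_1 - a, d_2 - b)}$ of coprime monic pairs $(g_1, g_2)$ decomposes into three locally closed pieces based on the values of $g_1(0), g_2(0)$: the open locus $\{g_1(0) \neq 0, g_2(0) \neq 0\}$ (identified with $\mathrm{Poly}_{(a, b)}^{(d_1, d_2)}$), the closed locus $\{g_1(0) = 0\}$ (identified with $\mathrm{Poly}_{(\geq a+1, b)}^{(d_1, d_2)}$ by factoring an extra $z$ out of $f_1$), and the closed locus $\{g_2(0) = 0\}$ (identified with $\mathrm{Poly}_{(a, \geq b+1)}^{(d_1, d_2)}$ similarly). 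These two closed loci are disjoint since $g_1, g_2$ coprime cannot share $0$ as a root, so the cut-and-paste relation in $K_0(\mathrm{Stck})$ gives the stated formula. The main obstacle throughout is algebraic bookkeeping: unpacking the iterated recursion into closed-form expressions with $(\Lb + 1)$ in the denominator requires a careful case split by $\alpha$ versus $\beta$, but there is no conceptual obstacle and the argument reduces to a clean induction.
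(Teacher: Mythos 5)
Your proof is correct and follows essentially the same route as the paper: both establish the recursion $\{\mathrm{Poly}_{(\ge a,b)}^{(d_1,d_2)}\}=\{\mathrm{Poly}_1^{(d_1-a,d_2-b)}\}-\{\mathrm{Poly}_{(\ge b+1,a)}^{(d_2,d_1)}\}$ via the multiplication-by-$(z^a,z^b)$ identification, iterate it twice to drop $\min(\alpha,\beta)$, and anchor at the $\alpha=0$ or $\beta=0$ base cases. Your reparametrization to $D(\alpha,\beta)$ and explicit closing of the geometric series are presentational refinements of what the paper leaves as an unwritten induction check, and your treatment of the second formula (the three-piece locally closed decomposition with disjointness from coprimality) is a correct spelling-out of what the paper calls "immediate from the definitions."
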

	
	\begin{proof}
		We prove the result by induction on $\min(\alpha,\beta)$. If $0 = \beta \le \alpha$, we have $\mathrm{Poly}_{(\ge a,d_2)}^{(d_1, d_2)}$ is the space of polynomials $(z^ag(z),z^{d_2})$ with $g(z)$ monic; this space is isomorphic to $\bA^\alpha$, so has motive $
		\Lb^\alpha$. Similarly, if $0 = \alpha < \beta$, then $\mathrm{Poly}_{(\ge d_1,b)}^{(d_1, d_2)}$ is the space of polynomials of the form $(z^{d_1},z^bg(z))$, where $g(z)$ is monic of degree $d_2-b$ and $g(0)\neq0$, i.e., the constant term of $g$ is non-zero. Thus, $\mathrm{Poly}_{(\ge d_1,b)}^{(d_1, d_2)}\simeq\bG_m\times\bA^{\beta-1}$ which has motive $(\Lb-1)\Lb^{\beta-1}$. 
		
		We may now assume $\alpha$ and $\beta$ are positive. Notice that via the map $(f_1,f_2)\mapsto (z^af_1,z^bf_2)$, the space $\mathrm{Poly}_{1}^{(d_1-a, d_2-b)}$ may be identified with the locally closed subscheme $\mathrm{Poly}_{(\ge a,b)}^{(d_1, d_2)}\cup \mathrm{Poly}_{(a,\geq b+1)}^{(d_1, d_2)}$; the union is taking place in the ambient weighted projective stack. Therefore
		\[
		\left\{\mathrm{Poly}_{(\ge a,b)}^{(d_1, d_2)}\right\} =
		\left\{\mathrm{Poly}_{1}^{(d_1-a, d_2-b)}\right\} 
		-\left\{\mathrm{Poly}_{(\geq b+1,a)}^{(d_2, d_1)}\right\}.
		\]
		If $d_2 - b = 1$ then we apply Proposition \ref{prop:poly_m} to the first term and the $\alpha = 0$ case above to the second term (note the roles of $d_2$ and $d_1$ have switched). 
		
		Otherwise, we apply the same relation to $\mathrm{Poly}^{(d_2, d_1)}_{(\geq b + 1, a)}$ to obtain 
		\[
		\left\{\mathrm{Poly}_{(\ge a,b)}^{(d_1, d_2)}\right\} =
		\left\{\mathrm{Poly}_{1}^{(d_1-a, d_2-b)}\right\} 
		- \left\{\mathrm{Poly}_1^{d_2 - b - 1, d_1 -a}\right\} + \left\{\mathrm{Poly}_{(\geq a+1,b+1)}^{(d_1, d_2)}\right\}.
		\]
		Then $\min\{\alpha, \beta\}$ has dropped by $1$ for the last term and the result follows by induction and Proposition \ref{prop:poly_m}. 
		
		The motive of the Poly-space $\mathrm{Poly}_{(a,b)}^{(d_1, d_2)}$ is immediate from the definitions.
	\end{proof}
	
	

	\subsection{Motives of height moduli with a fixed Kodaira type}  \label{subsect:Mot_Kod_One}
	
	Following Proposition~\ref{prop:single_addi}, we consider the case $l = 1$ (i.e. there is exactly one additive reduction fiber of specified Kodaira type and the rest of the bad reduction fibers are strictly multiplicative). We now compute a formula for $\cW^\gamma_{n,\Pb^1}(\Pc(4,6))$ for each Kodaira type $\gamma : (\nu(a_4), ~ \nu(a_6))$ as prescribed in Theorem~\ref{thm:corr_46}.

	\smallskip
	
	More generally, we consider $\cW^\gamma_{n,\bP^1}(\cP(\lambda_0,\lambda_1)) =\colon \cW^{\gamma}_n$ where $\gamma = (a,b), (\geq a, b)$, or $(a, \geq b)$ is a single vanishing condition. 
	
	\begin{prop} \label{prop:W0}
		The class of $\cW^\gamma_n \coloneqq \cW^\gamma_{n,\Pb^1}(\lambda_0,\lambda)$ for the vanishing condition $\gamma$ is
		\begin{equation*}\label{eqn:W_gen}
			\{\cW^\gamma_n\} = \{\Pb^1\} \cdot \{\Gb_m\} \cdot \left(\left\{\mathrm{Poly}_{\gamma}^{(\lambda_0n,\lambda_1n)}\right\}+\sum_{k=a}^{\lambda_0n-1}\left\{\mathrm{Poly}_{\gamma}^{(k,\lambda_1n)}\right\}+\sum_{l=b}^{\lambda_1n-1}\left\{\mathrm{Poly}_{\gamma}^{(\lambda_0n,l)}\right\}\right)
		\end{equation*}
	\end{prop}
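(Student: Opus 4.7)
Since $l=1$, the group $S_\mu=S_1$ is trivial by Definition \ref{Smu-definition}, so Theorem \ref{thm:main-thm-of-sec4} identifies $\cW^\gamma_n$ with $\cR^\gamma_n$, and Proposition \ref{prop:single_addi} then gives
\[
\{\cW^\gamma_n\}=\{\bP^1\}\cdot\frac{\{R^\gamma_n(0)\}}{\{\bG_m\}}.
\]
The plan is to compute $\{R^\gamma_n(0)\}$ by stratifying according to the actual degrees of the defining polynomials and identifying each stratum with $\bG_m^2$ times a Poly-space from Definition \ref{def:polygeqab}.

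First I would trivialize an affine chart around $0\in\bP^1$ and identify sections in $V_n^{\lambda_i}=H^0(\bP^1,\cO(\lambda_i n))$ with polynomials $\tilde f_i(z)\in k[z]$ of degree at most $\lambda_i n$. Under this identification the vanishing condition $\gamma$ at $0$ transfers verbatim, while a simultaneous zero at $z=\infty$ corresponds precisely to $\deg\tilde f_i<\lambda_i n$ for both $i$. Hence $R^\gamma_n(0)$ is the locally closed locus in $V_n^{\lambda_0}\oplus V_n^{\lambda_1}$ cut out by $\gamma$ at $0$, no common zero on $\bA^1\smallsetminus\{0\}$, and the exclusion of the case $d_0<\lambda_0 n$ and $d_1<\lambda_1 n$. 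Stratifying by $(d_0,d_1)=(\deg\tilde f_0,\deg\tilde f_1)$ produces three pieces: $(\lambda_0 n,\lambda_1 n)$; $(\lambda_0 n,l)$ for $l\in[b,\lambda_1 n-1]$; and $(k,\lambda_1 n)$ for $k\in[a,\lambda_0 n-1]$.

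On each stratum I would factor $\tilde f_i=u_i F_i$ where $u_i\in\bG_m$ is the leading coefficient and $F_i$ is monic of degree $d_i$. The conditions imposed by $\gamma$ at $0$ and the requirement of no common zero on $\bA^1\smallsetminus\{0\}$ are invariant under this scaling, so passing to $(F_0,F_1)$ identifies the stratum with $\bG_m^2\times\mathrm{Poly}_\gamma^{(d_0,d_1)}$. Summing the three contributions will yield
\[
\{R^\gamma_n(0)\}=\{\bG_m\}^2\!\left(\{\mathrm{Poly}_\gamma^{(\lambda_0 n,\lambda_1 n)}\}+\sum_{k=a}^{\lambda_0 n-1}\{\mathrm{Poly}_\gamma^{(k,\lambda_1 n)}\}+\sum_{l=b}^{\lambda_1 n-1}\{\mathrm{Poly}_\gamma^{(\lambda_0 n,l)}\}\right),
\]
and dividing by one factor of $\{\bG_m\}$ and multiplying by $\{\bP^1\}$ then gives the claimed formula.

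The argument is essentially formal once the stratification is in place. The only point requiring slight care is verifying that the lower endpoints of the summation indices agree across the three flavors of $\gamma$, namely $(a,b)$, $(\geq a,b)$, and $(a,\geq b)$; in each case the exact vanishing order of $\tilde f_i$ at $0$ is bounded below by $a$ (resp.\ $b$), forcing $d_i\geq a$ (resp.\ $d_i\geq b$), which matches the endpoints appearing in the sum.
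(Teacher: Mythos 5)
Your proof is correct and follows essentially the same route as the paper: reduce to computing the class of the fiber over $0$ via Proposition \ref{prop:single_addi}, stratify by the pair of actual degrees $(d_0,d_1)$ (noting $\gamma$ forces $d_0\geq a$, $d_1\geq b$ and that excluding a common zero at $\infty$ rules out $d_0<\lambda_0 n$ and $d_1<\lambda_1 n$ simultaneously), identify each stratum with $\bG_m^2\times\mathrm{Poly}_\gamma^{(d_0,d_1)}$ by extracting leading coefficients, and use that $\bG_m$ is special to pass to the quotient. This matches the paper's argument in both structure and content.
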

	
	\begin{proof}
		Recall first that by Proposition~\ref{prop:single_addi} we have $\{\cW^\gamma_n\}=\{\Pb^1\} \cdot \{\cW_{n}^{\gamma}(0)\}$. So it suffices to compute $\{\cW_{n}^{\gamma}(0)\}$ which leads us to the stratification into Poly-spaces adopted from the proof of Theorem 1 in \cite[\S 4.1]{HP}.
		
		Consider $T \subset H^0(\Oc_{\Pb^1}(\lambda_0n))\oplus H^0(\Oc_{\Pb^1}(\lambda_1n))\setminus \{0\}$ a $\Gb_m$-equivariant open subset parameterizing pairs $(U,V)$ with exactly one common zero at $0 = [1:0] \in \bP^1$ and with specified vanishing condition $\gamma$, where $\Gb_m$ acts via $t*(U,V)=(t^{\lambda_0}U,t^{\lambda_1}V)$. Since $\gamma$ consists of a single tuple, in the notation from Definition \ref{Smu-definition}, we have $S_\mu$ is the trivial group. Then by Theorem \ref{thm:main-thm-of-sec4}, we have $\cW_{n,C}^\gamma=\cR_{n,C}^\gamma$. Using the moduli interpretation for $\cR_{n,C}^\gamma$ given after Definition \ref{def:moduli-interp-rnmu}, we see $\cW_n^\gamma(0)=[T/\bG_m]$.
		
		Now fix a chart $\A^1 \hookrightarrow \Pb^1$ with $x \mapsto [1:x]$, and call $0=[1:0]$ and $\infty=[0:1]$. It comes from a homogeneous chart of $\Pb^1$ by $[Y:X]$ with $x:=X/Y$ away from $\infty$. Then for any $(U,V) \in T$, $U$ and $V$ are homogeneous polynomials in $X$ and $Y$ with degrees $\lambda_0n$ and $\lambda_1n$ respectively. By plugging in $Y=1$, we obtain representations of $u(x)$ and $v(x)$ as polynomials in $x$ with degrees at most $\lambda_0n$ and $\lambda_1n$, respectively. For instance, $\deg u < \lambda_0n$ as a polynomial in $x$ if and only if $U(X,Y)$ is divisible by $Y^{\lambda_0n - k}$, i.e. vanishes at $\infty$. From now on, $\deg P$ means the degree of $P$ as a polynomial in $x$. We will say $(u,v) \in T$ if $u = U(x,1)$ and $v = V(x,1)$ for $(U,V) \in T$. 
		
		Denoting $\deg u:=k$ and $\deg v:=l$, then one of $k=\lambda_0n$ or $l=\lambda_1n$ if $(U,V) \in T$ (as they do not simultaneously vanish at $\infty$) and $u,v$ have a unique common root at $0$ with specified vanishing condition $\gamma$. Since there are many possible degrees for a pair $(u,v) \in T$, consider the locally closed subsets $T_{k,l}:=\{(u,v) \in T : \deg u=k, \; \deg v=l \}$. Notice that $T_{k-1,\lambda_1n} \subset \overline{T}_{k,\lambda_1n}$ as for any $(u,v) \in T_{k-1,\lambda_1n}$, $U(X,Y)$ has a description as $Y^{\lambda_0n-k+1}U'(X,Y)$ which is $U_{[1:0]}(X,Y)$ for a pencil polynomials $U_{[t_0:t_1]}(X,Y)=Y^{\lambda_0n-k}(t_1Y-t_0X)U'(X,Y)$ where $(U_{[1:t_1]},V) \in T_{k,\lambda_1n}$. Hence, we obtain the following stratification:
		
		\begin{align*}
			T&=T_{\lambda_0n,\lambda_1n} \sqcup \left(\bigsqcup_{k=a}^{\lambda_0n-1} T_{k,\lambda_1n}\right) \sqcup \left(\bigsqcup_{l=b}^{\lambda_1n-1} T_{\lambda_0n,l} \right)\\
			T&=\overline{T_{\lambda_0n,\lambda_1n}} \supsetneq \overline{T_{\lambda_0n-1,\lambda_1n}} \supsetneq \dotsb \supsetneq \overline{T_{a,\lambda_1n}}\\
			T&=\overline{T_{\lambda_0n,\lambda_1n}} \supsetneq \overline{T_{\lambda_0n,\lambda_1n-1}} \supsetneq \dotsb \supsetneq \overline{T_{\lambda_0n,b}}
		\end{align*}
		\[\overline{T_{\lambda_0n-k,\lambda_1n}} \cap \overline{T_{\lambda_0n,\lambda_1n-l}} =\varnothing ~~\; \forall k,l>0 \]
		Then,
		\begin{equation}
			\label{eqn:sum}
			\{T\} =\{T_{\lambda_0n,\lambda_1n}\}+\sum_{k=a}^{\lambda_0n-1}\{T_{k,\lambda_1n}\}+\sum_{l=b}^{\lambda_1n-1}\{T_{\lambda_0n,l}\}
		\end{equation}
		
		Lastly, note that for each term $T_{\alpha, \beta}$ in the above sum, we have a map $T_{\alpha, \beta} \to \bG_m^2$ which takes $(u,v)$ to its leading coefficients. This map is $\bG_m^2$-equivariant for the natural scaling action on $(u,v)$ and the fiber over $(1,1)$ is exactly $\mathrm{Poly}^{(\alpha,\beta)}_\gamma$. Thus taking $\bG_m$-quotients and applying \cite[Prop. 1.1 iii)]{Ekedahl} yields the result. \end{proof}
	
	\section{Enumerations of elliptic curves over rational function fields} \label{sec:enumeration}
	
	In this section we put together the results of the previous sections to compute the motives of various height moduli of elliptic curves over the function field of $\bP^1_{k}$ and in particular prove the explicit enumerations in Theorems \ref{thm:ell_curve_min_count} \& \ref{thm:ell_curve_count_2} \ref{thm:ell_curve_min_count_body}.
	
	\subsection{Motives of height moduli spaces and associated inertia stacks}
	
	We keep notation as in Section \ref{sec:zeta} By extracting the coefficients of the generating series in Theorem \ref{thm:ratl}, we obtain the following exact formulas for the classes of $\cW_n^{min}$ and $\cI \cW_n^{min}$. 
	
	\begin{thm}\label{thm:tbd9}
		The classes $\left\{ \cW_{n}^{\mathrm{min}} \right\}$ are given by the following formulas:
		\begin{align*}
			\left\{ \cW_{0}^{\mathrm{min}} \right\} &=  \{\bP^N\} \\
			\left\{ \cW_{1}^{\mathrm{min}} \right\} 
			&= \{\Pb^N\} ( \Lb^{|\vec{\lambda}|} - \Lb ) + \Lb^{N+1}\{\Pb^{|\vec{\lambda}|-N-2}\} \\
			\left\{ \cW_{n \ge 2}^{\mathrm{min}} \right\} 
			&= \Lb^{(n-2)|\vec{\lambda}| + N + 2} ( \Lb^{|\vec{\lambda}|-1} - 1 ) ( \Lb^{|\vec{\lambda}|-N-1}\{\Pb^N\} + \{\Pb^{|\vec{\lambda}|-N-2}\} )
		\end{align*}
	\end{thm}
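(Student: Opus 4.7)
The plan is to extract the coefficient of $t^n$ in the motivic height zeta function $Z_{\vec{\lambda}}(t)$ computed in Theorem \ref{thm:ratl}\eqref{thm:ratla}. Since the formula
\[
Z_{\vec\lambda}(t) = \frac{1-\Lb t}{1-\Lb^{|\vec\lambda|}t}\bigl(\{\bP^N\} + \Lb^{N+1}\{\bP^{|\vec\lambda|-N-2}\}\,t\bigr)
\]
is already established, the statement is purely a calculation; there is no geometric obstacle to overcome.

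First, I will expand the rational factor as a geometric series. Writing $A=\{\bP^N\}$ and $B=\Lb^{N+1}\{\bP^{|\vec\lambda|-N-2}\}$ for brevity, we have
\[
\frac{1-\Lb t}{1-\Lb^{|\vec\lambda|}t}
= 1 + \sum_{k\ge 1}\Lb^{(k-1)|\vec\lambda|}\bigl(\Lb^{|\vec\lambda|}-\Lb\bigr)t^k,
\]
obtained from $(1-\Lb t)\sum_{k\ge0}\Lb^{k|\vec\lambda|}t^k$ by collecting the two telescoping contributions at each $t^k$.

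Next, I multiply by $A+Bt$ and read off coefficients by cases. The constant term gives $\{\cW_0^{\min}\}=A=\{\bP^N\}$, and the coefficient of $t^1$ gives $A(\Lb^{|\vec\lambda|}-\Lb)+B = \{\bP^N\}(\Lb^{|\vec\lambda|}-\Lb) + \Lb^{N+1}\{\bP^{|\vec\lambda|-N-2}\}$, matching the stated formulas for $n=0,1$. For $n\ge2$, both $A$ and $B$ contribute, yielding
\[
\{\cW_n^{\min}\} = \bigl(\Lb^{|\vec\lambda|}-\Lb\bigr)\bigl(A\,\Lb^{(n-1)|\vec\lambda|}+B\,\Lb^{(n-2)|\vec\lambda|}\bigr).
\]

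Finally, I factor the common $\Lb^{(n-2)|\vec\lambda|+1}$ and then $\Lb^{N+1}$ out of the parenthesized sum, using $\Lb^{|\vec\lambda|}-\Lb=\Lb(\Lb^{|\vec\lambda|-1}-1)$ and
\[
A\,\Lb^{|\vec\lambda|}+B
= \Lb^{N+1}\bigl(\Lb^{|\vec\lambda|-N-1}\{\bP^N\}+\{\bP^{|\vec\lambda|-N-2}\}\bigr).
\]
Combining these two factorizations produces
\[
\{\cW_n^{\min}\} = \Lb^{(n-2)|\vec\lambda|+N+2}\bigl(\Lb^{|\vec\lambda|-1}-1\bigr)\bigl(\Lb^{|\vec\lambda|-N-1}\{\bP^N\}+\{\bP^{|\vec\lambda|-N-2}\}\bigr),
\]
which is the stated formula for $n\ge 2$. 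The only minor bookkeeping is to confirm that $|\vec\lambda|\ge N+1$ (so that $\{\bP^{|\vec\lambda|-N-2}\}$ makes sense) and that the $n=1$ coefficient does not fit the $n\ge 2$ pattern precisely because for $n=1$ the $A$-contribution has no $\Lb^{(n-1)|\vec\lambda|}$ factor pulled from the geometric series (the $t^0$ term of $\tfrac{1-\Lb t}{1-\Lb^{|\vec\lambda|}t}$ is $1$, not $\Lb^{|\vec\lambda|}-\Lb$), explaining why the $n=0,1$ cases are stated separately.
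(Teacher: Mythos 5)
Your proof is correct and follows essentially the same approach as the paper: both extract coefficients of $t^n$ from the rational generating function of Theorem~\ref{thm:ratl} and simplify. The only cosmetic difference is that you first expand $\frac{1-\Lb t}{1-\Lb^{|\vec\lambda|}t}$ into a power series and then multiply by $\{\Pb^N\}+\Lb^{N+1}\{\Pb^{|\vec\lambda|-N-2}\}t$, whereas the paper first expands the numerator as a quadratic in $t$ and then multiplies by $\sum_{k}\Lb^{k|\vec\lambda|}t^k$; the resulting algebra and final factored form are identical.
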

	
	\begin{proof}  For $n = 0$ its clear. For $n = 1$, the coefficient of $t$ in the generating series is
		$$
		\Lb^{|\vec{\lambda}|}\{\Pb^N\} - \Lb\{\Pb^N\} + \Lb^{N+1}\{\Pb^{|\vec{\lambda}|-N-2}\} = \{\Pb^N\} ( \Lb^{|\vec{\lambda}|} - \Lb ) + \Lb^{N+1}\{\Pb^{|\vec{\lambda}|-N-2}\}. 
		$$
		Finally, for $n \geq 2$ the coefficient of $t^n$ is
		\begin{align*}
			\left\{ \cW_{n \ge 2}^{\mathrm{min}} \right\} &= 
			\Lb^{n|\vec{\lambda}|}\{\Pb^N\} + \Lb^{(n-1)|\vec{\lambda}|} (\Lb^{N+1}\{\Pb^{|\vec{\lambda}|-N-2}\} - \Lb \{\Pb^N\}) - \Lb^{(n-2)|\vec{\lambda}|} \Lb^{N+2}\{\Pb^{|\vec{\lambda}|-N-2}\} \\  
			&= \Lb^{(n-2)|\vec{\lambda}|} \left( \Lb^{2|\vec{\lambda}|} \{\Pb^N\} + \Lb^{|\vec{\lambda}|}  (\Lb^{N+1}\{\Pb^{|\vec{\lambda}|-N-2}\} - \Lb \{\Pb^N\}) - \Lb^{N+2} \{\Pb^{|\vec{\lambda}|-N-2}\} \right) \\ 
			&= \Lb^{(n-2)|\vec{\lambda}|} \left( (\Lb^{2|\vec{\lambda}|} - \Lb^{|\vec{\lambda}|+1}) \{\Pb^N\} + (\Lb^{|\vec{\lambda}|+N+1} - \Lb^{N+2})\{\Pb^{|\vec{\lambda}|-N-2}\} \right) \\ 
			&= \Lb^{(n-2)|\vec{\lambda}|} ( \Lb^{|\vec{\lambda}|-1} - 1 ) ( \Lb^{|\vec{\lambda}|+1}\{\Pb^N\} + \Lb^{N+2}\{\Pb^{|\vec{\lambda}|-N-2}\} ) \\
			&= \Lb^{(n-2)|\vec{\lambda}| + N + 2} ( \Lb^{|\vec{\lambda}|-1} - 1 ) ( \Lb^{|\vec{\lambda}|-N-1}\{\Pb^N\} + \{\Pb^{|\vec{\lambda}|-N-2}\} )
	\end{align*} \end{proof}

	\begin{thm}\label{thm:tbd10}
		The motive of $\left\{ \Ic\cW_{n}^{\mathrm{min}} \right\}$ is given by the following:
		
		\begin{align*}
			\left\{ \Ic\cW_{0}^{\mathrm{min}} \right\} &= \sum_{g \in \mu_\infty(k)} \Pb^{N_g} + \sum_{g \in \Bmu_k \setminus \mu_\infty(k)} R_{g,0} \\
			\left\{ \Ic\cW_{1}^{\mathrm{min}} \right\} &= 
			\sum_{g \in \mu_\infty(k)} \{\Pb^{N_g}\} ( \Lb^{|\vec{\lambda_{g}}|} - \Lb ) + \Lb^{{N_g}+1}\{\Pb^{|\vec{\lambda_{g}}|-{N_g}-2}\} + \sum_{g \in \Bmu_k \setminus \mu_{\infty}(k)} R_{g,1} \\
			\left\{ \Ic\cW_{n \ge 2}^{\mathrm{min}} \right\} &= 
			\sum_{g \in \mu_\infty(k)} \Lb^{(n-2)|\vec{\lambda_{g}}| + {N_g} + 2} ( \Lb^{|\vec{\lambda_{g}}|-1} - 1 ) ( \Lb^{|\vec{\lambda_{g}}|-{N_g}-1}\{\Pb^{N_g}\} + \{\Pb^{|\vec{\lambda_{g}}|-{N_g}-2}\} ) \\ & \quad \quad \quad \quad \quad \quad + \sum_{g \in \Bmu_k \setminus \mu_{\infty}(k)} R_{g,n}
		\end{align*}
		
		\noindent where $R_{g,n}$ is the coefficient of $t^n$ in $R_g(t)$ (see Theorem 8.9\ref{thm:ratlb}) and in particular $R_{g,n}$ is the motive of a stack with no $k$-points. 
	\end{thm}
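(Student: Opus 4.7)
The plan is to extract the coefficient of $t^n$ in the generating series $\cI Z_{\vec{\lambda}}(t)$ computed in Theorem \ref{thm:ratl}(b), in direct parallel with the proof of Theorem \ref{thm:tbd9}. The key observation is that the formula
\[
\cI Z_{\vec{\lambda}}(t) = \sum_{g \in \mu_{\infty}(k)} \frac{1 - \Lb t}{1 - \Lb^{|\vec{\lambda}_g|}t}\bigl(\{\bP^{N_g}\} + \Lb^{N_g+1}\{\bP^{|\vec{\lambda}_g|-N_g-2}\}t\bigr) + \sum_{g \in \Bmu_k \setminus \mu_\infty(k)} R_g(t)
\]
is, term by term over $g \in \mu_\infty(k)$, exactly of the shape of the generating series $Z_{\vec{\lambda}}(t)$ in Theorem \ref{thm:ratl}(a), with $(\vec{\lambda}, N)$ replaced by $(\vec{\lambda}_g, N_g)$.

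First, I would extract the coefficient $[t^n]$ of each $g$-summand for $g \in \mu_\infty(k)$. Since the algebraic manipulation in the proof of Theorem \ref{thm:tbd9} uses only the shape of the generating function and no property specific to $(\vec{\lambda}, N)$, the coefficient extraction transfers verbatim, yielding $\{\bP^{N_g}\}$ for $n=0$, the displayed expression for $n = 1$, and the product $\Lb^{(n-2)|\vec{\lambda}_g|+N_g+2}(\Lb^{|\vec{\lambda}_g|-1} - 1)(\Lb^{|\vec{\lambda}_g|-N_g-1}\{\bP^{N_g}\} + \{\bP^{|\vec{\lambda}_g|-N_g-2}\})$ for $n \geq 2$. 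Summing over $g \in \mu_\infty(k)$ gives the main terms of the theorem; note that this sum is finite because $I_g$, and hence $\vec{\lambda}_g$, is nonempty only for finitely many $g$ (namely those whose order divides some $\lambda_i$).

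Next, I would set $R_{g,n} := [t^n]R_g(t)$ for each $g \in \Bmu_k \setminus \mu_\infty(k)$, which gives the remainder sum displayed in the theorem. The statement that $R_{g,n}$ is the class of a stack with no $k$-points is inherited directly from the corresponding property of $R_g(t)$ established in Theorem \ref{thm:ratl}(b), which in turn comes from Lemma \ref{cor:tbd7} and the observation that for a finite extension $k'/k$ the base change $\cX_{k'}$ has no $k$-points.

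There is no real obstacle here: the computation is purely formal once Theorem \ref{thm:ratl}(b) is granted, and the coefficient extraction is the same algebraic manipulation as in Theorem \ref{thm:tbd9} applied independently to each $g$-summand. The only subtle bookkeeping is to keep track of the separation between $g \in \mu_\infty(k)$ (where the summand is a rational function with polynomial coefficients one can expand explicitly) and $g \in \Bmu_k \setminus \mu_\infty(k)$ (where the coefficients $R_{g,n}$ are collected into the remainder term and carry no $k$-points).
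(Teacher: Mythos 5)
Your proposal is correct and matches the paper's approach: the paper's proof simply applies the coefficient extraction from Theorem \ref{thm:tbd9} to each $g$-summand of the generating series from Theorem \ref{thm:ratl}(b), which is exactly what you describe.
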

	\begin{proof} The computations follow by applying the previous theorem to each term in the sum over $g \in \Bmu_k$ in Theorem \ref{thm:ratl}. 
	\end{proof} 
	
	\begin{rmk}
		Note that for each $g \in \mu_{\infty}(k)$, the corresponding term in the formula for $\cI \cW_n^{min}$ is simply $\{\cW_n^{min}((\lambda_i)_{i \in I_g})\}$, the motive of the moduli of minimal linear series of height $n$ on the weighted projective substack $\cP((\lambda_i)_{i \in I_g}) \subset \cP(\lambda_0, \ldots, \lambda_N)$.  
	\end{rmk}

	\subsection{Proof of Theorems~\ref{thm:motive_Rat}}
	
	We now apply Proposition \ref{prop:W0} to compute the motives for each Kodaira type. 
	
	As an example, we first compute the motive for additive reduction of type $\II$ at $j=0$ with the vanishing condition $\gamma = (\ge 1,1)$. The first equality below follows from Proposition \ref{prop:W0} and the second equality from Proposition \ref{prop:poly_m, Pc(4,6)}. 
	
	\begin{align*}
		&\{\cW_{n}^{(\ge 1,1)}(4,6)\}= \{\Pb^1\} \cdot \{\Gb_m\} \cdot \left(\left\{\mathrm{Poly}_{(\ge 1,1)}^{(4n,6n)}\right\}+\sum_{k=1}^{4n-1}\left\{\mathrm{Poly}_{(\ge 1,1)}^{(k,6n)}\right\}+\sum_{l=1}^{6n-1}\left\{\mathrm{Poly}_{(\ge 1,1)}^{(4n,l)}\right\}\right)\\
		&= (\Lb^2-1) \cdot \left[(\Lb - 1)\left( \frac{\Lb^{10n-2} + \Lb^{2n-1}}{(\Lb+1)} \right) + \sum_{k=1}^{4n-1} (\Lb - 1)  \left(\frac{\Lb^{k+6n-2} + \Lb^{-k+6n-1}}{(\Lb+1)}\right) + \Lb^{4n-1} \right]\\
		&\phantom{=}\text{ } + (\Lb^2-1) \cdot \left[ \sum_{l=2}^{4n}  (\Lb - 1)   \left(\frac{\Lb^{l+4n-2} - \Lb^{-l+4n}}{(\Lb+1)} \right) + \sum_{l=4n+1}^{6n-1} (\Lb - 1)  \left(\frac{\Lb^{l+4n-2} + \Lb^{l-4n-1}}{(\Lb+1)} \right) \right]\\
		&=  (\Lb^2 - 1)\Lb^{10n-2}
	\end{align*}

	In fact the same computation yields the following. Suppose $\lambda_1 > \lambda_0$ and let $\gamma = (a,b), (\geq a, b)$ or $(a, \geq b)$ be a vanishing condition. 
	
	\begin{prop}\label{prop:(a,b)counts} Suppose $\lambda_1n - b \geq \lambda_0n - a$, e.g. if $n \gg 0$. Then 
		\begin{align*}
			&\{\cW_n^{(\geq a, b)}(\lambda_0, \lambda_1)\} = \{\cW_n^{(a, \geq b)}(\lambda_0, \lambda_1)\} = (\bL^2 - 1)\bL^{(\lambda_0 + \lambda_1)n - a - b} \\
			&\{\cW_n^{(a,b)}(\lambda_0, \lambda_1)\} = (\bL^2 - 1)\left(\bL^{(\lambda_0 + \lambda_1)n - a - b} - \bL^{(\lambda_0 + \lambda_1)n - a - b - 1}\right)
		\end{align*}
	\end{prop}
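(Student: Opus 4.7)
The plan is to carry out the direct computation, generalizing the worked example for $(\lambda_0, \lambda_1, a, b) = (4, 6, 1, 1)$ preceding the proposition.

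For the case $\gamma = (\geq a, b)$, I would apply Proposition \ref{prop:W0} to write $\{\cW_n^{(\geq a, b)}(\lambda_0, \lambda_1)\} = (\bL^2 - 1) \cdot S$ where $S$ is the sum of a main term $\{\mathrm{Poly}_{(\geq a, b)}^{(\lambda_0 n, \lambda_1 n)}\}$ together with the two sums $\sum_{k = a}^{\lambda_0 n - 1}\{\mathrm{Poly}_{(\geq a, b)}^{(k, \lambda_1 n)}\}$ and $\sum_{l=b}^{\lambda_1 n - 1}\{\mathrm{Poly}_{(\geq a, b)}^{(\lambda_0 n, l)}\}$. Setting $A = \lambda_0 n - a$, $B = \lambda_1 n - b$ and $\delta = B - A \geq 0$, the hypothesis $B \geq A$ places the main term and every term of the $k$-sum in the cases $\alpha \leq \beta$ of Proposition \ref{prop:poly_m, Pc(4,6)} (including the boundary $\alpha = 0$ when $k = a$). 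The $l$-sum requires a three-way case split according to whether $l = b$ (so $\beta = 0$, third case), $b < l \leq b + A$ (so $0 < \beta \leq \alpha$, first case), or $l > b + A$ (so $\beta > \alpha$, second case), where the last range is empty when $\delta = 0$.

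Substituting the explicit Poly-space formulas and collecting over the common denominator $\bL + 1$, standard finite geometric series identities simplify each of the three contributions. The three pieces reduce, respectively, to rational expressions whose numerators contain compensating powers of $\bL^\delta$; when added, these cancel completely and the sum telescopes to $\bL^{A + B}(\bL + 1)/(\bL + 1) = \bL^{(\lambda_0 + \lambda_1) n - a - b}$. Multiplying by $\bL^2 - 1$ yields the claim.

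The case $\gamma = (a, \geq b)$ is handled by an analogous computation. The Poly-space symmetry $\{\mathrm{Poly}_{(a, \geq b)}^{(d_1, d_2)}\} = \{\mathrm{Poly}_{(\geq b, a)}^{(d_2, d_1)}\}$ from Proposition \ref{prop:poly_m, Pc(4,6)} does not directly identify $\{\cW_n^{(a, \geq b)}\}$ with $\{\cW_n^{(\geq a, b)}\}$ (since the sums in Proposition \ref{prop:W0} are not in bijection after swapping), but it allows one to invoke the established Poly-space formulas. In this case the main term and $k$-sum now lie in the cases $\alpha \geq \beta$ while the $l$-sum again splits three ways. The same style of telescoping again yields $\bL^{(\lambda_0 + \lambda_1)n - a - b}$. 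Finally, the case $\gamma = (a, b)$ follows from the set-theoretic stratification $\cW_n^{(\geq a, b)} = \cW_n^{(a, b)} \sqcup \cW_n^{(\geq a+1, b)}$, noting that the hypothesis $\lambda_1 n - b \geq \lambda_0 n - a > \lambda_0 n - (a+1)$ still applies to the latter stratum; subtracting gives $\{\cW_n^{(a, b)}\} = (\bL^2 - 1)(\bL^{(\lambda_0 + \lambda_1)n - a - b} - \bL^{(\lambda_0 + \lambda_1)n - a - b - 1})$ as claimed.

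The main obstacle is careful tracking of the case splits in the $l$-sum (and its analogue for $(a, \geq b)$), together with boundary cases where $\alpha$, $\beta$, or $\delta$ vanishes: different cases of Proposition \ref{prop:poly_m, Pc(4,6)} apply and the intermediate rational expressions change shape, but the final telescoping identity produces the same answer $\bL^{(\lambda_0 + \lambda_1) n - a - b}$ uniformly.
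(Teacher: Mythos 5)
Your proposal is correct and the broad strategy matches the paper. For the first two formulas ($\gamma = (\geq a,b)$ and $(a,\geq b)$) the paper simply says ``the first formula follows as in the case above,'' which is exactly what you propose: run the stratification of Proposition \ref{prop:W0} into Poly-spaces, substitute the explicit formulas from Proposition \ref{prop:poly_m, Pc(4,6)} with the required case split on $\alpha$ versus $\beta$, and let the geometric series telescope. The symmetry $\{\mathrm{Poly}_{(\geq a,b)}^{(d_1,d_2)}\}=\{\mathrm{Poly}_{(b,\geq a)}^{(d_2,d_1)}\}$ you use to handle $(a,\geq b)$ is indeed recorded in Proposition \ref{prop:poly_m, Pc(4,6)}.

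Where you genuinely diverge is the third formula. The paper does not subtract strata: it plugs $\gamma=(a,b)$ directly into Proposition \ref{prop:W0}, expands $\{\mathrm{Poly}_{(a,b)}^{(d_1,d_2)}\}=\{\mathrm{Poly}_1^{(d_1-a,d_2-b)}\}-\{\mathrm{Poly}_{(\geq a+1,b)}^{(d_1,d_2)}\}-\{\mathrm{Poly}_{(a,\geq b+1)}^{(d_1,d_2)}\}$ via the second part of Proposition \ref{prop:poly_m, Pc(4,6)}, observes that the $\mathrm{Poly}_1$ contributions telescope to $\bL^{(\lambda_0+\lambda_1)n-a-b}+\bL^{(\lambda_0+\lambda_1)n-a-b-1}$, and then subtracts the already-computed classes of both $\cW_n^{(\geq a+1,b)}$ and $\cW_n^{(a,\geq b+1)}$. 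Your route is shorter: you observe the cut-and-paste decomposition $\{\cW_n^{(\geq a,b)}\}=\{\cW_n^{(a,b)}\}+\{\cW_n^{(\geq a+1,b)}\}$ (the stratum $\cW_n^{(a,b)}$ is the open locus where the vanishing order of $s_0$ is exactly $a$, and its complement in $\cW_n^{(\geq a,b)}$ is $\cW_n^{(\geq a+1,b)}$), so the second line drops out of the first by a single subtraction, with no need to revisit the Poly-space formulas. The two routes give the same answer, but yours replaces one extra telescoping computation and two Poly-space expansions with a one-line subtraction, so it is a modest simplification of the paper's argument.
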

	
	\begin{proof} The first formula follows as in the case above using Propositions \ref{prop:poly_m, Pc(4,6)} and \ref{prop:W0}. 
		
		For the second formula, by Proposition \ref{prop:W0} and the second part of Proposition \ref{prop:poly_m, Pc(4,6)}, we have
		\begin{align*}
			\{\cW^{(a,b)}_n\} &= \{\Pb^1\} \cdot \{\Gb_m\} \cdot \left(\left\{\mathrm{Poly}_{(a,b)}^{(\lambda_0n,\lambda_1n)}\right\}+\sum_{k=a}^{\lambda_0n-1}\left\{\mathrm{Poly}_{(a,b)}^{(k,\lambda_1n)}\right\}+\sum_{l=b}^{\lambda_1n-1}\left\{\mathrm{Poly}_{(a,b)}^{(\lambda_0n,l)}\right\}\right) \\ 
			&= (\bL^2 - 1)\left(\left\{\mathrm{Poly}_1^{(\lambda_0n, \lambda_1n)}\right\} + \sum_{k = a}^{\lambda_0n -1} \left\{\mathrm{Poly}_1^{(k - a, \lambda_1n - b)}\right\} + \sum_{l = b}^{\lambda_1n - 1} \left\{\mathrm{Poly}_1^{(\lambda_0n - a, l - b)}\right\}\right) \\
			&\quad - \cW_n^{(\geq a +1, b)} - \cW_n^{(a, \geq b + 1)}
		\end{align*}
		
		The sum of $\mathrm{Poly}$-spaces telescopes to $\bL^{(\lambda_0 + \lambda_1)n - a - b} + \bL^{(\lambda_0 + \lambda_1)n - a- b - 1}$. The result now follows by applying the first part to $\cW_n^{(\geq a + 1, b)}$ and $\cW_n^{(a, \geq b + 1)}$.

	\end{proof}

	The rest of the cases with different $\gamma$ now follow by applying the proposition. This completes the Proof of Theorems~\ref{thm:motive_Rat}.

	\subsection{Enumerating elliptic curves over \texorpdfstring{$\Fb_q(t)$}{PP\textasciicircum1} ordered by height of discriminant}
	
	Let $\Delta$ be the discriminant of a minimal elliptic fibration. Then the height of the discriminant over $\Fb_q$ is $ht(\Delta) \coloneqq q^{\deg \Delta} = q^{12n}$ where $n$ is Faltings heignt which by Proposition \ref{cor:unstable_height} agrees with the stacky height.

	\medskip
	
	We first determine the sharp enumerations for the number of minimal elliptic fibrations over $\Pb^1_{\Fb_q}$ with a specified additive reduction of Kodaira type $\Theta$ type and the rest of the bad reductions are at worst multiplicative.

	\begin{thm}\label{thm:ell_curve_count_2}
		Let $n \in \mathbb{Z}_{+}$ and $\text{char} (\Fb_q) \neq 2,3$. The function $\Nc(\Fb_q(t),~\Theta,~B)$, which counts the number of minimal elliptic curves over $\Pb^1_{\Fb_q}$ having a single specified additive reduction of Kodaira type $\Theta$ and at worst multiplicative reduction otherwise, ordered by the multiplicative height of the discriminant $ht(\Delta) = q^{12n} \le B$, satisfies:     
		\begingroup
		\allowdisplaybreaks
		\begin{align*}
			&\Nc(\Fb_q(t),~\II~\mathrm{with}~j=0,~B) = 2\frac{q^2 - 1}{ q^{10} - 1} \cdot q^8 \cdot \left( B^{5/6} - 1\right) \\
			&\Nc(\Fb_q(t),~\III~\mathrm{with}~j=1728,~B) = 2\frac{q^2 - 1}{ q^{10} - 1} \cdot q^7 \cdot \left( B^{5/6} - 1\right) \\
			&\Nc(\Fb_q(t),~\IV~\mathrm{with}~j=0,~B) = 2\frac{q^2 - 1}{ q^{10} - 1} \cdot q^6 \cdot \left( B^{5/6} - 1\right) \\
			&\Nc(\Fb_q(t),~\I_0^*~\mathrm{w.}~j \neq 0,1728~\mathrm{or}~ \I_{k > 0}^*~\mathrm{w.}~j=\infty,~B) = 2\frac{q^2 - 1}{ q^{10} - 1} \cdot (q^5 - q^4) \cdot \left( B^{5/6} - 1\right) \\
			&\Nc(\Fb_q(t),~\I_0^*~\mathrm{with}~j=0, 1728,~B) = 2\frac{q^2 - 1 }{q^{10}-1 } \cdot q^4 \cdot \left( B^{5/6} - 1\right) \\
			&\Nc(\Fb_q(t),~\IV^*~\mathrm{with}~j=0,~B) = 2\frac{q^2 - 1}{ q^{10} - 1} \cdot q^3 \cdot \left( B^{5/6} - 1\right) \\
			&\Nc(\Fb_q(t),~\III^*~\mathrm{with}~j=1728,~B) = 2\frac{q^2 - 1}{ q^{10} - 1} \cdot q^2 \cdot \left( B^{5/6} - 1\right) \\
			&\Nc(\Fb_q(t),~\II^*~\mathrm{with}~j=0,~B) = 2\frac{q^2 - 1}{ q^{10} - 1} \cdot q \cdot \left( B^{5/6} - 1\right) \\
		\end{align*}
		\endgroup
	\end{thm}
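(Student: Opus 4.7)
The plan is to translate the enumeration question into a weighted point count of $\cW^{\gamma}_{n,\Pb^1}(4,6)$, double it to account for the generic $\mu_2$-automorphism of an elliptic curve, apply the motive formulas from Theorem~\ref{thm:motive_Rat}, and then sum a geometric series in $n$. First, Proposition~\ref{Wei_Ell_Moduli} and Corollary~\ref{cor:unstable_height} identify the $\Fb_q$-points of $\cW^{\gamma}_{n,\Pb^1}(4,6)$ with the isomorphism classes of minimal Weierstrass models over $\Fb_q(t)$ of Faltings height $n$ and vanishing condition $\gamma$ at the specified point. By Theorem~\ref{thm:corr_46}, this collection is exactly the set of minimal elliptic curves with a single fiber of Kodaira type $\Theta$ together with at worst multiplicative reduction elsewhere. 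Applying Theorem~\ref{thm:ptcounts},
\[
\Nc(\Fb_q(t),\Theta,B) \;=\; \sum_{n=1}^{N}\bigl|\cW^{\gamma}_{n,\Pb^1}(\Fb_q)/\!\sim\bigr| \;=\; \sum_{n=1}^{N}\#_q\bigl(\cI\cW^{\gamma}_{n,\Pb^1}\bigr),
\]
where $N$ is the largest integer with $q^{12N}\leq B$ and $\gamma$ corresponds to $\Theta$ via the table in Theorem~\ref{thm:corr_46}.

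The key structural observation is that each $\gamma$ appearing in that table forces both Weierstrass coefficients $a_4\in H^0(\Pb^1,\cO(4n))$ and $a_6\in H^0(\Pb^1,\cO(6n))$ to be not identically zero on $\Pb^1$. When $\nu_x(a_4)$ or $\nu_x(a_6)$ is required to be finite and exact, nonvanishing of the relevant section is immediate; in the remaining cases $(\geq 1,1),(\geq 2,2),(\geq 3,3),(\geq 3,4),(\geq 4,5)$, the hypothesis $a_4\equiv 0$ would force the normalized base locus $\overline{\mathrm{Bs}}(a_4^3,a_6^2)$ to equal $V(a_6^2)$, which has degree $12n$ in $\Pb^1$ and so cannot be concentrated at a single point when $n\geq 1$, contradicting the definition of $\cW^{\gamma}_{n,\Pb^1}$; symmetric reasoning applies to $a_6\equiv 0$. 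Under the weight-$(4,6)$ action of $\Gb_m$, the stabilizer of any pair $(a_4,a_6)$ with both sections nonzero is $\mu_{\gcd(4,6)}=\mu_2$, so every geometric point of $\cW^{\gamma}_{n,\Pb^1}$ has automorphism group $\mu_2$. Since $\mu_2$ is abelian and $\mathrm{char}(\Fb_q)\neq 2$, one then has $\cI\cW^{\gamma}_{n,\Pb^1}\cong\cW^{\gamma}_{n,\Pb^1}\sqcup\cW^{\gamma}_{n,\Pb^1}$, whence $\#_q(\cI\cW^{\gamma}_{n,\Pb^1})=2\#_q(\cW^{\gamma}_{n,\Pb^1})$.

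Substituting $\Lb=q$ in Theorem~\ref{thm:motive_Rat} computes $\#_q(\cW^{\gamma}_{n,\Pb^1})$ explicitly; e.g.\ $\#_q(\cW^{(\geq 1,1)}_{n,\Pb^1})=q^{10n-2}(q^2-1)$ for $\Theta=\II$, and for the generic $\I_0^*$ (or $\I^*_{k>0}$ at $j=\infty$) the four-term motive factors as $\Lb^{10n-6}(\Lb-1)(\Lb^2-1)$. The geometric series
\[
\sum_{n=1}^{N}q^{10n}\;=\;\frac{q^{10}\bigl(q^{10N}-1\bigr)}{q^{10}-1}\;=\;\frac{q^{10}\bigl(B^{5/6}-1\bigr)}{q^{10}-1}
\]
(using $q^{10N}=B^{5/6}$ at the jumps $B=q^{12N}$) then collects the dependence on $B$. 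Doubling the resulting weighted count and tracking the explicit power of $q$ in each of the eight cases recovers all the claimed formulas.

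The main obstacle is the case-by-case verification of the uniform $\mu_2$-stabilizer claim, which requires carefully matching the normalized base profile $\mu=(\mu_1)$ against the degrees $4n$ and $6n$ of $a_4$ and $a_6$. Once this is in hand, the remainder of the argument is direct bookkeeping with the motives of Theorem~\ref{thm:motive_Rat} and the summation of a geometric series. It is also worth noting that the summation starts at $n=1$: an $n=0$ surface is isotrivial with constant $j$-invariant and therefore cannot realize a single additive fiber of type $\Theta$ with non-additive fibers elsewhere, so makes no contribution to $\Nc(\Fb_q(t),\Theta,B)$.
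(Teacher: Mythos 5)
Your proof is correct and follows essentially the same route as the paper: identify $\cW^{\gamma}_{n,\Pb^1}(4,6)$ with minimal Weierstrass models of the prescribed Kodaira type, relate the unweighted count to twice the weighted count via the $\mu_2$ stabilizer, read off $\#_q(\cW^{\gamma}_{n,\Pb^1})$ from the motives, and sum the resulting geometric series. The one place where you go beyond the paper's terse justification ("the generic $\mu_{\gcd(4,6)}$ stabilizer") is your careful verification that \emph{every} geometric point of $\cW^{\gamma}_{n,\Pb^1}$ has stabilizer exactly $\mu_2$, by showing that the minimality and single-base-point constraints in $\cW^{\gamma}_{n,\Pb^1}$ force both $a_4$ and $a_6$ to be nonzero for $n\geq 1$; this closes a small gap that the paper elides.
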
 
	
	\begin{proof}[Proof of Theorems~\ref{thm:ell_curve_min_count}]

		We acquire the exact weighted point counts $\#_q\left(\cW_{n}^{\gamma}\right)$ over $\Fb_q$ from the motive formulas $\{\cW_{n}^{\gamma}\}$ by $\#_q: \Lb \mapsto q$. The exact number $|\cW_{n}^{\gamma}(\Fb_q)/\sim|$ of $\Fb_q$--isomorphism classes of $\Fb_q$--points of the moduli stack $\cW_{n}^{\gamma}$ over $\Fb_q$ with $\text{char} (\Fb_q) \neq 2,3$ is $|\cW_{n}^{\gamma}(\Fb_q)/\sim| = 2 \cdot \#_q\left(\cW_{n}^{\gamma}\right)$ where the factor of 2 comes from the hyperelliptic involution i.e., the generic $\mu_{\gcd(4,6)}$ stabilizer. By Theorem \ref{thm:sec7main} and Proposition \ref{prop:iso-HGammad-Rmun}, this is the count of elliptic surfaces of height $n$ over $\Pb^1_{\Fb_q}$ with the additive reduction controlled by vanishing conditions $\gamma$ corresponding to the given Kodaira fiber type $\Theta$. 
		
		\[ \Nc(\Fb_q(t),~\Theta,~B) = \sum \limits_{n=1}^{\left \lfloor \frac{log_q B}{12} \right \rfloor} |\cW_{n}^{\gamma}(\Fb_q)/\sim| = \sum \limits_{n=1}^{\left \lfloor \frac{log_q B}{12} \right \rfloor} 2 \cdot \#_q\left(\cW_{n}^{\gamma}\right) \]
		
		For $\gamma = (\geq a, b)$ or $(a, \geq b)$, we have by Proposition \ref{prop:(a,b)counts}
		$$
		\sum \limits_{n=1}^m \cdot \#_q\left(\cW_{n}^{\gamma}\right) = (q^2 - 1) \sum_{n = 1}^m q^{10n - a - b} = \frac{q^2 - 1}{q^{10}-1}q^{10 - a - b}(q^{10m}-1).
		$$
		Similarly, for $\gamma = (a,b)$, we have 
		$$
		\sum \limits_{n=1}^m \cdot \#_q\left(\cW_{n}^{\gamma}\right) = (q^2 - 1) \sum_{n = 1}^m q^{10n - a - b} - q^{10n - a - b - 1} = \frac{q^2 - 1}{q^{10}-1}(q-1)q^{10-a-b-1}(q^{10m}-1).
		$$
		Since $m = \left \lfloor \frac{log_q B}{12} \right \rfloor$, we have the that $q^{10m}-1$ grows as $B^{5/6} - 1$.
		
	\end{proof}
	
	\begin{rmk}
		Note that in each of the above cases, the elliptic curves over $\mathbb{F}_q(t)$ are non-isotrivial and so the automorphism group is $\mu_2$. Thus the weighted count $\Nc^w$ is simply $\frac{1}{2}\Nc$. Moreover, $ht(\Delta)$ is necessarily positive. 
	\end{rmk}

	Finally, we determine the sharp enumeration for the number of minimal elliptic fibrations over $\Pb^1_{\Fb_q}$. This is achieved by considering the totality of rational points on $\Me$ over $\Fb_q(t)$ via Theorems \ref{thm:tbd9} \& \ref{thm:tbd10}. In order to keep track of the primitive roots of unity contained in $\mathbb{F}_q$, we define the following auxillary function.
	
	\[
	\delta(x)\coloneqq
	\begin{cases}
		1 & \text{if } x \text{ divides } q-1,\\
		0 & \text{otherwise.}
	\end{cases}
	\]
	\begin{thm}\label{thm:ell_curve_min_count_body}
		Let $n \in \mathbb{Z}_{\ge 0}$ and $\text{char} (\Fb_q) > 3$. The counting function $\Nc^w(\Fb_q(t),~B)$ (resp. $\Nc(\Fb_q(t), ~B)$), which gives the weighted count (resp. unweighted count) of the number of minimal elliptic curves over $\Pb^1_{\Fb_q}$ ordered by the multiplicative height of the discriminant $ht(\Delta) = q^{12n} \le B$, is given by the following.   
		
		\begin{align*}
			\Nc^w(\Fb_q(t),~B) &= \left( \frac{q^{9} - 1}{q^{8} - q^{7}} \right) B^{5/6} - B^{1/6} \\\\
			\Nc(\Fb_q(t),~B) &= 2 \left( \frac{q^{9} - 1}{q^{8} - q^{7}} \right) B^{5/6} - 2 B^{1/6}   \\
			& + \delta(6) \cdot 4 \left( \frac{q^{5} - 1}{q^{5} - q^{4}} \right) B^{1/2} +  \delta(4) \cdot 2 \left( \frac{q^{3} - 1}{q^{3} - q^{2}} \right) B^{1/3}  \\
			& + \delta(6) \cdot 4 + \delta(4) \cdot 2 \\
		\end{align*}
		
	\end{thm}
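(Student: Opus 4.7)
The plan is to assemble the theorem from the height moduli framework developed earlier combined with the point-counting formulas of Theorems \ref{thm:tbd9} and \ref{thm:tbd10}, with a crucial correction for rational points landing at the boundary $j=\infty$. By Proposition \ref{Wei_Ell_Moduli} and Corollary \ref{cor:unstable_height}, $\cW_{n,\bP^1}^{min}(4,6) = \cM_{n,\bP^1}(\overline{\cM}_{1,1},\cL)$ parametrizes all $\bF_q(t)$-rational points of $\overline{\cM}_{1,1}$ of stacky height $n$, and the stacky height coincides with the Faltings height, so $\text{ht}(\Delta) = q^{12n}$. Since the theorem counts honest minimal elliptic curves (not the nodal cubic), we must restrict to the open substack lying over $\cM_{1,1}$, i.e.\ remove the closed substack $\cW_n^{\infty} \subset \cW_n^{min}$ of weighted linear series landing entirely at $j=\infty$.

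For the weighted count, I would apply $\#_q:\mathbb{L}\mapsto q$ to Theorem \ref{thm:tbd9} and extract the partial sum via the generating-series identity
\[
\sum_{n=0}^m \#_q(\cW_n^{min}(4,6)) = [t^m]\,\frac{Z_{(4,6)}(t)|_{\mathbb{L}=q}}{1-t}.
\]
A partial-fractions decomposition using $(1-t)(1-q^{10}t)$ yields $1 + \frac{q^9-1}{q^8-q^7}\,q^{10m}$ for $m\geq 1$, the key simplification being that the relevant numerator factor is exactly $(q^9-1)(q^{10}-1)/(q-1)$ so that the $(q^{10}-1)$ in the denominator cancels cleanly.

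Next I would identify $\cW_n^\infty$ explicitly: the locus $\{4a_4^3 + 27a_6^2 = 0\}$ in $\cP(4,6)$ is parametrized by $(a_4,a_6)=(-3f^2,-2f^3)$ for a section $f$ of $L^{\otimes 2}$, and minimality forces $f$ to be squarefree. Thus $\cW_n^{\infty}\cong[U_{2n,sf}/\bG_m^{wt=2}]$ where $U_{2n,sf}\subset H^0(\bP^1,\cO(2n))$ is the squarefree locus. Using the zeta-function identity $Z_{\bP^1}(t)/Z_{\bP^1}(t^2) = (1-qt^2)(1+t)/(1-qt)$ for squarefree divisor counts on $\bP^1$, one computes $\#_q(\cW_n^{\infty}) = q^{2n}-q^{2n-2}$ for $n\geq 2$ with explicit formulas at $n=0,1$; the sum telescopes to $1 + q^{2m} = 1 + B^{1/6}$ for $m\geq 1$. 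Subtracting this from the total count yields $\cN^w = \frac{q^9-1}{q^8-q^7}B^{5/6} - B^{1/6}$, explaining why the lower-order term has coefficient exactly $-1$.

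For the unweighted count, Theorem \ref{thm:ptcounts} reduces it to $\#_q$ of the inertia stack. By Theorem \ref{thm:tbd10}, $\cI\cW_n^{min}(4,6)$ decomposes according to roots of unity $g\in\mu_\infty(\bF_q)$: orders $1$ and $2$ each contribute a copy of $\cW_n^{min}(4,6)$ (total multiplier $2$); orders $3$ and $6$ contribute $\cW_n^{min}(6)$ when $6\mid q-1$ (total multiplier $4\delta(6)$, since $\delta(3)=\delta(6)$ for odd $q$); order $4$ contributes $\cW_n^{min}(4)$ when $4\mid q-1$ (multiplier $2\delta(4)$); order $12$ gives $I_g=\emptyset$ and contributes nothing. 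The analogous partial sums for $\lambda=4,6$, computed via the same generating-series technique, give $1+\frac{q^3-1}{q^3-q^2}B^{1/3}$ and $1+\frac{q^5-1}{q^5-q^4}B^{1/2}$ respectively. Finally, since $\cW_n^{\infty}$ has generic $\mu_2$-inertia, its inertia stack consists of two disjoint copies of itself, contributing $2(1+B^{1/6})$; subtracting this produces the $-2B^{1/6}$ term and the residual constants $\delta(6)\cdot 4 + \delta(4)\cdot 2$. The main obstacle is correctly parametrizing the $j=\infty$ boundary stratum as a weight-$2$ scaling quotient of the squarefree-form locus and carefully handling the special cases $n=0,1$ where the generic formulas from Theorem \ref{thm:tbd9} do not directly apply; once this is in hand the bookkeeping of the four geometric series assembles into the claimed closed-form enumeration.
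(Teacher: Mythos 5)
Your proposal is correct and follows essentially the same approach as the paper: compute the partial sums of $\#_q\cW^{min}_{n,\bP^1}(4,6)$ and $\#_q\cI\cW^{min}_{n,\bP^1}(4,6)$ using Theorems \ref{thm:tbd9} and \ref{thm:tbd10}, and subtract the contribution of rational points landing on the boundary $\cP(2)\hookrightarrow\cP(4,6)$ (which you parametrize by $(a_4,a_6)=(-3f^2,-2f^3)$ with $f$ squarefree, exactly as the paper realizes it as $\cW^{min}_{n,\bP^1}(2)$), together with the observation that the inertia of the boundary is two copies of itself since every point of $\cP(2)=B\mu_2$ has $\mu_2$-automorphisms. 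The only variations are cosmetic: you obtain the boundary point count via the squarefree-divisor zeta function $Z_{\bP^1}(t)/Z_{\bP^1}(t^2)$ rather than specializing Theorem \ref{thm:tbd9} to $\vec\lambda=(2)$, and you extract the partial sums by partial fractions on the generating series rather than direct summation; both give the same answer (and note your use of the notation $\cW_n^{\infty}$ clashes with the paper's Proposition \ref{prop:flattening}(4), where it denotes the locus $s_i\equiv0$ for all $i$, which is disjoint from $\cW_n^{min}$).
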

	
	\begin{proof} 
		
		We subtract the rational points landing on $\Pc(2)$ since we do not want to count the generically singular $j = \infty$ isotrivial elliptic curves. Note that $\delta(2) = 1$ as $q-1$ is always even for any odd prime power $q$. We have
		$$
		\Nc^w(\Fb_q(t), B) = \sum_{n = 0}^{\left \lfloor \frac{log_q B}{12} \right \rfloor} \#_q \cW_{n, \bP^1}^{min}(4,6) - \sum_{n = 0}^{\left \lfloor \frac{log_q B}{12} \right \rfloor} \#_q\cW_{n, \bP^1}^{min}(2).
		$$
		Each of these sums can be computed via Theorem \ref{thm:tbd9}:
		\begin{align*}
			\sum_{n = 0}^{\left \lfloor \frac{log_q B}{12} \right \rfloor} \#_q \cW^{min}_{n, \bP^1}(4,6) &= \frac{q^9 - 1}{q^8 - q^7} \cdot q^{10\left \lfloor \frac{log_q B}{12} \right \rfloor} + 1 = \frac{q^9 - 1}{q^8 - q^7} \cdot B^{5/6} + 1 \\
			\sum_{n = 0}^{\left \lfloor \frac{log_q B}{12} \right \rfloor} \#_q \cW^{min}_{n, \bP^1}(2) &= q^{2\left \lfloor \frac{log_q B}{12} \right \rfloor} + 1 = B^{1/6} + 1.
		\end{align*}
		
		Similarly, for the unweighted count, we wish to compute
		$$
		\Nc(\Fb_q(t), B) = \sum_{n = 0}^{\left \lfloor \frac{log_q B}{12} \right \rfloor} \#_q \cI\cW^{min}_{n, \bP^1}(4,6) - \sum_{n = 0}^{\left \lfloor \frac{log_q B}{12} \right \rfloor} \#_q \cI\cW^{min}_{n, \bP^1}(2). 
		$$
		The latter term is simply $2$ times the weighted count since the automorphism group of any point of $\cP(2) = B\mu_2$ is $\mu_2$. Thus it sums to $2B^{1/6} + 2$. 
		
		For the first term, we use Theorems \ref{thm:tbd9} and \ref{thm:tbd10} to see that 
		\begin{align*}
			\#_q\cI \cW_{n,\bP^1}^{min} &= \sum_{g \in \mu_{\infty}(k)} \#_q\cW_{n,\bP^1}^{min}(I_g) \\ &= 2\#_q\cW_{n,\bP^1}(4,6) + \delta(6)\cdot 4 \#_q\cW_{n,\bP^1}(6) + \delta(4) \cdot 2 \#_q \cW_{n,\bP^1}(4).
		\end{align*}
		
		\noindent Summing over all $n$ reduces to the following computations using Theorem \ref{thm:tbd9}:
		\begin{align*}
			\sum_{n = 0}^{\left \lfloor \frac{log_q B}{12} \right \rfloor} \#_q\cW_{n,\bP^1}(4,6) &= \frac{q^9 - 1}{q^8 - q^7} \cdot B^{5/6} + 1 \\
			\sum_{n = 0}^{\left \lfloor \frac{log_q B}{12} \right \rfloor} \#_q\cW_{n,\bP^1}(6) &= \frac{q^5 - 1}{q^5 - q^4} \cdot B^{1/2} + 1 \\ 
			\sum_{n = 0}^{\left \lfloor \frac{log_q B}{12} \right \rfloor} \#_q\cW_{n,\bP^1}(4) &= \frac{q^3 - 1}{q^3 - q^2} \cdot B^{1/3} + 1
		\end{align*}
		from which the result follows.

	\end{proof}
	
	\begin{rmk}
		The lower order main term of order $B^{1/6}$ present in both the weighted and unweighted counts comes from subtracting the $\mu_2$ twist families of generically singular $j = \infty$ isotrivial elliptic curves. And the lower order main terms of order $B^{1/2}$ and $B^{1/3}$ in the unweighted count $\Nc(\Fb_q(t),~B)$ come from counting the $\mu_6$ and $\mu_4$ twist families of isotrivial elliptic curves having strictly additive bad reductions with extra automorphisms concentrated at the special $j$-invariants $j = 0, 1728$.
	\end{rmk}

	\section*{Acknowledgements}
	The authors are indebted to Yuri Manin for inspiring discussions, especially for his encouragement on understanding the link between curve counting and number theory. Warm thanks to Changho Han, Christian Haesemeyer, Jack Hall, Giovanni Inchiostro, Aaron Landesman, Johannes Schmitt, Jason Starr, Zijian Yao and Gjergji Zaimi as well for helpful discussions. We thank Tristan Phillips for many helpful comments on an earlier draft of the paper. Jun-Yong Park was supported by the ARC grant DP210103397 and the Max Planck Institute for Mathematics. MS was partially supported by a Discovery Grant from the National Science and Engineering Research Council of Canada and a Mathematics Faculty Research Chair.

	\bibliographystyle{alpha}
	\bibliography{main.bib}
	
	\vspace{+9pt}

	\noindent Dori Bejleri \enspace -- \enspace \texttt{bejleri@math.harvard.edu} \\
	\textsc{Department of Mathematics, Harvard University, USA} \\

	\noindent Jun--Yong Park \enspace -- \enspace \texttt{june.park@unimelb.edu.au} \\
	\textsc{School of Mathematics and Statistics, University of Melbourne, Australia} \\

	\noindent Matthew Satriano \enspace -- \enspace \texttt{msatriano@uwaterloo.ca} \\
	\textsc{Department of Pure Mathematics, University of Waterloo, Canada} \\

\end{document}